\numberwithin{equation}{section}%Number like (1.1), (1.2), ...
\theoremstyle{plain}
\newtheorem{theorem}{Theorem}[section]
\newtheorem{lemma}[theorem]{Lemma}
\newtheorem{corollary}[theorem]{Corollary}
\newtheorem{proposition}[theorem]{Proposition}
\theoremstyle{definition}
\newtheorem{remark}[theorem]{Remark}
\providecommand{\abs}[1]{|#1|}
\newcommand{\Convection}[2]{#1 \cdot \nabla #2}
\newcommand{\Complex}{ \mathbb{C} }
\newcommand{\bracket}[1]{ ( #1 ) }
\newcommand{\dt}{ \partial_t }
\newcommand{\Div}{ \mathrm{div} \,  }
\newcommand{\Dive}{ \mathrm{div}_{\epsilon} \,  }
\newcommand{\ep}[1]{ {#1}_{\epsilon} }
\newcommand{\euclid}[1]{\mathbb{R}^{#1}}
\newcommand{\Fourier}{ \mathcal{F}}
\newcommand{\FourierInverse}{ \mathcal{F}^{ - 1 } }
\newcommand{\Lebesgue}[1]{ L^{ #1 } }
\newcommand{\ImaginaryPart}{ \mathrm{Im} }
\newcommand{\Inverse}[1]{ { \left( #1 \right)}^{-1} }
\newcommand{\Integer}{\mathbb{Z}}
\newcommand{\In}[1]{ \quad \mathrm{in} \quad #1 }
\newcommand{\LongBracket}[1]{ \left( #1 \right) }
\newcommand{\LongNorm}[2]{ { \left| \left| #1 \right| \right| }_{#2} }
\newcommand{\LongSet}[2]{\left \{ #1 : #2 \right \}}
\newcommand{\MRspace}{W^{1,p} (0, T ;L^q(\Omega)) \cap L^p (0, T; W^{2,q}(\Omega))}
\newcommand{\Napier}{ e }
\providecommand{\nrm}[2]{\lVert#1\rVert_{#2}}
\newcommand{\On}[1]{ \quad \mathrm{on} \quad #1 }
\newcommand{\ON}[1]{\quad \mathrm{on} \quad #1}
\newcommand{\Prime}[1]{ {#1}^{\prime} }
\newcommand{\Real}{ \mathbb{R} }
\newcommand{\RealPart}{ \mathrm{Re} }
\newcommand{\Set}[2]{\left \{#1 \, ; \, #2 \right \}}
\newcommand{\sLongBracket}[1]{ \left[ #1 \right] }
\newcommand{\SpaceTime}{ \Omega \times (0, \infty) }
\newcommand{\TimeInt}[4]{\int_{#1}^{#2} #3 \  d #4}
\newcommand{\Torus}{ \mathbb{T} }
\newcommand{\AmruComment}[1]{\textcolor{black}{#1}}
\newcommand{\FurukawaComment}[1]{\textcolor{black}{#1}}
\newcommand{\FurukawaCommentSecond}[1]{\textcolor{black}{#1}}
\newcommand{\FurukawaCommentThird}[1]{\textcolor{black}{#1}}
\newcommand{\FurukawaCommentFourth}[1]{\textcolor{black}{#1}}
\newcommand{\FurukawaCommentFifth}[1]{\textcolor{black}{#1}}
\newcommand{\FurukawaCommentSixth}[1]{\textcolor{black}{#1}}
\newcommand{\FurukawaCommentSeventh}[1]{\textcolor{black}{#1}}
\newcommand{\PGComment}[1]{\textcolor{black}{#1}}
\title{The Hydrostatic Approximation for the Primitive Equations by the Scaled Navier-Stokes Equations under the \FurukawaCommentSecond{No-Slip} Boundary Condition}
\author{Ken Furukawa, Yoshikazu Giga, Takahito Kashiwabara}
\keywords{ \FurukawaCommentThird{
	The first author was partly supported by the Program for Leading Graduate Schools, Leading Graduate Course for Frontiers of Mathematical Sciences and Physics,  Japan Society for the Promotion of Science (JSPS).
	The second author was partly supported by JSPS Grant-in-Aid for Scientific Research (Kiban) S (No. 26220702), A (No. \FurukawaCommentSeventh{17H}01091), A (No. 19H00639), B (No. 16H03948) and Challenging Pioneering Research (Kaitaku) (No. 18H05323).
	\FurukawaCommentSixth{The third author was partly supported by JSPS Grant-in-Aid for Young Scientists B (No. 17K14230)
and by Grant for The University of Tokyo Excellent Young Researchers.} \\ \\ 
	This work was partly supported by the DFG International Research Training Group IRTG 1529 and the JSPS Japanese- German Graduate Externship on Mathematical Fluid Dynamics.
	}}
\begin{document}
%	\linenumbers
	\maketitle
	
	\thanks{
		\begin{center}
			{ \it \FurukawaCommentThird{Dedicated to Professor Matthias Hieber on the occasion of his 60th birthday}}
		\end{center}
	}
	
	\begin{abstract}
		\FurukawaCommentSeventh{In this paper we justify the hydrostatic approximation of the primitive equations in the maximal $L^p$-$L^q$-setting in the three-dimensional layer domain $\Omega = \Torus^2 \times (-1, 1)$ under the no-slip (Dirichlet) boundary condition in any time interval $(0, T)$ for $T>0$.
		We show that the solution to the scaled Navier-Stokes equations with Besov initial data} \FurukawaCommentSecond{$u_0 \in B^{s}_{q,p}(\Omega)$ for $s > 2 - 2/p + 1/ q$} converges to the solution to the primitive equations with the same initial data in $\mathbb{E}_1 (T) = W^{1, p}(0, T ; L^q (\Omega)) \cap L^p(0, T ; W^{2, q} (\Omega)) $ with order $O(\epsilon)$ where $(p,q) \in (1,\infty)^2$ satisfies $ \frac{1}{p} \leq \min \bracket{ 1 - 1/q, 3/2 - 2/q }$. 
		The global well-posedness of the scaled Navier-Stokes equations in $\mathbb{E}_1 (T)$ is also proved for sufficiently small $\epsilon>0$.	Note that $T = \infty$ is included.
	\end{abstract} 
	
%%%%%%%%%%%%%%%%%%%%%%%%%%%%%%%%%%%
%%%%%%%%%%%%%%%%%%%%%%%%%%%%%%%%%%%%
%%%%%%%%%%%%%%%%%%%%%%%%%%%%%%%%%%%%
%%%%%%%%%%%%%%%%%%%%%%%%%%%%%%%%%%%%
	
\section{Introduction}

	\FurukawaCommentThird{We consider the primitive equations of the form}
		\begin{align*}
			(PE) \left \{
				\begin{array}{rll}
					\dt u - \Delta u + u \cdot \nabla v + \nabla_H \pi \quad
					 = 
					& 0 
					& \In{ \SpaceTime }, \\
				 	\partial_z \pi \quad
					 = 
				 	& 0 
				 	& \In{\SpaceTime}, \\
					\Div u \quad
					 = 
					&0 
					& \In{\SpaceTime}, \\
					u \quad
					= 
					& 0 
					& \On \partial \Omega	\times (0 , \infty) \FurukawaCommentThird{,}
				\end{array} 
			\right.
		\end{align*}
	\FurukawaCommentThird{where $u = (v, w) \in \euclid{2} \times \Real$ and $\pi$ are the unknown velocity field and pressure field,} respectively, $\nabla_H = \bracket{\partial_x, \partial_y}^T$, and $\Omega = \Torus^2 \times (-1, 1)$ \FurukawaCommentSixth{for $\Torus = \Real / 2 \pi \Integer$}. 
	By divergence-free condition $w$ is given by the formula
	\begin{align*}
		w (x^\prime, x_3, t) 
		= - \int_{-1}^{x_3}
			\mathrm{div}_H \, v (x^\prime, \FurukawaCommentSecond{\zeta}, t) d \zeta 
		d \zeta
		= \int_{x_3}^{1}
			\mathrm{div}_H \, v (\FurukawaCommentSecond{x^\prime , \zeta}, t) 
		\FurukawaCommentSecond{d \zeta} \FurukawaCommentThird{;}
	\end{align*}
	\FurukawaCommentThird{here we invoked physically reasonable condition $w (\cdot , \cdot , \pm 1 , \cdot) = 0$}.
	The primitive \FurukawaCommentSixth{equations are} fundamental model for geographic flow. 
	Existence of \PGComment{the global} weak solution to the primitive equations \PGComment{on the sphere} with $L^2$-initial data was proved by Lions, Temam and Wang \cite{LionsTemamWangShou1992}. 
	Local-in-time well-posedness was proved by \FurukawaComment{Guill\'{e}n-Gonz\'{a}lez, Masmoudi and Rodr\'{i}guez-Bellido} \cite{GuillenMasmoudiRodriguez2001}. 
	Although global well-posedness of the 3-dimensional Navier-Stokes \FurukawaCommentFifth{equations are} the well-known open problem, for the primitive \FurukawaCommentFourth{equations,} this problem \FurukawaCommentFourth{has} been solved by Cao and Titi \cite{CaoTiti2007}.
	Hieber and Kashiwabara \cite{HieberKashiwabara2016} extended this result to prove global well-posedness for the \FurukawaCommentSixth{primitive equations} in $L^p$-settings.
	\FurukawaComment{
		In these papers boundary conditions are imposed no-slip (Dirichlet) on the bottom and slip (Neumann) on the top.}
	Recently, \FurukawaCommentSecond{the second and last authors together with} Gries, Hieber and Hussein \cite{GigaGriesHieberHusseinKashiwabara2017_analiticity} obtained global-in-time well-posedness in the maximal regularity spaces (mixed Lebesgue-Sobolev \FurukawaCommentFifth{spaces}) $ W^{1,p} \bracket{ 0, T ; L^q \bracket{\Omega} } \cap L^p \bracket{ 0, T ; W^{2,q} \bracket{\Omega} } $ for $T>0$ \FurukawaCommentFifth{and appropriate $1 < p,q < \infty$} under various boundary conditions.
	
	\FurukawaCommentSeventh{Our aim} in this paper is to give a rigorous justification of the derivation of the primitive equations under the Dirichlet boundary condition. 
	We \PGComment{begin by explaining its derivation}. 
	\FurukawaCommentThird{Let us consider the anisotropic viscous Navier-Stokes equations in a thin domain of the form}
		\begin{align*} %\label{eq_NS_anisotropic_viscosity} 
			(ANS) \left \{
				\begin{array}{rcll}
					\dt u - ( \Delta_H + \epsilon^2 \partial_z^2 ) u + u \cdot \nabla u + \nabla \pi 
					& = 
					& 0 
					& \In{ \Omega_{\epsilon} \times (0, \infty) }, \\
					\Div u 
					& = 
					& 0 
					& \In{ \Omega_{\epsilon} \times (0, \infty) },
				\end{array}
			\right.
		\end{align*}
	 where $\Omega_{\epsilon} = (- \epsilon, \epsilon) \times \mathbb{T}^2$. 
	 If $\epsilon = 1$, (ANS) is the usual Navier-Stokes equations. 
	\FurukawaCommentThird{
	The equations (ANS) \FurukawaCommentFifth{are} considered as a good model to describe motion of incompressible viscous fluid filled in a thin domain.
	}
	 Actually, if we put the Reynolds number $1$, since length and velocity are of $\epsilon$-order, apparent viscosity for vertical direction must be of $\epsilon^2$-order  \AmruComment{from the Reynolds number point of view}. 
	 The primitive equations \FurukawaCommentThird{are} formally derived from \FurukawaCommentThird{(ANS)}.
	\FurukawaCommentThird{We introduce new unknowns of (ANS) by rescaling as}
		\begin{itemize}
			\item $ u_{\epsilon} := (v_{\epsilon}, w_{\epsilon}) $
			\item $ v_{\epsilon} (x, y, z, t) := v  (x, y, \FurukawaCommentFifth{\epsilon z}, t) $
			\item $ w_{\epsilon} (x, y, z, t) := w (x, y, \FurukawaCommentFifth{\epsilon z}, t) / \epsilon$
			\item $ \pi_{\epsilon} (x, y, z, t) := \pi (x, y, \FurukawaCommentFifth{\epsilon z}, t) $,
		\end{itemize}
 	\FurukawaCommentFifth{where $x, y \in \Torus$, $z \in (-1, 1)$ and $t> 0$.}
 	Then, $(u_{\epsilon}, \pi_{\epsilon})$ satisfy the scaled Navier-Stokes equations in a fixed domain
		\begin{align*} %\label{eq_SNS}
			(SNS) 
			\left \{
				\begin{array}{rcll}
					\dt v_{\epsilon} - \Delta v_{\epsilon} + u_{\epsilon} \cdot \nabla v_{\epsilon} + \nabla_H \pi_{\epsilon}  
					& =
					& 0
					& \In{\SpaceTime}, \\ 
					\epsilon^2 \LongBracket{ \dt w_{\epsilon} - \Delta w_{\epsilon} + u_{\epsilon} \cdot \nabla w_{\epsilon} } + \partial_z \pi_{\epsilon} 
					& = 
					& 0 
					& \In{\SpaceTime}, \\ 
					\FurukawaCommentFifth{\mathrm{div}}\, u 
					& = 
					& 0 
					& \In{\SpaceTime}.
				\end{array}
			\right.
		\end{align*}
	Taking formally $\epsilon \rightarrow 0$ for the above equations, we get the primitive equations.
	
	\PGComment{
		The Navier-Stokes equations (SNS) \FurukawaCommentFifth{are} well-studied for $\epsilon = 1$ since the work of Leray \cite{Leray1934}, where a global weak solution is constructed in $\Omega = \Real^3$.
		For a general domain see Farwig, Kozono, Sohr \cite{FarwigKozonoSohr2005}.
		\FurukawaCommentThird{A local strong solution} is constructed by Fujita and Kato \cite{FujitaKato1964} \FurukawaCommentThird{when initial data is} in $H^{1/2}$.
		It is extended to various domains in various function spaces; see e.g. Ladyzenskaya \cite{Ladyzhenskaya1969}, Kato \cite{Kato1984}, Giga and Miyakawa \cite{GigaMiyakawa1989} for early development.
		\FurukawaCommentSixth{The reader refers} to a book of Lemari\'{e}-Rieusset \cite{Lemarie-Rieusset2016} and \FurukawaCommentFifth{review articles} by Farwig, Kozono and Sohr \cite{FarwigKozonoSohr2018} and Gallagher \cite{Gallagher2018} for resent development.
		Many results can be extended for general $\epsilon > 0$ but it is not often written explicitly \FurukawaCommentThird{except in} a book of Chemin, Desjardins, Gallagher and Grenier \cite{CheminDesjardinsGallagherGrenier2006}.
	}
	
	Rigorous justification of the primitive equations from the scaled Navier-Stokes equations was studied by Az\'{e}rad and Guill\'{e}n \cite{AzeradGuillen2001}. 
	They obtained weak* convergence in the natural energy space $L^{\infty}( 0, T; L^2(\Omega)) \cap L^2( 0, T; H^1(\Omega)) $ for $\Omega = \Torus^2 \times (-1, 1)$ and $T>0$. 
	Recently, Li and Titi \cite{LiTiti2017} improved their result to get strong convergence \PGComment{by energy method} with the aid of regularity of the solution to the primitive equations. 
	\FurukawaCommentSecond{The authors together with }Hieber, Hussein and Wrona \cite{FurukawaGigaHieberHusseinKashiwabaraWrona2018} extended Li and Titi's result in maximal-regularity \FurukawaCommentFifth{spaces} $W^{1, p} (0, T ; L^q (\Torus^3)) \cap L^{p} (0, T ; W^{2, q} (\Torus^3))$ \FurukawaCommentSeventh{with initial trace in the Besov space $B^{2 - 2 / p}_{q,p}$} for $T > 0$ and $1 / p \leq \min (1 - 1 / q, 3 / 2 - 2 / q)$ \FurukawaCommentThird{by an operator} theoretic approach.
	The case of $p = q = 2$ is corresponding to Li and Titi's result.
	 \AmruComment{
	 	Note the case of the torus corresponding to Neumann boundary conditions on top and bottom part, moreover, the work of Az\'{e}rad and Guill\'{e}n treats mixed boundary conditions with Dirichlet boundary conditions on the bottom, while Li and Titi deal with Neumann boundary conditions only.
	 }
	As we already mentioned, the \FurukawaCommentFifth{primitive equations are} a model for geographic flow. 
	Although it is more physically natural to consider the case of Dirichlet-Neumann and Dirichlet boundary conditions, there was no result of justification of derivation \FurukawaComment{in a strong topology} to the \FurukawaCommentFifth{primitive equations} from the Navier-Stokes equations.
	
	\AmruComment{
		Let
		\begin{align*}
			\mathbb{E}_1 (T) 
			&= \{ u \in \MRspace \, ;  \, \mathrm{div} \, u = 0, \, u |_{x = \pm 1} = 0 \}, \\
			 \mathbb{E}_0 (T) 
			 & = \{ u \in L^p (0, T ; L^q (\Omega)) \, ;  \, \mathrm{div} u = 0, \, u |_{x = \pm 1} = 0 \}, \\
			  \mathbb{E}_1^\pi (T) 
			 & = \{ \pi \in L^p (0, T ; W^{1, q} (\Omega)) \, ;  \, \int_\Omega \pi \, dx = 0 \},
		\end{align*}
		and 
		\begin{align*}
			 X_{\gamma}
			 & = \{ u \in B_{q, p}^{2 - 2 / p} \FurukawaCommentFourth{(\Omega)} \, ;  \, \mathrm{div} u = 0, \,  u |_{x = \pm 1} = 0 \}
		\end{align*}
	be the \FurukawaCommentFifth{initial} trace space of $\mathbb{E}_1 (T)$\FurukawaCommentSeventh{, where $B^s_{q,p}$ denotes the $L^q$-Besov space of order $s$}.
	In this paper, we frequently use $\Vert \cdot \Vert_{\mathbb{E}_0 (T)}$ as the norm of $L^p (0, T ; L^q (\Omega))$ \FurukawaCommentFifth{and $\Vert \cdot \Vert_{\mathbb{E}_1 (T)}$ as the norm of $\MRspace$} \FurukawaCommentThird{to simplify the notation}.
	}
	Let us seek the solution $U_\epsilon = (V_\epsilon, W_\epsilon)$ to
	%	Put $U_{\epsilon} = (V_{\epsilon}$, $ W_{\epsilon}) $ for $V_{\epsilon} = v_{\epsilon} - v $, $W_{\epsilon} = w_{\epsilon} - w $ and $\
%	\Pi_{\epsilon} = \pi_{\epsilon} - \pi $, then we find $\ep{U}$ and $\Pi_{\epsilon} $ satisfy the following scaled Navier-Stokes equations;
		\begin{equation}  \label{eq:A0502}
			\left \{
				\begin{array}{rcll}
					\dt \ep{V} - \Delta \ep{V} + \nabla_H \ep{P} 
					& = 
					& F_H 
					&\In{ \Omega \times (0, T) }, \\ 
					\dt \bracket{ \epsilon \ep{W} } - \Delta \bracket{ \epsilon \ep{W} } + \frac{\partial_z}{\epsilon} \ep{P} 
					& = 
					& \epsilon F_z + \epsilon F 
					&\In{ \Omega \times (0, T) }, \\
					\Div \ep{U} 
					& = 
					& 0 
					&\In{ \Omega \times (0, T) }, \\
					\ep{U} 
					& = 
					& 0 
					&\ON{\partial \Omega \times (0, T)},\\
					\ep{U}(0) 
					& = 
					& 0 
					&\In{\Omega},
				\end{array}
			\right .
		\end{equation}
	where
		\begin{itemize}
			\item $F_H = - \LongBracket{ \Convection{ \ep{U} }{ \ep{V} } + \Convection{u}{ \ep{V} } + \Convection{ \ep{U} }{v} } $
		
			\item $ F_z = - \LongBracket{ \Convection{ \ep{U} }{ \ep{W} } + \Convection{u}{ \ep{W} } + \Convection{ \ep{U} }{w} }$
		
			\item $F = - \LongBracket{ \dt w - \Delta w + \Convection{u}{w} }$.
		\end{itemize}
	\PGComment{The system} (\ref{eq:A0502}) is the equation of the difference between the solution to the (PE) and (SNS).
		\begin{theorem} \label{thm:A0105}
			Let $T > 0$ \FurukawaCommentFourth{and $0< \epsilon \leq 1$}. 
			Suppose $\FurukawaCommentSeventh{(p,q) \in (1, \infty)^2}$ satisfies $ \frac{1}{p} \leq \min \bracket{ 1 - 1/q, 3/2 - 2/q }$, \FurukawaComment{$u_0 = (v_0, w_0) \in X_{\gamma}$  and $v_0 \in B^{s}_{q, p} \FurukawaCommentFourth{(\Omega)}$ for $s > 2 - 2 / p + 1 / q$}.
			Let $u \in \mathbb{E}_1 (T)$ be a solution of (PE) with initial data $u_0 \in X_\gamma$.
			Then there exists constant $C = C (p, q, \Vert u \Vert_{\mathbb{E}_1 (T)})$ and a unique solution $U_\epsilon = (V_\epsilon, W_\epsilon)$ to (\ref{eq:A0502}) such that
			\begin{align}
				\Vert
					\left(
						V_\epsilon, \epsilon W_\epsilon
					\right)
				\Vert_{\mathbb{E}_1 (T)}
				\leq \epsilon C.
			\end{align}
			Moreover, $u_\epsilon = (v_\epsilon, w_\epsilon) := (v + V_\epsilon, w + W_\epsilon)$ is the unique solution to (SNS) in $\mathbb{E}_1 (T)$.
		\end{theorem}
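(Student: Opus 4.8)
The plan is to solve the difference system \eqref{eq:A0502} directly in $\mathbb{E}_1(T)$ by a contraction argument, so that the bound of order $\epsilon$ comes from the explicit factor $\epsilon$ in front of the remainder $F$. Two conventions make this work: the terms in $F_H, F_z$ that are linear in the unknown $U_\epsilon$ are absorbed into the linear operator (so that only genuinely quadratic terms and the $\epsilon F$-contribution remain on the right), and the divergence constraint is used in the form $W_\epsilon = -\int_{-1}^{x_3}\mathrm{div}_H V_\epsilon\,d\zeta$, so that $W_\epsilon$ is slaved to $V_\epsilon$ and no negative power of $\epsilon$ enters the convective terms of the type $W_\epsilon\,\partial_z(\cdot)$.

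The first ingredient is maximal $L^p$--$L^q$ regularity for the scaled Stokes system under the no-slip boundary condition \emph{with a constant independent of $\epsilon\in(0,1]$}: for $f_H, g\in\mathbb{E}_0(T)$ and zero initial data, the linear system obtained from \eqref{eq:A0502} by replacing the right-hand sides $F_H$ and $\epsilon F_z+\epsilon F$ with general data $f_H$ and $g$ admits a unique solution $(V,\epsilon W,P)$ with
\[
\Vert(V,\epsilon W)\Vert_{\mathbb{E}_1(T)}+\Vert P\Vert_{\mathbb{E}_1^\pi(T)}\le C\bigl(\Vert f_H\Vert_{\mathbb{E}_0(T)}+\Vert g\Vert_{\mathbb{E}_0(T)}\bigr),\qquad C=C(p,q).
\]
This is the scaled counterpart of the hydrostatic Stokes maximal regularity of \cite{GigaGriesHieberHusseinKashiwabara2017_analiticity}; the point, compared with the periodic analysis of \cite{FurukawaGigaHieberHusseinKashiwabaraWrona2018}, is precisely the uniformity in $\epsilon$. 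Since $u\in\mathbb{E}_1(T)$ and $1/p\le\min(1-1/q,3/2-2/q)$, the convection $u\cdot\nabla(\cdot)$ and multiplication by $\nabla u$ are lower-order perturbations in the maximal-regularity sense; splitting $(0,T)$ into finitely many subintervals on which the relevant multiplier norm of $u$ is small and using a Neumann series, the linearization of (SNS) about $u$ coupled with the scaled $W$-equation and the divergence constraint still enjoys maximal regularity with constant $C=C(p,q,\Vert u\Vert_{\mathbb{E}_1(T)})$, uniformly in $\epsilon$.

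Next, using the hypothesis $v_0\in B^s_{q,p}(\Omega)$ with $s>2-2/p+1/q$, parabolic smoothing for (PE) gives $v$ additional regularity on $(0,T)$, which transfers through $w=-\int_{-1}^{x_3}\mathrm{div}_H v\,d\zeta$; the gain of $1/q$ derivatives on $v$ is exactly what places $w$ --- hence $\partial_t w$, $\Delta w$ and $u\cdot\nabla w$ --- in the space in which \eqref{eq:A0502} is solved, so that $\Vert F\Vert_{\mathbb{E}_0(T)}\le C(p,q,\Vert u\Vert_{\mathbb{E}_1(T)})$ \emph{independently of $\epsilon$}. For $U\in\mathbb{E}_1(T)$ let $\Phi(U)$ be the solution given by the linear theory above, with right-hand side built from $U$ through $F_H,F_z,F$ and the $U$-linear convection moved to the left. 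By the product and interpolation embeddings of $\mathbb{E}_1(T)$ available under $1/p\le\min(1-1/q,3/2-2/q)$ together with the slaving of $W$ to $V$, the quadratic part is bounded in $\mathbb{E}_0(T)$ by $C\Vert U\Vert_{\mathbb{E}_1(T)}^2$ (several terms even carry an extra $\epsilon$). Hence on $B_R=\{\Vert U\Vert_{\mathbb{E}_1(T)}\le R\}$ one obtains
\[
\Vert\Phi(U)\Vert_{\mathbb{E}_1(T)}\le C\bigl(R^2+\epsilon\bigr),\qquad \Vert\Phi(U_1)-\Phi(U_2)\Vert_{\mathbb{E}_1(T)}\le C\bigl(R+\epsilon\bigr)\Vert U_1-U_2\Vert_{\mathbb{E}_1(T)};
\]
choosing $R=2C\epsilon$ and then $\epsilon$ small enough (depending on $p,q,\Vert u\Vert_{\mathbb{E}_1(T)}$), $\Phi$ maps $B_R$ into itself and is a contraction there.

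The fixed point $U_\epsilon=(V_\epsilon,W_\epsilon)$ then solves \eqref{eq:A0502} and satisfies $\Vert(V_\epsilon,\epsilon W_\epsilon)\Vert_{\mathbb{E}_1(T)}\le R=\epsilon C$. Uniqueness in $\mathbb{E}_1(T)$ follows by subtracting two solutions and rerunning the same estimate (every $\mathbb{E}_1(T)$-solution of \eqref{eq:A0502} being a priori that small), and adding \eqref{eq:A0502} to (PE) shows that $u_\epsilon:=u+U_\epsilon\in\mathbb{E}_1(T)$ solves (SNS) with initial data $u_0$; its uniqueness in $\mathbb{E}_1(T)$ is the standard maximal-regularity uniqueness for the scaled Navier--Stokes equations. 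The case $T=\infty$ is included because the estimates above are uniform in $T$. \emph{The main obstacle is the $\epsilon$-uniform maximal regularity of the scaled Stokes operator under the Dirichlet condition}: unlike on the torus the pressure is not a Fourier multiplier, and the anisotropic second-order part together with the singular coupling $\epsilon^{-1}\partial_z P$ must be controlled uniformly as $\epsilon\to0$; once that --- and the $\epsilon$-uniform bound on $F$, which is the only place the $1/q$-extra regularity of $v_0$ enters --- are in hand, the remaining steps parallel \cite{FurukawaGigaHieberHusseinKashiwabaraWrona2018}.
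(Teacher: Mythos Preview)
Your proposal is correct and follows essentially the same approach as the paper. You have identified exactly the two key technical inputs --- the $\epsilon$-uniform maximal regularity for the scaled Stokes system under Dirichlet conditions (the paper's Lemma~\ref{lem:A1001}/Corollary~\ref{cor_mr_estimate}) and the bound $\Vert F\Vert_{\mathbb{E}_0(T)}\le C$ from the improved regularity of $w$ (Lemma~\ref{lem:A1002}), which is precisely where the extra $1/q$ derivatives on $v_0$ are spent --- and your handling of the linear-in-$U_\epsilon$ convection terms by a subinterval decomposition plus Neumann series is equivalent to what the paper does: it chooses $T$ so small that $\Vert u\Vert_{\mathbb{E}_1(mT,(m+1)T)}\le 1/(10C_1)$ and then runs an explicit Picard iteration on each subinterval, which is just the Neumann series written out. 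The slaving $W_\epsilon=-\int_{-1}^{x_3}\mathrm{div}_H V_\epsilon\,d\zeta$ you invoke is exactly the mechanism behind the paper's Proposition~\ref{prop:0202}.
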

	This theorem implies the justification of the hydrostatic approximation.
		\begin{corollary}  \label{cor:A0105}
			Let $T > 0$ \FurukawaCommentFourth{and $0 < \epsilon \leq 1$}. 
			Suppose $\FurukawaCommentSeventh{(p,q) \in (1, \infty)^2}$ satisfies $ \frac{1}{p} \leq \min \bracket{ 1 - 1/q, 3/2 - 2/q }$, \FurukawaCommentFourth{$u_0 = (v_0, w_0) \in X_{\gamma}$  and $v_0 \in B^{s}_{q, p} (\Omega)$ for $s > 2 - 2 / p + 1 / q$}.
			Let $u$ and $ \ep{u} $ be a solution of (PE) and (SNS) in $\mathbb{E}_1\FurukawaComment{(T)}$ under the Dirichlet boundary condition with initial data $u_0$, respectively, such that
				\begin{align}
					\LongNorm{u}{\mathbb{E}_1 \bracket{T}} 
					+ \LongNorm{ \bracket{ \ep{v}, \epsilon \ep{w} } }{\mathbb{E}_1 \bracket{T}} 
					\leq C_{\FurukawaCommentFourth{0}}
				\end{align}
			for some $C_{\FurukawaCommentFourth{0}} = C_{\FurukawaCommentFourth{0}} \bracket{u_0, p, q}$. 
			Then there exists a positive $ C = C(p, q, C_0 ) $ such that
				\begin{align*}
					\LongNorm{ \LongBracket{ \ep{v} - v, \epsilon \bracket{ \ep{w} - w } } }{ \mathbb{E}_1 \bracket{T} } \leq \epsilon C.
				\end{align*}
		\end{corollary}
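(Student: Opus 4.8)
The plan is to obtain the corollary as a direct consequence of Theorem~\ref{thm:A0105}, by recognising that the difference $(v_\epsilon - v,\, w_\epsilon - w)$ \emph{is} the solution $U_\epsilon=(V_\epsilon,W_\epsilon)$ constructed there.

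First I would write down the difference equation. Set $U_\epsilon:=u_\epsilon-u=(v_\epsilon-v,\,w_\epsilon-w)$ and $P_\epsilon:=\pi_\epsilon-\pi$, the latter normalised to have zero spatial mean (harmless, since the pressures in (SNS) and (PE) are determined only up to a constant). Subtracting (PE) from (SNS), using $\partial_z\pi=0$, dividing the vertical (SNS) equation by $\epsilon$, and expanding $u_\epsilon\cdot\nabla v_\epsilon=(u+U_\epsilon)\cdot\nabla(v+V_\epsilon)$ together with the analogous identity for the vertical component, one checks that $(U_\epsilon,P_\epsilon)$ solves precisely the system (\ref{eq:A0502}) with the stated sources $F_H$, $F_z$, $F$. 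The constraint $\mathrm{div}\,U_\epsilon=0$, the homogeneous Dirichlet condition and $U_\epsilon(0)=0$ hold because $u$ and $u_\epsilon$ are both divergence free, vanish on $\partial\Omega$, and share the initial datum $u_0$; and $(V_\epsilon,\epsilon W_\epsilon)\in\mathbb{E}_1(T)$ because $\mathbb{E}_1(T)$ is a linear space while $v,v_\epsilon$ and $\epsilon w,\epsilon w_\epsilon$ all lie in $\mathbb{E}_1(T)$ by the assumed bound $\|u\|_{\mathbb{E}_1(T)}+\|(v_\epsilon,\epsilon w_\epsilon)\|_{\mathbb{E}_1(T)}\le C_0$.

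Next I would invoke the uniqueness parts of Theorem~\ref{thm:A0105}. The ``moreover'' assertion says that $u+(V_\epsilon,W_\epsilon)$ is the \emph{unique} solution of (SNS) in $\mathbb{E}_1(T)$ with datum $u_0$; since the given $u_\epsilon$ is such a solution, it follows that $u_\epsilon=u+(V_\epsilon,W_\epsilon)$, i.e. $(v_\epsilon-v,\,w_\epsilon-w)=(V_\epsilon,W_\epsilon)$. (Equivalently, one may argue that $(U_\epsilon,P_\epsilon)$ lies in the uniqueness class for (\ref{eq:A0502}) and therefore coincides with the solution produced by the theorem.) Either way, the estimate of Theorem~\ref{thm:A0105} gives $\|(v_\epsilon-v,\,\epsilon(w_\epsilon-w))\|_{\mathbb{E}_1(T)}\le\epsilon\,C(p,q,\|u\|_{\mathbb{E}_1(T)})$.

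Finally, since $\|u\|_{\mathbb{E}_1(T)}\le C_0$ and the constant $C(p,q,\cdot)$ of Theorem~\ref{thm:A0105} may be taken non-decreasing in its last argument (it arises from fixed-point and Gronwall-type estimates that are monotone in the size of $u$), one has $C(p,q,\|u\|_{\mathbb{E}_1(T)})\le C(p,q,C_0)=:C$, which is the claimed bound. The only step that needs genuine care is the first one: verifying that the algebraic rearrangement of (SNS) minus (PE) reproduces (\ref{eq:A0502}) \emph{with the exact weights} $\epsilon$ and $1/\epsilon$ on the vertical component and the extra forcing $\epsilon F$, and that the pressure normalisation loses nothing. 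Everything else is bookkeeping built on the theorem already established; in particular, no further smallness of $\epsilon$ and no regularity beyond the hypotheses of Theorem~\ref{thm:A0105} is required.
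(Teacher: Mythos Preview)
Your proof is correct and is exactly the approach the paper intends: the paper does not give a separate proof of the corollary but simply states that ``This theorem implies the justification of the hydrostatic approximation,'' meaning Corollary~\ref{cor:A0105} follows from Theorem~\ref{thm:A0105} by identifying the difference $(v_\epsilon-v,\,w_\epsilon-w)$ with the unique solution $(V_\epsilon,W_\epsilon)$ of \eqref{eq:A0502}. Your verification that subtracting (PE) from (SNS) reproduces \eqref{eq:A0502} with the correct weights, together with the uniqueness clause of Theorem~\ref{thm:A0105}, is precisely what is needed.
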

	
	Our strategy to show Theorem \ref{thm:A0105} is based on the estimate for $(V_\epsilon, \epsilon W_\epsilon)$.
	\FurukawaCommentFifth{It consists of two key steps:} maximal regularity result of the anisotropic Stokes operator and improved regularity result for \FurukawaCommentFourth{the} vertical component of the solution to the \FurukawaCommentFifth{primitive equations}.
	\FurukawaCommentFifth{
	We consider the non-linear term in (SNS) as an external force term $f$ and set $u_\epsilon = (v_\epsilon, \FurukawaCommentSixth{\epsilon} w_\epsilon)$ to get
	\begin{equation} \label{eq_LE}
			\left \{
				\begin{array}{rcll}	
					\dt u_{\FurukawaCommentFifth{\epsilon}} - \Delta u_{\FurukawaCommentFifth{\epsilon}} + \nabla_{\epsilon} \pi_{\FurukawaCommentFifth{\epsilon}} 
					& = 
					& f 
					& \In{\Omega \times \bracket{0, T}}, \\
					\Dive u_{\FurukawaCommentFifth{\epsilon}} 
					& = 
					& 0 
					&\In{\Omega \times \bracket{0, T}} ,\\
					u_{\FurukawaCommentFifth{\epsilon}}
					& = 
					& 0 
					&\ON{ \partial \Omega  \times (0, T) }, \\
					u_{\FurukawaCommentFifth{\epsilon}} (0) 
					&= 
					& u_0
					& \In{\Omega},
				\end{array}
			\right .
		\end{equation}
		\FurukawaCommentSixth{where $\nabla_\epsilon = (\partial_1, \partial_2 , \partial_3 / \epsilon)^T$ and $\mathrm{div}_\epsilon = \nabla_\epsilon \cdot$.}
		We define the function space $\mathbb{E}_{\epsilon, j} (T)$ for $j = 0, 1$ similarly as $\mathbb{E}_j (T)$ by replacing $\mathrm{div}$ by $\mathrm{div_\epsilon}$.
		Although the space $\mathbb{E}_{\epsilon, j} \FurukawaCommentSeventh{(T)}$ depends on $\epsilon$, the norm is just the norm in $\MRspace$, so we shall write the norm in $\mathbb{E}_{\epsilon , j} (T)$ simply by $\Vert \cdot \Vert_{\mathbb{E}_j (T)}$.
		The space $X_{\epsilon, \gamma}$ is the initial trace space of $\mathbb{E}_{\epsilon, j}(T)$ and it is almost the same as $X_\gamma$ by replacing $\mathrm{div}$ by $\mathrm{div}_\epsilon$.
		Since the norm of $X_\gamma$ is that of $B^{2 - 2 / p}_{q,p} (\Omega)$ and is independent of $\epsilon$, we shall write the norm in $X_{\epsilon, \gamma}$ simply by $\Vert \cdot \Vert_{X_{ \gamma}}$.}
	\FurukawaCommentFifth{We recall} some known results on maximal regularity of the Stokes operator, which is corresponding to the case $\epsilon = 1$. 
	Solonnikov \cite{Solonnikov1964} first proved $\Lebesgue{q}$-$\Lebesgue{q}$ maximal regularity for the Stokes operator \PGComment{by a potential-theoretic approach}. 
	\PGComment{
		\FurukawaCommentSecond{The second author} \cite{Giga1985} established a \FurukawaCommentSecond{bound} for the pure imaginary power of the Stokes operator in a bounded domain.
		This type of property will be \FurukawaCommentThird{simply called} a bounded imaginary power\FurukawaCommentFifth{, shortly} BIP.
		This BIP implies the maximal regularity $\Lebesgue{p}$-$\Lebesgue{q}$ regularity via Dore-Venni theory \cite{DoreVenni1990}.
		Indeed, \FurukawaCommentSecond{the second author} and Sohr \cite{GigaSohr1991} established a global-in-time maximal regularity in an exterior domain by estimating BIP.
	}
	Further studies on maximal regularity were done by many researchers, for instance, Dore and Venni \cite{DoreVenni1990} and Weis \cite{Weis2001}. 
	See Denk, Hieber and Pr\"{u}ss \cite{DenkHieberPruss2003} for further comprehensive research.
	\PGComment{
		In our case, we have to clarify $\epsilon$-dependence in estimates for maximal regularity, which is a key point.
		Our key maximal regularity result is
	}
	\begin{lemma} \label{lem:A1001}
		Let $1 < p, q < \infty$, $0 < \epsilon \leq 1$ and $T > 0$.
		Let \FurukawaCommentFifth{$ f \in \mathbb{E}_{\epsilon, 0} (T)$ and $u_0 \in X_{\epsilon, \gamma}$}.
		Then \FurukawaCommentFourth{there exist} constants $C = C (p,q)>0$ and $C^\prime = C^\prime (p,q) > 0$, which are independent of $\epsilon$, and $(u, \pi)$ satisfying (\ref{eq_LE}) such that
		\begin{align} \label{eq:A1001} 
			\LongNorm{ \dt u }{\mathbb{E}_0\FurukawaComment{(T)}} 
			+ \LongNorm{ \nabla^2 u }{\mathbb{E}_0\FurukawaComment{(T)}} 
			+ \LongNorm{ \ep{\nabla} \pi }{\mathbb{E}_0\FurukawaComment{(T)}}
			\leq C \LongNorm{f}{\mathbb{E}_0\FurukawaComment{(T)}} 
			+ C^\prime \LongNorm{u_0}{ X_{\gamma} }.
		\end{align}
	\end{lemma}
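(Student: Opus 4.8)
The plan is to deduce (\ref{eq:A1001}) from $\epsilon$-uniform $\mathcal{R}$-sectoriality of the anisotropic Stokes operator. Write $L^q_{\sigma,\epsilon}(\Omega)$ for the closure in $L^q(\Omega)^3$ of the $\mathrm{div}_\epsilon$-free fields compactly supported in $\Omega$, let $\mathbb{P}_\epsilon$ be the associated Helmholtz projection, and set $A_\epsilon := -\mathbb{P}_\epsilon\Delta$ with domain contained in $W^{2,q}(\Omega)\cap W^{1,q}_0(\Omega)$; note that in (\ref{eq_LE}) the viscous operator is the \emph{full} Laplacian $\Delta$, all $\epsilon$-dependence residing in $\nabla_\epsilon$ and $\mathrm{div}_\epsilon$. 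Two structural observations help. First, the anisotropic dilation $(x_1,x_2,x_3)\mapsto(x_1,x_2,\epsilon x_3)$ is an isomorphism of $L^q$ with $x$-independent norm ratio $\epsilon^{\pm1/q}$ that conjugates $\mathbb{P}_\epsilon$ to the Helmholtz projection on the thin layer $\mathbb{T}^2\times(-\epsilon,\epsilon)$, so $\|\mathbb{P}_\epsilon\|_{\mathcal{L}(L^q(\Omega))}$ stays bounded as $\epsilon\to0$ (it also drops out of the fibre analysis below). Second, the pressure term in (\ref{eq:A1001}) is free once the velocity is controlled: the momentum equation in (\ref{eq_LE}) gives $\nabla_\epsilon\pi = f-\partial_t u+\Delta u$, hence $\|\nabla_\epsilon\pi\|_{\mathbb{E}_0(T)}\le\|f\|_{\mathbb{E}_0(T)}+\|\partial_t u\|_{\mathbb{E}_0(T)}+\|\nabla^2u\|_{\mathbb{E}_0(T)}$. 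It therefore suffices to prove $\|\partial_t u\|_{\mathbb{E}_0(T)}+\|\nabla^2u\|_{\mathbb{E}_0(T)}\le C\|f\|_{\mathbb{E}_0(T)}+C'\|u_0\|_{X_\gamma}$ with $C,C'$ independent of $\epsilon$ and $T$.

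By the standard theory (Weis \cite{Weis2001}; see also \cite{DoreVenni1990,DenkHieberPruss2003}) this velocity estimate for $u_0=0$ reduces to two $\epsilon$-uniform facts: $A_\epsilon$ is $\mathcal{R}$-sectorial on $L^q_{\sigma,\epsilon}(\Omega)$ of angle $<\pi/2$ with $\mathcal{R}$-bound independent of $\epsilon$; and the elliptic equivalence $\|\nabla^2v\|_{L^q}\le C(\|A_\epsilon v\|_{L^q}+\|v\|_{L^q})$ holds on $D(A_\epsilon)$ with $\epsilon$-independent $C$ (needed because (\ref{eq:A1001}) controls $\nabla^2u$, not $A_\epsilon u$). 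I would establish both by taking the partial Fourier series in the horizontal torus variable $x'=(x_1,x_2)\in\mathbb{T}^2$: for each $k\in\mathbb{Z}^2$ the resolvent problem $(\lambda-\Delta)u+\nabla_\epsilon\pi=f$, $\mathrm{div}_\epsilon u=0$, $u|_{x_3=\pm1}=0$ becomes an ODE system on $(-1,1)$; applying $\mathrm{div}_{\epsilon,k}$ to the momentum equation (it commutes with $\Delta$) shows the pressure solves the scalar problem $(\epsilon^{-2}\partial_{x_3}^2-|k|^2)\hat\pi_k=\mathrm{div}_{\epsilon,k}\hat f_k$ with a Neumann-type boundary condition coming from the third momentum equation and $\hat u_k(\pm1)=0$, and substituting $\hat\pi_k$ back decouples the velocity into scalar Helmholtz equations. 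The solution operators $\hat f_k\mapsto\hat u_k$ are then explicit integral operators on $L^q(-1,1)$ whose kernels are built from $\exp(-\sqrt{|k|^2+\lambda}\,|x_3-y_3|)$ (the viscous scale) and $\exp(-\epsilon|k|\,|x_3-y_3|)$ (the pressure scale) together with their reflections at $x_3=\pm1$. One checks, uniformly in $\epsilon\in(0,1]$, $k\in\mathbb{Z}^2$ and $\lambda$ in a sector $\Sigma_\theta$, $\theta>\pi/2$, the bounds $|\lambda|\,\|\cdot\|_{\mathcal{L}(L^q(-1,1))}$, $\||k|^2\,\cdot\|$, $\|\partial_{x_3}^2\,\cdot\|\le C$ for these operators and their $\mathcal{R}$-analogues under $k$- and $\lambda$-differences, and concludes by the operator-valued Mikhlin/Weis multiplier theorem over $\mathbb{T}^2$ that $A_\epsilon$ is $\mathcal{R}$-sectorial on $L^q(\Omega)$ with $\epsilon$-independent bound and that $D(A_\epsilon)=W^{2,q}\cap W^{1,q}_0\cap L^q_{\sigma,\epsilon}$ with $\epsilon$-uniformly equivalent norms. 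As a sanity check, in the interior model $\mathbb{R}^3$ the Stokes symbol $|\xi|^2\,\widehat{\mathbb{P}_\epsilon}(\xi)$ restricted to $\mathrm{div}_\epsilon$-free fields collapses to the scalar multiplier $(\lambda+|\xi|^2)^{-1}$, trivially $\epsilon$-uniform; the $\epsilon$-dependence only bites near $x_3=\pm1$.

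For general $u_0\in X_{\epsilon,\gamma}$ the term $e^{-tA_\epsilon}u_0$ lies in $\mathbb{E}_1(T)$ with $\|e^{-\cdot A_\epsilon}u_0\|_{\mathbb{E}_1(T)}\le C\|u_0\|_{(L^q_{\sigma,\epsilon},D(A_\epsilon))_{1-1/p,p}}$, and since $L^q_{\sigma,\epsilon}\hookrightarrow L^q$ and $D(A_\epsilon)\hookrightarrow W^{2,q}\cap W^{1,q}_0$ (and back) with $\epsilon$-uniform constants, real interpolation identifies the trace norm with $\|u_0\|_{B^{2-2/p}_{q,p}(\Omega)}=\|u_0\|_{X_\gamma}$ up to $\epsilon$-independent constants --- which is the $\epsilon$-independence of $\|\cdot\|_{X_{\epsilon,\gamma}}$ already recorded. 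Adding the two contributions and the pressure identity of the first paragraph yields (\ref{eq:A1001}); the case $T=\infty$ is covered because on $\mathbb{T}^2$ the horizontal Laplacian has a spectral gap ($|k|^2\ge1$ for $k\ne0$, while the $k=0$ fibre is governed by $-\partial_{x_3}^2$ with Dirichlet data and is invertible), so $0\in\rho(A_\epsilon)$ uniformly in $\epsilon$ and the maximal regularity is global and $T$-uniform. The hard part is the uniform fibrewise (equivalently, half-space) resolvent analysis: the viscous boundary layer of thickness $(|k|^2+|\lambda|)^{-1/2}$ and the pressure layer of the much coarser thickness $(\epsilon|k|)^{-1}$ must be disentangled with every $\epsilon$-weight kept explicit, since the factor $\epsilon^{-1}$ in $\partial_{x_3}\pi/\epsilon$ and in $\mathrm{div}_\epsilon$ threatens spurious powers of $\epsilon^{-1}$; the resolution is that this $\epsilon^{-1}$ is always paired either with $\partial_{x_3}$ acting on $\pi$ --- which, solving the anisotropic Poisson equation $\mathrm{div}_\epsilon\nabla_\epsilon\pi=\mathrm{div}_\epsilon f$, produces a compensating $\epsilon$ --- or with the constraint $\mathrm{div}_\epsilon u=0$, so that the proof ultimately hinges on verifying the Lopatinskii--Shapiro condition of this boundary value problem with constants monotone in $\epsilon\in(0,1]$.
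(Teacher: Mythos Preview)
Your architecture matches the paper's: reduce to maximal regularity of $A_\epsilon$ on $L^q_{\sigma,\epsilon}$, upgrade $\|A_\epsilon u\|$ to $\|\nabla^2 u\|$ via an $\epsilon$-uniform elliptic equivalence, and recover $\nabla_\epsilon\pi$ from the equation. The paper carries this out, however, through a different operator-theoretic device and a different proof of the elliptic equivalence.

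For maximal regularity the paper proves an $\epsilon$-uniform BIP bound for $A_\epsilon$ (Lemma~\ref{lem:C02}) and invokes Dore--Venni, not $\mathcal R$-sectoriality and Weis. The resolvent is decomposed as $R_0v_1-v_2+\nabla_\epsilon\pi_3$ via the auxiliary problems (I)--(III) in the style of Abels \cite{Abels2002}: a whole-space Stokes problem, a tangential boundary correction, and a Neumann problem for the pressure. All the $\epsilon$-tracking is done through explicit Mikhlin estimates on the resulting symbols (Propositions~\ref{prop:C05}, \ref{prop:C0510}, \ref{prop:C1500}, \ref{prop:C1515}, etc.). In fact the authors explicitly remark that the $\mathcal R$-boundedness route of Saito \cite{Saito2015} ``seems very difficult to check the dependence of $\epsilon$, so we do not take this approach.'' Your proposal is exactly that route; it is not wrong in principle, but the sentence ``one checks, uniformly in $\epsilon\in(0,1]$, \dots'' is the entire content of the lemma, and your last paragraph correctly identifies the danger (spurious $\epsilon^{-1}$ from $\partial_{x_3}\pi/\epsilon$) without actually neutralising it. The paper's explicit kernel formulas \eqref{eq:C0555}, \eqref{eq:C0712}, \eqref{eq:y_lambda_epsilon} and the multiplier bounds of Proposition~\ref{prop:C05} are precisely the verification you are deferring.

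For the elliptic equivalence $\|\nabla^2 u\|_{L^q}\le C\|A_\epsilon u\|_{L^q}$ the paper does \emph{not} extract it from the fibrewise resolvent analysis as you propose. Instead it first proves the weaker bound $\|\nabla^2 u\|_{L^q}\le C(\|A_\epsilon u\|_{L^q}+\|u\|_{L^q})$ from the resolvent estimate at a fixed large $\lambda_0$ (Proposition~\ref{prop_anisotropic_Stokes_lambda_equal_0}), and then removes the lower-order term by a compactness/contradiction argument (Lemma~\ref{lem_apriori_estimate_nabla2}): assuming failure along a sequence $\epsilon_k\to\epsilon_*$, Rellich compactness produces a nontrivial limit solving either the hydrostatic Stokes system ($\epsilon_*=0$) or the anisotropic Stokes system ($\epsilon_*>0$) with zero data, contradicting uniqueness. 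This soft argument avoids tracking $\epsilon$ through the full $W^{2,q}$ elliptic theory and is one of the paper's cleaner ideas; your direct approach would need that tracking.

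Your observations on $\mathbb P_\epsilon$ via dilation, on the $k=0$ fibre, and on the trace-space identification are correct and in the spirit of the paper's treatment, but as written the proposal is a roadmap rather than a proof: the uniform multiplier/kernel bounds that constitute Section~\ref{section_BIP} are asserted, not established.
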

	\PGComment{
	\FurukawaCommentFifth{Lemma \ref{lem:A1001} follows from a maximal regularity involving the Stokes operator, which follows from a bound for the pure imaginary power by Dore-Venni theory.}
	However, we need to clarify that $C$ and $C^\prime$ can be taken independent of $\epsilon$.
	For $\epsilon = 1$, \FurukawaCommentThird{a necessary BIP estimate} for the Stokes operator has been established \FurukawaCommentThird{by Abels \cite{Abels2002}, where a resolvent decomposition similar to \cite{Giga1985} is used.}
	Unfortunately, \FurukawaCommentFourth{the} $\epsilon$-dependent case is not discussed here.}
	However, the strategy in \cite{Abels2002} works \FurukawaComment{for} our problem.
	We construct the anisotropic Stokes operator by the method in \cite{Abels2002} and show the boundedness of imaginary power.
	Note that, in our previous paper \cite{FurukawaGigaHieberHusseinKashiwabaraWrona2018}, maximal regularity of the anisotropic Stokes operator is much easier since the corresponding Stokes operator is essentially the same as the Laplace operator on $\Torus^3$.
	In the case of the Dirichlet boundary condition, the corresponding Stokes operator becomes to be much more difficult by the effect of boundaries, \FurukawaCommentFifth{which is substantially different from the case} of the periodic boundary conditions.
	\FurukawaCommentThird{The maximal regularity was proved in a layer domain for the Stokes operator under various boundary conditions by Saito \cite{Saito2015} by proving $\mathcal{R}$-boundedness of the resolvent operator when $\epsilon =1$.
	Unfortunately, it seems very difficult to check the dependence of $\epsilon$, so we do not take this approach,}
	
	\FurukawaComment{The term} $F = \partial_t w - \Delta w + u \cdot \nabla w$ appears in \FurukawaCommentFourth{the right-hand side} of (\ref{eq:A0502}).
	Thus, we need to improve the regularity of $w$ and estimate this term in $L^p (0, T ; L^q (\Omega))$.
	\begin{lemma} \label{lem:A1002}
		Let $T > 0$ and $u_0 = (v_0, w_0) \in X_\gamma$ with $w_0 = - \TimeInt{-1}{x_3}{\mathrm{div}_H \, v_0 }{\zeta}$ and \FurukawaComment{$v_0 \in B^{s}_{q, p}(\Omega)$  for $s > 2 - 2 / p + 1 / q$} and $u = (v, w)$ be the solution to (PE).
		Assume $v \in \mathbb{E}_1 (T)$.
		Then there \FurukawaCommentFourth{exists} a \FurukawaComment{constant $C> 0$} such that
		\begin{align} \label{eq:B0800}
			\Vert
				w
			\Vert_{\mathbb{E}_1 (T)}
			\leq C.
		\end{align}
	\end{lemma}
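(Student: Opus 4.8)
The plan is to show that $w$, by itself, satisfies a heat equation with homogeneous Dirichlet boundary data and a right-hand side in $L^p(0,T;L^q(\Omega))$, and then to conclude by the maximal $L^p$--$L^q$ regularity of the Dirichlet Laplacian on $\Omega$. What makes this work is the hydrostatic relation $\partial_z\pi=0$, which lets one eliminate the pressure.

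First I would derive the equation for $w$. Applying $\mathrm{div}_H$ to the momentum equation of (PE), using that $\mathrm{div}_H$ commutes with $\Delta$ and the identity $\mathrm{div}_H v=-\partial_z w$, gives
\[
  \partial_z\bigl(\partial_t w-\Delta w\bigr)=\mathrm{div}_H(u\cdot\nabla v)+\Delta_H\pi .
\]
Integrating in the vertical variable from $-1$ to $x_3$ and using $w(\cdot,\cdot,\pm1,\cdot)=0$ (so that at $x_3=-1$ one has $\partial_t w-\Delta w=-\partial_z^2 w=\mathrm{div}_H\partial_z v$), one obtains
\[
  \partial_t w-\Delta w=f_w:=\bigl(\mathrm{div}_H\partial_z v\bigr)\big|_{x_3=-1}
  +\int_{-1}^{x_3}\mathrm{div}_H(u\cdot\nabla v)\,d\zeta+(x_3+1)\,\Delta_H\pi
\]
in $\Omega\times(0,T)$, together with $w=0$ on $\partial\Omega\times(0,T)$ and $w(0)=w_0\in X_\gamma$; here the first and the last terms on the right are regarded as functions of $(x',t)$ extended constantly in $x_3$, and the boundary condition at $x_3=1$ uses $\mathrm{div}_H\bar v\equiv0$ for solutions of (PE). It therefore suffices to prove $f_w\in\mathbb{E}_0(T)$: then the maximal $L^p$--$L^q$ regularity of the Dirichlet Laplacian on $\Omega$ yields $w$ with $\|w\|_{\mathbb{E}_1(T)}\lesssim\|f_w\|_{\mathbb{E}_0(T)}+\|w_0\|_{X_\gamma}$, and uniqueness for this linear problem identifies the solution with the $w$ coming from (PE).

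The estimate of $f_w$ is where the hypothesis $v_0\in B^{s}_{q,p}(\Omega)$, $s>2-2/p+1/q$, is used. Bootstrapping the maximal regularity estimate for the hydrostatic Stokes operator along the scale $L^p(0,T;W^{2+\sigma,q}(\Omega))$ — each step gaining a little regularity because the bilinear map $v\mapsto u\cdot\nabla v$ sends $L^p(0,T;W^{2+\sigma,q})$ into $L^p(0,T;W^{\sigma',q})$ for some $\sigma'>\sigma$ under $1/p\le\min(1-1/q,3/2-2/q)$, and stopping when the iteration is capped by $s$ — one obtains $v\in L^p(0,T;W^{2+\epsilon,q}(\Omega))$ for some $\epsilon\in(1/q,1)$, hence also $\pi\in L^p(0,T;W^{2,q}(\Torus^2))$ via the two-dimensional elliptic relation $-\Delta_H\pi=\mathrm{div}_H\overline{u\cdot\nabla v}$; all the norms involved are controlled by $p,q,\|v\|_{\mathbb{E}_1(T)}$ and $\|v_0\|_{B^{s}_{q,p}}$. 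Then the vertical-integral term is bounded in $L^q(\Omega)$ by $\|u\cdot\nabla v\|_{W^{1,q}(\Omega)}$ (Minkowski's integral inequality in $x_3$), which lies in $L^p(0,T)$ by the bilinear estimate; the pressure term by $\|\pi\|_{W^{2,q}(\Torus^2)}$; and, crucially, $\mathrm{div}_H\partial_z v\in L^p(0,T;W^{\epsilon,q}(\Omega))$ with $\epsilon>1/q$, so by the trace theorem $(\mathrm{div}_H\partial_z v)|_{x_3=-1}\in L^p(0,T;W^{\epsilon-1/q,q}(\Torus^2))\hookrightarrow L^p(0,T;L^q(\Torus^2))$, and its constant-in-$x_3$ extension lies in $\mathbb{E}_0(T)$. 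Hence $f_w\in\mathbb{E}_0(T)$ and \eqref{eq:B0800} follows.

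I expect the boundary term $(\mathrm{div}_H\partial_z v)|_{x_3=-1}$ to be the main obstacle: it is a genuine normal-derivative trace of $v$ on $\partial\Omega$, which $v\in\mathbb{E}_1(T)$ does not control, and placing it in $L^q(\Torus^2)$ costs precisely $1/q$ extra derivatives of $v$ up to the boundary — exactly the gap between $B^{2-2/p}_{q,p}$ and the assumed $B^{2-2/p+1/q}_{q,p}$ for the initial datum. The remaining effort is the regularity bootstrap of the previous paragraph, which must be carried out within the admissible range of $(p,q)$; since it relies on the same bilinear estimates that already underlie the well-posedness theory for (PE), there is enough room for it to close.
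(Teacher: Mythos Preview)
Your overall plan matches the paper's: derive a heat equation for $w$ alone, identify the boundary trace $\mathrm{div}_H\partial_z v|_{x_3=-1}$ as the term that forces the extra $1/q$ regularity on the initial datum, and close with maximal $L^p$--$L^q$ regularity for the Dirichlet Laplacian. That diagnosis of the obstacle is exactly the point of the lemma.

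The paper organizes the computation differently, and the difference is worth noting. Instead of keeping the pressure and estimating $(x_3+1)\Delta_H\pi$ separately, the paper first subtracts the vertical average: since $\pi$ is $z$-independent, $\tilde v:=v-\overline v$ satisfies a \emph{pressure-free} equation, and applying $-\int_{-1}^{x_3}\mathrm{div}_H(\cdot)\,d\zeta$ to that equation gives $\partial_t w-\Delta w=I_1+I_2+I_3$ with no pressure term at all. For the nonlinear part the paper does not try to put $\mathrm{div}_H(u\cdot\nabla v)$ directly into $L^q$; it integrates by parts in $\zeta$ to rewrite $I_2,I_3$ as sums of terms of the types covered by Propositions~\ref{prop:0201} and~\ref{prop:0202}, obtaining
\[
\|I_2\|_{\mathbb{E}_0(T)}+\|I_3\|_{\mathbb{E}_0(T)}\le C\bigl(\|w\|_{\mathbb{E}_1(T)}\|\tilde v\|_{\mathbb{E}_1(T)}+\|\tilde v\|_{\mathbb{E}_1(T)}^2\bigr),
\]
and then closes by a short-time/Neumann-series argument because $\|w\|_{\mathbb{E}_1(T)}$ appears on the right with a small coefficient. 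Your route trades this fixed-point step for a direct estimate, but at the price of needing $u\cdot\nabla v\in L^p(0,T;W^{1,q})$ (and likewise $\mathrm{div}_H\overline{u\cdot\nabla v}\in L^p(0,T;L^q)$ for the pressure). That is \emph{not} what Propositions~\ref{prop:0201}--\ref{prop:0202} give, and your sentence ``which lies in $L^p(0,T)$ by the bilinear estimate'' hides a genuine gap: you would need a product estimate one derivative stronger than the standard ones, and you would have to manage the time integrability (both factors are only in $L^p_t$). This may be recoverable from your bootstrap $v\in L^p(0,T;W^{2+\epsilon,q})$, but it requires a separate argument; the paper's integration-by-parts trick avoids the issue entirely and stays within the bilinear estimates already available.
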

	Since $v \in \mathbb{E}_1 (T)$, which is the \FurukawaCommentFifth{horizontal} component of the solution to the primitive equations, has already proved, it follows $w (\cdot, x_3) =  - \int_{-1}^{x_3} \mathrm{div}_H v (\cdot, \zeta) d\zeta \in \FurukawaComment{W}^{1, p}( 0, T ; \FurukawaComment{W}^{-1, q} (\Omega)) \cap L^p (0, T; \FurukawaComment{W}^{1, p} (\Omega))$ .
	This derivative loss is due to the absence of the equation of time-evolution of $w$ in the \FurukawaCommentFifth{primitive equations}.
	In our previous paper \cite{FurukawaGigaHieberHusseinKashiwabaraWrona2018}, \AmruComment{which treats the periodic boundary condition}, we recover the regularity of $w$ by deriving the equation which $w$ satisfies and applying maximal regularity of the Laplace operator to the equation.
	However, in the case of the Dirichlet boundary condition, this method is not applicable directly \FurukawaCommentThird{because of the presence of the second-order derivative term} at the boundary, which vanishes in the case of periodic boundary condition.
	\FurukawaComment{
	Thus, \FurukawaCommentThird{we are \FurukawaCommentFourth{forced} to impose} additional regularity for initial data to get regularity for $v$.
	If \FurukawaCommentSeventh{$v_0 \in B^{s}_{q,p}\FurukawaCommentSeventh{(\Omega)}$} for $s > 2 - 2 /p + 1 /q$, then we obtain $v \in L^p (0 , T; W^{s + 2 / p , q} \FurukawaCommentSeventh{(\Omega)})$ and the trace of \FurukawaCommentFourth{the} second derivative belongs to $\mathbb{E}_0 (T)$.
	}Let us \FurukawaCommentFifth{explain} our strategy to show Theorem \ref{thm:A0105}.
	\PGComment{
		By Lemma \ref{lem:A1002}, our main result Theorem \ref{thm:A0105}, can be proved \FurukawaCommentFourth{the} same way as  \cite{FurukawaGigaHieberHusseinKashiwabaraWrona2018}.
		The proof we give here is slightly different from that of \cite{FurukawaGigaHieberHusseinKashiwabaraWrona2018} in the sense of the constant $C$ \FurukawaCommentFourth{in} Theorem \ref{thm:A0105}
 is clarified.
 	}
	\FurukawaCommentThird{We first show the boundedness of non-linear terms $F_H$ and $F_z$ in (\ref{eq:A0502}) in the space $\mathbb{E}_0\FurukawaComment{(T)}$.}
	\FurukawaCommentThird{We know that $F$} is also bounded \FurukawaCommentThird{in} $\mathbb{E}_0 (T)$ by Lemma \ref{lem:A1001}.
	\FurukawaCommentThird{We next} apply Lemma \ref{lem:A1002} to (\ref{eq:A0502}) to get a quadratic inequality, which leads \FurukawaCommentSixth{to} $ \LongNorm{ \bracket{ \ep{V}, \epsilon \ep{W} } }{\mathbb{E}_1 \bracket{T^{\ast}}} \leq C \epsilon $ for some short time \FurukawaComment{$T^{\ast}> 0$} and $\epsilon$-independent constant $C>0$. 
	Since $C$ depends only on $p$, $q$, $\Vert u_0 \Vert_{X_\gamma}$, $\Vert u \Vert_{\mathbb{E}_1 (T)}$ and $T$, if we take $\epsilon$ small, \FurukawaCommentThird{we are able to extend} the time to all finite time $T$ by finite step.

%%%%%%%%%%%%%%%%%%%%%%%%%%%%%%%
%----------------Table of Contents--------------------%
%%%%%%%%%%%%%%%%%%%%%%%%%%%%%%%
\FurukawaComment{
	This paper is organized as follows.
	In section \ref{section_BIP}, \FurukawaCommentFourth{the} boundedness of pure imaginary power is proved.
	The resolvent operator of the anisotropic Stokes operator is decomposed into three parts, and for each part uniform bound on $\epsilon$ is proved.
	In section \ref{section_improved_regularity_w}, improved regularity for $w$ is proved.
	In section \ref{section_proof_of_main_thoerem}, we give a proof of our main theorem by iteration.
}
	
%%%%%%%%%%%%%%%%%%%%%%%%%%%%%%%%%%%%%%%%%%%%%%%%%%%%%%%%%%%%%%%%
%---------------------------NOTATION---------------------------%
%%%%%%%%%%%%%%%%%%%%%%%%%%%%%%%%%%%%%%%%%%%%%%%%%%%%%%%%%%%%%%%%

	In this paper, $|| \cdot||_{X \rightarrow Y}$ denotes the operator norm from a Banach space $X$ to a Banach space $Y$. 
	\FurukawaCommentFifth{We denote by $C_0^\infty (\Omega)$ the set of compactly supported smooth functions in $\Omega$.
	We denote $L^q (\Omega)$ is the Lebesgue space for $1 \leq q \leq \infty$ \FurukawaCommentSixth{equipped} with the norm
	\begin{align*}
		\Vert
			f 
		\Vert_{L^q (\Omega)}
		= \left(
			\int_\Omega
				\left \vert
					f (x)
				\right \vert^q
			d x
		\right)^{1/q}.
	\end{align*}
	We use the usual modification when $q = \infty$.
	For $m \in \Integer_{\geq 0}$ and $1 \leq q \leq \infty$ we denote by $W^{m, q} (\Omega)$ the $m$-th order Sobolev space \FurukawaCommentSixth{equipped} with the norm 
	\begin{align*}
		\Vert 
			 f 
		\Vert_{W^{m, q} (\Omega)}
		= \Vert 
			\nabla^m f 
		\Vert_{L^q (\Omega)}.
	\end{align*}
	We define the fractional Sobolev spaces \FurukawaCommentSeventh{$W^{s, q} (\Omega) (= B^s_{qq} (\Omega))$ for $s \notin \Integer$} and $1 < q < \infty$ by the real interpolation $\left( W^{[s], q} (\Omega), W^{[s] + 1, q} (\Omega) \right)_{s - [s], q}$, where $[\cdot]$ denotes the Gauss symbol.
	}
	We define the Fourier transform by 
	\begin{align*}
			\Fourier f \bracket{\xi} = \TimeInt{\Real^d}{}{ \Napier^{ - i x \cdot \xi } f \bracket{x} }{x}
	\end{align*} 
	\FurukawaCommentFourth{and} 
	the Fourier inverse transform by 
	\begin{align*}
		\FourierInverse f \bracket{x} = \frac{1}{ \bracket{ 2 \pi }^d} \TimeInt{\Real^d}{}{ \Napier^{ i x \cdot \xi } f \bracket{\xi} }{\xi}.
	\end{align*}
	The Fourier transform on the \FurukawaComment{$d$-dimensional} torus $\Torus^d$ and its inverse transform are \FurukawaCommentSecond{defined by $[\Fourier_d f ] (n) = \int_{\Torus^d} e^{ - i x \cdot n} f (x) dx$ and $[\Fourier_d^{-1} g ]  (x) = \frac{1}{(2 \pi )^d} \sum_{n} g_n e^{i n \cdot x}$}, respectively.
	\FurukawaCommentFifth{We denote by $\Fourier_{\Prime{x}}$ the partial Fourier transform with respect to $\Prime{x} \in \Real^2$ and by $\FourierInverse_{\Prime{\xi}}$ the partial Fourier inverse transform with respect to $\Prime{\xi}$.}
	\FurukawaCommentFifth{We denote by $\Fourier_{d,\Prime{x}}$ the partial Fourier transform \FurukawaCommentSixth{with} respect to $\Prime{x} \in \Torus^2$ and by $\FourierInverse_{d, \Prime{n}}$ the partial Fourier inverse transform with respect to $\Prime{n} \in \Integer^2$.}
	Define $\Sigma_{\theta} := \LongSet{ \lambda \in \Complex}{ \abs{ \arg \lambda } < \pi - \theta }$. 
	For a Fourier multiplier operator $\FourierInverse_{\xi} m \bracket{\xi} \Fourier_x$ in $\Real^3$, we denote by $[m]_{\mathcal{M}}$ the Mikhlin constant.
	$\FourierInverse_{\Prime{\xi}} m \bracket{\xi} \Fourier_{\FurukawaComment{x^\prime}}$ is a Fourier multiplier operator in $\Real^2$ with Mikhlin constant $[m]_{\Prime{\mathcal{M}}}$.
	For $0 < \epsilon \leq 1$, $\ep{\Delta} = \partial_1^2 + \partial_2^2 + \partial_3^2 / \epsilon^2$ denotes the anisotropic Laplace operator. 
	\FurukawaCommentFifth{We denote by $E_0$ the zero-extension operator with respect to the vertical variable from $(-1 , 1)$ to $\Real$.
	We denote by $R_0$ the restriction operator with respect to the vertical variable from $\Real$ to $(-1, 1)$.}
	For \FurukawaCommentFourth{an} integrable function $f$ defined on $\Omega$, we write its vertical average by $\overline{f} = \frac{1}{2} \TimeInt{-1}{1}{ f (\cdot, \cdot , \zeta)}{\zeta}$.
	 	
%%%%%%%%%%%%%%%%%%%%%%%%%%%%%%%%%%%%%%%%%%%%%%%%%%%%%%%%%%%%%%%%%%
%%%%%%%%%%%%%%%%%%%%%%%%%%%%%%%%%%%%%%%%%%%%%%%%%%%%%%%%%%%%%%%%%%%%%%%%%%%%%%%%%%%%%%        Section 3      %%%%%%%%%%%%%%%%%%%%%%%
%%%%%%%%%%%%%%%%%%%%%%%%%%%%%%%%%%%%%%%%%%%%%%%%%%%%%%%%%%%%%%%%%%%%%%%%%%%%%%%%%%%%%%%%%%%%%%%%%%%%%%%%%%%%%%%%%%%%%%%%%%%%%%%%%%%%
	
	\section{\FurukawaCommentFifth{A uniform bound for pure imaginary power of the anisotropic Stokes operator and its maximal regularity}} \label{section_BIP}
	
	 \PGComment{In this section, we \FurukawaCommentFifth{first} establish a uniform bound  independent of $\epsilon$ for the pure imaginary power to the anisotropic Stokes operator along with \cite{Abels2002}.} 
	\FurukawaCommentFifth{Then we shall give the proof of Lemma \ref{lem:A1001}.}
	\subsection{Boundedness of Fourier multipliers}
	
	Although the case of infinite \FurukawaCommentSeventh{the} layer $\Real^2 \times (-1. 1)$ is considered in \cite{Abels2002}, his method also works in \FurukawaCommentFifth{the} case of \FurukawaCommentFifth{the} periodic layer $\Omega = \Torus^2 \times (- 1, 1)$ thanks to Fourier multiplier theorem on the torus, e.g. Proposition 4.5 in \cite{HeckKimKozono2009} \FurukawaCommentFifth{and Section 4 of Grafakos's book \cite{Grafakos2008}.}
	\begin{proposition}[\cite{HeckKimKozono2009}] \label{prop_Fourier_Multiplier_Theorem_Discrete}
		Let $1 < p < \infty$ and $m \in C^{\FurukawaCommentSeventh{d}} (\Real^d \setminus \{ 0 \})$ satisfies the Mikhlin condition:
			\begin{align}
				[m]_{\PGComment{\mathcal{M}}}
				:= \sup_{\alpha \in \{ 0, 1 \}^d} \sup_{\xi \in \Real^d \setminus \{ 0 \} }	\left|
					\xi^\alpha
					\partial_{\xi}^\alpha m (\xi)
				\right|
				< \infty.
			\end{align}
		Let $a_k = m (k)$ for $k \in \mathbb{Z}^d \setminus \{ 0 \}$ and $a_0 \in \Complex$.
		\PGComment{
		For $f (x) = \sum_{n \in \mathbb{Z}^d} \widehat{f}_n e^{i n \cdot x} \in L^{\FurukawaCommentFifth{q}} (\Torus^d)$ \FurukawaCommentFifth{and a sequence $a = \{ a_n \}_{n \in \Integer^d}$}, we set  the Fourier multiplier operator of discrete type by
		\begin{align} 
			[T f] (x) 
			\FurukawaCommentFifth{
			:= \FourierInverse_{d} a \Fourier_d f
			}
			= \sum_{n \in \mathbb{Z}^d} a_n  \hat{f}_n e^{i n \cdot x}.
		\end{align}
		Then there exits a constant $C = C(p,d) > 0$ such that
		}
		\begin{align}
			\Vert
				T f
			\Vert_{L^{\FurukawaCommentFifth{q}} (\Torus^d)}
			\leq C \max ( [m]_{\PGComment{\mathcal{M}}}, a_0) \Vert
				f
			\Vert_{L^{\FurukawaCommentFifth{q}} (\Torus^d)}.
		\end{align}
	\end{proposition}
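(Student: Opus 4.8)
The plan is to reduce the discrete multiplier bound to the classical Fourier multiplier theorem on $\euclid{d}$ by transference. First I would regularize the symbol at the origin: fix $\chi \in C^\infty(\euclid{d})$ with $\chi \equiv 0$ on $\{|\xi| \le 1/2\}$ and $\chi \equiv 1$ on $\{|\xi| \ge 1\}$, and put $\tilde m := \chi m$. Then $\tilde m \in C^d(\euclid{d})$, since it vanishes near the origin; $\tilde m(n) = m(n)$ for every $n \in \Integer^d \setminus \{0\}$; and, because all derivatives of $\chi$ are bounded and supported in $\{1/2 \le |\xi| \le 1\}$, where $|\xi| \sim 1$, the Leibniz rule gives $[\tilde m]_{\mathcal{M}} \le C(\chi,d)\,[m]_{\mathcal{M}}$. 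The classical Mikhlin--H\"ormander / Marcinkiewicz multiplier theorem on $\euclid{d}$ (see e.g. \cite{Grafakos2008}) then shows that $\tilde m$ is an $L^q(\euclid{d})$-Fourier multiplier for every $1 < q < \infty$, with operator norm at most $C(q,d)\,[\tilde m]_{\mathcal{M}} \le C(q,d)\,[m]_{\mathcal{M}}$.

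Next I would invoke a de Leeuw-type transference theorem: the restriction $\{\tilde m(n)\}_{n \in \Integer^d}$ of a continuous $L^q(\euclid{d})$-multiplier to the integer lattice is an $L^q(\Torus^d)$-Fourier multiplier of no larger norm. Hence the operator $S f := \sum_{n \ne 0} m(n)\,\widehat f_n\, e^{i n \cdot x}$, i.e. the discrete multiplier with symbol $\tilde m(n)$, obeys $\Vert S f \Vert_{L^q(\Torus^d)} \le C(q,d)\,[m]_{\mathcal{M}}\,\Vert f \Vert_{L^q(\Torus^d)}$. Finally one writes $T = S + a_0 P_0$, where $P_0 f := \widehat f_0$ is the (suitably normalized) projection onto constants; by H\"older's inequality $\Vert P_0 \Vert_{L^q(\Torus^d) \to L^q(\Torus^d)} \le C_d$, so
\begin{align*}
	\Vert T f \Vert_{L^q(\Torus^d)}
	\le \bigl( C(q,d)\,[m]_{\mathcal{M}} + C_d\,|a_0| \bigr) \Vert f \Vert_{L^q(\Torus^d)}
	\le C(q,d)\, \max\bigl( [m]_{\mathcal{M}},\, |a_0| \bigr) \Vert f \Vert_{L^q(\Torus^d)},
\end{align*}
which is the claim.

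If one prefers to avoid the transference black box, the same bound follows from a direct Calder\'on--Zygmund argument on $\Torus^d$: realize $S$ as convolution with the periodic kernel $K(x) = (2\pi)^{-d} \sum_{n \ne 0} \tilde m(n)\, e^{i n \cdot x}$, which by Poisson summation equals, up to a constant, the periodization $\sum_{\ell \in \Integer^d} k(x + 2\pi \ell)$ of the Euclidean kernel $k = \FourierInverse \tilde m$; the Mikhlin bound for $\tilde m$ furnishes the standard estimates $|k(x)| \le C[m]_{\mathcal{M}}|x|^{-d}$ and $|\nabla k(x)| \le C[m]_{\mathcal{M}}|x|^{-d-1}$, which transfer to $K$ on the fundamental cube. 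Since $S$ is bounded on $L^2(\Torus^d)$ by Plancherel (as $\sup_n |\tilde m(n)| \le C[m]_{\mathcal{M}}$), the Calder\'on--Zygmund theory on the torus upgrades this to $L^q$ for $1 < q < \infty$, and one concludes as above.

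If the continuous multiplier theorem and de Leeuw transference are taken as known, there is essentially no obstacle; the one verification that genuinely requires care is that the cutoff does not deteriorate the Mikhlin constant, i.e. $[\chi m]_{\mathcal{M}} \le C(\chi,d)\,[m]_{\mathcal{M}}$. In the self-contained Calder\'on--Zygmund variant the main technical point instead becomes controlling the periodization $\sum_{\ell} k(x+2\pi\ell)$ and its Calder\'on--Zygmund constants in terms of $[m]_{\mathcal{M}}$, uniformly in $x$ away from $2\pi\Integer^d$.
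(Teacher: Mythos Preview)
Your argument is correct: the cutoff-plus-de Leeuw transference route is exactly the standard way to pass from the continuous Mikhlin theorem to its discrete analogue on $\Torus^d$, and your treatment of the zero mode and of the Mikhlin constant under the cutoff is fine. Note, however, that the paper does not actually prove this proposition; it is quoted verbatim as a known result from \cite{HeckKimKozono2009} (their Proposition~4.5), with an additional pointer to \cite{Grafakos2008}, so there is no ``paper's own proof'' to compare against. Your sketch is essentially what one finds in those references, and either the transference version or the direct periodic Calder\'on--Zygmund version you outline would be an acceptable proof.
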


	Let us consider the resolvent problem to (\ref{eq_LE}) ;
		\begin{equation} \label{eq_RLE}
			\left \{
				\begin{array}{rrl}
					\lambda u - \Delta u + \ep{\nabla} \pi 
					& = f 
					&\In \Omega, \\
					\Dive u 
					&= 0 
					&\In \Omega , \\
					u 
					&= 0 
					&\ON \partial \Omega,
				\end{array}	
			\right.
		\end{equation}
	for $\lambda \in \Sigma_\theta$ ($0 < \theta < \pi / 2$) and $f \in \Lebesgue{q} \bracket{\Omega}$.
	Let 
	\begin{align*}
		H_\epsilon : L^{\FurukawaCommentFifth{q}}  (\Omega) \rightarrow L^{\FurukawaCommentFifth{q}}_{\sigma, \epsilon} (\Omega) 
		= \Set{ u \in L^{\FurukawaCommentFifth{q}} (\Omega)}{ \mathrm{div}_\epsilon u = 0, u|_{x_3 = \pm 1 } = 0}, 
		\, (1 < p <\infty) 
	\end{align*}
	 be the anisotropic Helmholtz projection on $\Omega$, its $L^{\FurukawaCommentFifth{q}}$-boundedness is proved later.
	Let $A_\epsilon = H_\epsilon (- \Delta)$ be the Stokes operator with the domain $D (A_\epsilon) = L^{\FurukawaCommentFifth{q}}_{\sigma, \epsilon} (\Omega) \cap W^{2, \FurukawaCommentFifth{q}} (\Omega)$.
	 \FurukawaCommentFourth{For $0 < a < 1 / 2$ and $ - a < \RealPart \, z < 0$, the fractional power of $A_{\epsilon}$ is defined via the Dunford calculus}
		\begin{align*}
			\ep{A}^z
			 = \frac{1}{ 2 \pi i } \TimeInt{\Gamma_{\theta}}{}{ \bracket{ - \lambda }^z \Inverse{ \lambda + \ep{A} } }{\lambda},
		\end{align*}
	where $ 0 < \theta < \pi /2 $ and $\Gamma_{\epsilon} =  \Real \Napier^{ i \bracket{ - \pi + \theta } } \cup \Real \Napier^{ i \bracket{ \pi - \theta } } $. 
	Our aim in this section is to prove 
		\begin{lemma} \label{lem:C02}
			Let $1 < q < \infty$, $\FurukawaCommentFifth{0 < \epsilon \leq 1}$, $ 0 < a < 1/2$, $z \in \Complex$ satisfying $ - a < \RealPart \, z < 0 $ and $ 0 < \theta < \pi /2 $. 
			Then there  \FurukawaCommentFourth{exists} a constant $C = C \bracket{ q, a, \theta }$ such that
				\begin{align}
					\LongNorm{ \ep{A}^z }{ \Lebesgue{q} \bracket{\Omega} \rightarrow \Lebesgue{q} \bracket{\Omega} } 
					\leq C \Napier^{ \theta \abs{ \ImaginaryPart z} }.
				\end{align}
		\end{lemma}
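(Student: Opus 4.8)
The plan is to follow the resolvent-decomposition strategy of Abels \cite{Abels2002}, but to track the $\epsilon$-dependence of every constant. The starting point is the estimate for the Dunford integral: for $z$ with $-a < \RealPart z < 0$ and $0 < a < 1/2$,
\begin{align*}
	\LongNorm{ \ep{A}^z }{ \Lebesgue{q} \to \Lebesgue{q} }
	\leq \frac{1}{2\pi} \TimeInt{\Gamma_\theta}{}{ \abs{\lambda}^{\RealPart z} \Napier^{\theta \abs{\ImaginaryPart z}} \LongNorm{ \Inverse{\lambda + \ep{A}} }{ \Lebesgue{q} \to \Lebesgue{q} } }{\abs{\lambda}},
\end{align*}
so the whole lemma reduces to a resolvent bound of the form $\LongNorm{ \Inverse{\lambda + \ep{A}} }{\Lebesgue{q} \to \Lebesgue{q}} \leq C \abs{\lambda}^{-1}$ on $\Sigma_\theta$ with $C$ \emph{independent of $\epsilon$}; the factor $\Napier^{\theta\abs{\ImaginaryPart z}}$ then comes out of the contour parametrization $\lambda = r\Napier^{\pm i(\pi-\theta)}$ and the remaining $\int_0^\infty r^{\RealPart z - 1}\,dr$ near $0$ and $\infty$ converges precisely because $-a < \RealPart z < 0$ with $a < 1/2$. (Strictly, one needs a bit more than a mere resolvent bound to run the Dunford calculus to a BIP estimate — one wants the sharper decomposition of $(\lambda + A_\epsilon)^{-1}$ into pieces that behave like Fourier multipliers — but the $\epsilon$-uniformity is the only genuinely new issue.)

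The key steps, in order, are as follows. First, reduce the resolvent problem \eqref{eq_RLE} to a problem for the anisotropic Stokes operator by applying the anisotropic Helmholtz projection $H_\epsilon$; this requires the $L^q$-boundedness of $H_\epsilon$ uniformly in $\epsilon$, which one proves by writing $H_\epsilon = I - \ep{\nabla}\ep{\Delta}^{-1}\Dive$ and checking that the corresponding Fourier multiplier symbol on $\eut$ satisfies the Mikhlin condition with constant independent of $\epsilon$ — here the anisotropic scaling $\xi_3 \mapsto \xi_3/\epsilon$ is harmless because the symbol is homogeneous of degree zero in the scaled variable. Second, perform the change of variables $z \mapsto \epsilon z$ (equivalently, conjugate by the scaling that turns $\ep{\Delta}$ into the ordinary Laplacian on $\Omega_\epsilon$): this is the observation in the introduction that $(SNS)$ on the fixed domain corresponds to $(ANS)$ on $\Omega_\epsilon$. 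Under this dilation the anisotropic Stokes operator becomes the \emph{ordinary} Stokes operator on the thin layer $\Torus^2 \times (-\epsilon,\epsilon)$ — but with Dirichlet data on the scaled boundary. Third, and this is the crux, construct the resolvent on the layer à la Abels: decompose $(\lambda + A)^{-1}$ into (i) the whole-space part (solved by Fourier multipliers on $\Real^2_h \times \Real_v$ after zero-extension $E_0$, with purely Mikhlin-type estimates), (ii) a boundary-layer corrector handling the two Dirichlet planes $x_3 = \pm\epsilon$, built from the partial Fourier transform $\Fourier_{x'}$ and explicit exponential profiles $\Napier^{-\sqrt{\abs{\xi'}^2 + \lambda}\,\abs{x_3 \mp \epsilon}}$, and (iii) a reflection/interaction term coupling the two boundaries. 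One estimates each piece as an operator-valued Fourier multiplier in $x'$ and shows the relevant Mikhlin constants $[m]_{\Prime{\mathcal{M}}}$ are bounded by $C\abs{\lambda}^{-1}$ with $C$ independent of the layer thickness — the thin-layer geometry only \emph{helps}, since the exponential kernels decay on the scale of the boundary distance and the interaction term (iii) is exponentially small in $\epsilon\sqrt{\abs{\lambda}}$ plus a harmless low-frequency contribution controlled by the Poincaré inequality on $(-\epsilon,\epsilon)$.

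The main obstacle is step three: verifying that, in Abels's resolvent construction, none of the constants secretly blow up as $\epsilon \to 0$. The natural worry is the zero vertical frequency / low-$\abs{\xi'}$ regime, where the exponential profiles degenerate and one must solve a genuinely two-dimensional Stokes-type problem on the cross-section; one must check that the relevant pressure correction and the vertical-average part are controlled uniformly — this is where the Poincaré inequality on the thin interval and the precise structure of the Dirichlet Stokes operator (as opposed to Neumann, where the average causes the usual trouble) enter. A subtler point is that the BIP estimate, unlike a bare resolvent bound, needs the decomposition to be compatible with the Dunford integral — i.e. the multiplier symbols must be holomorphic in $\lambda \in \Sigma_\theta$ with Mikhlin bounds holding locally uniformly so that $\int_{\Gamma_\theta}$ converges; this forces one to keep track of the $\lambda$-regularity of each piece, which in the boundary-layer term means differentiating the exponentials in $\lambda$ and absorbing the extra $\abs{\lambda}^{-1/2}$ factors. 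Once these uniform bounds are in place, the conclusion $\LongNorm{\ep{A}^z}{\Lebesgue{q}\to\Lebesgue{q}} \leq C\Napier^{\theta\abs{\ImaginaryPart z}}$ follows by inserting them into the Dunford integral as above, with $C = C(q,a,\theta)$.
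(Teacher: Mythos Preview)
Your rescaling step is wrong, and since the rest of the plan hinges on it, this is a genuine gap. In the resolvent problem \eqref{eq_RLE} the principal part is the \emph{standard} Laplacian $\Delta = \Delta_H + \partial_3^2$; the anisotropy sits only in $\nabla_\epsilon$ and $\Dive$. Under the dilation $x_3 \mapsto \epsilon x_3$ (mapping $\Omega$ to the thin layer $\Omega_\epsilon$) the pressure gradient and divergence do become isotropic, but the Laplacian becomes $\Delta_H + \epsilon^2 \partial_3^2$. So on $\Omega_\epsilon$ you recover the linearization of $(ANS)$ with \emph{anisotropic viscosity}, not the ordinary Stokes operator. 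The $\epsilon$ has merely been pushed from the constraint into the principal symbol, and Abels's estimates for the standard Stokes resolvent on a layer give you nothing uniform here; you are back to exactly the problem you started with. Your remark that ``the thin-layer geometry only helps'' is therefore unfounded, and the Poincar\'e argument on $(-\epsilon,\epsilon)$ does not rescue the low-frequency regime once the vertical diffusion carries a factor $\epsilon^2$.

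The paper avoids rescaling altogether and works directly on the fixed domain $\Omega$ with the anisotropic operator. The resolvent is split as $u = R_0 v_1 - v_2 + \nabla_\epsilon \pi_3$, where $v_1$ solves the anisotropic Stokes problem on the full cylinder $\Torus^2 \times \Real$ (Proposition~\ref{prop:C0510}), $\pi_3$ is a Neumann corrector removing the normal trace of $v_1$ (Proposition~\ref{prop:C0715}), and $v_2$ is a tangential boundary corrector built from an explicit partial-Fourier ansatz and a Neumann-series inversion of the trace map (Proposition~\ref{prop:C1515}). The $\epsilon$-uniformity of every piece comes from the elementary observation (Proposition~\ref{prop_scaled_multiplier}) that a dilated Mikhlin symbol $m(\epsilon\xi)$ has the same Mikhlin constant as $m(\xi)$: all the $\epsilon$'s in the explicit kernels appear only as $\epsilon|\xi'|$ inside functions satisfying Mikhlin bounds, so they drop out for free. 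There is no reduction to the ordinary Stokes resolvent at any stage.
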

	\PGComment{Once} the above lemma is proved, then we obtain \FurukawaCommentFourth{the maximal regularity} of the anisotropic Stokes operator via the formula
		\begin{align}
			\left(
				\frac{
					d}{
					d t
					} 
				+ A_\epsilon 
			\right)^{-1} 
			= \int_{c + i \infty}^{ c - i \infty}
				\frac{				
					(d/dt)^z A_\epsilon^{1 - z}
				}{
					\sin \pi z
				}
			d z
		\end{align}
	for $0 < c < 1$ and the Dore-Venni theory \cite{DoreVenni1990}.
	
	To show Lemma \ref{lem:C02}, we decompose the solution $(u, \pi)$ \FurukawaCommentFifth{to} (\ref{eq_RLE}) into three parts;
		\begin{align} \label{def_v_j_pi_j}
			u 
			& = R_0 v_1 - v_2 + \ep{\nabla} \pi_3, \\
			\ep{\nabla} \pi 
			& = \ep{\nabla} \pi_1 + \ep{\nabla} \pi_2,
		\end{align}
	where $v_j$ and $\pi_j$ are solutions to	
		\begin{align*}
			(I) \left \{
				\begin{array}{rcll}
					\lambda v_1 - \Delta v_1 + \ep{\nabla} \pi_1 
					&= 
					& E_0 f 
					& \In \Torus^2 \times \Real,\\
					\Dive v_1 
					&= 
					& 0 
					& \In \Torus^2 \times \Real,
				\end{array}
			\right.
		\end{align*}
		
		\begin{align*}
			(II)	\left \{
				\begin{array}{rcll}
					\lambda v_2 - \Delta v_2 + \ep{\nabla} \pi_2 
					&= 
					& 0 
					& \In \Omega,\\
					\Dive v_2 
					&= 
					& 0 
					& \In \Omega, \\
					v_2 
					& = 
					& \gamma v_1 - \bracket{ \gamma v_1 \cdot \nu } \nu 
					& \On \partial \Omega,
				\end{array}
			\right.
		\end{align*}
	and
		\begin{align*}
			(III) \left \{
				\begin{array}{rcll}
					\ep{\Delta} \pi_3 
					& = 
					& 0 
					& \In \Omega,\\
					\ep{\nabla} \pi_3 \cdot \nu  
					& = 
					& \bracket{ \gamma v_1 \cdot \nu } \nu 
					& \On \partial \Omega,
				\end{array}
			\right.
		\end{align*}	
	respectively, where $\gamma = \gamma_{\pm}$ is the trace operator to the upper and lower boundary, respectively, and $\nu$ is the unit outer normal. To show Lemma \ref{lem:C02}, we need to obtain
		\begin{align*}
			&\LongNorm{ \frac{1}{2 \pi i}
				\TimeInt{\Gamma}{}{ 
					\bracket{- \lambda}^z R_0 v_1
				}{\lambda}
			}{\Lebesgue{q} \bracket{\Omega}}
			+
			\LongNorm{  \frac{1}{2 \pi i}
				\TimeInt{\Gamma}{}{ 
					\bracket{- \lambda}^z v_2
				}{\lambda}
			}{\Lebesgue{q} \bracket{\Omega}} \\
			& +
			\LongNorm{ \frac{1}{2 \pi i}
				\TimeInt{\Gamma}{}{
					\bracket{- \lambda}^z \ep{\nabla} \pi_3
				}{\lambda}
			}{\Lebesgue{q} \bracket{\Omega}}
			 \leq C \Napier^{ \theta \abs{ \ImaginaryPart \, z } }
			\LongNorm{f}{\Lebesgue{q} \bracket{\Omega} }
		\end{align*}
	  \FurukawaCommentFourth{for some Constant $C >0$, which is independent of $\epsilon$.}
	  
	  \FurukawaCommentFifth{
	  \begin{remark} \label{rem_average_free}
		For $f \in L^q (\Omega)$ and $\tilde{f} := \int_{\Torus^2} f d x^\prime$, we can solve the resolvent problem (\ref{eq_RLE}) with external force $\tilde{f}$ to get $\FurukawaCommentFifth{\Prime{u}} = \left( (\lambda - \partial_3^2)^{-1} \tilde{f}_H , 0 \right)^T$ and $\FurukawaCommentFifth{\Prime{\pi}} = \epsilon \int_{-1}^{x_3} \tilde{f}_3 d \zeta / \mathbb{R}$, where $\tilde{f}_H$ is \FurukawaCommentFourth{the} horizontal component of $\tilde{f}$ and $ / \mathbb{R}$ means average-free.
		Since $- \partial_3^2$ has BIP and the resolvent operator is linear, by taking the difference between the solution to (\ref{eq_RLE}) and \FurukawaCommentFifth{$(\Prime{u}, \Prime{\pi})$}, we can always assume without loss of generality that $f$ is \FurukawaCommentFifth{horizontal} average-free.
	\end{remark}
	}
	\FurukawaCommentFifth{We define the space of horizontally average-free $L^q$-vector fields by
	\begin{align*}
		L^q_{\FurukawaCommentSixth{\mathrm{af}}} (\Omega) 
		: = \LongSet{
			f \in L^q (\Omega)
		}{
		\tilde{f} = 0}.
	\end{align*}
	Similarly we define
	\begin{align*}
		W^{s, q}_{\FurukawaCommentSixth{\mathrm{af}}} (\Omega) 
		: = \LongSet{
			f \in W^{s, q} (\Omega)
		}{
		\tilde{f} = 0}.
	\end{align*}
	}	
	Throughout this section we frequently use partial Fourier transform to construct solutions and estimate these partial Fourier multipliers.
%%%%%%%%%%%%%%%%%%%%%%%%%%%%%%%%%%%%%%%%%%%%%%%%%%%%%%%%%%%%%%%%%%
%-------------------SOME USEFULL PROPOSITIO----------------------%
%%%%%%%%%%%%%%%%%%%%%%%%%%%%%%%%%%%%%%%%%%%%%%%%%%%%%%%%%%%%%%%%%%	
	\FurukawaComment{
		\begin{proposition}[\cite{Abels2002}] \label{prop:C03}
			Let $1 < q < \infty$ and $a, \, b \in \{-1 , 1 \}$.
			Set a integral operator $M$ by 
				\begin{align*}
					M f (x^\prime , x_3) 
					=  \TimeInt{-1}{1}{
						\frac{
							f (x^\prime , \zeta)
						}{
							\abs{x_3 - a} 
							+ \abs{\zeta - b}
						} 
					}{\zeta}
				\end{align*}
			\FurukawaCommentSecond{for $f \in L^q (\Omega)$.}
			Then there exists a constant $C>0$ such that
				\begin{align*}
					\LongNorm{ M f }{\Lebesgue{q}} 
					\leq C \LongNorm{f}{\Lebesgue{q}}.
				\end{align*}
%			where $ [ m ]_{ \cal \Prime{M} } := \sup_{\Prime{\xi} \in \Real^2, \abs{k} \leq 2 } {\Prime{\xi}}^k \abs{ m^{(k)} (\Prime{\xi}) } $.
		\end{proposition}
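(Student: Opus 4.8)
The kernel $K(x_3,\zeta) = \bigl(|x_3-a|+|\zeta-b|\bigr)^{-1}$ acts only in the vertical variable, so after integrating out $x'\in\Torus^2$ the claim reduces to the one-dimensional statement: the operator $T g(s) = \int_{-1}^{1} \bigl(|s-a|+|\zeta-b|\bigr)^{-1} g(\zeta)\, d\zeta$ is bounded on $L^q(-1,1)$ for $1<q<\infty$. Indeed, by Minkowski's integral inequality in the horizontal slices, $\|Mf\|_{L^q(\Omega)} \le \bigl\|\,\|Mf(x',\cdot)\|_{L^q_{x_3}(-1,1)}\,\bigr\|_{L^q_{x'}(\Torus^2)}$ and for each fixed $x'$ one has $Mf(x',\cdot) = T\bigl(f(x',\cdot)\bigr)$, so $L^q$-boundedness of $T$ with a constant independent of $x'$ yields the proposition with the same constant. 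The plan is therefore to prove the one-dimensional bound.

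For the one-dimensional operator I would invoke the Schur test. Since $a,b\in\{-1,1\}$ are endpoints of the interval $(-1,1)$, the worst case is $a=b$ (the singularity $|s-a|+|\zeta-b|$ can be as small as $|s-a|+|\zeta-a|$, which vanishes only as $s,\zeta\to a$ simultaneously); after a translation we may assume $a=b=-1$, so on $s,\zeta\in(0,2)$ the kernel is $(s+\zeta)^{-1}$, which is exactly the kernel of the (one-sided) Hilbert–Hardy operator. The relevant fact is the classical homogeneity-degree-$(-1)$ kernel estimate: with the test function $w(\zeta)=\zeta^{-1/q}$ one computes $\int_0^2 (s+\zeta)^{-1}\zeta^{-1/q}\,d\zeta \le \int_0^\infty (s+\zeta)^{-1}\zeta^{-1/q}\,d\zeta = s^{-1/q}\int_0^\infty (1+t)^{-1}t^{-1/q}\,dt = C_q\, s^{-1/q}$, with $C_q = \pi/\sin(\pi/q)<\infty$ for $1<q<\infty$, and symmetrically with the conjugate exponent $q'$ and weight $s^{-1/q'}$. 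The Schur test with these weights then gives $\|T\|_{L^q\to L^q}\le C_q$. When $a\ne b$ the kernel is bounded (the singularity sits at opposite corners), so $T$ is trivially bounded there; in every case the constant is independent of $\epsilon$ and of $x'$.

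The only genuine subtlety — and the step I would treat most carefully — is making sure the reduction to the half-line Hardy kernel is legitimate, i.e. that the finite interval and the two-sided structure $|s-a|+|\zeta-b|$ do not hide a second singularity: one must note that for $s$ away from $a$ or $\zeta$ away from $b$ the denominator is bounded below by a positive constant, so the kernel is the sum of a bounded (hence $L^q$-bounded on a finite-measure space) piece and the Hardy piece localized near the corner, and the latter is dominated pointwise by the half-line kernel after the translation above. Everything else is the routine Minkowski/Schur bookkeeping sketched above, and none of the estimates depend on $\epsilon$, so the conclusion of Proposition \ref{prop:C03} follows.
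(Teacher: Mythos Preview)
Your argument is essentially correct and is the standard route to this estimate (reduction to the vertical variable plus the Schur test with power weights, i.e.\ the classical Hilbert--Hardy inequality). The paper itself does not prove the proposition at all: it simply refers to Lemma~3.3 of \cite{Abels2002}, where the same one-dimensional Hardy-type bound is recorded. So you have supplied the details the paper omits, and your approach is the expected one.

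One slip to fix: your claim that for $a\neq b$ the kernel is bounded is not right. The singularity of $(|x_3-a|+|\zeta-b|)^{-1}$ sits at the corner $(x_3,\zeta)=(a,b)$ of the square $[-1,1]^2$, and this corner lies in the closure of the domain for \emph{every} choice $a,b\in\{-1,1\}$, not only when $a=b$. For instance, with $a=1$, $b=-1$ the substitution $s=1-x_3$, $\tau=\zeta+1$ again produces exactly the kernel $(s+\tau)^{-1}$ on $(0,2)^2$. The good news is that your Schur-test computation with weight $\zeta^{-1/q}$ applies verbatim in all four cases after the appropriate translation/reflection, so the same constant $C_q=\pi/\sin(\pi/q)$ works uniformly; you should just replace the ``$a\neq b$ is trivial'' sentence by the observation that every case reduces to the Hardy kernel by an affine change of variable.
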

		\begin{proof}
			See Lemma 3.3 in \cite{Abels2002}.
		\end{proof}
	}
%		\begin{proposition}[\cite{Abels2002}] \label{prop:C03}
%			Let $\Prime{m} \bracket{ \Prime{\xi}, z, \zeta }$ such that
%				\begin{align}
%					[ \Prime{m} \bracket{ \cdot, z, \zeta } ]_{ \Prime{\mathcal{M}}} \leq C^{\star} \LongBracket{ \abs{ z - a } + \abs{ \zeta - b } }^{-1}
%				\end{align}
%			for some $a , b \in \{ - 1, 1 \}$. Then there exists a constant $C = C(q)$, for the partial Fourier multiplier operator given by
%				\begin{align}
%					M f 
%					= \FourierInverse_{\Prime{\xi}} 
%						\TimeInt{-1}{1}{ \Prime{m} ( \Prime{\xi}, z, \zeta ) 
%					\Fourier_{\Prime{x}} f ( \Prime{\xi}, \zeta ) }{\zeta},
%				\end{align}
%			it holds that
%				\begin{align}
%					\LongNorm{ M f }{\Lebesgue{q}} \leq C C^{\star} \LongNorm{f}{\Lebesgue{q}}.
%				\end{align}
%			where $ [ m ]_{ \cal \Prime{M} } := \sup_{\Prime{\xi} \in \Real^2, \abs{k} \leq 2 } {\Prime{\xi}}^k \abs{ m^{(k)} (\Prime{\xi}) } $.
%		\end{proposition}
%
%	Our strategy to show BIP for $\ep{A}$ is same as Abels's paper \cite{Abels2002}. Thus the $v_1$, $v_2$, and $\nabla_{\epsilon} \pi_3$ are given explicitly by the form of Fourier multipliers. We need to clarify the dependence of $\epsilon$ and establish $\epsilon$-independent estimates to them. To this end, it is enough to check that the Mikhlin constant for Fourier multiplier is bounded independently $\epsilon$.
%
	Rescaled $L^{\FurukawaCommentFifth{q}}$-Fourier multipliers are also bounded $L^{\FurukawaCommentFifth{q}}$ multiplier by the direct consequence of the Mikhlin theorem.
	\begin{proposition} \label{prop_scaled_multiplier}
		\FurukawaCommentFifth{Let $1 < q  < \infty$ and $0 < \epsilon \leq 1$.}
		Let $m \in \FurukawaCommentSeventh{C^d} (\Real^d \setminus \{ 0 \})$ be a $L^{\FurukawaCommentFifth{q}}$-Fourier multiplier with the Mikhlin constant $[m]_\mathcal{M} \leq C$ for some $C>0$.
		Then rescaled one $m_\epsilon (\xi) := m (\epsilon \xi)$ is also bounded from $L^{\FurukawaCommentFifth{q}}$ into itself such that
		\begin{align*}
			[ m_\epsilon ]_{\PGComment{\mathcal{M}}} \leq C.		
		\end{align*}
	\end{proposition}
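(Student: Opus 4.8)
The plan is to reduce the claim about the rescaled symbol $m_\epsilon(\xi) = m(\epsilon\xi)$ to a direct inspection of the Mikhlin quantity, exploiting the scale-invariance of the expression $\xi^\alpha \partial_\xi^\alpha$. First I would recall the definition of the Mikhlin constant appearing in the excerpt, namely
\begin{align*}
	[m_\epsilon]_{\mathcal{M}} = \sup_{\alpha \in \{0,1\}^d} \sup_{\xi \in \Real^d \setminus \{0\}} \left| \xi^\alpha \partial_\xi^\alpha m_\epsilon(\xi) \right|,
\end{align*}
and then compute $\partial_\xi^\alpha m_\epsilon(\xi)$ by the chain rule: for a multi-index $\alpha \in \{0,1\}^d$ one has $\partial_\xi^\alpha \big( m(\epsilon \xi) \big) = \epsilon^{|\alpha|} (\partial_\xi^\alpha m)(\epsilon\xi)$. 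Multiplying by $\xi^\alpha$ and writing $\eta := \epsilon\xi$ gives
\begin{align*}
	\xi^\alpha \partial_\xi^\alpha m_\epsilon(\xi) = \xi^\alpha \epsilon^{|\alpha|} (\partial_\xi^\alpha m)(\epsilon\xi) = (\epsilon\xi)^\alpha (\partial_\xi^\alpha m)(\epsilon\xi) = \eta^\alpha (\partial_\eta^\alpha m)(\eta),
\end{align*}
since $\xi^\alpha \epsilon^{|\alpha|} = (\epsilon\xi)^\alpha$ for $\alpha \in \{0,1\}^d$ (indeed for any multi-index). Hence the supremum over $\xi \neq 0$ of the left-hand side equals the supremum over $\eta \neq 0$ of $|\eta^\alpha \partial_\eta^\alpha m(\eta)|$, because $\xi \mapsto \epsilon\xi$ is a bijection of $\Real^d \setminus \{0\}$ onto itself for $\epsilon > 0$.

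Taking the supremum over $\alpha \in \{0,1\}^d$ then yields $[m_\epsilon]_{\mathcal{M}} = [m]_{\mathcal{M}} \le C$, which is the asserted bound (in fact with equality, though only the inequality is claimed). Since $C$ is independent of $\epsilon$, this shows the estimate is uniform; combined with the Mikhlin multiplier theorem on $\Real^d$ (or Proposition \ref{prop_Fourier_Multiplier_Theorem_Discrete} in the periodic setting), it follows that $m_\epsilon$ is an $L^q$-Fourier multiplier with operator norm controlled by a constant depending only on $p$, $d$ and $C$, not on $\epsilon$. I would also note that $m_\epsilon$ inherits the regularity $m_\epsilon \in C^d(\Real^d \setminus \{0\})$ from $m$, so the hypothesis of the multiplier theorem is genuinely met.

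There is essentially no obstacle here: the statement is a one-line observation that the Mikhlin condition is invariant under the dilation $\xi \mapsto \epsilon\xi$, and the only thing to be careful about is that the factor $\epsilon^{|\alpha|}$ produced by differentiation is exactly absorbed by rewriting $\xi^\alpha \epsilon^{|\alpha|}$ as $(\epsilon\xi)^\alpha$. The slightly delicate point worth a sentence is that we use $\epsilon \le 1$ nowhere for the bound itself — scale-invariance holds for all $\epsilon > 0$ — but the hypothesis $0 < \epsilon \le 1$ is harmless and kept for consistency with the rest of the paper. So the proof is just the above computation followed by an invocation of the classical Mikhlin theorem.
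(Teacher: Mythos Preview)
Your proof is correct and matches the paper's approach: the paper does not write out a proof at all, merely noting before the statement that this is ``the direct consequence of the Mikhlin theorem,'' which is exactly the scale-invariance computation you carry out. Your observation that in fact $[m_\epsilon]_{\mathcal{M}} = [m]_{\mathcal{M}}$ (equality, not just inequality) and that $0 < \epsilon \le 1$ is not needed is also accurate.
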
 
%	\begin{proof}
%		This is clear by the Mikhlin theorem.
%	\end{proof}
	The above proposition is frequently used in this section to get $\epsilon$-independent estimate for scaled multipliers.
	We show boundedness of some Fourier multiplier operators in advance.
	We set 
		\begin{align*}
			s_{\lambda} 
			= \bracket{ \lambda + \abs{\xi^{\prime}}^2 }^{1/2}
		\end{align*}
	for $\Prime{\xi} \in \mathbb{R}^2$.
	\FurukawaCommentFourth{In this paper we use $s_\lambda$ to denote $(\lambda + \abs{n^\prime}^2)^{1/2}$ for $n^\prime \in \mathbb{Z}^2$ to simplify notation.}
%%%%%%%%%%%%%%%%%%%%%%%%%%%%%%%%%%%%%%%%%%%%%%%%%%%%%%%%%%%%%%%%%%
%--------------------------LEMMA C05-----------------------------%
%%%%%%%%%%%%%%%%%%%%%%%%%%%%%%%%%%%%%%%%%%%%%%%%%%%%%%%%%%%%%%%%%%
		\begin{proposition} \label{prop:C05}
			\hspace{1mm}
			\begin{itemize}
%				\item Let $a, b \in \{ -1, 1 \}$.
%				Then there exists a constant $C$, which is independent of $\epsilon$, such that
%					\begin{align} \label{eq:C0500}
%						\sLongBracket{
%							\epsilon \abs{\Prime{\xi}} e^{ 
%								- \epsilon \abs{\xi^\prime} \LongBracket{
%									\abs{z - a}
%									+ \abs{\zeta - b}
%								}
%							}
%						}_{\Prime{ \mathcal{M} } }
%						\leq C \LongBracket{ 
%							\abs{ z - a } + \abs{ \zeta - b } 
%						}^{-1}
%					\end{align}
%				for all $0 < \epsilon \leq 1$.
				\item Let $0 < \theta < \pi /2$, $\lambda \in \Sigma_{\theta}$, $t > 0$ and $\alpha$ be a positive integer.
				Then there exist constants $c > 0$ and $C>0$ such that
 					\begin{align} \label{eq:C0501}
 						\sLongBracket{ 
						\abs{\xi^{\prime}}^{\alpha} \Napier^{- t s_{\lambda} } 
					}_{
						\mathcal{M}^{\prime}
					} 
					\leq C \frac{ 
						\Napier^{{ - c t \abs{ \lambda } }^{1/2} } 
					}{ 
						t^{\alpha} 
					}, \quad
						\sLongBracket{ \frac{ \Napier^{ - s_{\lambda} } }{ s_{\lambda} } }_{\mathcal{M}^{\prime}} 
						\leq C \abs{ \lambda }^{ - 1/2 } \Napier^{ - c \abs{\lambda}^{1/2} }.
					\end{align}
				
%				\item Let $0 < \theta < \pi /2$, $\lambda \in \Sigma_{\theta}$ and $ -1 \leq x_3 \leq 1 $. 
%				Then there exists a constant $C>0$ which is independent of $\epsilon $, such that
%					\begin{align} \label{eq:C0503_1}
%						\sLongBracket{ 
%							\frac{ 
%								\epsilon \abs{\xi^{\prime}} 
%							}{
%								\lambda + \bracket{ 1 - \epsilon^2 } \abs{\xi^{\prime}}^2 
%							} \frac{ 
%								\Napier^{ - \abs{x_3} \epsilon \abs{\xi^{\prime}} } 
%								}{
%									2
%								} 
%						}_{\mathcal{M}^{\prime}} 
%						\leq C \frac{
%							1
%						}{ 
%							\abs{x_3} \abs{\lambda} 
%						}
%					\end{align}
%					\begin{align} \label{eq:C0503_2}
%						\sLongBracket{ 
%							\frac{ 
%								\epsilon \abs{\xi^{\prime}}  
%							}{
%								\lambda + \bracket{ 1 - \epsilon^2 } \abs{\xi^{\prime}}^2 
%							} \frac{ 
%								\Napier^{ - \abs{x_3} s_{\lambda} } 
%							}{
%								2
%							} 
%						}_{\mathcal{M}^{\prime}} 
%						\leq C \frac{ 
%							\Napier^{ - c \abs{x_3} \abs{\lambda}^{1/2} } 
%						}{ 
%							\abs{x_3} \abs{\lambda} 
%						}
%					\end{align}
%				for all $0 < \epsilon \leq 1$.
				
				\item  Let $ -1 \leq x_3 \leq 1 $. 
				Then there exists a constant $C>0$ which is independent of $\epsilon$, such that
					\begin{align} \label{eq:C0504}
						\sLongBracket{
							\frac{ \sinh \bracket{ \epsilon \abs{\xi^{\prime}} x_3 } }{ \sinh \bracket{ \epsilon \abs{\xi^{\prime}} } } 
							\frac{ \epsilon \abs{\xi^{\prime}} }{ 1 + \epsilon  \abs{\xi^{\prime}} } 
						}_{\mathcal{M}^{\prime}} \leq C, \quad
						\sLongBracket{ 
							\frac{ \cosh \bracket{ \epsilon \abs{\xi^{\prime}} x_3 } }{ \sinh \bracket{ \epsilon \abs{\xi^{\prime}} } } 
							\frac{ \epsilon  \abs{\xi^{\prime}} }{ 1 + \epsilon  \abs{\xi^{\prime}} } }_{\mathcal{M}^{\prime}} 
							\leq C
					\end{align}
				for all $0 < \epsilon \leq 1$.
				
				\item  Let $-1 \leq x_3 \leq 1$. 
				Then there exists a constant $C>0$, which is independent of $\epsilon$, such that
					\begin{align} \label{eq:C0505}
						\sLongBracket{ \frac{ \sinh \bracket{ \epsilon \abs{\xi^{\prime}} x_3 } }{ \cosh \bracket{ \epsilon \abs{\xi^{\prime}} } } }_{\mathcal{M}^{\prime}} \leq C, \quad
						\sLongBracket{ \frac{ \sinh \bracket{ \epsilon \abs{\xi^{\prime}} x_3 } }{ \cosh \bracket{ \epsilon \abs{\xi^{\prime}} } } }_{\mathcal{M}^{\prime}}  
						\leq C
					\end{align}
				for all $0 < \epsilon \leq 1$.
			\end{itemize}
		\end{proposition}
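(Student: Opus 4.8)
The plan is to exploit that every multiplier occurring in \eqref{eq:C0501}, \eqref{eq:C0504} and \eqref{eq:C0505} is \emph{radial}, i.e.\ of the form $m(\xi') = g(|\xi'|)$ with $g$ smooth on $(0,\infty)$, and thereby reduce the computation of $[m]_{\mathcal{M}'}$ to one-variable estimates. Writing $r = |\xi'|$ and using $\partial_{\xi_i}|\xi'| = \xi_i/r$, the chain rule gives, for each $\alpha \in \{0,1\}^2$,
\[
 \bigl|(\xi')^\alpha \partial_{\xi'}^\alpha m(\xi')\bigr| \le |g(r)| + r\,|g'(r)| + r^2\,|g''(r)|,
\]
so it suffices to bound $\sup_{r>0}\bigl(|g(r)| + r|g'(r)| + r^2|g''(r)|\bigr)$ by the claimed right-hand sides for each profile $g$.

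For \eqref{eq:C0501} I would first record the sectorial behaviour of $s_\lambda = (\lambda+r^2)^{1/2}$: since $0<\theta<\pi/2$ and adding the nonnegative real $r^2$ to $\lambda$ moves it toward the positive real axis, one has $\lambda+r^2\in\Sigma_\theta$ and $|\lambda+r^2|\ge c_\theta(|\lambda|+r^2)$, hence $\RealPart s_\lambda \ge c(|\lambda|^{1/2}+r)$, $|s_\lambda|\ge c(|\lambda|^{1/2}+r)$, $|s_\lambda'| = r/|s_\lambda| \le c^{-1}$ and $|s_\lambda''| \le c^{-1}|s_\lambda|^{-1}$. Differentiating $g(r) = r^\alpha e^{-ts_\lambda}$ (respectively $g(r) = s_\lambda^{-1}e^{-s_\lambda}$) at most twice, and bounding each factor $|s_\lambda|^{-1}$ by $c^{-1}(|\lambda|^{1/2}+r)^{-1}$, one sees that $|g|$, $r|g'|$, $r^2|g''|$ are finite sums of terms of the form $C\,t^{a} r^{\alpha+a}e^{-t\RealPart s_\lambda}$ with $a\ge0$ (for the second multiplier $t$ is replaced by $1$ and one factor $|s_\lambda|^{-1}\le c^{-1}|\lambda|^{-1/2}$ is kept aside). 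Each such term is then estimated via $\RealPart s_\lambda\ge c(|\lambda|^{1/2}+r)$ and the elementary identity $\sup_{r>0} t^{a} r^{\alpha+a}e^{-(c/2)tr} = C_a\, t^{-\alpha}$, which produces the prefactor $t^{-\alpha}$ and leaves $e^{-(c/2)t|\lambda|^{1/2}}$; in the one place where the rational prefactor is only bounded by a constant, one additionally uses $\sup_{x>0} xe^{-(c/2)x}<\infty$ to trade a factor $e^{-(c/2)|\lambda|^{1/2}}$ for $|\lambda|^{-1/2}$. This yields the two inequalities in \eqref{eq:C0501}.

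For \eqref{eq:C0504} and \eqref{eq:C0505} I would invoke Proposition~\ref{prop_scaled_multiplier}: each multiplier there is $m_\epsilon(\xi') = m(\epsilon\xi')$ with $m$ the $\epsilon=1$ profile, so it is enough to bound $[m]_{\mathcal{M}'}$ at $\epsilon=1$, uniformly in $x_3\in[-1,1]$; equivalently, to bound $\sup_{r>0}\bigl(|g|+r|g'|+r^2|g''|\bigr)$ uniformly in $x_3$ for $g(r)$ equal to $\frac{\sinh(rx_3)}{\sinh r}\frac{r}{1+r}$, $\frac{\cosh(rx_3)}{\sinh r}\frac{r}{1+r}$, $\frac{\sinh(rx_3)}{\cosh r}$ and $\frac{\cosh(rx_3)}{\cosh r}$. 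I would split into $0<r\le1$ and $r\ge1$. For $0<r\le1$, Taylor expansions of $\sinh$ and $\cosh$ at the origin show that each $g$ extends to a $C^\infty$ function near $r=0$ with $C^2$-norm bounded uniformly in $x_3$ — in the two cases with $\sinh r$ in the denominator the factor $\frac{r}{1+r}$ is precisely what cancels the $\sim1/r$ singularity of $1/\sinh r$. For $r\ge1$ one has $|\sinh(rx_3)|\le\sinh r$, $\cosh(rx_3)\le\cosh r$ and $\sinh r \asymp \cosh r \asymp e^r$, so each ratio is $O\bigl(e^{-(1-|x_3|)r}\bigr)$; moreover, after applying the product-to-sum identities, the $k$-th $r$-derivative of each ratio has a numerator in which the leading terms $\propto e^{(1+|x_3|)r}$ cancel, leaving it bounded by $C\bigl(\delta^{k}e^{-\delta r}+e^{-r}\bigr)$ with $\delta:=1-|x_3|\in[0,1]$. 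Since $\sup_{r>0}r^{k}\delta^{k}e^{-\delta r} = \sup_{x>0}x^{k}e^{-x}$ and $\sup_{r>0}r^k e^{-r}$ are finite independently of $x_3$, while $\frac{d^k}{dr^k}\frac{r}{1+r} = O(r^{-1-k})$, the product (and each ratio, which equals the constant $1$ in the $\cosh/\cosh$ case when $|x_3|=1$) satisfies $|g|+r|g'|+r^2|g''|\le C$ on $[1,\infty)$ uniformly in $x_3$. Combining the two regimes and using Proposition~\ref{prop_scaled_multiplier} gives \eqref{eq:C0504} and \eqref{eq:C0505} with constants independent of $0<\epsilon\le1$.

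The main obstacle is the uniform-in-$x_3$ control in the regime $r\gtrsim1$ of the hyperbolic profiles: the exponential gap $e^{-(1-|x_3|)r}$ degenerates as $|x_3|\to1$, and one must verify that each differentiation supplies exactly the compensating factor $1-|x_3|$ — this is the cancellation of the leading $e^{(1+|x_3|)r}$ terms in the numerator after the product-to-sum identities — so that no estimate blows up near the endpoints $x_3=\pm1$ (where the ratios cease to decay); this is what makes the statement both $\epsilon$- and $x_3$-uniform. By comparison, \eqref{eq:C0501} is routine once the sectorial bounds on $s_\lambda$, $s_\lambda'$, $s_\lambda''$ are in hand, the only slightly delicate point being to peel off the prefactors $t^{-\alpha}$ and $|\lambda|^{-1/2}$ before estimating the remaining exponential.
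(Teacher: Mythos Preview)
Your argument is correct and essentially complete. For \eqref{eq:C0501} you carry out directly the radial Mikhlin verification that the paper outsources to Lemma~3.5 of \cite{Abels2002}; the sectorial lower bounds $\RealPart s_\lambda \ge c(|\lambda|^{1/2}+r)$, $|s_\lambda|\ge c(|\lambda|^{1/2}+r)$ and the trade $e^{-c|\lambda|^{1/2}} \le C|\lambda|^{-1/2}e^{-(c/2)|\lambda|^{1/2}}$ are exactly the right ingredients, and your bookkeeping of the $t^{-\alpha}$ factor is sound.

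For \eqref{eq:C0504}--\eqref{eq:C0505} both you and the paper invoke Proposition~\ref{prop_scaled_multiplier} to remove the $\epsilon$-dependence, but then take different routes. The paper rewrites each hyperbolic ratio as a sum of pure exponentials, e.g.\ $\frac{\sinh(\epsilon|\xi'|x_3)}{\sinh(\epsilon|\xi'|)} = \frac{e^{\epsilon|\xi'|(x_3-1)}-e^{-\epsilon|\xi'|(x_3+1)}}{1-e^{-2\epsilon|\xi'|}}$, and then checks the Mikhlin condition for each piece separately, using that $\frac{\epsilon|\xi'|}{(1-e^{-2\epsilon|\xi'|})(1+\epsilon|\xi'|)}$ is a bounded scaled multiplier. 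You instead keep the ratio intact, split $r\le 1$ versus $r\ge 1$, and track how differentiation produces the compensating factor $\delta=1-|x_3|$ in front of the slowly decaying $e^{-\delta r}$. One small wording point: the leading $e^{(1+|x_3|)r}$ terms in the numerator do not literally cancel after the product-to-sum identities; rather their $O(1)$ coefficients combine to an $O(\delta)$ coefficient (e.g.\ $x_3\cdot\tfrac12 - \tfrac12 = -\tfrac{\delta}{2}$ in the $\sinh/\cosh$ case), which is precisely the $\delta^k$ in your bound. The conclusion $r^k|g^{(k)}(r)|\le C$ uniformly in $x_3$ is correct either way. Your approach is more self-contained and makes the $x_3$-uniformity explicit; the paper's exponential decomposition is shorter to write but hides this uniformity inside the building-block multipliers.
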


%%%%%%%%%%%%%%%%%%%%%%%%%%%%%%%%%%%%%%%%%%%%%%%%%%%%%%%%%%%%%%%%%%
%-----------------------PROOF OF LEMMA C05-----------------------%
%%%%%%%%%%%%%%%%%%%%%%%%%%%%%%%%%%%%%%%%%%%%%%%%%%%%%%%%%%%%%%%%%%
		
		\begin{proof}
			\FurukawaCommentFourth{The estimate} (\ref{eq:C0501}) is a direct consequence of Lemma 3.5 in \cite{Abels2002} and the Mikhlin theorem. 
			%The Mikhlin theorem implies there \FurukawaComment{exists} a constant $C>0$ such that
			%	\begin{align} \label{eq:C0525}
			%		\sLongBracket{ 
			%			\abs{\xi^{\prime}}^{\alpha} \Napier^{ - t \abs{\xi^{\prime}} } 
			%		}_{
			%			\mathcal{M}^{\prime}
			%		} 
			%		\leq \frac{C}{t^{\alpha}} , \quad 
			%		\sLongBracket{ 
			%			\frac{1}{ \FurukawaComment{s_\lambda}} 
			%		}_{\mathcal{M}^{\prime}} 
			%		\leq \frac{C}{ \FurukawaComment{\abs{\lambda}^{1/2}} } \quad
			%	\end{align}
			%for all $t \in \Real$ and $\alpha>0$.
			By definition of $\sinh$ and $\cosh$, we find the formula
				\begin{align} \label{eq:C0526}
					\frac{ 
						\sinh \bracket{ \epsilon \abs{\Prime{\xi}} x_3 } 
					}{
						\sinh \bracket{ \epsilon \abs{ \Prime{\xi} } } 
					} 
					= \frac{ 
						\Napier^{ \epsilon \abs{\Prime{\xi}} x_3 } 
						- \Napier^{ - \epsilon \abs{\Prime{\xi}} x_3 } 
					}{ 
						\Napier^{ \epsilon \abs{\Prime{\xi}} } 
						- \Napier^{ - \epsilon \abs{\Prime{\xi}} } 
					} 
					= \frac{ 
						\Napier^{ - \epsilon \abs{\Prime{\xi}} \bracket{ x_3 - 1 } } 
					}{ 
						1 
						- \Napier^{ - 2 \epsilon \abs{\Prime{\xi}} } 
					} 
					- \frac{ 
						\Napier^{ 
							- \epsilon \abs{\Prime{\xi}} \bracket{ 
								x_3 + 1 
							} 
						} 
					}{ 
						1 
						- \Napier^{ 
							- 2 \epsilon \abs{\Prime{\xi}} 
						} 
					}
				\end{align}
			and
			\begin{align} \label{eq:C0526_2}
				\FurukawaComment{
				\frac{ 
						\cosh \bracket{ \epsilon \abs{\Prime{\xi}} x_3 } 
					}{
						\sinh \bracket{ \epsilon \abs{ \Prime{\xi} } } 
					} 
				}
				= \frac{ 
						\Napier^{ - \epsilon \abs{\Prime{\xi}} \bracket{ x_3 - 1 } } 
					}{ 
						1 
						- \Napier^{ - 2 \epsilon \abs{\Prime{\xi}} } 
					} 
					+ \frac{ 
						\Napier^{ 
							- \epsilon \abs{\Prime{\xi}} \bracket{ 
								x_3 + 1 
							} 
						} 
					}{ 
						1 
						- \Napier^{ 
							- 2 \epsilon \abs{\Prime{\xi}} 
						} 
					} \FurukawaCommentFourth{.}
				\end{align} 
			Thus, multiplying $ \frac{ \epsilon \abs{\Prime{\xi}} }{ 1 + \epsilon \abs{\Prime{\xi}} }$ by both sides of (\ref{eq:C0526}), \FurukawaCommentSeventh{we find from Proposition \ref{prop_scaled_multiplier} that}
				\begin{align}
					&\sLongBracket{ 
							\frac{ 
								\sinh \bracket{ \epsilon \abs{\Prime{\xi}} x_3 } 
							}{ 
								\sinh \bracket{ \epsilon \abs{ \Prime{\xi} } } 
							} 
							\frac{ 
								\epsilon  \abs{\Prime{\xi}} 
							}{ 
								1 
								+ \epsilon \abs{\Prime{\xi}} 
							} 
						}_{\Prime{\mathcal{M}}} \notag \\ \label{eq:C0527}
					& \leq C 
					\sLongBracket{ 
						\Napier^{ - \epsilon \abs{\Prime{\xi}} \bracket{ x_3 - 1 } } 
						\frac{ 
							\epsilon  \abs{\Prime{\xi}} 
						}{ 
							\bracket{ 
								1 
								- \Napier^{ 
									- 2 \epsilon \abs{\Prime{\xi}} } 
							}
						} \frac{
							1
						}{ 
							\bracket{ 
								1 + \epsilon  \abs{\Prime{\xi}} 
							} 
						} 
					}_{\mathcal{M}^{\prime}} \notag \\
					& + \sLongBracket{ 
						\Napier^{ - \epsilon \abs{\Prime{\xi}} \bracket{ x_3 + 1 } } 
						\frac{ 
							\epsilon \abs{\Prime{\xi}} 
						}{ 
							\bracket{ 1 - \Napier^{ - 2 \epsilon \abs{\Prime{\xi}} } }
						} \frac{
							1 
						}{
							\bracket{ 1 + \epsilon  \abs{\Prime{\xi}} } 
						} 
					}_{\mathcal{M}^{\prime}} \notag \\
					& \leq C.
				\end{align}
			\FurukawaCommentSeventh{The second} inequality of (\ref{eq:C0504}) is proved by the same as above using (\ref{eq:C0526_2}).
			\FurukawaCommentSeventh{Similarly, by definition of $\sinh$ and $\cosh$, the estimate (\ref{eq:C0505}) follows.}
		\end{proof}
%			Although, in the proof of (\ref{eq:C0504}), we does not used exponential decay of symbol respect to $\Prime{\xi}$, we use this fact in the proof of Lemma \ref{lem:C0710}.
					
%%%%%%%%%%%%%%%%%%%%%%%%%%%%%%%%%%%%%%%%%%%%%%%%%%%%%%%%%%%%%%%%%%
%--------------------------EQUATION 1 ---------------------------%
%%%%%%%%%%%%%%%%%%%%%%%%%%%%%%%%%%%%%%%%%%%%%%%%%%%%%%%%%%%%%%%%%%
	\subsection{Estimate for $v_1$}
	
	Let us consider the equations (I).	
	For \FurukawaCommentSeventh{$a \in \Real$} we denote by $\tau_a f = f (a \cdot)$ the rescaling operator by $a$.
	The anisotropic Helmholtz projection $\mathbb{\ep{P}}^{\Real^3}$ on $\Real^3$ with symbols
		\begin{align*}
			\mathcal{F} \mathbb{P}_{\epsilon}^{\Real^3}
			= I_3 
			- \xi_{\epsilon} \otimes \xi_{\epsilon}, \quad
			\xi_{\epsilon} 
			= \LongBracket{
				\xi_1, \xi_2, \frac{\xi_3}{\epsilon} 
			} \in \Real^3,
		\end{align*}
	is bounded in $L^q(\Real^3)$ by boundedness of the Riesz operator and the formula
		\begin{align} \label{eq:0530}
			\FourierInverse_{\xi} m \bracket{ a \xi} \Fourier_x f
			= \tau_{a^{-1}}
				\sLongBracket{
					\FourierInverse_{\xi} m \bracket{ \xi } \Fourier_x \tau_a f
				}\FurukawaCommentFifth{.}
		\end{align}
		\FurukawaCommentFourth{Actually apply (\ref{eq:0530}) with respect to the third variable, then, \FurukawaCommentFifth{the symbol is no longer dependent on $\epsilon$}.
		Changing the variable with respect to and using boundedness of the Riesz operator, we find}
		\begin{align*}
			\Vert
				\mathbb{P}_\epsilon^{\Real^3} f
			\Vert_{L^{\FurukawaCommentFifth{q}} (\Real^3)}
			= \epsilon^{-1} \Vert
				\mathbb{P}_1 \sLongBracket{
					\tau_{1 / \epsilon}^3 f
				} 
			\Vert_{L^{\FurukawaCommentFifth{q}} (\Real^3)}
			\leq C \Vert
				f
			\Vert_{L^{\FurukawaCommentFifth{q}} (\Real^3)},
		\end{align*}
		where $\tau^3_a$ is the rescaled operator with respect to the third variable for $a > 0$.	
		We define the anisotropic Helmholtz projection \FurukawaComment{$\mathbb{P}_\epsilon^{\Torus^2 \times \Real}$ } on $\Torus^2 \times \Real$ with symbols \FurukawaCommentFourth{by}
		\begin{align*}
			\Fourier_{x_3} \Fourier_{d, \Prime{x} }\mathbb{P}_\epsilon^{\Torus^2 \times \Real}
			= I_3
				- \left( 
					\begin{array}{c}
						n_1 \\
						n_2 \\
						\xi_3 / \epsilon
					\end{array}
				\right)
				\otimes \left( 
					\begin{array}{c}
						n_1 \\
						n_2 \\
						\xi_3 / \epsilon
					\end{array}
				\right), \quad
				n_1, n_2 \in \mathbb{Z} , \, \, \xi_3 \in \Real.
		\end{align*}
		We find $\mathbb{P}_\epsilon^{\Torus^2 \times \Real}$ is bounded from $L^q (\Torus^2 \times \Real)$ into itself by boundedness of \FurukawaCommentThird{$\mathbb{P}^{\Real^3}_\epsilon$} and Proposition \ref{prop_Fourier_Multiplier_Theorem_Discrete} \FurukawaCommentSeventh{uniformly in $\epsilon \in (-1, 1)$}.

%%%%%%%%%%%%%%%%%%%%%%%%%%%%%%%%%%%%%%%%%%%%%%%%%%%%%%%%%%%%%%%%%%
%----------------------PROPOSITION C0510-------------------------%
%%%%%%%%%%%%%%%%%%%%%%%%%%%%%%%%%%%%%%%%%%%%%%%%%%%%%%%%%%%%%%%%%%
	
		\begin{proposition} \label{prop:C0510}
			Let $ 1 < q < \infty $, $0 < a < 1/2$, \FurukawaComment{$0<\epsilon \leq 1$}, $z \in \Complex$ satisfying $ - a < \RealPart z < 0$ and $0 < \theta < \pi / 2$. Then there exists a constant $C = C \bracket{ q, a, \theta }$ such that
				\begin{align} 
					& \LongNorm{					
						\frac{1}{ 2 \pi i } R_0 \TimeInt{
							\Gamma_\theta
						}{}{ 
							\bracket{- \lambda}^z \Inverse{ \lambda - \Delta_{\Torus^2 \times \Real} } \ep{ \mathbb{P} }^{\Torus^2 \times \Real}  E_0 f
						}{
							\lambda
						}
					}{\Lebesgue{q} \bracket{\Torus^2 \times \Real} } \notag \\ \label{eq:C0560}
					& \leq C \Napier^{\theta \abs{\ImaginaryPart \, z}} \LongNorm{
						f
					}{
						\Lebesgue{q}\bracket{\Omega}
					}
				\end{align}
			 for all $ f \in \Lebesgue{q} \bracket{ \Omega } $.
		\end{proposition}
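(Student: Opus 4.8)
The plan is to peel off the anisotropic Helmholtz projection and reduce to an $\epsilon$-free bound for a fractional power of the scalar Laplacian on $\Torus^2 \times \Real$. By definition of the Dunford calculus, the quantity on the left-hand side of (\ref{eq:C0560}) is precisely $R_0 (-\Delta_{\Torus^2 \times \Real})^z \mathbb{P}_\epsilon^{\Torus^2 \times \Real} E_0 f$, where $(-\Delta_{\Torus^2 \times \Real})^z$ is given by the same contour integral with $A = -\Delta_{\Torus^2 \times \Real}$. Since $R_0$ is a contraction, $\Vert E_0 \Vert \le 1$, and $\mathbb{P}_\epsilon^{\Torus^2 \times \Real}$ is bounded on $L^q(\Torus^2 \times \Real)$ uniformly in $0 < \epsilon \le 1$ — as established above from boundedness of $\mathbb{P}_\epsilon^{\Real^3}$ and Proposition~\ref{prop_Fourier_Multiplier_Theorem_Discrete} — the proof reduces to the \emph{$\epsilon$-free} bound
\[
	\Vert (-\Delta_{\Torus^2 \times \Real})^z g \Vert_{L^q(\Torus^2 \times \Real)} \le C(q, a, \theta)\, \Napier^{\theta \abs{\ImaginaryPart \, z}} \Vert g \Vert_{L^q(\Torus^2 \times \Real)},
\]
valid for horizontally average-free $g$; by Remark~\ref{rem_average_free} one may take $f$, and hence $g := \mathbb{P}_\epsilon^{\Torus^2 \times \Real} E_0 f$, horizontally average-free, and then $\Vert g \Vert_{L^q} \le C \Vert f \Vert_{L^q(\Omega)}$ uniformly in $\epsilon$.

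For the displayed bound, I would pass to the partial Fourier transform $\Fourier_{x_3} \Fourier_{d, x'}$: the operator $(-\Delta_{\Torus^2 \times \Real})^z$ then acts as the Fourier multiplier with symbol $(\abs{n'}^2 + \xi_3^2)^z$ at $(n', \xi_3) \in (\Integer^2 \setminus \{0\}) \times \Real$, the value at the horizontal zero mode being irrelevant (set it to $0$) precisely because $g$ is horizontally average-free; in particular $\abs{n'}^2 + \xi_3^2 \ge 1$ on the relevant modes. This symbol is the restriction to $(\Integer^2 \setminus \{0\}) \times \Real$ of
\[
	\tilde{m}_z(\xi) = \abs{\xi}^{2z} = \frac{1}{2 \pi i} \int_{\Gamma_\theta} (- \lambda)^z (\lambda + \abs{\xi}^2)^{-1}\, d\lambda, \qquad \xi \in \Real^3 \setminus \{0\},
\]
the contour integral converging because $\RealPart \, z > -a > -1$ controls the integrand near $\lambda = 0$ and $\RealPart \, z < 0$ controls it near $\lambda = \infty$. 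On $\Gamma_\theta$ one has $\abs{\arg(-\lambda)} = \theta$, whence $\abs{(-\lambda)^z} \le \abs{\lambda}^{\RealPart \, z}\, \Napier^{\theta \abs{\ImaginaryPart \, z}}$, which is the source of the claimed exponential factor. Feeding this, the sectorial resolvent estimate $\abs{\lambda + \rho}^{-1} \le C_\theta (\abs{\lambda} + \rho)^{-1}$ ($\lambda \in \Gamma_\theta$, $\rho > 0$), and the rescaling $\lambda = \abs{\xi}^2 \mu$ into $\xi^\alpha \partial_\xi^\alpha \tilde{m}_z$ for $\alpha \in \{0, 1\}^3$, a routine computation yields $[\tilde{m}_z]_{\mathcal{M}} \le C(q, a, \theta)\, \Napier^{\theta \abs{\ImaginaryPart \, z}}$, uniformly for $-a < \RealPart \, z < 0$.

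The estimate then follows from the Fourier multiplier theorem on $\Torus^2 \times \Real$: combining the discrete multiplier theorem, Proposition~\ref{prop_Fourier_Multiplier_Theorem_Discrete}, in the two horizontal variables (with zero-mode value $0$) with the Mikhlin theorem in the vertical variable — cf. \cite{HeckKimKozono2009, Grafakos2008} — gives $\Vert (-\Delta_{\Torus^2 \times \Real})^z g \Vert_{L^q} \le C(q, a, \theta)\, \Napier^{\theta \abs{\ImaginaryPart \, z}} \Vert g \Vert_{L^q}$, and together with the first step this is (\ref{eq:C0560}).

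The one genuinely delicate point is the Mikhlin-type estimate for $\tilde{m}_z$ carrying the precise exponential factor $\Napier^{\theta \abs{\ImaginaryPart \, z}}$; this is a scalar, $\epsilon$-independent instance of the contour and resolvent computations of Lemma~3.5 in \cite{Abels2002}. Note that no new $\epsilon$-uniform analysis enters here: the entire $\epsilon$-dependence of (\ref{eq:C0560}) is absorbed into the uniformly bounded projection $\mathbb{P}_\epsilon^{\Torus^2 \times \Real}$ — in contrast with the forthcoming estimates for $v_2$ and $\pi_3$, where the boundary is felt and genuine $\epsilon$-uniform work is required.
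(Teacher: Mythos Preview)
Your proposal is correct and follows essentially the same approach as the paper: factor out the uniformly bounded projection $\mathbb{P}_\epsilon^{\Torus^2 \times \Real}$ and reduce to BIP for $-\Delta_{\Torus^2 \times \Real}$. The paper's proof is the two-line version of yours, simply citing BIP for the Laplacian on the cylinder as known rather than sketching the Mikhlin-multiplier verification you provide.
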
	
		\begin{proof}
			It is known that the Laplace operator on a cylinder $\Torus^2 \times \Real$ has BIP.
			Combining this fact and $L^q$-boundedness of $\mathbb{P}_\epsilon^{\Torus^2 \times \Real}$, we have (\ref{eq:C0560}).
		\end{proof}
		
		\FurukawaCommentFifth{
		\begin{proposition} \label{prop_nabla2_resolvent_estimate}
			Let $1< q < \infty$, $0 < \epsilon \leq 1$, $0 < \theta < \pi /2$ and $\lambda \in \Sigma_\theta$.
			Then there exists a constant $C = C(q) > 0$, which is independent of $\epsilon$, \FurukawaCommentSeventh{such that}
			\begin{align*}
				\left \Vert
					\nabla^2 \left(
						\lambda - \Delta_{\Torus^2 \times \Real} 
					\right)^{-1} \mathbb{P}_\epsilon^{\Torus^2 \times \Real} E_0 f
				\right \Vert_{L^q (\Omega)}
				\leq C \Vert
					f
				\Vert_{L^q (\Omega)}.
			\end{align*}
		\end{proposition}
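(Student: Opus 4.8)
The plan is to reduce the estimate to a genuinely $\epsilon$-free bound, exploiting the fact that the entire $\epsilon$-dependence of the operator sits in the anisotropic Helmholtz projection, whose $L^q$-bound independent of $\epsilon$ is already in hand. First I would record three elementary observations: restriction $L^q(\Torus^2\times\Real)\to L^q(\Omega)$ has norm one, since $\Omega\subset\Torus^2\times\Real$; the zero-extension $E_0\colon L^q(\Omega)\to L^q(\Torus^2\times\Real)$ is an isometry onto its range; and $\mathbb{P}_\epsilon^{\Torus^2\times\Real}$ is bounded on $L^q(\Torus^2\times\Real)$ with operator norm independent of $0<\epsilon\le1$, which is exactly what was proved just above from the $L^q$-boundedness of $\mathbb{P}_\epsilon^{\Real^3}$ and Proposition \ref{prop_Fourier_Multiplier_Theorem_Discrete}. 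Composing these, the asserted inequality follows once we establish
\begin{align*}
	\bigl\| \nabla^2 (\lambda-\Delta_{\Torus^2\times\Real})^{-1} \bigr\|_{L^q(\Torus^2\times\Real)\to L^q(\Torus^2\times\Real)} \le C
\end{align*}
with $C=C(q,\theta)$ independent of $\lambda\in\Sigma_\theta$ (and of $\epsilon$, which no longer appears), and then apply it to $\mathbb{P}_\epsilon^{\Torus^2\times\Real}E_0 f$.

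For the displayed resolvent bound I would diagonalize $\lambda-\Delta_{\Torus^2\times\Real}$ by the mixed Fourier transform $\Fourier_{x_3}\Fourier_{d,x^\prime}$, so that each second-order component of $\nabla^2(\lambda-\Delta_{\Torus^2\times\Real})^{-1}$ becomes the multiplier $\eta_j\eta_k/(\lambda+|\eta|^2)$ with $\eta=(n_1,n_2,\xi_3)$, $n^\prime\in\Integer^2$, $\xi_3\in\Real$. Viewing $\eta\mapsto\eta_j\eta_k/(\lambda+|\eta|^2)$ as a symbol on $\Real^3$, the inequality $|\lambda+|\eta|^2|\ge c_\theta(|\lambda|+|\eta|^2)\ge c_\theta|\eta|^2$ --- valid because $\lambda\in\Sigma_\theta$ --- together with a routine differentiation shows that its $\Real^3$-Mikhlin constant is bounded by a constant depending only on $\theta$, uniformly in $\lambda$. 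Restricting the first two variables to the lattice and invoking the discrete--continuous multiplier theorem, as in the passage above that established $L^q$-boundedness of $\mathbb{P}_\epsilon^{\Torus^2\times\Real}$ via Proposition \ref{prop_Fourier_Multiplier_Theorem_Discrete}, then yields the $L^q(\Torus^2\times\Real)$ bound; chaining it with the norm-one restriction and extension maps and the $\epsilon$-uniform bound for $\mathbb{P}_\epsilon^{\Torus^2\times\Real}$ proves the proposition.

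The hard part, such as it is, is bookkeeping rather than analysis: one must check that the Mikhlin bound for the sectorial symbol $\eta\mapsto\eta_j\eta_k/(\lambda+|\eta|^2)$ is uniform over the whole sector $\Sigma_\theta$ --- this is standard, and is the same mechanism underlying the BIP estimate for the Laplacian on the cylinder used in Proposition \ref{prop:C0510} --- and that pre-composition with $\mathbb{P}_\epsilon^{\Torus^2\times\Real}$ introduces no $\epsilon$-dependence, which it cannot since that projection's norm is $\epsilon$-independent. Assembling the three norm estimates then gives the inequality with $C=C(q)$, or more precisely $C=C(q,\theta)$.
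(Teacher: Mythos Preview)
Your proposal is correct and follows essentially the same route as the paper: the paper's one-line proof invokes Propositions \ref{prop_Fourier_Multiplier_Theorem_Discrete} and \ref{prop_scaled_multiplier} together with the already-established $\epsilon$-uniform $L^q$-boundedness of $\mathbb{P}_\epsilon^{\Torus^2\times\Real}$, which is exactly your decomposition into the $\epsilon$-free Mikhlin estimate for $\eta_j\eta_k/(\lambda+|\eta|^2)$ plus pre-composition with the uniformly bounded projection. Your remark that the constant is really $C(q,\theta)$ is well taken.
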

		\begin{proof}
			\FurukawaCommentSeventh{This follows from Propositions \ref{prop_Fourier_Multiplier_Theorem_Discrete} and \ref{prop_scaled_multiplier} since $P_\epsilon^{\Torus^2 \times \Real}$ is uniformly bounded from $L^q (\Torus^2 \times \Real)$ into itself.}
		\end{proof}	}
%%%%%%%%%%%%%%%%%%%%%%%%%%%%%%%%%%%%%%%%%%%%%%%%%%%%%%%%%%%%%%%%%%
%-------------------PROOF OF PROPOSITION C0510-------------------%
%%%%%%%%%%%%%%%%%%%%%%%%%%%%%%%%%%%%%%%%%%%%%%%%%%%%%%%%%%%%%%%%%%
%%%%%%%%%%%%%%%%%%%%%%%%%%%%%%%%%%%%%%%%%%%%%%%%%%%%%%%%%%%%%%%%%%
%---------------END OF PROOF OF PROPOSITION C0510----------------%
%%%%%%%%%%%%%%%%%%%%%%%%%%%%%%%%%%%%%%%%%%%%%%%%%%%%%%%%%%%%%%%%%%
	Let us calculate the partial Fourier transform for $v_1$ with respect to the horizontal variable.
	\FurukawaComment{This} is needed to obtain representation formula for $v_2$ later.
	%We temporary consider on $\Real^2 \times \Real$.
	Let $g \in L^q ( \FurukawaCommentFifth{\Torus^2 \times \Real})$.
	The solution $\FurukawaComment{\tilde{v}}$ to the equation 
		\begin{align*}
			\left \{
				\begin{array}{rcll}
					\lambda \tilde{v} - \Delta \tilde{v} + \ep{\nabla} \tilde{\pi}
					&= 
					& g 
					& \In \FurukawaCommentFifth{\Torus^2 \times \Real},\\
					\Dive \tilde{v}
					&= 
					& 0 
					& \In \FurukawaCommentFifth{\Torus^2 \times \Real},
				\end{array}
			\right.
		\end{align*}
	is given by
		\begin{align*}
			\tilde{v} 
			= \LongBracket{ 
				\lambda - \Delta_{\Real^3}
			}^{-1} \mathbb{P}_\epsilon^{\FurukawaCommentFifth{\Torus^2 \times \Real}} \FurukawaComment{g}.
		\end{align*}
	Moreover,
		\begin{align}
			K_{\lambda, \epsilon} g \notag
			& := \LongBracket{ 
				\lambda - \Delta_{\FurukawaCommentFifth{\Torus^2 \times \Real}}
				}^{-1} \mathbb{P}_\epsilon^{\FurukawaCommentFifth{\Torus^2 \times \Real}}  g \notag \\
			& = \mathcal{F}^{-1} 
				\bracket{ 
					\lambda 
					+ \FurukawaCommentFifth{ \abs{n^\prime}^2 + \xi_3^2}
				}^{- 1} \LongBracket{ 
					I_3 - \frac{\ep{\xi} \otimes \ep{\xi} }{ \abs{ \ep{\xi} }^2 } 
				} 
			\mathcal{F}  g \notag \\  \label{eq:C0550_discrete}
			& = \mathcal{F}^{- 1}_{\FurukawaCommentFifth{n^\prime}}  \TimeInt{\Real}{}{ 
					k_{\lambda, \epsilon} \bracket{ 
						\FurukawaCommentFifth{n^\prime}, x_3 - \zeta 
					}  
				\mathcal{F}_{\FurukawaCommentFifth{n^\prime}} g \bracket{\FurukawaCommentFifth{n^\prime}, \zeta } 
			}{\zeta},
		\end{align}
	where \FurukawaCommentFifth{$\xi_\epsilon = (n^\prime, \xi_3 / \epsilon) \in \Integer^2 \times \Real$ and}
		\begin{align}
			& \Prime{k}_{\lambda, \epsilon} \bracket{\FurukawaCommentFifth{n^\prime}, x_3 } \notag \\
			& = \FourierInverse_{\xi_3} \sLongBracket{
				\bracket{ 
					\lambda 
					+ \FurukawaCommentFifth{\abs{n^\prime}^2 + \xi_3^2}
				}^{- 1} \LongBracket{ 
						I_3 
						- \frac{\ep{\xi} \otimes \ep{\xi} }{ \abs{ \ep{\xi} }^2 } 
				}
			} \notag \\
			& = 
			\frac{ \Napier^{ - s_{\lambda} } }{ 2 s_{\lambda} }
				\left( 
					\begin{array}{cc}
						I & 0 \\
						0 & 0
					\end{array}
				\right) \notag \\
			&	-
				\left( 
					\begin{array}{c}
						\FurukawaCommentFifth{n^\prime} \otimes \FurukawaCommentFifth{n^\prime} \frac{
							\epsilon^2}{ 
							\lambda 
							+ \bracket{ 1 - \epsilon^2 } \abs{ \FurukawaCommentFifth{n^\prime} }^2 
							}  \frac{
								- \epsilon \abs{\FurukawaCommentFifth{n^\prime}} \Napier^{ - \abs{x_3} s_{\lambda} } 
								+ s_{\lambda} \Napier^{ - \abs{x_3} \epsilon \abs{ \FurukawaCommentFifth{n^\prime} } } 
							}{
								2 s_{\lambda} \epsilon \abs{\FurukawaCommentFifth{n^\prime}} 
							} \\
						- i { \FurukawaCommentFifth{n^\prime} }^T \frac{
							\epsilon^2 
						 }{ 
						 	\lambda 
						 	+ \bracket{ 1 - \epsilon^2 } \abs{ \FurukawaCommentFifth{n^\prime} }^2 
						 } \frac{
						 	\Napier^{- \abs{x_3} s_{\lambda}} - \Napier^{ - \abs{x_3} \epsilon \abs{\FurukawaCommentFifth{n^\prime}} }
						 }{ 
						 	2 
						 } 
					\end{array}
				\right. \notag \\
			& \quad \quad \quad \quad \quad \quad \quad \left.
					\begin{array}{c}
						- i \FurukawaCommentFifth{n^\prime}  \frac{ 
							\epsilon^2 
						}{ 
							\lambda 
							+ \bracket{ 1 - \epsilon^2 } \abs{ \FurukawaCommentFifth{n^\prime} }^2 
						} \frac{ 
							\Napier^{- \abs{x_3} s_{\lambda}} 
							- \Napier^{ 
								- \abs{x_3} \epsilon \abs{\FurukawaCommentFifth{n^\prime}} 
							}
						}{ 
							2 
						} \notag \\
						- \abs{ \FurukawaCommentFifth{n^\prime} }^2 \frac{
						 	\epsilon^2
						 }{ 
						 	\lambda 
						 	+ \bracket{ 1 - \epsilon^2 } \abs{ \FurukawaCommentFifth{n^\prime} }^2 
						 } \frac{
						 	- \epsilon \abs{\FurukawaCommentFifth{n^\prime}} \Napier^{ - \abs{x_3} s_{\lambda} } + s_{\lambda} \Napier^{ - \abs{x_3} \epsilon \abs{ \FurukawaCommentFifth{n^\prime} } } 
						 }{
							2 s_{\lambda} \epsilon \abs{\FurukawaCommentFifth{n^\prime}} 
						 }
					\end{array}
				\right) \notag \\
			& = : \frac{ 
				\Napier^{ - s_{\lambda} } 
			}{ 
				2 s_{\lambda} 
			}
				\left( 
					\begin{array}{cc}
						I_2 & 0 \\
						0 & 0
					\end{array}
				\right)
				-
				\left( 
					\begin{array}{cc}
						\FurukawaCommentFifth{n^\prime} \otimes \FurukawaCommentFifth{n^\prime} \eta_{\lambda, \epsilon}^{\prime} \bracket{ \FurukawaCommentFifth{n^\prime}, x_3 } 
						& - i \FurukawaCommentFifth{n^\prime} \partial_3 \eta_{\lambda, \epsilon}^{\prime} \bracket{ \FurukawaCommentFifth{n^\prime}, x_3 } \\ \label{eq:C0555}
						- i { \FurukawaCommentFifth{n^\prime} }^T \partial_3 \eta_{\lambda, \epsilon}^{\prime} \bracket{ \FurukawaCommentFifth{n^\prime}, x_3 }  
						& - \abs{\xi^{\prime}}^2 \eta_{\lambda, \epsilon}^{\prime} \bracket{\FurukawaCommentFifth{n^\prime}, x_3 }
					\end{array}
				\right).
		\end{align}
	\FurukawaCommentFifth{The kernel function} $k_{\lambda, \epsilon} \bracket{\FurukawaCommentFifth{n^\prime}, x_3}$ is calculated by the residue theorem. 
	Actually, since poles of $\Inverse{ \lambda + \FurukawaCommentFifth{\abs{n^\prime}^2 + \xi_3^2} }$ are $\xi_3 = \pm i s_{\lambda} $, the residue theorem implies the partial Fourier inverse transform of $ \Inverse{ \lambda + \FurukawaCommentFifth{\abs{n^\prime}^2 + \xi_3^2}  } $ with respect to $\xi_3$ is given by inserting $\xi_3 = i s_{\lambda} \, \mathrm{or} \, - i s_{\lambda}$ into $\Napier^{ i x_3 \xi_3}$ so that the real part become to be negative. 
	Thus, we have
		\begin{align} \label{eq_definition_of_e_prime_lamda}
			\Prime{e}_\lambda (\FurukawaCommentSixth{n^\prime}, x_3)
			: = \FourierInverse_{\xi_3} \LongBracket{
				\lambda + \FurukawaCommentSixth{\abs{n^\prime}^2 + \abs{x_3}^2}
			}^{-1}
			= \frac{
				e^{- \abs{x_3} s_\lambda}}{
				s_\lambda
			}.
		\end{align}
	Moreover, this formula leads \FurukawaCommentSixth{to}
		\begin{align*}
			\FourierInverse_{\xi_3} \sLongBracket{
				\abs{\xi_\epsilon}^2
			}^{-1}
			= \FourierInverse_{\xi_3} \sLongBracket{
				\frac{
					\epsilon^2
				}{
					\epsilon^2 \abs{\FurukawaCommentFifth{n^\prime}}^2 + \xi_3^2
				}
			}
			= \frac{
				\epsilon e^{- \abs{x_3} \epsilon \abs{\FurukawaCommentFifth{n^\prime}} }
			}{
				\abs{\FurukawaCommentFifth{n^\prime}}
			}.
		\end{align*}
	Combining the above two calculations and the formula
		\begin{align*}
			I_3 - \frac{ \xi_\epsilon \otimes \xi_\epsilon }{ \abs{\xi_\epsilon}^2 }
			=	\left( 
					\begin{array}{cc}
						I_2 & 0 \\
						0   & 0
					\end{array}
				\right)
			- 
				\left(
					\begin{array}{cc}
						\frac{ 
							\FurukawaCommentFifth{n^\prime} \otimes \FurukawaCommentFifth{n^\prime} 
						}{ 
							\abs{\FurukawaCommentFifth{\xi_\epsilon}}^2
						} 
						& \frac{ 
							\xi_3 \FurukawaCommentFifth{n^\prime} / \epsilon
						}{ 
								\abs{\FurukawaCommentFifth{\xi_\epsilon}}^2 
						} \\
						\frac{ 
							\xi_3 \FurukawaCommentFifth{n^\prime}^T / \epsilon
						}{ 
							\abs{\FurukawaCommentFifth{\xi_\epsilon}
						}^2 
						} 
						& - \frac{ 
							\abs{ \FurukawaCommentFifth{n^\prime}_\epsilon }^2 
						}{ 
							\abs{\FurukawaCommentFifth{\xi_\epsilon}}^2 
						}
					\end{array}
				\right)	,
		\end{align*}
	\FurukawaCommentFifth{we obtain (\ref{eq:C0555})}.
	\begin{comment}
	Thus, the solution $v_1$ to (I) with external force $g \in L^q (\Torus^2 \times \Real)$ is given by
		\begin{align} \label{eq:C0550_discrete}
			v_1
			= K_{\lambda, \epsilon} g 
			:= \mathcal{F}^{- 1}_{ d, n^{\prime} }  \TimeInt{\Real}{}{ 
					k_{\lambda, \epsilon} \bracket{ 
						\FurukawaCommentFifth{n^\prime}, x_3 - \zeta 
					}  
				\mathcal{F}_{d, x^{\prime}} g \bracket{n^{\prime}, \zeta } 
			}{\zeta} \FurukawaCommentFourth{.}
		\end{align}
		\end{comment}

%%%%%%%%%%%%%%%%%%%%%%%%%%%%%%%%%%%%%%%%%%%%%%%%%%%%%%%%%%%%%%%%%%
%--------------------------EQUATION 3----------------------------%
%%%%%%%%%%%%%%%%%%%%%%%%%%%%%%%%%%%%%%%%%%%%%%%%%%%%%%%%%%%%%%%%%%

	\subsection{Boundedness of the anisotropic Helmholtz projection}
	
	Next, we consider the equation (III) with boundary data $\phi = \bracket{\phi_+, \phi_-} \FurukawaCommentFifth{\in C_0^\infty (\Omega)}$. 
	Applying the partial Fourier transform to (III), we have
%		\begin{empheq}[left = {\empheqlbrace}]{align} \label{eq:C0701}
		\begin{align} \label{eq:C0701}
			\left \{
				\begin{array}{rcl}
				\LongBracket{ 
						\frac{
							\partial_z^2
						}{
						\epsilon^2
						} 
					- \abs{ n^\prime }^2 } \mathcal{F}_{d, x^{\prime} } \pi_3 (\Prime{n}, x_3) 
					& =
					& 0 , \\
					\frac{\partial_z }{ \epsilon} \mathcal{F}_{d, x^{\prime} } \pi_3 \bracket{ n^\prime, \pm 1}
					& = 
					& \Fourier_{d, \Prime{x}} {\phi}_{\pm} \bracket{\Prime{n}} \FurukawaCommentFourth{,}
				\end{array}
			\right .
		\end{align}
%		\end{empheq}
	for $n^\prime \in \mathbb{Z}^2 \FurukawaComment{ \setminus \{ 0 \}}$ and $x_3 \in (-1 , 1)$.
	The solution to (\ref{eq:C0701}) is of the form 
		\begin{align*}
			\Fourier_{d, \Prime{x}} \pi_3  (n^\prime, x_3)
			= C_1 \Napier^{\epsilon x_3 \abs{\Prime{n}} } 
			+ C_2 \Napier^{- \epsilon x_3 \abs{\Prime{n}} }
		\end{align*} 
	for some constant $C_1$ and $C_2$. 
	Take the constants so that (\ref{eq:C0701}) satisfied, namely 
		\begin{align*}
			& C_1 
			= \frac{
					\Fourier_{d, \Prime{x}} \phi_+ 
					+ \Fourier_{d, \Prime{x}} \phi_-  
				}{ 
					4 \abs{\Prime{n}} \cosh \bracket{ \epsilon \abs{\Prime{n}} }
				}
			+ \frac{
					\Fourier_{d, \Prime{x}} \phi_+ 
					- \Fourier_{d, \Prime{x}} \phi_-  
				}{ 
					4 \abs{\Prime{n}} \sinh \bracket{ \epsilon \abs{\Prime{n}} } 
				}, \quad \\
			& C_2 
			= - 
				\frac{
					\Fourier_{d, \Prime{x}} \phi_+ 
					+ \Fourier_{d, \Prime{x}} \phi_-  
				}{ 
					4 \abs{\Prime{\xi}} \cosh \bracket{ \epsilon \abs{\Prime{n}} } 
				}
			+ \frac{
					\Fourier_{d, \Prime{x}} \phi_+ - \Fourier_{d, \Prime{x}} \phi_-  
				}{ 4 \abs{\Prime{\xi}} \sinh \bracket{ \epsilon \abs{\Prime{n}} }
				},
		\end{align*}
	then the solution to (\ref{eq:C0701}) is given by
		\begin{align*}
			& \pi_3 \bracket{ x^{\prime}, x_3 } \\
			 & = \mathcal{F}^{ - 1 }_{d,  n^{\prime} } \LongBracket{
			 	\frac{
			 		\sinh \bracket{ \epsilon x_3 \abs{ \Prime{n}}} 
			 	}{ 
			 		\abs{\Prime{n} } \cosh \bracket{\epsilon \abs{ \Prime{n}} } 
			 	} \frac{ 
			 		\Fourier_{d, \Prime{x}} {\phi}_{+} 
			 		+ \Fourier_{\Prime{x}} {\phi}_{-} 
			 	}{
			 		2
			 	} 
			 + \frac{
			 	\cosh \bracket{ \epsilon x_3 \abs{ \Prime{n}}} 
			 }{ 
				\abs{\Prime{n} } \sinh \bracket{\epsilon \abs{ \Prime{n}} } 
			 } \frac{ 
			 	\Fourier_{d, \Prime{x}} {\phi}_{+} 
			 	- \Fourier_{d, \Prime{x}} {\phi}_{-} 
			 }{
			 	2
			 } }.
		\end{align*}
	Moreover, its anisotropic gradient given by
		\begin{align}
			\nabla_{\epsilon} \pi_3 
			& = \mathcal{F}^{ - 1 }_{d,  n^{\prime} } 
				\left(
					\begin{array}{c}
						\frac{
							i n^{\prime} \sinh \bracket{ \epsilon x_3 \abs{ n^{\prime} } } 
						}{ 
							\abs{ n^{\prime} } \cosh \bracket{ \epsilon \abs{ n^{\prime} } }
						} 
						\frac{ 
							\Fourier_{d, \Prime{x}} {\phi}_{+} 
							+ \Fourier_{d, \Prime{x}} {\phi}_{-} 
						}{2} 
					+ \frac{ 
						i n^{\prime} \cosh \bracket{ \epsilon x_3 \abs{ n^{\prime} } } 
					}{ 
						\abs{ n^{\prime} } \sinh \bracket{ \epsilon \abs{ n^{\prime} } } 
					} \frac{ 
						\Fourier_{d, \Prime{x}} {\phi}_{+} 
						- \Fourier_{d, \Prime{x}} {\phi}_{-} 
						}{2} \\
					\frac{  
						\cosh \bracket{ \epsilon x_3 \abs{ n^{\prime} } } 
					}{  
						\cosh \bracket{ \epsilon \abs{ n^{\prime} } } 
					} \frac{ 
						\Fourier_{d, \Prime{x}} {\phi}_{+} 
						+ \Fourier_{d, \Prime{x}} {\phi}_{-} 
					}{2} 
					+ \frac{ 
						\sinh \bracket{ \epsilon x_3 \abs{ n^{\prime} } } 
					}{ 
						\sinh \bracket{ \epsilon \abs{ n^{\prime} } } 
					} \frac{ 
						\Fourier_{d, \Prime{x}}{\phi}_{+} 
						- \Fourier_{d, \Prime{x}} {\phi}_{-} }{2}
					\end{array}
				\right) \notag \\ \label{eq:C0712}
			& =: \mathcal{F}^{ - 1}_{d,  n^{\prime} } 
				\alpha_{ \epsilon, +} \bracket{ n^{\prime}, x_3 } 
			\mathcal{F}_{d,  x^{\prime} } \phi_{+} 
			+ \mathcal{F}^{ - 1}_{d,  n^{\prime} } 
				\alpha_{ \epsilon, - } \bracket{ n^{\prime}, x_3} 
			\mathcal{F}_{d,  x^{\prime} } \phi_{-}. 
		\end{align}
	We \FurukawaComment{apply the trace} to (\ref{eq:C0712}) to get
		\begin{align*}
			\gamma_{ \pm } \ep{\nabla} \pi_3 
			&= \mathcal{F}^{ - 1 }_{d,  n^{\prime} }
			\left(
				\begin{array}{c}
					\frac{ \pm i n^{\prime} \sinh \bracket{ \epsilon \abs{ n^{\prime} } } }{ \abs{ n^{\prime} } \cosh \bracket{ \epsilon \abs{ n^{\prime} } } } 
					\frac{ 
						\Fourier_{d, \Prime{x}} {\phi}_{+} 
						+ \Fourier_{d, \Prime{x}} {\phi}_{-} 
					}{2}
					+ \frac{ 
						i n^{\prime} \cosh \bracket{ \epsilon \abs{ n^{\prime} } } 
					}{ 
						\abs{ n^{\prime} } \sinh \bracket{ \epsilon \abs{ n^{\prime} } } 
					} \frac{ 
						\Fourier_{d, \Prime{x}} {\phi}_{+} -
						\Fourier_{d, \Prime{x}} {\phi}_{-} }{2} \\
					\frac{ 
						\Fourier_{d, \Prime{x}} {\phi}_{+} 
						+ \Fourier_{d, \Prime{x}} {\phi}_{-} 
					}{2}
					\pm \frac{ 
						\Fourier_{d, \Prime{x}} {\phi}_{+} 
						- \Fourier_{d, \Prime{x}} {\phi}_{-} }{2}
				\end{array}
			\right).
		\end{align*}
	We insert \FurukawaCommentFifth{$\phi_{\pm} = \gamma_{\pm} \ep{\mathbb{P}}^{\Torus^2 \times \Real} f$ to (\ref{eq:C0712}) for $f \in C_0^\infty (\Omega)$} \FurukawaCommentSeventh{satisfying $\tilde{f}=0$} and set
		\begin{align*}
			\Pi_{\epsilon} f 
			& := \mathcal{F}^{-1}_{d, n^{\prime}}
				\sLongBracket{
					\alpha_{ \epsilon, + } \bracket{ n^{\prime}, x_3 } \gamma_+ \mathcal{F}_{d, x^{\prime}} \LongBracket{  
						\Napier_3 \cdot \mathbb{P}_{\epsilon}^{\Torus^2 \times \Real} E_0 f 
					} 
				} \\
			 & +  \mathcal{F}^{-1}_{d, n^{\prime}}
				\sLongBracket{				
					\alpha_{ \epsilon, - } \bracket{ n^{\prime}, x_3 } \gamma_-  \mathcal{F}_{d, x^{\prime}} \LongBracket{ 
						\Napier_3 \cdot \mathbb{P}_{\epsilon}^{\Torus^2 \times \Real} E_0 f 
					}
				}.
		\end{align*}
	
%%%%%%%%%%%%%%%%%%%%%%%%%%%%%%%%%%%%%%%%%%%%%%%%%%%%%%%%%%%%%%%%%%
%---------------------------LEMMA C0710--------------------------%
%%%%%%%%%%%%%%%%%%%%%%%%%%%%%%%%%%%%%%%%%%%%%%%%%%%%%%%%%%%%%%%%%%
	
		\begin{lemma} \label{lem:C0710}
			 \FurukawaCommentFifth{Let $1 < q < \infty$, $0 < \epsilon \leq 1$ and $s \geq 0$.}
			 Then there \FurukawaComment{exists} a constant $C = C \bracket{q}$, \FurukawaCommentFifth{which is independent of $\epsilon$, the operator  $\Pi_\epsilon$ can be extended to a bounded operator from $W^{s,q}_{\FurukawaCommentSeventh{\mathrm{af}}} (\Omega)$ into itself} such that
				\begin{align} \label{eq_bound_Pi}
					\LongNorm{
						\Pi_\epsilon f
					}{\FurukawaCommentFifth{W^{s, q}} \bracket{\Omega} }
					\leq C \LongNorm{f}{\FurukawaCommentFifth{W^{s, q}} \bracket{\Omega}}
				\end{align}
			for all \FurukawaCommentFifth{$f \in W^{s, q}_{\FurukawaCommentSixth{\mathrm{af}}} \bracket{\Omega}$}.
		\end{lemma}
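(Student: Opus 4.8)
The plan is to reduce the claim to three tools already at hand: the uniform Mikhlin-type bounds of Proposition~\ref{prop:C05}, the rescaling Proposition~\ref{prop_scaled_multiplier} combined with the discrete multiplier theorem Proposition~\ref{prop_Fourier_Multiplier_Theorem_Discrete}, and the $L^q$-boundedness of the one–dimensional integral operator $M$ of Proposition~\ref{prop:C03}. It suffices to prove the bound for $f\in C_0^\infty(\Omega)$ with $\tilde f=0$; then only the Fourier modes $n'\neq0$ occur, so the (at $0$ singular) multipliers below cause no harm. I treat $s=0$ first. \emph{Step 1 (an explicit formula for $\phi_\pm$).} Since $f\in C_0^\infty(\Omega)$, the extension $E_0f$ is supported in $\Torus^2\times(-1,1)$, so at $x_3=\pm1$ one has $\abs{x_3-\zeta}=1\mp\zeta$ and $\mathrm{sgn}(x_3-\zeta)=\pm1$ throughout the convolution. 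Computing the third component of $\mathbb{P}_\epsilon^{\Torus^2\times\Real}E_0f$ by the residue theorem exactly as for (\ref{eq:C0555}), using $\mathcal{F}^{-1}_{\xi_3}[(\abs{n'}^2+\xi_3^2)^{-1}]$ and $\mathcal{F}^{-1}_{\xi_3}[\abs{\xi_\epsilon}^{-2}]$ so that $\mathcal{F}^{-1}_{\xi_3}[\abs{n'}^2\abs{\xi_\epsilon}^{-2}]=\tfrac{\epsilon\abs{n'}}{2}\Napier^{-\epsilon\abs{n'}\abs{x_3}}$, gives
\begin{align*}
 \widehat{\phi}_\pm(n')=\int_{-1}^1\Big(\tfrac{\epsilon\abs{n'}}{2}\,\Napier^{-\epsilon\abs{n'}(1\mp\zeta)}\,\widehat{f}_3(n',\zeta)\mp\tfrac{i\epsilon}{2}\,\Napier^{-\epsilon\abs{n'}(1\mp\zeta)}\,n'\!\cdot\widehat{f}'(n',\zeta)\Big)\,d\zeta .
\end{align*}
Thus $\widehat{\phi}_\pm$ is a $\zeta$-integral against $\widehat f$ whose kernel is the scalar $\epsilon\abs{n'}\Napier^{-\epsilon\abs{n'}(1\mp\zeta)}$, possibly times the bounded Riesz symbol $n'/\abs{n'}$.

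\emph{Step 2 (structure of the kernel of $\Pi_\epsilon$).} Substituting $\phi_\pm$ into (\ref{eq:C0712}) and expanding each hyperbolic quotient by (\ref{eq:C0526})--(\ref{eq:C0526_2}) — so that $\cosh(\epsilon\abs{n'}x_3)/\cosh(\epsilon\abs{n'})$ and its companions become $(1\pm\Napier^{-2\epsilon\abs{n'}})^{-1}$ times sums of $\Napier^{-\epsilon\abs{n'}(1\mp x_3)}$ — and multiplying by the factor $\Napier^{-\epsilon\abs{n'}(1\mp\zeta)}$ coming from $\phi_\pm$, one finds that $\Pi_\epsilon f$ is a finite sum of operators $f\mapsto\mathcal{F}^{-1}_{d,n'}\int_{-1}^1\mathcal{K}_\epsilon(n',x_3,\zeta)\mathcal{F}_{d,x'}f(n',\zeta)\,d\zeta$ with
\begin{align*}
 \mathcal{K}_\epsilon(n',x_3,\zeta)=\Big(\tfrac{n'}{\abs{n'}}\ \text{or}\ 1\Big)\,\frac{\epsilon\abs{n'}\,\Napier^{-\epsilon\abs{n'}(\abs{x_3-a}+\abs{\zeta-b})}}{1\pm\Napier^{-2\epsilon\abs{n'}}},\qquad a,b\in\{-1,1\}.
\end{align*}
Writing $\tau=\abs{x_3-a}+\abs{\zeta-b}\in(0,4]$ and $\epsilon\abs{n'}\Napier^{-\epsilon\abs{n'}\tau}=\tau^{-1}(\tau\epsilon\abs{n'})\Napier^{-\tau\epsilon\abs{n'}}$, split $\mathcal{K}_\epsilon=\big(\abs{x_3-a}+\abs{\zeta-b}\big)^{-1}m_\epsilon(n',x_3,\zeta)$; by Proposition~\ref{prop:C05}, the $\lambda=0$ case of (\ref{eq:C0501}), and Proposition~\ref{prop_scaled_multiplier} (which removes the $\epsilon$ after the rescaling $n'\mapsto\epsilon n'$), the Mikhlin constant of $n'\mapsto m_\epsilon(n',x_3,\zeta)$ is bounded uniformly in $\epsilon,x_3,\zeta$, hence $m_\epsilon(\cdot,x_3,\zeta)$ acts on $L^q(\Torus^2)$ with a uniform norm by Proposition~\ref{prop_Fourier_Multiplier_Theorem_Discrete}.

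\emph{Step 3 (assembling, and the general $s$).} Taking the $L^q(\Torus^2)$-norm in $x'$ inside the $\zeta$-integral (Minkowski) and using Step~2, $\|\Pi_\epsilon f(\cdot,x_3)\|_{L^q(\Torus^2)}\le C\sum_{a,b\in\{\pm1\}}\int_{-1}^1\|f(\cdot,\zeta)\|_{L^q(\Torus^2)}\big(\abs{x_3-a}+\abs{\zeta-b}\big)^{-1}d\zeta$; applying Proposition~\ref{prop:C03} to the scalar function $x_3\mapsto\|f(\cdot,x_3)\|_{L^q(\Torus^2)}$, then raising to the $q$-th power and integrating in $x_3$, yields (\ref{eq_bound_Pi}) with $s=0$ and $C=C(q)$ independent of $\epsilon$. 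For $s=k\in\Integer_{\geq0}$, horizontal derivatives commute through $\Pi_\epsilon$ (they only multiply the symbols by $in'_j$), while a vertical derivative applied to (\ref{eq:C0712}) produces an extra factor $\epsilon\abs{n'}$ in front of a $\cosh/\sinh$-quotient; since $\epsilon\abs{n'}\,\Napier^{-\epsilon\abs{n'}(1\mp\zeta)}=\pm\partial_\zeta\Napier^{-\epsilon\abs{n'}(1\mp\zeta)}$, integrating by parts in the $\zeta$-integral of Step~1 trades this factor for a vertical derivative of $f$, the boundary contributions being controlled by the trace theorem and the exponential decay of the hyperbolic multipliers; iterating gives $\|\Pi_\epsilon f\|_{W^{k,q}(\Omega)}\le C\|f\|_{W^{k,q}(\Omega)}$ uniformly in $\epsilon$, and the general $s\geq0$ follows by real interpolation $\big(W^{[s],q}(\Omega),W^{[s]+1,q}(\Omega)\big)_{s-[s],q}$.

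\emph{Main obstacle.} The delicate terms are those carrying $\sinh(\epsilon\abs{n'})$ in the denominator, namely $\sinh(\epsilon\abs{n'}x_3)/\sinh(\epsilon\abs{n'})$ and $\cosh(\epsilon\abs{n'}x_3)/\sinh(\epsilon\abs{n'})$, which blow up at $n'=0$. These always act on $\tfrac12(\phi_+-\phi_-)$, and from $\Napier^{-\epsilon\abs{n'}(1-\zeta)}-\Napier^{-\epsilon\abs{n'}(1+\zeta)}=2\Napier^{-\epsilon\abs{n'}}\sinh(\epsilon\abs{n'}\zeta)$ one sees that $\widehat{\phi_+-\phi_-}$ vanishes to second order at $n'=0$; the first of these two vanishing powers is precisely the regularizing factor $\tfrac{\epsilon\abs{n'}}{1+\epsilon\abs{n'}}$ that appears in estimate (\ref{eq:C0504}), so the combination restores the uniform Mikhlin bound in Step~2. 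The other point that must be watched is the $\epsilon$-independence of every constant, which is ensured throughout by invoking Proposition~\ref{prop_scaled_multiplier} after each rescaling $n'\mapsto\epsilon n'$.
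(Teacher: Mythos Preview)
Your treatment of $s=0$ is essentially the paper's: you obtain the same explicit kernel decomposition (the paper's formula~(\ref{eq:0720})), recognize that every symbol has the form $A(\epsilon n')(1+\epsilon|n'|)\,e^{-\epsilon|n'|(|x_3\pm1|+|\zeta\pm1|)}$ with $A$ uniformly Mikhlin, and conclude via Propositions~\ref{prop:C03} and~\ref{prop_Fourier_Multiplier_Theorem_Discrete}. One small correction: in your ``Main obstacle'' paragraph, $\widehat{\phi_+-\phi_-}$ vanishes only to \emph{first} order at $n'=0$ (the $f'$-contribution carries $\epsilon\, n'\cosh(\epsilon|n'|\zeta)$, not a quadratic factor), but first order is exactly what (\ref{eq:C0504}) requires, so the argument survives.

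For $s>0$ your integration-by-parts route is different from the paper's and, as written, incomplete. The difficulty is density: you start from $f\in C_0^\infty(\Omega)$, but $C_0^\infty(\Omega)$ is not dense in $W^{k,q}(\Omega)$ for $k\ge 1$, so to reach all of $W^{k,q}_{\mathrm{af}}(\Omega)$ you must allow $f$ with nonzero boundary trace. Then the IBP boundary terms at $\zeta=\pm1$ that you dismiss by ``trace theorem and exponential decay'' are of the form $\mathcal F^{-1}_{d,n'}\bigl[\tilde\alpha(n',x_3)\,\epsilon|n'|\,\widehat{f}(n',\pm1)\bigr]$; controlling these in $L^q(\Omega)$ by $\|f\|_{W^{1,q}}$ \emph{uniformly in $\epsilon$} amounts to a Poisson-extension estimate that needs its own argument and is not among the tools at hand. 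The paper sidesteps all of this with a one-line observation you missed: by construction $\Pi_\epsilon f=\nabla_\epsilon\pi_3$ with $\Delta_\epsilon\pi_3=0$ in $\Omega$, hence
\[
\partial_3^2\,\Pi_\epsilon f=-\epsilon^2(\partial_1^2+\partial_2^2)\,\Pi_\epsilon f=-\epsilon^2\,\Pi_\epsilon(\partial_1^2+\partial_2^2)f,
\]
the second equality since horizontal derivatives commute with $\Pi_\epsilon$. Combined with the $s=0$ bound this gives (\ref{eq_bound_Pi}) for every even integer $s$, and real interpolation fills in the remaining values. This is both shorter and avoids the boundary issue entirely.
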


%%%%%%%%%%%%%%%%%%%%%%%%%%%%%%%%%%%%%%%%%%%%%%%%%%%%%%%%%%%%%%%%%%%-----------------------PROOF OF LEMMA C0710---------------------%
%%%%%%%%%%%%%%%%%%%%%%%%%%%%%%%%%%%%%%%%%%%%%%%%%%%%%%%%%%%%%%%%%%
		
		\begin{proof}
			\FurukawaCommentSixth{Let $f \in C_0^\infty (\Omega)$ satisfy $\tilde{f} = 0$.}
			We seek the multiplier of $\Pi_\epsilon$ by a direct calculation.
			Recall that the symbol of $\ep{\mathbb{P}}^{\Torus^2 \times \Real}$ is of the form
				\begin{align} \label{eq:C0715}
					\Fourier_{x_3} \Fourier_{d, x^\prime} \ep{\mathbb{P}}^{\Torus^2 \times \Real} 
					=
						\left(
							\begin{array}{cc}
								I_2 & 0 \\
								0 & 0 \\							
							\end{array}
						\right)
						-
							\left(
							\begin{array}{cc}
								\frac{ \Prime{n} \otimes \Prime{n} }{ \abs{ \ep{ \Prime{n}}}^2 } 
								& \frac{ \Prime{n} \xi_3 / \epsilon }{ \abs{ \ep{ \Prime{n}}}^2 } \\
								\frac{{\Prime{n}}^T \xi_3 / \epsilon }{ \epsilon \abs{ \ep{ \Prime{n}}}^2 } 
								& - \frac{ \abs{ \Prime{n}}^2 }{ \abs{ \ep{ \Prime{n}}}^2 } \\		
							\end{array}
						\right).
				\end{align}
			Since the symbol of $\ep{\mathbb{P}}^{\Torus^2 \times \Real} $ have poles at $\xi_3 = \pm i \epsilon \abs{\Prime{n}}$, we apply $e_3 \cdot$ to (\ref{eq:C0715}) by the left hand side and use the residue theorem so that the power of $e$ is negative to get
				\begin{align} \label{eq:C0717}
					\Fourier_{d, x^{\prime}} 
						\bracket{ \Napier_3 \cdot \mathbb{P}_{\epsilon}^{\Torus^2 \times \Real}  E_0 f }
					& = 
					- \TimeInt{-1}{1}{ 
							\frac{ 
								i \Napier^{ - \abs{ x_3 - \zeta} \epsilon \abs{ n^{\prime} } } 
							}{2} \epsilon n^{\prime} \cdot 
							\Fourier_{d, x^{\prime}} f^{\prime} \bracket{ n^{\prime}, \zeta } 
						}{\zeta}  \notag \\
					& + 
					\TimeInt{-1}{1}{ 
						\frac{ 
							\Napier^{ - \abs{ x_3 - \zeta} \epsilon \abs{ \Prime{n} } } 
						}{2}  \epsilon \abs{ \Prime{n} } \Fourier_{d, x^{\prime}} f_3 \bracket{ n^{\prime}, \zeta} 
					}{\zeta}.
				\end{align}						
			Note that the integration is due to the relationship between the Fourier transform and convolution. 
			Applying trace operators $\gamma_{\pm}$ and $\alpha_{\epsilon, \pm} \bracket{\Prime{n}, x_3} $, respectively, and taking Fourier inverse transform with respect to $\Prime{n}$, we find %$\Pi_{\epsilon} f \bracket{ {\Prime{x}, x_3} }$ is given explicitly as
				\begin{align}
					& \Pi_{\epsilon} f \bracket{ {\Prime{x}, x_3} } \notag \\
					& = 
					- \FourierInverse_{d, \Prime{n}} 
							\TimeInt{-1}{1}{ 
								\alpha_{\epsilon, +} \bracket{ n^{\prime}, x_3 } \frac{ i \Napier^{ - \abs{ 1 - \zeta} \epsilon \abs{ n^{\prime} } } }{2} \epsilon n^{\prime} \cdot \Fourier_{\FurukawaCommentFifth{d,} x^{\prime}} f^{\prime} \bracket{ n^{\prime}, \zeta } 
							}{\zeta} \notag \\
					& + \FourierInverse_{d, \Prime{n}} 
							\TimeInt{-1}{1}{
								\alpha_{\epsilon, +} \bracket{ n^{\prime}, x_3 } 
								\frac{ \Napier^{ - \abs{ 1 - \zeta} \epsilon \abs{ \Prime{n} } } }{2} \epsilon \abs{ \Prime{n} } \Fourier_{d, x^{\prime}} f_3 \bracket{ n^{\prime}, \zeta} 
							}{\zeta}
					 \notag \\ 
					& 
					- \FourierInverse_{d, \Prime{n}}
							\TimeInt{-1}{1}{ 
								\alpha_{\epsilon, - } \bracket{ n^{\prime}, x_3 } 
								\frac{ i \Napier^{ - \abs{ -1 - \zeta} \epsilon \abs{ n^{\prime} } } }{2} 
								\epsilon n^{\prime} \cdot \Fourier_{d, x^{\prime}} f^{\prime} \bracket{ n^{\prime}, \zeta } 
							}{\zeta} \notag \\
					& + \FourierInverse_{d, \Prime{n}} 
							\TimeInt{-1}{1}{ 
								\alpha_{\epsilon, -} \bracket{ n^{\prime}, x_3 } 
								\frac{ \Napier^{ - \abs{ - 1 - \zeta} \epsilon \abs{ \Prime{n} } } }{2} 
								\epsilon \abs{ \Prime{n} } \Fourier_{d, x^{\prime}} f_3 \bracket{ n^{\prime}, \zeta} 
							}{\zeta} \notag \\ \label{eq:0720}
					& = : I_1 + I_2 + I_3 + I_4\FurukawaCommentFifth{.}
				\end{align}
			\FurukawaCommentFifth{By} the definition of $\alpha_{\pm, \epsilon}$,
				\begin{align*}
					I_1
					& = - \FourierInverse_{d, \Prime{n}} 
						\int_{-1}^{1}
							\frac{1}{2}
							\left(
								\begin{array}{c}
									\frac{ 
										i n^{\prime} \sinh \bracket{ \FurukawaCommentSeventh{\epsilon} x_3 \abs{ n^{\prime} } } 
									}{ 
										\abs{ n^{\prime} } \cosh \bracket{ \epsilon \abs{ n^{\prime} } } 
									} 
									+ \frac{ 
										i n^{\prime} \cosh \bracket{ \epsilon x_3 \abs{ n^{\prime} } } 
									}{ 
										\abs{ n^{\prime} } \sinh \bracket{ \epsilon  \abs{ n^{\prime} } } 
									} \\
									\frac{
										\cosh (\epsilon x_3 \abs{\Prime{n}})
									}{
										\cosh (\epsilon \abs{\Prime{n}})
									}	
									+	\frac{
										\sinh (\epsilon x_3 \abs{\Prime{n}})
									}{
										\sinh (\epsilon \abs{\Prime{n}})
									}
								\end{array}
							\right) \\
							& \quad \quad \quad \quad \quad \quad \quad \quad 
							\times \frac{ 
									i \Napier^{ - \abs{ 1 - \zeta} \epsilon \abs{ n^{\prime} } } 
								}{2} \epsilon n^{\prime} \cdot 
					\Fourier_{d, x^{\prime}} f^{\prime} \bracket{ n^{\prime}, \zeta } 
							d\zeta.
				\end{align*}
			Symbols in the integral can be written by $A \bracket{\epsilon \Prime{n}} \bracket{ 1 + \epsilon \abs{\Prime{n}} } \Napier^{- \epsilon \abs{\Prime{n}} \bracket{ \abs{x_3 \pm 1} + \abs{\zeta \pm 1} }} $ for a symbol $A$ with \FurukawaCommentFourth{an} $\epsilon$-independent Mikhlin constant by \FurukawaCommentSeventh{Proposition \ref{prop:C05}}.
			The same argument is valid for $I_j$ ($j = 2, 3, 4$).
			Thus, we find from \FurukawaCommentSeventh{Propositions \ref{prop:C03} and \ref{prop_scaled_multiplier}} that
				\begin{align*}
					\LongNorm{
						\ep{\Pi} f
					}{L^q \bracket{\Omega} }
					& \leq C \LongNorm{
						f
					}{L^{\FurukawaCommentFourth{q}} (\Omega)}
					+ C\LongNorm{ \TimeInt{-1}{1}{ 
						\frac{ \LongNorm{f \bracket{ \cdot, \zeta }}{\Lebesgue{q} \bracket{\Torus^2}} }{ \abs{ x_3 \pm 1 } + \abs{ \zeta \pm 1  } } }{\zeta} 
					}{\Lebesgue{q} \bracket{-1, 1}} \notag \\
					& \leq C \LongNorm{f}{\Lebesgue{q} \bracket{\Omega} }
				\end{align*}				 
			for all \FurukawaCommentSixth{$f \in C_0^\infty (\Omega)$ satisfying $\tilde{f} = 0$}, where the constant $C$ is independent of $\epsilon$.
			\FurukawaCommentFifth{Thus the estimate (\ref{eq_bound_Pi}) holds for $m = 0$.
			We find from the formula (\ref{eq:0720}) that $\partial_j$ commutes with $\Pi_\epsilon$ for $j = 1, 2$.
			Moreover, the equation (\ref{eq:C0701}) implies
			\begin{align*}
				\partial_3^2 \, \Pi_\epsilon f
				& = - \epsilon^2  (\partial_1^2 + \partial_2^2) \, \Pi_\epsilon f \\
				& = - \epsilon^2   \Pi_\epsilon \, (\partial_1^2 + \partial_2^2) f
			\end{align*}
			Thus we find (\ref{eq_bound_Pi}) holds for all positive even number $m$.
			We can obtain (\ref{eq_bound_Pi}) for all \FurukawaCommentSixth{$s>0$} by interpolation.}
		\end{proof}
%%%%%%%%%%%%%%%%%%%%%%%%%%%%%%%%%%%%%%%%%%%%%%%%%%%%%%%%%%%%%%%%%%%-------------------END PROOF OF LEMMA C0710---------------------%
%%%%%%%%%%%%%%%%%%%%%%%%%%%%%%%%%%%%%%%%%%%%%%%%%%%%%%%%%%%%%%%%%%
	
	 \FurukawaCommentFifth{ We set the operator
	\begin{align*}
		P_{N, \epsilon} 
		:= R_0 \left( \mathbb{P}_{\epsilon}^{\Torus^2 \times \Real} - \Pi_{\epsilon} \right) f
	\end{align*}
	for all $f \in C_0^\infty (\Omega)$ \FurukawaCommentSeventh{satisfying $\tilde{f}=0$}.}
	Then \FurukawaCommentThird{Lemma} \ref{lem:C0710} implies %\FurukawaCommentFifth{the operator $P_{N, \epsilon}$ continuously extened a bounded operator from $\FurukawaCommentFifth{W^{s, q}}(\Omega)$ into itself}.
	\begin{corollary} \label{cor_bound_P_N_epsilon}
		 Let $1 < q < \infty$, \FurukawaCommentFifth{$0 < \epsilon \leq 1$ and $s > 0$.} 
		 Then there \FurukawaCommentFourth{exists} a constant $C > 0$, which is independent of $\epsilon$, \FurukawaCommentFifth{the operator  $P_{N, \epsilon}$ can be extended to a bounded operator from $W^{s,q}_{\FurukawaCommentSeventh{\mathrm{af}}} (\Omega)$ into itself} such that \FurukawaCommentFifth{
		 	\begin{align*}
		 		\LongNorm{
		 			P_{N, \epsilon} f
		 		}{
		 			W^{s, q} (\Omega)
		 		}
		 		\leq C \LongNorm{
		 			f
		 		}{
		 			W^{s, q} (\Omega)
		 		}
		 	\end{align*}
		} for all \FurukawaCommentFifth{$f \in W^{s, q}_{\FurukawaCommentSixth{\mathrm{af}}} (\Omega)$}.
	\end{corollary}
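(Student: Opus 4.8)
The plan is to estimate the two pieces of $P_{N,\epsilon}$ separately and add the bounds. For $f \in C_0^\infty(\Omega)$ with $\tilde f = 0$ we have $P_{N,\epsilon} f = R_0\,\mathbb{P}_\epsilon^{\Torus^2 \times \Real} E_0 f - R_0\,\Pi_\epsilon f$, and since $\Pi_\epsilon f$ is already a function on $\Omega$ the last term is just $\Pi_\epsilon f$. For it I would directly invoke Lemma \ref{lem:C0710}, giving $\|\Pi_\epsilon f\|_{W^{s,q}(\Omega)} \le C\|f\|_{W^{s,q}(\Omega)}$ with $C = C(q)$ independent of $\epsilon$. Everything then reduces to bounding $R_0\,\mathbb{P}_\epsilon^{\Torus^2 \times \Real} E_0 f$ in $W^{s,q}(\Omega)$ uniformly in $\epsilon$.

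For that I would first take $s = m \in \Integer_{\ge 0}$. Since $\mathbb{P}_\epsilon^{\Torus^2 \times \Real}$ is a Fourier multiplier it commutes with every $\partial^\alpha$, so the uniform $L^q(\Torus^2 \times \Real)$-boundedness of $\mathbb{P}_\epsilon^{\Torus^2 \times \Real}$ established above (via Propositions \ref{prop_Fourier_Multiplier_Theorem_Discrete} and \ref{prop_scaled_multiplier}) upgrades at once to $\|\mathbb{P}_\epsilon^{\Torus^2 \times \Real} g\|_{W^{m,q}(\Torus^2 \times \Real)} \le C\|g\|_{W^{m,q}(\Torus^2 \times \Real)}$ with the same $\epsilon$-independent constant $C$. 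General $s > 0$ then follows by real interpolation between $W^{[s],q}$ and $W^{[s]+1,q}$, the definition of $W^{s,q}$ used in this paper. Since $f \in C_0^\infty(\Omega)$ vanishes near $\partial\Omega$, $E_0 f \in C_0^\infty(\Torus^2 \times \Real)$ with $\|E_0 f\|_{W^{s,q}(\Torus^2 \times \Real)} \le C\|f\|_{W^{s,q}(\Omega)}$ (an equality for integer $s$), and $R_0$ does not increase the $W^{s,q}$-norm; combining, $\|R_0\,\mathbb{P}_\epsilon^{\Torus^2 \times \Real} E_0 f\|_{W^{s,q}(\Omega)} \le C\|f\|_{W^{s,q}(\Omega)}$ uniformly in $\epsilon$.

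Adding the two estimates gives $\|P_{N,\epsilon} f\|_{W^{s,q}(\Omega)} \le C\|f\|_{W^{s,q}(\Omega)}$ for all $f \in C_0^\infty(\Omega)$ with $\tilde f = 0$, with $C$ independent of $\epsilon$; a density argument then extends $P_{N,\epsilon}$ to a bounded operator on $W^{s,q}_{\mathrm{af}}(\Omega)$ with the same bound, and the image stays horizontal-average-free because $E_0$, $\mathbb{P}_\epsilon^{\Torus^2 \times \Real}$, $\Pi_\epsilon$ and $R_0$ are all diagonal in the horizontal frequency while $f$ has vanishing zeroth horizontal mode. I expect the only genuinely delicate point to be the $\epsilon$-independence of the $L^q$-bound for $\mathbb{P}_\epsilon^{\Torus^2 \times \Real}$ — which is precisely the purpose of Proposition \ref{prop_scaled_multiplier} — so that, once this is in hand, the corollary is little more than a repackaging of Lemma \ref{lem:C0710} together with the uniform boundedness of the anisotropic Helmholtz projection on the cylinder $\Torus^2 \times \Real$.
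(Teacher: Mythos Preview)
Your approach is precisely what the paper intends: the paper's own proof is the single line ``Then Lemma \ref{lem:C0710} implies [the corollary]'', and you have correctly unpacked it into its two ingredients --- Lemma \ref{lem:C0710} for the $\Pi_\epsilon$ piece, and the uniform $L^q$-boundedness of $\mathbb{P}_\epsilon^{\Torus^2\times\Real}$ (lifted to $W^{m,q}$ by commutation with derivatives, then interpolated) for the piece $R_0\mathbb{P}_\epsilon^{\Torus^2\times\Real}E_0$.

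There is, however, a gap in your final density step. The space $C_0^\infty(\Omega)\cap\{\tilde f=0\}$ is \emph{not} dense in $W^{s,q}_{\mathrm{af}}(\Omega)$ once $s>1/q$: its closure consists of functions with vanishing trace on $\partial\Omega$, which is strictly smaller. Hence the a priori bound you establish on compactly supported smooth $f$ does not by itself yield a bounded extension to all of $W^{s,q}_{\mathrm{af}}(\Omega)$. The difficulty is concentrated in the vertical derivative of $R_0\mathbb{P}_\epsilon^{\Torus^2\times\Real}E_0 f$: for general $f\in W^{1,q}(\Omega)$ one has $\partial_3 E_0 f = E_0\partial_3 f$ plus boundary contributions at $x_3=\pm1$, so your commutation argument breaks down outside $C_0^\infty$. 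Note how the paper's proof of Lemma \ref{lem:C0710} avoids the analogous issue for $\Pi_\epsilon$: it uses that $\pi_3$ is $\epsilon$-harmonic to convert $\partial_3^2\Pi_\epsilon f$ into $-\epsilon^2\Pi_\epsilon\Delta_H f$, thereby reducing everything to \emph{horizontal} derivatives of $f$, which commute cleanly with $E_0$ and require no density. The paper does not spell out the corresponding step for $R_0\mathbb{P}_\epsilon^{\Torus^2\times\Real}E_0$, so to close your argument you would need either a similar structural device (e.g.\ writing this piece as $f$ minus an anisotropic gradient and exploiting that structure) or a direct kernel estimate controlling its vertical derivatives on all of $W^{s,q}_{\mathrm{af}}(\Omega)$.
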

	
%	\begin{remark}
		\FurukawaComment{Note that} $P_{N, \epsilon}$ is not the anisotropic Helmholtz projection on $\Omega$.
		$P_{N, \epsilon}$ is the operator which maps from the \FurukawaCommentSeventh{$L^{\FurukawaCommentFifth{q}}$}-vector fields into \FurukawaCommentSeventh{$L^{\FurukawaComment{q}}$}-divergence-free vector fields  with tangential trace.
		However,  we find that the anisotropic Helmholtz projection is bounded from $L^{\FurukawaComment{q}} (\Omega)$ into itself by the same method of Lemma \ref{lem:C0710}.
		Let $u \in \FurukawaCommentSixth{C^\infty_0} (\Omega)$.
		Then, we obtain the solution $\pi_\epsilon$ to the Neumann problem
			\begin{align} \label{eq:C0725}
				\left \{
					\begin{array}{rll}
						\Delta_\epsilon \pi_{\epsilon}
						& = \mathrm{div}_\epsilon \, u
						& \mathrm{in} \quad \Omega, \\
						\gamma_\pm \frac{\partial_3 \pi_{\epsilon}}{\epsilon}
						& = u \cdot \nu_\pm
						& \mathrm{on} \quad \partial \Omega.				
					\end{array}
				\right.
			\end{align}
		The anisotropic Helmholtz projection $H_\epsilon$ is defined by
			\begin{align*}
				H_\epsilon u = u - \nabla_\epsilon \pi_\epsilon.
			\end{align*}
		In the case of the Dirichlet boundary condition, i.e. $\gamma_\pm u = 0$, the right hand side of the second equality of (\ref{eq:C0725}) is zero.
		Let us consider the $L^{\FurukawaCommentFourth{q}}$-boundedness of $\nabla_\epsilon \pi_\epsilon$, which implies the boundedness of the anisotropic Helmholtz projection.
		\FurukawaCommentSixth{For the solution $\pi^0$ to the equation
		\begin{align*}
			\left \{
				\begin{array}{rcll}
					\partial_3^2 \pi^0 (x_3) / \epsilon^2 
					& =
					& \partial_3 \tilde{u}_3 (x_3) / \epsilon, 
					& x_3 \in (-1 , 1), \\
					\partial_3 \pi^0 (\pm 1) / \epsilon
					& =
					& 0,
				\end{array}
			 \right.
		\end{align*}
		where $\tilde{u}_3 = \int_{\Torus^2} u_3 \, dx^\prime$, we have 
		\begin{align} \label{eq_C0730_0}
			\nabla_\epsilon \pi^0 = (0, 0, \tilde{u}_3 )^T.
		\end{align}
		}
		Let $\pi_\epsilon^1$ and $\pi_\epsilon^2$ be the solutions to
			\begin{align} \label{eq:C0730}
%				\left \{
%					\begin{array}{rll}
						\Delta_\epsilon \pi_{\epsilon}^1
						 = E_0 \mathrm{div}_\epsilon u \quad
						 \mathrm{in} \quad \Torus^2 \times \Real,
%						\gamma_\pm \frac{\partial_3 \pi_{\epsilon}}{\epsilon}
%						& = u \cdot \mu_\pm
%						& \mathrm{on} \quad \partial \Real^2.				
%					\end{array}
%				\right.
			\end{align}
		and 
			\begin{align*}
				\left \{
					\begin{array}{rll}
						\Delta_\epsilon \pi_{\epsilon} ^2
						& = 0
						& \mathrm{in} \quad \Omega \FurukawaCommentFourth{,}\\
						\FurukawaCommentSixth{\gamma_\pm \partial_3 \pi_{\epsilon}^2 / \epsilon}
						& = - \gamma_\pm \nu_\pm \cdot \nabla_\epsilon \Delta_\epsilon^{-1} E_0 \mathrm{div}_\epsilon \, u
						& \mathrm{on} \quad \partial \Omega,				
					\end{array}
				\right.
			\end{align*}
		respectively, \FurukawaCommentSixth{for $u \in C^\infty_0 (\Omega)$ satisfying $\tilde{u} = 0$.}
%		Then, we find 
%			\begin{align*}
%				\pi_\epsilon 
%				= R_0 \pi_\epsilon^1 + \pi_\epsilon^2
%			\end{align*}
%		\FurukawaComment{up to constant.}
		Let us first consider (\ref{eq:C0730}).
		It follows from integration by parts
		\begin{align*}
			\Fourier_{x_3} \Fourier_{d, x^\prime} E_0 \mathrm{div}_\epsilon u
			& = \Fourier_{d, \Prime{x}} \TimeInt{-1}{1}{
				e^{- i x_3 \xi_3} \left(
					\mathrm{div}_H \Prime{u} (\cdot, x_3)
					+ \frac{\partial_3 u_3 (\cdot, x_3)}{\epsilon}
				\right)
			}{x_3} \\
			& = i \left(
				\begin{array}{cc}
					n^\prime
					& \xi_3 / \epsilon
				\end{array}
			\right)^T \cdot \Fourier_{d, x^\prime} ( E_0 u) \\
			& = \Fourier_{x_3} \Fourier_{d, x^\prime} \mathrm{div}_\epsilon (E_0 u).
		\end{align*}	
		This formula, the Mikhlin theorem and Proposition \ref{prop_Fourier_Multiplier_Theorem_Discrete} imply
			\begin{align} \label{eq:CC0735}
				\LongNorm{
					\nabla_\epsilon \pi_\epsilon^1
				}{
					L^{\FurukawaCommentFourth{q}} (\Omega)					
				}
				\leq C \LongNorm{
					u
				}{
					L^{\FurukawaCommentFourth{q}} (\Omega)
				},
			\end{align}
		where $C>0$ is independent of $\epsilon$.
		Moreover, since $e_3 \cdot \nabla_\epsilon \Delta_\epsilon^{-1} \mathrm{div}_\epsilon$ is given by the \FurukawaCommentFourth{left-hand} side of (\ref{eq:C0717}), we can use the same method as Lemma \ref{lem:C0710} to \FurukawaComment{get}
			\begin{align} \label{eq:CC0736}
				\LongNorm{
					\nabla_\epsilon \pi_\epsilon^2
				}{
					L^{\FurukawaCommentFourth{q}} (\Omega)					
				}
				\leq C \LongNorm{
					u
				}{
					L^{\FurukawaCommentFourth{q}} (\Omega)
				},
			\end{align}
		where $C>0$ is also independent of $\epsilon$.
		\FurukawaCommentSixth{The formula (\ref{eq_C0730_0}) and} \FurukawaCommentSixth{estimates} (\ref{eq:CC0735}) and (\ref{eq:CC0736}) \FurukawaCommentFourth{imply} $L^p$-boundedness of the anisotropic Helmholtz projection on $\Omega$.
%	\end{remark}
	\FurukawaCommentFifth{
	Summing up the above argument, we have
	\begin{lemma} \label{lem_bound_H_epsilon}
		 Let $1 < q < \infty$ and $0 < \epsilon \leq 1$. Then there \FurukawaCommentFifth{exists} a constant $C > 0$, which is independent of $\epsilon$, such that
		 	\begin{align*}
		 		\LongNorm{
		 			H_{ \epsilon} f
		 		}{
		 			L^q (\Omega)
		 		}
		 		\leq C \LongNorm{
		 			f
		 		}{
		 			L^q (\Omega)
		 		}
		 	\end{align*}
		 for all $f \in L^q (\Omega)$.
	\end{lemma}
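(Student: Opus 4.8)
\emph{Proof plan.} By density of $C_0^\infty(\Omega)$ in $L^q(\Omega)$ it suffices to prove the estimate for $u\in C_0^\infty(\Omega)$ with a constant independent of $\epsilon$, and then pass to the limit. First I would split $u=\tilde u+(u-\tilde u)$ into its horizontal average and the horizontally average-free part $u-\tilde u\in L^q_{\mathrm{af}}(\Omega)$. The average part is essentially one-dimensional: solving the Neumann problem for $\pi^0$ gives, by (\ref{eq_C0730_0}), $\nabla_\epsilon\pi^0=(0,0,\tilde u_3)^T$, hence $\|\nabla_\epsilon\pi^0\|_{L^q(\Omega)}\le \|u\|_{L^q(\Omega)}$ with no $\epsilon$-dependence. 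So it remains to bound $\nabla_\epsilon\pi_\epsilon$ for average-free $u$, where $\pi_\epsilon$ solves (\ref{eq:C0725}); note that in the Dirichlet case $\gamma_\pm u=0$, so the right-hand side of the second equation of (\ref{eq:C0725}) vanishes.

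Next I would decompose $\pi_\epsilon=\pi_\epsilon^1+\pi_\epsilon^2$ as in (\ref{eq:C0730}) and the display following it. For the whole-cylinder part, the integration-by-parts identity $E_0\,\mathrm{div}_\epsilon u=\mathrm{div}_\epsilon(E_0 u)$ (established just before (\ref{eq:CC0735})) gives $\nabla_\epsilon\pi_\epsilon^1=\nabla_\epsilon\Delta_\epsilon^{-1}\mathrm{div}_\epsilon(E_0 u)$, a Fourier multiplier operator with symbol $\xi_\epsilon\otimes\xi_\epsilon/|\xi_\epsilon|^2$. After rescaling the vertical variable via (\ref{eq:0530}) the symbol loses its $\epsilon$-dependence, so Proposition \ref{prop_scaled_multiplier} together with the discrete Mikhlin theorem Proposition \ref{prop_Fourier_Multiplier_Theorem_Discrete} yields (\ref{eq:CC0735}) with $\epsilon$-uniform constant. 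The corrector $\pi_\epsilon^2$ is chosen to cancel the boundary flux of $\nabla_\epsilon\pi_\epsilon^1$; since $e_3\cdot\nabla_\epsilon\Delta_\epsilon^{-1}\mathrm{div}_\epsilon$ is given on the boundary by the right-hand side of (\ref{eq:C0717}), the gradient $\nabla_\epsilon\pi_\epsilon^2$ admits exactly the explicit representation of the type produced in the proof of Lemma \ref{lem:C0710} (formula (\ref{eq:C0712}) with the boundary data taken from (\ref{eq:C0717})). Hence its kernel is dominated, uniformly in $\epsilon$, by $A(\epsilon n')(1+\epsilon|n'|)\,e^{-\epsilon|n'|(|x_3\pm1|+|\zeta\pm1|)}$ for a symbol $A$ with $\epsilon$-independent Mikhlin constant, and Propositions \ref{prop:C03} and \ref{prop_scaled_multiplier} applied exactly as in Lemma \ref{lem:C0710} give (\ref{eq:CC0736}). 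Combining the three pieces, $\|H_\epsilon u\|_{L^q(\Omega)}=\|u-\nabla_\epsilon(\pi^0+\pi_\epsilon^1+\pi_\epsilon^2)\|_{L^q(\Omega)}\le C\|u\|_{L^q(\Omega)}$ with $C$ independent of $\epsilon$, and density finishes the proof.

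The main obstacle is the uniform-in-$\epsilon$ control of the harmonic corrector $\pi_\epsilon^2$: its symbol involves ratios of $\sinh$ and $\cosh$ of $\epsilon|n'|$, which degenerate both as $\epsilon\to0$ and as $|n'|\to\infty$. The point is that the compensating factor $\epsilon|n'|/(1+\epsilon|n'|)$ coming from the boundary flux is exactly what Proposition \ref{prop:C05} (estimates (\ref{eq:C0504})--(\ref{eq:C0505})) is built to absorb, so that the corrector reduces to the positive integral operator $M$ of Proposition \ref{prop:C03}, whose $L^q$-boundedness carries no $\epsilon$. Everything else — the residue computation behind the kernel formulas and the Mikhlin-type estimates — is routine bookkeeping.
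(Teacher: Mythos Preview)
Your proposal is correct and follows essentially the same route as the paper: split off the horizontal average via (\ref{eq_C0730_0}), decompose the remaining pressure into the whole-cylinder piece $\pi_\epsilon^1$ handled by the integration-by-parts identity and the Mikhlin theorem (\ref{eq:CC0735}), and the harmonic corrector $\pi_\epsilon^2$ handled by the kernel analysis of Lemma~\ref{lem:C0710} together with Propositions~\ref{prop:C03} and~\ref{prop:C05} to obtain (\ref{eq:CC0736}). The paper presents exactly this argument in the paragraphs preceding the lemma and then simply states the lemma as a summary, so there is nothing to add.
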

	}
%%%%%%%%%%%%%%%%%%%%%%%%%%%%%%%%%%%%%%%%%%%%%%%%%%%%%%%%%%%%%%%%%%
%--------------------------PROPOSITION C0715-------------------- %
%%%%%%%%%%%%%%%%%%%%%%%%%%%%%%%%%%%%%%%%%%%%%%%%%%%%%%%%%%%%%%%%%%
		\begin{proposition} \label{prop:C0715}
			Let $0 <\epsilon \leq 1$, $ 1 < q < \infty $, $0 < a < 1/2$, $z \in \Complex $ satisfying $- a < \RealPart z < 0$ and $ 0 < \theta < \pi / 2$. 
			Then there exists a constant \FurukawaCommentFifth{$C = C \bracket{q, a, \theta}$, which is independent of $\epsilon$,} the solution $ \pi_3 $ to (III) with boundary data $\bracket{ \gamma K_{\lambda, \epsilon} f \cdot \nu } \nu$ satisfies
				\begin{align*}
					\LongNorm{				
						\frac{1}{ 2 \pi i } \TimeInt{\Gamma_\theta}{}{
							\bracket{- \lambda }^{z} \ep{\nabla} \pi_3 
						}{\lambda}
					}{\Lebesgue{q} \bracket{\Omega} }
					\leq C \Napier^{\theta \abs{\ImaginaryPart \, z}} \LongNorm{f}{\Lebesgue{q}\bracket{\Omega}}
				\end{align*}
			\FurukawaCommentFifth{for all $f \in \Lebesgue{q} \bracket{\Omega} $.}
		\end{proposition}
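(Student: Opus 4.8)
The plan is to make every operator explicit in partial Fourier variables, carry out the $\lambda$--integration first, and then invoke Propositions \ref{prop:C03}, \ref{prop_Fourier_Multiplier_Theorem_Discrete} and \ref{prop_scaled_multiplier} together with the two--scale estimates of Proposition \ref{prop:C05}, keeping every constant independent of $\epsilon$. By Remark \ref{rem_average_free} we may assume $\tilde f=0$.

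First I would compute the Neumann datum for (III). Since $\nu_\pm=(0,0,\pm1)$, the boundary datum $\bracket{\gamma K_{\lambda,\epsilon}f\cdot\nu}\nu$ is carried by $\phi_\pm:=\pm\gamma_\pm\bracket{K_{\lambda,\epsilon}E_0 f}_3$, and by (\ref{eq:C0550_discrete})--(\ref{eq:C0555}) the third component of $v_1=K_{\lambda,\epsilon}E_0 f$ depends only on $\eta^\prime_{\lambda,\epsilon}$ and $\partial_3\eta^\prime_{\lambda,\epsilon}$, so that
\begin{align*}
\Fourier_{d,\Prime{x}}\phi_\pm(n^\prime)
&=\int_{-1}^1 i\,\partial_3\eta^\prime_{\lambda,\epsilon}(n^\prime,\pm1-\zeta)\,\bracket{n^\prime\cdot\Fourier_{d,\Prime{x}}f^\prime(n^\prime,\zeta)}\,d\zeta \\
&\quad+\int_{-1}^1|n^\prime|^2\eta^\prime_{\lambda,\epsilon}(n^\prime,\pm1-\zeta)\Fourier_{d,\Prime{x}}f_3(n^\prime,\zeta)\,d\zeta .
\end{align*}
The point here is the identity $\lambda+(1-\epsilon^2)|n^\prime|^2=(s_\lambda-\epsilon|n^\prime|)(s_\lambda+\epsilon|n^\prime|)$ together with the fact that the numerator $-\epsilon|n^\prime|\Napier^{-|x_3|s_\lambda}+s_\lambda\Napier^{-|x_3|\epsilon|n^\prime|}$ of $\eta^\prime_{\lambda,\epsilon}$ vanishes at $s_\lambda=\epsilon|n^\prime|$ and hence is divisible by $(s_\lambda-\epsilon|n^\prime|)$; after this cancellation (this is Lemma 3.5 of \cite{Abels2002}, reflected in Proposition \ref{prop:C05}) the kernel of $\phi_\pm$ is a finite sum of terms of the schematic form $\frac{\epsilon^{j}|n^\prime|^{j}}{s_\lambda(s_\lambda+\epsilon|n^\prime|)}\,\Napier^{-|\pm1-\zeta|\,\mathfrak{s}}$ with $\mathfrak{s}\in\{s_\lambda,\,\epsilon|n^\prime|\}$ and small $j\ge0$, each decaying in $|\lambda|$ at least like $|\lambda|^{-1/2}$, uniformly in $\epsilon$ and $n^\prime$.

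Next I would substitute $\phi_\pm$ into the pressure formula (\ref{eq:C0712}), namely $\nabla_\epsilon\pi_3=\Fourier^{-1}_{d,n^\prime}\alpha_{\epsilon,+}(n^\prime,x_3)\Fourier_{d,\Prime{x}}\phi_++\Fourier^{-1}_{d,n^\prime}\alpha_{\epsilon,-}(n^\prime,x_3)\Fourier_{d,\Prime{x}}\phi_-$; exactly as in the proof of Lemma \ref{lem:C0710}, $\alpha_{\epsilon,\pm}(n^\prime,x_3)=A(\epsilon n^\prime)(1+\epsilon|n^\prime|)\Napier^{-\epsilon|n^\prime|\,|x_3\mp1|}$ with $A$ of $\epsilon$--uniform Mikhlin constant. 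Hence $\nabla_\epsilon\pi_3=\Fourier^{-1}_{d,n^\prime}\int_{-1}^1\Theta_{\lambda,\epsilon}(n^\prime,x_3,\zeta)\Fourier_{d,\Prime{x}}f(n^\prime,\zeta)\,d\zeta$ with $\Theta_{\lambda,\epsilon}$ a finite sum of products of an $\epsilon$--uniformly bounded scaled multiplier in $n^\prime$, a factor $\Napier^{-\epsilon|n^\prime|\,|x_3\mp1|}$, and one of the $\phi_\pm$--kernels above. Setting $\Theta^z_\epsilon:=\frac1{2\pi i}\int_{\Gamma_\theta}(-\lambda)^z\Theta_{\lambda,\epsilon}\,d\lambda$ and using $|(-\lambda)^z|\le\Napier^{\theta|\ImaginaryPart z|}|\lambda|^{\RealPart z}$ on $\Gamma_\theta$, the estimates (\ref{eq:C0501}), and the elementary bound $\int_{\Gamma_\theta}|\lambda|^{\RealPart z-1/2}\Napier^{-c\delta|\lambda|^{1/2}}|d\lambda|\le C\delta^{-1-2\RealPart z}\le C\delta^{-1}$ for $\delta\le2$ (valid since $-a<\RealPart z<0$, $a<1/2$), one finds that the $s_\lambda$--scale part of $\Theta^z_\epsilon$ is dominated by $C\Napier^{\theta|\ImaginaryPart z|}\bracket{|x_3\mp1|+|\zeta\mp1|}^{-1}$, the kernel of the operator $M$ of Proposition \ref{prop:C03}, whereas the $\epsilon|n^\prime|$--scale part becomes, after integrating in $\lambda$, a scaled discrete multiplier with $\epsilon$--uniform Mikhlin constant. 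Applying Proposition \ref{prop:C03} in the vertical variable and Propositions \ref{prop_Fourier_Multiplier_Theorem_Discrete} and \ref{prop_scaled_multiplier} in the horizontal variable then yields the asserted bound with $C=C(q,a,\theta)$ independent of $\epsilon$.

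The main obstacle is this $\lambda$--integration: a crude bound on the $\phi_\pm$--kernel would retain the factor $\bracket{\lambda+(1-\epsilon^2)|n^\prime|^2}^{-1}$, which near its zero is not integrable against $|\lambda|^{\RealPart z}$, so one must use the exact numerator structure of $\eta^\prime_{\lambda,\epsilon}$ to cancel that singularity, and at the same time keep the two scales $s_\lambda$ and $\epsilon|n^\prime|$ separated so that Proposition \ref{prop:C05} and the rescaling Proposition \ref{prop_scaled_multiplier} deliver $\epsilon$--uniform constants; this is precisely where the constraint $\RealPart z>-a>-1/2$ is used, the borderline exponent $-1$ in the $M$--kernel being reached only as $\RealPart z\to0$.
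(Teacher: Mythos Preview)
Your approach is essentially correct in outline but takes a much longer route than the paper. The paper observes the factorization
\[
\nabla_\epsilon\pi_3=\Pi_\epsilon\bigl(K_{\lambda,\epsilon}E_0 f\bigr),
\]
where $\Pi_\epsilon$ is the operator of Lemma~\ref{lem:C0710}, which is \emph{independent of $\lambda$}. Since $\Pi_\epsilon$ commutes with the Cauchy integral $\frac{1}{2\pi i}\int_{\Gamma_\theta}(-\lambda)^z(\,\cdot\,)\,d\lambda$, the proof reduces to two already--established facts: the $\epsilon$--uniform $L^q$--boundedness of $\Pi_\epsilon$ (Lemma~\ref{lem:C0710}) and the BIP bound for $v_1=K_{\lambda,\epsilon}E_0 f$ (Proposition~\ref{prop:C0510}). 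That is the entire proof.

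What you do instead is unpack both $\Pi_\epsilon$ and $K_{\lambda,\epsilon}$ into their explicit kernels, multiply them together, and then carry out the $\lambda$--integration on the composite kernel $\Theta_{\lambda,\epsilon}$ by hand. This amounts to reproving Lemma~\ref{lem:C0710} and Proposition~\ref{prop:C0510} simultaneously, inline, rather than invoking them as black boxes. The $s_\lambda$--scale piece of your argument is fine and recovers the $M$--kernel of Proposition~\ref{prop:C03}; but your treatment of the ``$\epsilon|n'|$--scale part'' is where the sketch becomes thin. After $\lambda$--integration you still have a product of $\Napier^{-\epsilon|n'|\,|x_3\mp1|}$ from $\alpha_{\epsilon,\pm}$ and $\Napier^{-\epsilon|n'|\,|\pm1-\zeta|}$ from $\phi_\pm$, together with a $\lambda$--integrated factor of the form $\int_{\Gamma_\theta}(-\lambda)^z\,s_\lambda^{-1}(s_\lambda+\epsilon|n'|)^{-1}\,d\lambda$; showing this gives an $\epsilon$--uniform Mikhlin multiplier times an $L^q$--bounded integral operator in $(x_3,\zeta)$ is precisely the content of the proof of Lemma~\ref{lem:C0710} (see the estimates around (\ref{eq:C0527})), and you would end up redoing that work. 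The paper's factorization avoids all of this by separating the $\lambda$--dependent resolvent from the $\lambda$--independent Neumann solver \emph{before} integrating.
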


%%%%%%%%%%%%%%%%%%%%%%%%%%%%%%%%%%%%%%%%%%%%%%%%%%%%%%%%%%%%%%%%%%
%-----------------PROOF OF PROPOSITION C0715-------------------- %
%%%%%%%%%%%%%%%%%%%%%%%%%%%%%%%%%%%%%%%%%%%%%%%%%%%%%%%%%%%%%%%%%%
		\begin{proof}
			\FurukawaCommentFifth{In view of Remark \ref{rem_average_free}, we \FurukawaCommentSixth{may assume} $\tilde{f} = 0$ without loss of generality.}
			Since
				\begin{align} \label{eq_formula_nabla_epsilon_pi_3}
					\ep{\nabla} \pi_3 = \ep{\Pi} K_{\lambda, \epsilon} E_0 f
				\end{align}
			and the Cauchy integral commutes with $\ep{\Pi}$, \FurukawaCommentSixth{the conclusion is obtained from} Proposition \ref{prop:C0510} and Lemma \ref{lem:C0710}.
		\end{proof}
%%%%%%%%%%%%%%%%%%%%%%%%%%%%%%%%%%%%%%%%%%%%%%%%%%%%%%%%%%%%%%%%%%
%-------------END OF PROOF OF PROPOSITION C0715----------------- %
%%%%%%%%%%%%%%%%%%%%%%%%%%%%%%%%%%%%%%%%%%%%%%%%%%%%%%%%%%%%%%%%%%
	\FurukawaCommentFifth{
	\begin{proposition} \label{prop_nabla2_resolvent_estimate_2}
			Let $0 <\epsilon \leq 1$, $ 1 < q < \infty $, $ 0 < \theta < \pi / 2$ and $\lambda \in \Sigma_\theta$. 
			Then there exists a constant $C = C \bracket{q, \theta}$, which is independent of $\epsilon$, the solution $ \pi_3 $ to (III) with boundary data $\bracket{ \gamma K_{\lambda, \epsilon} f \cdot \nu } \nu$ satisfies
				\begin{align} \label{eq_nabla2_resolvent_estimate_2}
					\LongNorm{				
						\nabla^2	 \ep{\nabla} \pi_3 
					}{\Lebesgue{q} \bracket{\Omega} }
					\leq C  \LongNorm{f}{\Lebesgue{q}\bracket{\Omega}}
				\end{align}
			for all $f \in \Lebesgue{q} \bracket{\Omega} $. 
		\end{proposition}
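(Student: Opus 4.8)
The plan is the following. As a preliminary step I would invoke Remark~\ref{rem_average_free} to reduce to the case $\tilde{f}=0$: the horizontal average part of $f$ is handled by the explicit solution given there, whose vertical component vanishes, so it produces zero boundary data in (III) and hence no contribution to $\pi_3$. Once $\tilde{f}=0$, the function $g:=K_{\lambda,\epsilon}E_0 f$ has Fourier support in $\{\,|n'|\ge 1\,\}$, so its restriction to $\Omega$ belongs to $W^{2,q}_{\mathrm{af}}(\Omega)$, and Proposition~\ref{prop_nabla2_resolvent_estimate} provides the bound $\|g\|_{W^{2,q}(\Omega)}\le C\|f\|_{L^q(\Omega)}$ with $C$ independent of $\lambda\in\Sigma_\theta$ and of $\epsilon$. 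By the identity \eqref{eq_formula_nabla_epsilon_pi_3}, $\nabla_\epsilon\pi_3=\Pi_\epsilon g$, so the whole claim reduces to bounding the Hessian of $\Pi_\epsilon g$ in $L^q(\Omega)$ in terms of $\|g\|_{W^{2,q}(\Omega)}$.

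To do that I would split $\nabla^2(\Pi_\epsilon g)$ according to the number of vertical derivatives. For a purely horizontal pair $\partial_i\partial_j$ with $i,j\in\{1,2\}$, these commute with $\Pi_\epsilon$ (as shown in the proof of Lemma~\ref{lem:C0710}), so the term equals $\Pi_\epsilon\partial_i\partial_j g$ and Lemma~\ref{lem:C0710} with $s=0$ controls it by $C\|\nabla^2 g\|_{L^q}$. For the pure vertical pair $\partial_3^2$, I would use that each component of $\nabla_\epsilon\pi_3$ is $\Delta_\epsilon$-harmonic, since $\pi_3$ is by (III), whence $\partial_3^2(\nabla_\epsilon\pi_3)=-\epsilon^2(\partial_1^2+\partial_2^2)(\nabla_\epsilon\pi_3)=-\epsilon^2\Pi_\epsilon(\partial_1^2+\partial_2^2)g$, which is controlled by $C\epsilon^2\|\nabla^2 g\|_{L^q}\le C\|\nabla^2 g\|_{L^q}$. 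For a mixed pair $\partial_i\partial_3$ with $i\in\{1,2\}$, I would write $\partial_i\partial_3(\Pi_\epsilon g)=\partial_3\Pi_\epsilon(\partial_i g)$ and estimate $\|\partial_3\Pi_\epsilon(\partial_i g)\|_{L^q(\Omega)}\le\|\Pi_\epsilon(\partial_i g)\|_{W^{1,q}(\Omega)}\le C\|\partial_i g\|_{W^{1,q}(\Omega)}\le C\|\nabla^2 g\|_{L^q(\Omega)}$ via Lemma~\ref{lem:C0710} with $s=1$. Summing the three cases and using $\|\nabla^2 g\|_{L^q(\Omega)}\le C\|f\|_{L^q(\Omega)}$ yields \eqref{eq_nabla2_resolvent_estimate_2}. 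Alternatively one can bypass the case distinction altogether by applying Lemma~\ref{lem:C0710} directly with $s=2$ to $g$.

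I do not expect a genuine analytic obstacle here — the real work sits in Lemma~\ref{lem:C0710} (the $\epsilon$-uniform boundedness of $\Pi_\epsilon$ on $W^{s,q}_{\mathrm{af}}$) and in Proposition~\ref{prop_nabla2_resolvent_estimate}, which I take for granted — so the main point is organizational. The two things to get right are that the mixed derivative $\partial_i\partial_3$ forces one to use the first-order ($s=1$) mapping property of $\Pi_\epsilon$ rather than merely its $L^q$-boundedness, and that the reduction to $\tilde{f}=0$ is genuinely needed, both because $\Pi_\epsilon$ is only defined and bounded on horizontally average-free fields and because it is what places $K_{\lambda,\epsilon}E_0 f$ in $W^{2,q}_{\mathrm{af}}(\Omega)$. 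All constants remain independent of $\epsilon$ because Lemma~\ref{lem:C0710}, Proposition~\ref{prop_nabla2_resolvent_estimate}, and the elementary bound $\epsilon^2\le 1$ are.
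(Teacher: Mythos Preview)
Your proposal is correct and follows essentially the same approach as the paper: the paper's proof is the single sentence that \eqref{eq_nabla2_resolvent_estimate_2} follows directly from \eqref{eq:C0550_discrete}, the identity \eqref{eq_formula_nabla_epsilon_pi_3}, Lemma~\ref{lem:C0710}, and Proposition~\ref{prop_nabla2_resolvent_estimate}. Your ``alternative'' of applying Lemma~\ref{lem:C0710} with $s=2$ directly is exactly this one-line argument (recall the paper's convention $\|h\|_{W^{2,q}}=\|\nabla^2 h\|_{L^q}$), and your case splitting simply unpacks how that lemma handles the various derivative combinations.
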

		\begin{proof}
			The estimate (\ref{eq_nabla2_resolvent_estimate_2}) is a direct consequence of (\ref{eq:C0550_discrete}), (\ref{eq_formula_nabla_epsilon_pi_3}),  \FurukawaCommentSeventh{Lemma \ref{lem:C0710} and Proposition \ref{prop_nabla2_resolvent_estimate}.}	
		\end{proof}
		}
	\subsection{Estimate for $v_2$}
		
%%%%%%%%%%%%%%%%%%%%%%%%%%%%%%%%%%%%%%%%%%%%%%%%%%%%%%%%%%%%%%%%%%
%-------------------------EQUATION 2 ----------------------------%
%%%%%%%%%%%%%%%%%%%%%%%%%%%%%%%%%%%%%%%%%%%%%%%%%%%%%%%%%%%%%%%%%%	
	Let us consider the equation (II) with tangential boundary data $g = \bracket{ g_{+}, g_{-}}$. 
	Set
		\begin{align}
			y_{ \lambda, \epsilon}^{\prime} \bracket{n^{\prime}} 
			& = 2 s_{\lambda} \LongBracket{ I_2 + \frac{ \epsilon \abs{n^{\prime}} }{s_{\lambda}} \frac{ n^{\prime} \otimes n^{\prime} }{\abs{ n^\prime }^2} }, \notag\\ \label{eq:y_lambda_epsilon}
			y_{\lambda, \epsilon} \bracket{\Prime{n}} 
			& = 
			\left(
				\begin{array}{cc}
					\Prime{y}_{ \lambda, \epsilon } \bracket{n^{\prime}} & 0 \\
					0 & 0 
				\end{array}
			\right).
		\end{align}
	Then, $y_{ \lambda, \epsilon }$ satisfies
		\begin{align} \label{eq:C1505}
			\Prime{k}_{ \lambda, \epsilon } \bracket{ n^{\prime}, 0 } y_{\lambda, \epsilon} \bracket{\Prime{n}}
			= J_2
			:=
			\left(
				\begin{array}{cc}
					I_2 & 0 \\
					0 & 0				
				\end{array}
			\right),
		\end{align}
	\FurukawaCommentFifth{where $k^\prime_{\lambda , \epsilon}$ is defined by (\ref{eq:C0555}).}
	Let us define a multiplier operator $L_{\lambda, \epsilon}$ as
		\begin{align} 
			L_{\lambda, \epsilon} g (\Prime{n}, x_3)
			& = \ep{ \mathbb{P}}^{\Torus^2 \times \Real}  \FourierInverse_{d, n^{\prime}} \sLongBracket{
				e_{ \lambda }^{\prime} \bracket{ n^{\prime}, 1 - x_3 } y_{\lambda, \epsilon} \bracket{n^{\prime}} \Fourier_{d,  x^{\prime}} g_{+} \bracket{n^{\prime}} 
			}  \notag \\ \label{eq:C1506}
			& + \ep{ \mathbb{P}}^{\Torus^2 \times \Real} \FourierInverse_{d, n^{\prime}} \sLongBracket{ 
				e_{ \lambda }^{\prime} \bracket{ n^{\prime}, - 1 - x_3 } y_{\lambda, \epsilon} \bracket{n^{\prime}} \Fourier_{d, x^{\prime}} g_{-} \bracket{n^{\prime}} 
			},
		\end{align}
	where $e^\prime_\lambda$ is defined by (\ref{eq_definition_of_e_prime_lamda}).	
	Let $p_{\FurukawaCommentFourth{\epsilon}}^{\prime} \bracket{\Prime{n}, x_3 }$ be a partial Fourier transform of the symbol of $\ep{\mathbb{P}}^{\Torus^2 \times \Real}$ with respect to $\xi_3$.
	Then
		\begin{align}
			L_{\lambda, \epsilon} g (\Prime{n}, \cdot)
			& = \FourierInverse_{d, \Prime{n}} 
				\sLongBracket{
					\Prime{p}_{\epsilon} \bracket{ \Prime{n}, \cdot } \ast_{3} \Prime{e}_{\lambda} \bracket{ \Prime{n}, 1 - \cdot } \Prime{y}_{\lambda, \epsilon} \bracket{\Prime{n}} \Fourier_{d, \Prime{x}} g_{+} \bracket{\Prime{n}}
				} \notag \\ \label{eq:C1507}
			& + \FourierInverse_{d, \Prime{n}} 
				\sLongBracket{
					\Prime{p}_{\epsilon} \bracket{ \Prime{n}, \cdot } \ast_{3} \Prime{e}_{\lambda} \bracket{ \Prime{n}, - 1 - \cdot } \Prime{y}_{\lambda, \epsilon} \bracket{\Prime{n}} \Fourier_{d, \Prime{x}} g_{-} \bracket{\Prime{n}}
				},
		\end{align}
	where $ \cdot \ast_3 \cdot$ is convolution with respect to $x_3$. 
	We set
		\begin{align} \label{eq:C1509}
			W_{\lambda, \epsilon} = P_{N, \epsilon} L_{\lambda, \epsilon}.
	 	\end{align}
	Then, $W_{\lambda, \epsilon} g$ is a solution to (II) with boundary data $ \gamma W_{\lambda, \epsilon} g$.
	We first get \FurukawaComment{the} Fourier multiplier of $ \gamma W_{\lambda, \epsilon}$. 
	Next, we show the map $S_{\lambda,\epsilon} : g \mapsto \gamma W_{\lambda,  \epsilon} g$ has \FurukawaComment{a} bounded inverse for large $\lambda$. 
	Put
		\begin{align} \label{eq:C1510}
			V_{\lambda, \epsilon} g 
			= W_{\lambda, \epsilon} S^{-1}_{\lambda, \epsilon} g,
		\end{align}
	then, $V_{\lambda, \epsilon} g$ gives the solution to (II) with boundary data $g$. 
	
%%%%%%%%%%%%%%%%%%%%%%%%%%%%%%%%%%%%%%%%%%%%%%%%%%%%%%%%%%%%%%%%%%
%--------------------PROPOSITION C1500---------------------------%
%%%%%%%%%%%%%%%%%%%%%%%%%%%%%%%%%%%%%%%%%%%%%%%%%%%%%%%%%%%%%%%%%%

		\begin{proposition} \label{prop:C1500}
			\FurukawaCommentFifth{
			Let $r > 0$ be sufficiently large.
			Let $ 1 < q < \infty $, $0 < \epsilon \leq 1$, $s>0$, $0 < \theta < \pi/2 $ and $ \lambda \in \Sigma_{\theta} $.}
			Then, for $\abs{\lambda} > r $, there exists a bounded operator $R_{\lambda, \epsilon}$ from $\Lebesgue{q} (\Omega)$ into itself satisfying
				\begin{align}  \label{eq_existence_of_inverse}
					\LongNorm{R_{\lambda, \epsilon}}{
						\FurukawaCommentFifth{
						W^{s , q}_{\FurukawaCommentSixth{\mathrm{af}}} (\Torus^2)
						\rightarrow W^{s , q}_{\FurukawaCommentSixth{\mathrm{af}}} (\Torus^2)
						}
					} 
					\leq \frac{C}{\abs{\lambda}^{1/2}},
				\end{align}
			and \FurukawaCommentFifth{
			\begin{align} \label{eq_existence_of_inverse_2}
				\LongNorm{
					R_{\lambda, \epsilon}}{
						\FurukawaCommentFifth{
						W^{s , q}_{\FurukawaCommentSixth{\mathrm{af}}} (\Torus^2)
						\rightarrow W^{s + 1 , q}_{\FurukawaCommentSixth{\mathrm{af}}} (\Torus^2)
					}
				}
				\leq C,
			\end{align}
			} where $C>0$ is independent of $\epsilon$, such that 
				\begin{align} \label{eq:C1533}
					- S_{\lambda, \epsilon}^{-1} = I + R_{\lambda, \epsilon}.
				\end{align}
		\end{proposition}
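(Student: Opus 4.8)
The plan is to compute the horizontal Fourier multiplier symbol of the boundary operator $S_{\lambda,\epsilon} = \gamma W_{\lambda,\epsilon}$ defined after (\ref{eq:C1509}), to identify its leading part and show this leading part equals $-I$ on tangential vector fields uniformly in $\epsilon$, and then to invert $S_{\lambda,\epsilon}$ for large $|\lambda|$ by a Neumann series. In view of Remark \ref{rem_average_free} and the construction of $W_{\lambda,\epsilon}$ we work throughout on horizontally average-free functions, so $n' \in \Integer^2 \setminus \{0\}$.

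First I would apply the traces $\gamma_{\pm}$ at $x_3 = \pm 1$ to $W_{\lambda,\epsilon} g = P_{N,\epsilon} L_{\lambda,\epsilon} g$, using (\ref{eq:C1506})--(\ref{eq:C1507}) and that $\gamma_{\pm}$ commutes with $\Fourier_{d,x'}$. Evaluating the kernel $e'_\lambda(n',\cdot)$ from (\ref{eq_definition_of_e_prime_lamda}) at the two boundary planes, the \emph{diagonal} contributions ($\gamma_+$ acting on $g_+$, $\gamma_-$ on $g_-$) involve $e'_\lambda(n',0) = s_\lambda^{-1}$, whereas the \emph{off-diagonal} ones carry the factor $e'_\lambda(n',\mp 2) = e^{-2 s_\lambda}/s_\lambda$. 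For the diagonal part the essential point is the identity (\ref{eq:C1505}), namely $k'_{\lambda,\epsilon}(n',0)\,y_{\lambda,\epsilon}(n') = J_2$: the choice of $y_{\lambda,\epsilon}$ in (\ref{eq:y_lambda_epsilon}) is designed precisely so that, after passing the convolution $p'_\epsilon \ast_3 e'_\lambda$ and the Helmholtz corrections ($\mathbb{P}_\epsilon^{\Torus^2\times\Real}$ inside $L_{\lambda,\epsilon}$, and $P_{N,\epsilon} = R_0(\mathbb{P}_\epsilon^{\Torus^2\times\Real}-\Pi_\epsilon)$, with $\Pi_\epsilon$ producing only tangential traces by Lemma \ref{lem:C0710}) through the trace, the leading symbol of $\gamma_{\pm} W_{\lambda,\epsilon}$ on same-sign data equals $-J_2$, i.e. $-I$ on tangential fields. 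Collecting the remaining terms one writes $S_{\lambda,\epsilon} = -I + T_{\lambda,\epsilon}$, where $T_{\lambda,\epsilon}$ is a $2\times2$ matrix Fourier multiplier on $\Torus^2$.

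Next I would estimate $T_{\lambda,\epsilon}$ uniformly in $\epsilon \in (0,1]$. After factoring out the anisotropic Helmholtz projections (bounded on $W^{s,q}$ with $\epsilon$-independent constants by Lemma \ref{lem:C0710}, Corollary \ref{cor_bound_P_N_epsilon} and Proposition \ref{prop_Fourier_Multiplier_Theorem_Discrete}), each entry of $T_{\lambda,\epsilon}$ is, up to rescaled multipliers of $\epsilon$-independent Mikhlin constant (Proposition \ref{prop_scaled_multiplier}), a factor of the type $s_\lambda^{-1}(\cdots)$ or $e^{-2 s_\lambda}/s_\lambda$ — precisely those controlled in Proposition \ref{prop:C05}, whose Mikhlin constants are $O(|\lambda|^{-1/2})$ on $\Sigma_\theta$. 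Hence $\Vert T_{\lambda,\epsilon}\Vert_{W^{s,q}_{\mathrm{af}}(\Torus^2)\to W^{s,q}_{\mathrm{af}}(\Torus^2)} \le C|\lambda|^{-1/2}$ with $C$ independent of $\epsilon$. Choosing $r$ so large that $C r^{-1/2} < 1/2$, for $|\lambda| > r$ the operator $-S_{\lambda,\epsilon} = I - T_{\lambda,\epsilon}$ is invertible by Neumann series, $-S_{\lambda,\epsilon}^{-1} = \sum_{k\ge0} T_{\lambda,\epsilon}^k = I + R_{\lambda,\epsilon}$ with $R_{\lambda,\epsilon} = T_{\lambda,\epsilon}(I-T_{\lambda,\epsilon})^{-1}$; this is (\ref{eq:C1533}), and $\Vert R_{\lambda,\epsilon}\Vert \le 2C|\lambda|^{-1/2}$ gives (\ref{eq_existence_of_inverse}).

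For the one-derivative gain (\ref{eq_existence_of_inverse_2}), observe that the same remainder symbols multiplied by $|n'|$ still have $\lambda$- and $\epsilon$-independent bounded Mikhlin constants, since $|n'|/s_\lambda \le C_\theta$ and $|n'|\,e^{-2 s_\lambda}/s_\lambda \le e^{-2 s_\lambda}$; hence $T_{\lambda,\epsilon}$ maps $W^{s,q}_{\mathrm{af}}(\Torus^2)$ into $W^{s+1,q}_{\mathrm{af}}(\Torus^2)$ with a constant independent of $\lambda,\epsilon$. Combining this with $R_{\lambda,\epsilon} = T_{\lambda,\epsilon} + T_{\lambda,\epsilon} R_{\lambda,\epsilon}$ and the $W^{s,q}_{\mathrm{af}}\to W^{s,q}_{\mathrm{af}}$ bound on $R_{\lambda,\epsilon}$ already obtained yields (\ref{eq_existence_of_inverse_2}). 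I expect the main obstacle to be the first step: verifying that the diagonal leading symbol of $\gamma W_{\lambda,\epsilon}$ is \emph{exactly} $-J_2$ uniformly in $\epsilon$ — this is the cancellation engineered by (\ref{eq:y_lambda_epsilon})--(\ref{eq:C1505}), and carrying the convolution $p'_\epsilon \ast_3 e'_\lambda$ together with the projections through the boundary trace while keeping every estimate independent of $\epsilon$ is the delicate computation; the remainder is multiplier bookkeeping via Propositions \ref{prop:C05}, \ref{prop_scaled_multiplier}, \ref{prop_Fourier_Multiplier_Theorem_Discrete} and Lemma \ref{lem:C0710}.
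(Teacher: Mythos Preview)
Your strategy matches the paper's proof exactly: compute $p'_\epsilon \ast_3 e'_\lambda = -k'_{\lambda,\epsilon}$, use the identity (\ref{eq:C1505}) to extract the diagonal $-J_2$, bound the remainder symbols by $C|\lambda|^{-1/2}$ uniformly in $\epsilon$, and invert by Neumann series; the $W^{s,q}\to W^{s+1,q}$ gain likewise comes from the same symbols multiplied by $|\xi'|$.

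One point in your sketch is too coarse and would not go through as stated. You assert that the remainder symbols are, up to rescaled multipliers, of the type $s_\lambda^{-1}$ or $e^{-2s_\lambda}/s_\lambda$. But the kernel $k'_{\lambda,\epsilon}(n',\pm 2)$ from (\ref{eq:C0555}) contains $\eta'_{\lambda,\epsilon}(n',\pm 2)$, which produces a term proportional to
\[
\frac{\epsilon^2}{\lambda+(1-\epsilon^2)|n'|^2}\,\frac{s_\lambda\,e^{-2\epsilon|n'|}}{\epsilon|n'|}\,y'_{\lambda,\epsilon}(n')
\]
(the paper's $II_3$). Here the exponential is $e^{-2\epsilon|n'|}$, not $e^{-2s_\lambda}$, and it tends to $1$ as $\epsilon\to 0$, so no decay in $|\lambda|$ comes from the exponential factor. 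The $|\lambda|^{-1/2}$ bound for this piece requires a separate argument: for small $\epsilon$ one uses the decay of $s_\lambda/\bigl(\lambda+(1-\epsilon^2)|n'|^2\bigr)$, while for $\epsilon$ bounded away from $0$ the factor $e^{-2\epsilon|n'|}$ does the job. The paper makes exactly this case distinction; your ``multiplier bookkeeping'' must include it to obtain a constant independent of $\epsilon$. The analogous care is needed for the $\Pi_\epsilon$-contribution (the $\alpha_{\pm,\epsilon}$ terms), which you absorb into ``factoring out the anisotropic Helmholtz projections'' but which in fact appear explicitly in the trace symbol via (\ref{eq_e_k_y_F_g}) and again involve $\partial_3\eta'_{\lambda,\epsilon}(n',\pm 2)$ with an $e^{-2\epsilon|n'|}$ term.
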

		
%%%%%%%%%%%%%%%%%%%%%%%%%%%%%%%%%%%%%%%%%%%%%%%%%%%%%%%%%%%%%%%%%%%-----------------PROOF OF PROPOSITION C1500---------------------%
%%%%%%%%%%%%%%%%%%%%%%%%%%%%%%%%%%%%%%%%%%%%%%%%%%%%%%%%%%%%%%%%%%
		
		\begin{proof}
			 \FurukawaCommentFifth{Let $g \in C^\infty (\Torus^2)$ be horizontal average-free}.
			Since $e^\prime_{\lambda}$ is an even function with respect to $x_3$, we find from \FurukawaCommentFifth{the} change of variable that
				\begin{align*}
					\Prime{p}_{\epsilon} \bracket{\Prime{n}, \cdot} \ast_3 e^\prime_{\lambda} \bracket{\Prime{n}, 1 - \cdot} \notag
					& = \TimeInt{\Real}{}{
							\Prime{p}_{\epsilon} \bracket{\Prime{n}, \cdot - \zeta} e^\prime_{\lambda} \bracket{\Prime{n}, 1 - \zeta} 
						}{\zeta} \notag \\
					& = - \TimeInt{\Real}{}{
							 		\Prime{p}_{\epsilon} \bracket{\Prime{n}, \eta} e^\prime_{\lambda} \bracket{\Prime{n}, - 1 + \cdot - \eta} 
						 		}{\eta} \notag \\ %\label{eq:C1518}
					& = - \Prime{k}_{\lambda, \epsilon} \bracket{\Prime{n}, - 1 + \cdot },
				\end{align*}
			and similarly 
				\begin{align*}% \label{eq:C1519}
					\Prime{p}_{\epsilon} \bracket{\Prime{n}, \cdot} \ast_3 e^\prime_{\lambda} \bracket{\Prime{n}, - 1 - \cdot}
					= - \Prime{k}_{\lambda, \epsilon} \bracket{\Prime{n}, 1 + \cdot } \FurukawaCommentFourth{.}
				\end{align*}
			Thus, we find from (\ref{eq:C1507}) that
				\begin{align}
					L_{\lambda, \epsilon} g (\Prime{n}, \FurukawaCommentFifth{x_3})
					& = \FourierInverse_{d, {\FurukawaCommentFifth{n^\prime}}} 
						\sLongBracket{
							- \Prime{k}_{\lambda, \epsilon} (\FurukawaCommentFifth{n^\prime}, -1 + \FurukawaCommentFifth{x_3})
							\Prime{y}_{\lambda, \epsilon} \bracket{\FurukawaCommentFifth{n^\prime}} \Fourier_{d, \Prime{x}} g_{+} \bracket{\FurukawaCommentFifth{n^\prime}}
						} \notag \\  \label{eq:C1520}
					& + \FourierInverse_{d, \FurukawaComment{\Prime{n}}} 
						\sLongBracket{
							- \Prime{k}_{\lambda, \epsilon} (\FurukawaCommentFifth{n^\prime}, 1 + \FurukawaCommentFifth{x_3})
							\Prime{y}_{\lambda, \epsilon} \bracket{\FurukawaCommentFifth{n^\prime}} \Fourier_{d, \Prime{x}} g_{-} \bracket{\FurukawaCommentFifth{n^\prime}}
						}\FurukawaCommentFourth{.}
				\end{align}
			We apply $P_{N, \epsilon}$ to (\ref{eq:C1520}) to get
				\begin{align}
					S_{\lambda, \epsilon} g 
					& = \gamma_{\pm} W_{\lambda, \epsilon} g \notag \\
					& = - 
					\FourierInverse_{d, \Prime{n}} 
						\sLongBracket{ \Prime{k}_{\lambda, \epsilon} \bracket{ \Prime{n}, - 1 \pm 1 } y_{\lambda, \epsilon} \bracket{\xi^{\prime}} 
					\Fourier_{d, \Prime{x}} g_{+} \bracket{\Prime{n}} } \notag \\
					& - 
					\FourierInverse_{d, n^{\prime}} 
						\sLongBracket{ 
							\alpha_{+, \epsilon} \bracket{ \Prime{n }, \pm 1
							} e_3 \cdot
							k_{ \lambda, \epsilon }^{\prime} 
							\bracket{
								n^{\prime}, 2 
							} y_{\lambda, \epsilon} \bracket{n^{\prime}} 
					\Fourier_{d, x^{\prime}} g_{-} \bracket{n^{\prime}} } \notag \\	
					& - 
					\FourierInverse_{d, \Prime{n}} 
						\sLongBracket{ 
							\Prime{k}_{\lambda, \epsilon} 
							\bracket{ 
								\Prime{n}, 1 \pm 1 
							} 
							y_{\lambda, \epsilon} \bracket{\xi^{\prime}} 
					\Fourier_{d, \Prime{x}} g_{-} \bracket{\Prime{n}} 
						} \notag \\ 
					& - 
					\FourierInverse_{d, n^{\prime}} 
						\sLongBracket{ 
							\alpha_{-, \epsilon} \bracket{\Prime{n}, \pm 1 } e_3 \cdot
							k_{ \lambda, \epsilon }^{\prime} \bracket{ n^{\prime}, - 2 } y_{\lambda, \epsilon} \bracket{n^{\prime}} 
					\Fourier_{d, x^{\prime}} g_{+} \bracket{n^{\prime}} 
						} \notag \\ \label{eq:C1525}
						& = I_1 + I_2 + I_3 + I_4.
				\end{align}
			Let us estimate $I_1$ and $I_3$.
			\FurukawaCommentFourth{The identity} (\ref{eq:C1505}) implies
				\begin{align} \label{eq:C1522}
					\FourierInverse_{d, \Prime{n}} 
						\sLongBracket{ 
							\Prime{k}_{\lambda, \epsilon} \bracket{ \Prime{n}, 0 } y_{\lambda, \epsilon} \bracket{n^{\prime}} 
					\Fourier_{d, \Prime{x}} g_{\pm} \bracket{\Prime{n}} 
						} 
					= g_{\pm}.
				\end{align}
			%Thus this term is bounded from $\Lebesgue{q}$ into itself. 
			We need to show the other terms are $O(1 / \abs{\lambda}^{1/2})$.
			By (\ref{eq:C0555}) and (\ref{eq:y_lambda_epsilon}), we have
				\begin{align*}
					 & \Prime{k}_{\lambda, \epsilon} \bracket{ \Prime{\xi}, \pm 2 } 
					 y_{\lambda, \epsilon} \bracket{ \Prime{n} } \notag \\
				 	 & =
					 \Napier^{- s_{\lambda}} 
					 	\LongBracket{ 
						 	J_2 + \frac{ \epsilon \abs{\Prime{n}} }{ s_{\lambda} } \frac{ J_2 n \otimes J_2 n }{ \abs{\Prime{n}}^2 }
						} \notag \\ %\label{eq:C1530}
%				   & \, -
%					 J_2 \xi \otimes J_2 \xi 
%					 \frac{ \epsilon^2 }{ \lambda + \bracket{ 1 - \epsilon^2 } \abs{\Prime{\xi}}^2 } 
%					 \frac{ - \epsilon \abs{\Prime{\xi}} \Napier^{  - 2 s_{\lambda} } + s_{\lambda} \Napier^{ - 2 \epsilon \abs{\Prime{\xi}}  }  }{ s_{\lambda} \epsilon \abs{\Prime{\xi}} } s_{\lambda}
%					  \LongBracket{ 
%					  	 J_2 + 
%					  	\frac{ \epsilon \abs{\Prime{\xi}} }{ s_{\lambda} } \frac{  J_2 \xi \otimes  J_2 \xi }{ \abs{\Prime{\xi}}^2 } 
%					  } \\ 
						& \, -
					 J_2 n \otimes J_2 n
					 \frac{ \epsilon^2 }{ \lambda + \bracket{ 1 - \epsilon^2 } \abs{\Prime{n}}^2 } 
%					 \frac{ 
					 	\Napier^{  - 2 s_{\lambda} } 
%					 	+ s_{\lambda} \Napier^{ - 2 \epsilon \abs{\Prime{\xi}}  }  
%					 	}{ 
%					 		s_{\lambda} 
%					 	} s_{\lambda}
					  \LongBracket{ 
					  	 J_2 + 
					  	\frac{ \epsilon \abs{\Prime{n}} }{ s_{\lambda} } \frac{  J_2 n \otimes  J_2 n }{ \abs{\Prime{n}}^2 } 
					  } \\
					  & \, -
					 J_2 n \otimes J_2 n 
					 \frac{ \epsilon^2 }{ \lambda + \bracket{ 1 - \epsilon^2 } \abs{\Prime{n}}^2 } 
					 \frac{ 
%							- \epsilon \abs{\Prime{\xi}} \Napier^{  - 2 s_{\lambda} } 
							\Napier^{ - 2 \epsilon \abs{\Prime{n}}  }  
						}{ 
							\epsilon \abs{\Prime{n}} 
						} s_{\lambda}
					  \LongBracket{ 
					  	 J_2 + 
					  	\frac{ \epsilon \abs{\Prime{n}} }{ s_{\lambda} } \frac{  J_2 n \otimes  J_2 n }{ \abs{\Prime{n}}^2 } 
					  } \notag \\
						& =: II_1 + II_2 + II_3. \notag
				\end{align*}
			We find from (\ref{eq:C0501}) and (\ref{eq:C0504}) in Proposition \ref{prop:C05} and 
			\begin{align} \label{eq_C1530_multiplier}
				\sLongBracket{
					\frac{
						\abs{\xi^\prime}
					}{
						s_\lambda
					}
				}_{\mathcal{M}^\prime}
				+ \sLongBracket{
					\frac{
						J_2 \xi \otimes J_2 \xi
					}{
						\abs{\xi^\prime}^2
					}
				}_{\mathcal{M}^\prime}
				\leq C
				\FurukawaCommentFifth{, \quad\xi = (\xi^\prime , \xi_3) \in \Real^3,}
			\end{align}
		 	that
				\begin{align} \label{eq:boundedness_Mikhlin_constant_A_1}
					\sLongBracket{
						II_1
					}_{\mathcal{M}^\prime}
					\leq C e^{ - c \abs{\lambda}^\frac{1}{2}},
					\FurukawaCommentFifth{
					\quad \sLongBracket{
						\, \abs {\xi^\prime} \, II_1
					}_{\mathcal{M}^\prime}
					\leq C e^{ - c \abs{\lambda}^\frac{1}{2}},
					}
				\end{align}
			\FurukawaCommentSixth{where we interpret that the multiplier $II_1$ is extended from $\Integer^2$ to $\Real^2$ in the canonical way.}
			\FurukawaComment{
			Since 
				\begin{align} \label{eq_C1530_multiplier_2}
					\sLongBracket{
					\frac{
						1
					}{
						\lambda + (1 - \epsilon^2) \abs{\xi^\prime}^2
					}
				}_{\mathcal{M}^\prime}
				\leq \frac{C}{\abs{\lambda}},
				\end{align}
			by the same way as above we find
			}
				\begin{align} \label{eq:boundedness_Mikhlin_constant_A_2}
					\sLongBracket{
						II_2
					}_{\mathcal{M}^\prime}
					\leq \frac{C e^{- c \abs{\lambda}^{1/2}}}{\abs{\lambda}}, \quad
					\FurukawaCommentFifth{
					\sLongBracket{
						\, \abs {\xi^\prime} \, II_2
					}_{\mathcal{M}^\prime}
					\leq C e^{ - c \abs{\lambda}^\frac{1}{2}},}
				\end{align}
			for \FurukawaComment{$\lambda \in \Sigma_\theta$}, where constants $c, C > 0$ are independent of $\epsilon$.
			Note that \FurukawaCommentFifth{$II_3$} \FurukawaComment{has a} little bit problem near $\epsilon = 0$ since, at this point, we can not use \FurukawaCommentFourth{the} decay of $\Napier^{- 2 \epsilon \abs{\Prime{\xi}}}$ to obtain uniform boundedness of the Mikhlin constant. 
			However, we can use \FurukawaCommentFourth{the} decay of $1/ \bracket{ \lambda + (1 - \epsilon^2) \abs{\Prime{\xi}}^2}$ around $\epsilon = 0$.
			On the other hand, when $\epsilon$ is away from $0$, we have no problem \FurukawaComment{to use decay of $e^{- 2 \epsilon \abs{\xi^\prime}}$}.
			Thus, combining \FurukawaComment{this} observation with \FurukawaCommentSeventh{Proposition \ref{prop_scaled_multiplier}, (\ref{eq_C1530_multiplier}) and (\ref{eq_C1530_multiplier_2})}, we conclude that
				\begin{align} \label{eq:boundedness_Mikhlin_constant_A_3}
					\sLongBracket{
						II_3
					}_{\mathcal{M}^\prime}
					\leq \frac{C}{\abs{\lambda}^\frac{1}{2}}, \quad
					\FurukawaCommentFifth{
					\sLongBracket{
						\, \abs{\xi^\prime} \, II_3
					}_{\mathcal{M}^\prime}
					\leq C,}	
				\end{align}
			where $C>0$ is independent of $\epsilon$.
			Thus we find from (\ref{eq:boundedness_Mikhlin_constant_A_1}), (\ref{eq:boundedness_Mikhlin_constant_A_2}) and \FurukawaComment{(\ref{eq:boundedness_Mikhlin_constant_A_3}) }that \FurukawaCommentFifth{
				\begin{align} 
					& 
					\LongNorm{
						I_1 + I_3 - g_+ - g_-
					}{L^q (\Torus^2)} \notag \\
					& \leq  \LongNorm{
						\FourierInverse_{d, \Prime{n}} 
							\sLongBracket{ 
								\Prime{k}_{\lambda, \epsilon} \bracket{ \Prime{n}, 2 } y_{\lambda, \epsilon} \bracket{n^{\prime}} 
						\Fourier_{d, \Prime{x}} g \bracket{\Prime{n}} 
							} 
					}{\FurukawaCommentFifth{L^q (\Omega)}} \notag \\
					& + \LongNorm{
						\FourierInverse_{d, \Prime{n}} 
							\sLongBracket{ 
								\Prime{k}_{\lambda, \epsilon} \bracket{ \Prime{n}, - 2 } y_{\lambda, \epsilon} \bracket{n^{\prime}} 
						\Fourier_{d, \Prime{x}} g \bracket{\Prime{n}} 
							} 
					}{\FurukawaCommentFifth{L^q (\Omega)}} \notag \\
					& \leq \frac{C}{\abs{\lambda}^\frac{1}{2}} \LongNorm{
						g
					}{
						L^{\FurukawaCommentFifth{q}} (\FurukawaCommentFifth{\Torus^2})
					}.\label{eq:C1531}
				\end{align}
				}
			Next, we estimate $I_2$ and $I_4$.
			It follows from (\ref{eq:C0555}) that
 				\begin{align}
					& e_3 \cdot
							k_{ \lambda, \epsilon }^{\prime} \bracket{ n^{\prime}, \pm 2 } y_{\lambda, \epsilon} \bracket{n^{\prime}} 
					\Fourier_{d, x^{\prime}} g_{\mp} \bracket{n^{\prime}} \notag \\
					& = e_3 \cdot \sLongBracket{
						\frac{e^{- s_\lambda}}{2 s_\lambda} y_{\lambda, \epsilon} (\Prime{n}) \Fourier_{d, \Prime{x}} g_\mp
						- \left( 
						\begin{array}{cc}
							\eta_{\lambda, \epsilon}^{\prime} \bracket{ n^{\prime}, \pm 2 }
							n^{\prime} \otimes n^{\prime} 
							\Prime{y}_{\lambda, \epsilon} (\Prime{n})
							& 0 \\
							- \partial_3 \eta_{\lambda, \epsilon}^{\prime} \bracket{ n^{\prime}, \pm 2 }
							i { n^{\prime} }^T
							\Prime{y}_{\lambda, \epsilon} (\Prime{n})
							& 0
						\end{array}
						\right)
						\Fourier_{d, \Prime{x}} g_\mp
					} \notag \\ \label{eq_e_k_y_F_g}
					& = - \left( 
						\begin{array}{ccc}
							- \partial_3 \eta_{\lambda, \epsilon}^{\prime} \bracket{ n^{\prime}, \pm 2}
							i { n^{\prime} }^T
							\Prime{y}_{\lambda, \epsilon} (\Prime{n})
							& 0
						\end{array}
						\right)
						\Fourier_{d, \Prime{x}} g_\mp.
				\end{align}
			Recall 
				$ \partial_3 \eta_{\lambda, \epsilon} (\Prime{n}, \pm 2) = \frac{
							\epsilon^2 
						 }{ 
						 	\lambda 
						 	+ \bracket{ 1 - \epsilon^2 } \abs{ n^{\prime} }^2 
						 } \frac{
						 	\Napier^{- 2 s_{\lambda}} - \Napier^{ - 2 \epsilon \abs{n^{\prime}} }
						 }{ 
						 	2 
						 } 
				$.
			Then, we find from \FurukawaCommentSeventh{the first inequality of (\ref{eq:C0501})} and \FurukawaComment{(\ref{eq_C1530_multiplier_2})} that
				\begin{align*}
					& \sLongBracket{
						\LongBracket{
							1 + \epsilon \abs{\Prime{\xi}}
						}
						\partial_3 \eta_{\lambda, \epsilon} (\Prime{\xi}, \pm 2)
						\Prime{y}_{\lambda} (\Prime{\xi})
					}_{\mathcal{M}^\prime} \notag \\
					& = \sLongBracket{
						\frac{
							\epsilon^2 
						 }{ 
						 	\lambda 
						 	+ \bracket{ 1 - \epsilon^2 } \abs{ \Prime{\xi} }^2 
						 }
						\LongBracket{
							1 + \epsilon \abs{\Prime{\xi}}
						} 
						s_{\lambda} 
						 \LongBracket{ 
						 	\Napier^{- 2 s_{\lambda}} 
						 	- \Napier^{ - 2 \epsilon \abs{\xi^{\prime}} }
						 	}
						 	\LongBracket{ 
						 		I_2 + \frac{ \epsilon \abs{\Prime{\xi}} }{s_{\lambda}} \frac{ \Prime{\xi} \otimes \Prime{\xi} }{\abs{ \Prime{\xi} }^2} 
						 	}
					}_{\mathcal{M}^\prime} \notag \\
					& \leq \frac{C}{\abs{\lambda}^\frac{1}{2}},
				\end{align*}
			\FurukawaCommentFifth{
			and
			\begin{align} \label{eq_regularity_R_epsilon_lamda}
				\sLongBracket{
					\abs{\xi^\prime}
						\LongBracket{
							1 + \epsilon \abs{\Prime{\xi}}
						}
						\partial_3 \eta_{\lambda, \epsilon} (\Prime{\xi}, \pm 2)
						\Prime{y}_{\lambda} (\Prime{\xi})
					}_{\mathcal{M}^\prime} 
				\leq C,
			\end{align}}
			\hspace{-1.5mm}where $C>0$ is independent of $\epsilon$.
			\FurukawaCommentSeventh{The formula (\ref{eq:C0712}), estimates (\ref{eq:C0504}) and (\ref{eq:C0505})} lead \FurukawaCommentSixth{to}
				\begin{align*}
						\sLongBracket{ 
							\frac{
								\alpha_{\pm, \epsilon} \bracket{\Prime{\xi}, \pm \FurukawaCommentFifth{1}} \epsilon \abs{\Prime{\xi}} 
							}{
								1 + \epsilon \abs{\Prime{\xi}} 
							}
						}_{\mathcal{M}^\prime} 
						< \infty,
				\end{align*}
		uniformly on $\epsilon$.
		\FurukawaCommentSixth{We find from Proposition \ref{prop_Fourier_Multiplier_Theorem_Discrete} that}
				\begin{align}
						& \FurukawaCommentFifth{\LongNorm{
							I_2 + I_4 
						}{L^q (\Torus^2)}} \notag \\
						& \FurukawaCommentFifth{\leq} \LongNorm{
							\FourierInverse_{d, \Prime{n}} 
								\alpha_{+, \epsilon} \bracket{\Prime{n}, \pm 1 } e_3 \cdot
								k_{ \lambda, \epsilon }^{\prime} \bracket{ \Prime{n},  2 } 
								y_{\lambda, \epsilon} \bracket{\Prime{n}} 
							\Fourier_{d, x^{\prime}} g_{-}
						}{\Lebesgue{q} (\Torus^2)} \notag \\
						& + \LongNorm{
							\FourierInverse_{d, \Prime{n}} 
								\alpha_{-, \epsilon} \bracket{\Prime{n}, \pm 1 } e_3 \cdot
								k_{ \lambda, \epsilon }^{\prime} \bracket{ \Prime{n}, - 2 } 
								y_{\lambda, \epsilon} \bracket{\Prime{n}} 
							\Fourier_{d, x^{\prime}} g_{+}
						}{\Lebesgue{q} (\Torus^2)} \notag \\ \label{eq:C1532}
						& \leq \frac{
							C
						}{ 
							\abs{\lambda}^\frac{1}{2} 
						} \LongNorm{
							g
						}{
							L^q (\Torus^2)
						},
				\end{align}
			where $C$ is independent of $\epsilon$. 
			Thus, taking $\abs{\lambda}$ sufficiently large, clearly \FurukawaCommentFourth{the} choice of $\lambda$ is also independent of $\epsilon$, we can conclude by \FurukawaComment{(\ref{eq:C1525}), (\ref{eq:C1522}\FurukawaCommentFourth{)}, (\ref{eq:C1531}) and (\ref{eq:C1532}) }that
				\begin{align*}
					- S_{\lambda,\epsilon} 
					= I + O \bracket{ \abs{\lambda}^{ - 1/2} }.
				\end{align*}
			\FurukawaCommentFifth{By the Neumann series argument we obtain (\ref{eq_existence_of_inverse}) for $s = 0$.
			Moreover, we find from (\ref{eq:boundedness_Mikhlin_constant_A_1}), (\ref{eq:boundedness_Mikhlin_constant_A_2}), (\ref{eq:boundedness_Mikhlin_constant_A_3}) and (\ref{eq_regularity_R_epsilon_lamda}) that (\ref{eq_existence_of_inverse_2}) holds for $s = 0$.
			Since $\partial_j$ ($j = 1,2$) commutes Fourier multiplier operators, we obtain (\ref{eq_existence_of_inverse}) and (\ref{eq_existence_of_inverse_2}) for $s > 0$. 
			}
		\end{proof}
	
%%%%%%%%%%%%%%%%%%%%%%%%%%%%%%%%%%%%%%%%%%%%%%%%%%%%%%%%%%%%%%%%%%%------------END OF PROOF OF PROPOSITION C1500-------------------%
%%%%%%%%%%%%%%%%%%%%%%%%%%%%%%%%%%%%%%%%%%%%%%%%%%%%%%%%%%%%%%%%%%

%%%%%%%%%%%%%%%%%%%%%%%%%%%%%%%%%%%%%%%%%%%%%%%%%%%%%%%%%%%%%%%%%%%-------------------------COROLLARY C1505------------------------%
%%%%%%%%%%%%%%%%%%%%%%%%%%%%%%%%%%%%%%%%%%%%%%%%%%%%%%%%%%%%%%%%%%
%	\begin{corollary} \label{cor:C1505}
%			Under the same assumption as Proposition \ref{eq:C1531}, if we take $\RealPart \lambda > 0$ sufficiently large, then $S_{\lambda, \epsilon}$ have bounded inverse and there exists a constant $C > 0$, which is independent of $\epsilon$, and a bounded operator $R_{\lambda, \epsilon}$ from $\Lebesgue{q} (\Omega)$ into itself satisfying
%				\begin{align*}
%					\LongNorm{R_{\lambda, \epsilon}}{\Lebesgue{q} (\Omega) \rightarrow \Lebesgue{q} (\Omega)} \leq \frac{C}{\abs{\lambda}^{1/2}} ,
%				\end{align*}
%			such that 
%				\begin{align} \label{eq:C1533}
%					- S_{\lambda, \epsilon}^{-1} = I + R_{\lambda, \epsilon}.
%				\end{align}
%		\end{corollary}
%%%%%%%%%%%%%%%%%%%%%%%%%%%%%%%%%%%%%%%%%%%%%%%%%%%%%%%%%%%%%%%%%%%---------------------END OF COROLLARY C1505---------------------%
%%%%%%%%%%%%%%%%%%%%%%%%%%%%%%%%%%%%%%%%%%%%%%%%%%%%%%%%%%%%%%%%%%

	\begin{proposition} \label{prop:C1507}
		Let \FurukawaCommentFifth{$1 < q < \infty$, $0 < \epsilon \leq 1$,} $0 < \theta < \pi / 2$ and $\lambda \in \Sigma_\theta$.
		Then there exist $r > 0$ and a constant $C > 0$, which is independent of $\epsilon$ and $\lambda$, if $\abs{\lambda} \geq r$, $V_{\lambda , \epsilon}$ defined by (\ref{eq:C1510}) satisfies
		\begin{align} \label{eq:C1534}
			\LongNorm{V_{\lambda, \epsilon} g}{\Lebesgue{q} \bracket{\Omega}}
			\leq C \abs{\lambda}^{- 1 / 2q} \LongNorm{g}{\Lebesgue{q}  \bracket{\partial \Omega}}
		\end{align}
		for all \FurukawaCommentSixth{$g \in \Lebesgue{q} \bracket{\partial \Omega}$ satisfying $\tilde{g} = 0$.}
	\end{proposition}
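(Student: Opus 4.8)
The plan is to reduce (\ref{eq:C1534}) to a trace-type bound for $W_{\lambda,\epsilon}$ and then to a Mikhlin estimate combined with an $x_3$-integration. By (\ref{eq:C1510}) and (\ref{eq:C1533}) one has $V_{\lambda,\epsilon} = -W_{\lambda,\epsilon}(I + R_{\lambda,\epsilon})$ with $W_{\lambda,\epsilon} = P_{N,\epsilon} L_{\lambda,\epsilon}$ as in (\ref{eq:C1509}). By Proposition \ref{prop:C1500} in the case $s = 0$, $\|R_{\lambda,\epsilon}\|_{L^q(\partial\Omega) \to L^q(\partial\Omega)} \leq C|\lambda|^{-1/2} \leq 1$ once $|\lambda| \geq r$ with $r$ large and independent of $\epsilon$; hence it suffices to prove
\[
\|W_{\lambda,\epsilon} g\|_{L^q(\Omega)} \leq C |\lambda|^{-1/(2q)} \|g\|_{L^q(\partial\Omega)}
\]
for horizontally average-free $g = (g_+, g_-)$, with $C$ independent of $\epsilon$ and $\lambda$, because then $\|V_{\lambda,\epsilon} g\|_{L^q(\Omega)} \leq \|W_{\lambda,\epsilon}\|\,\|I + R_{\lambda,\epsilon}\|\,\|g\|_{L^q(\partial\Omega)} \leq 2C|\lambda|^{-1/(2q)}\|g\|_{L^q(\partial\Omega)}$. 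Now $P_{N,\epsilon}$ is bounded on $L^q(\Omega)$ uniformly in $\epsilon$ (Lemma \ref{lem:C0710} together with the uniform $L^q$-boundedness of $\mathbb{P}_\epsilon^{\Torus^2\times\Real}$), and by linearity of $\mathbb{P}_\epsilon^{\Torus^2\times\Real}$ in (\ref{eq:C1506}) we have $L_{\lambda,\epsilon} g = \mathbb{P}_\epsilon^{\Torus^2\times\Real} E_{\lambda,\epsilon} g$, where
\begin{align*}
	E_{\lambda,\epsilon} g(x', x_3) := {}& \FourierInverse_{d,n'}\bigl[ e'_\lambda(n', 1 - x_3)\, y_{\lambda,\epsilon}(n')\, \Fourier_{d,x'} g_+(n') \bigr] \\
	&+ \FourierInverse_{d,n'}\bigl[ e'_\lambda(n', -1 - x_3)\, y_{\lambda,\epsilon}(n')\, \Fourier_{d,x'} g_-(n') \bigr].
\end{align*}
Thus it is enough to bound $\|E_{\lambda,\epsilon} g\|_{L^q(\Torus^2 \times \Real)}$ by the right-hand side above.

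For the core estimate I would use the explicit symbol of $E_{\lambda,\epsilon}$. By (\ref{eq_definition_of_e_prime_lamda}) and (\ref{eq:y_lambda_epsilon}) the factor $s_\lambda$ cancels, and the $n'$-symbol acting on $g_\pm$ equals $2\, e^{-|1 \mp x_3|\, s_\lambda}\bigl( I_2 + \tfrac{\epsilon |n'|}{s_\lambda}\tfrac{n' \otimes n'}{|n'|^2}\bigr)$. For each fixed $x_3$ I would bound its two-dimensional Mikhlin constant uniformly in $\epsilon$, $\lambda$ and $x_3$: the Riesz-type matrix $\tfrac{n'\otimes n'}{|n'|^2}$ has bounded Mikhlin constant; $\tfrac{\epsilon |n'|}{s_\lambda}$ has Mikhlin constant $\leq C$ by (\ref{eq_C1530_multiplier}) together with $\epsilon \leq 1$; and, since $\mathrm{Re}\, s_\lambda \geq c |\lambda|^{1/2}$ on $\Sigma_\theta$, $[e^{-t s_\lambda}]_{\mathcal{M}'} \leq C e^{-c' t |\lambda|^{1/2}}$ for $t > 0$ with $C$ independent of $t$ (the $\alpha = 0$ case of (\ref{eq:C0501}), proved as in Lemma 3.5 of \cite{Abels2002}). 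Hence the symbol has Mikhlin constant $\leq C e^{-c' |1 \mp x_3|\, |\lambda|^{1/2}}$; as $g_\pm$ is average-free, its zero frequency plays no role, so Proposition \ref{prop_Fourier_Multiplier_Theorem_Discrete} gives
\begin{align*}
	\|E_{\lambda,\epsilon} g(\cdot, x_3)\|_{L^q(\Torus^2)} \leq {}& C e^{-c' |1 - x_3|\, |\lambda|^{1/2}} \|g_+\|_{L^q(\Torus^2)} \\
	&+ C e^{-c' |1 + x_3|\, |\lambda|^{1/2}} \|g_-\|_{L^q(\Torus^2)} .
\end{align*}
Raising to the power $q$, integrating over $x_3 \in \Real$ and using $\int_\Real e^{-q c' |1 \mp x_3|\, |\lambda|^{1/2}}\, dx_3 \leq 2 (q c')^{-1} |\lambda|^{-1/2}$ yields $\|E_{\lambda,\epsilon} g\|_{L^q(\Torus^2 \times \Real)} \leq C |\lambda|^{-1/(2q)}\|g\|_{L^q(\partial\Omega)}$, which together with the first paragraph establishes (\ref{eq:C1534}). (Integrating instead over $x_3 \in (-1,1)$ only improves the bound.)

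This argument is essentially an assembly of multiplier estimates already available, so I do not expect a genuine obstruction; the points that need care are: (i) keeping every constant independent of $\epsilon$ through the composition $P_{N,\epsilon}\, \mathbb{P}_\epsilon^{\Torus^2\times\Real}$ and, in particular, through the $\epsilon$-dependent term $\tfrac{\epsilon |n'|}{s_\lambda}\tfrac{n'\otimes n'}{|n'|^2}$ of $y_{\lambda,\epsilon}$, which does not decay as $\epsilon \to 0$ but is a uniformly bounded Fourier multiplier thanks to (\ref{eq_C1530_multiplier}); and (ii) the bookkeeping of the $x_3$-integration, where the exponential decay $e^{-c'|1 \mp x_3|\, |\lambda|^{1/2}}$ of the Mikhlin constant must be turned into exactly the half-power $|\lambda|^{-1/(2q)}$, not $|\lambda|^{-1/q}$.
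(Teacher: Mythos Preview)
Your proof is correct and follows essentially the same route as the paper: factor $V_{\lambda,\epsilon}=W_{\lambda,\epsilon}S_{\lambda,\epsilon}^{-1}$, use Proposition~\ref{prop:C1500} to bound $S_{\lambda,\epsilon}^{-1}$ uniformly, bound $P_{N,\epsilon}$ via Lemma~\ref{lem:C0710}/Corollary~\ref{cor_bound_P_N_epsilon}, and show $\|L_{\lambda,\epsilon}g\|_{L^q}\le C|\lambda|^{-1/(2q)}\|g\|_{L^q(\partial\Omega)}$. The only difference is that the paper obtains this last bound by quoting the resolvent estimate for the Dirichlet Laplacian (Lemma~5.3 in \cite{Abels2002}), whereas you unpack it explicitly via the pointwise Mikhlin bound $Ce^{-c'|1\mp x_3||\lambda|^{1/2}}$ and integration in $x_3$; both give the same $|\lambda|^{-1/(2q)}$.
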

	\begin{proof}
		We take $r>0$ so that $R_{\lambda, \epsilon}$ exists.
		Then $S_{\lambda, \epsilon}^{-1}$ is bounded on $L^q (\Omega)$.
		\FurukawaCommentSeventh{
		 We find from the resolvent estimate for the Dirichlet Laplacian on $\Omega$, see Lemma 5.3 in \cite{Abels2002}, and Proposition \ref{prop:C1500} that
		 \begin{align*}
				\LongNorm{ 
					L_{\lambda, \epsilon} S_{\lambda, \epsilon}^{-1} g
				}{L^q (\Omega)} 
				\leq C \abs{\lambda}^{- 1 / 2 q} \LongNorm{ 
					g 
				}{L^q (\Omega)},
			\end{align*}
			where $C>0$ is independent of $\epsilon$.
		By Corollary \ref{cor_bound_P_N_epsilon}, we obtain (\ref{eq:C1534}).
		}

%		Using boundedness of $\mathbb{P}_\epsilon^{\Torus^2 \times \Real}$ and $P_{N, \epsilon}$ on $L^q (\Torus^2 \times \Real)$ and $L^q (\Omega)$, respectively, and the resolvent estimate for the Dirichlet Laplacian on $\Omega$, see Lemma 5.3 in \cite{Abels2002}, we find
%			\begin{align*}
%				\LongNorm{ 
%					V_{\lambda, \epsilon} g 
%				}{L^q (\Omega}
%				& = \LongNorm{ 
%					P_{N, \epsilon} L_{\lambda, \epsilon} S_{\lambda, \epsilon}^{-1} g
%				}{L^q (\Omega)} \\
%				& \leq C \LongNorm{ 
%					L_{\lambda, \epsilon} S_{\lambda, \epsilon}^{-1} g 
%				}{L^q \FurukawaCommentFifth{(\Omega)}} \\
%				& \leq C \abs{\lambda}^{- 1 / 2 q} \LongNorm{ 
%					S_{\lambda, \epsilon}^{-1} g 
%				}{L^q \FurukawaCommentFifth{(\Omega)}} \\
%				& \leq C \abs{\lambda}^{- 1 / 2 q} \LongNorm{ 
%					g 
%				}{L^q \FurukawaCommentFifth{(\Omega)}},
%			\end{align*}
%		where $C>0$ is independent of $\epsilon$.
%		We know that the Mikhlin constant of $y_{\lambda, \epsilon} / s_{\lambda}$ is bounded uniformly on $\epsilon$. 
%		Thus combining with boundeness of $\ep{\mathbb{P}}$, $S_{\lambda, \epsilon}^{-1}$ and Lemma 5.3 in \cite{Abels2002}, we can use same argument as in \cite{Abels2002} to get (\ref{eq:C1534}). Details are omitted here since the argument is totally same.
	\end{proof}

%%%%%%%%%%%%%%%%%%%%%%%%%%%%%%%%%%%%%%%%%%%%%%%%%%%%%%%%%%%%%%%%%%%-----------------------PROPOSITION C1502------------------------%
%%%%%%%%%%%%%%%%%%%%%%%%%%%%%%%%%%%%%%%%%%%%%%%%%%%%%%%%%%%%%%%%%%
	\begin{proposition} \label{prop:C1510}
		Let $1 < q < \infty$ and $0 < \epsilon \leq 1$.
		Then there exists a constant $C > 0$, which is independent of $\epsilon$, such that
			\begin{align} \label{eq:C1535}
				\LongNorm{	
					\FourierInverse_{d, \Prime{n}}		
						\frac{ 1 + \epsilon \abs{\Prime{n}} }{ \epsilon \abs{\Prime{n}} } 
						\LongBracket{ e_3 \cdot 
					\Fourier_{\FurukawaCommentFifth{d,\Prime{x}}} 
					\LongBracket{ \ep{\mathbb{P}}^{\Torus^2 \times \Real} E_0 f }}
				}{\Lebesgue{q} (\Omega)}
				\leq C 
				\LongNorm{f}{\Lebesgue{q} (\Omega)}
			\end{align}
		for all \FurukawaCommentFifth{$f \in \Lebesgue{q}_{\FurukawaCommentSixth{\mathrm{af}}} (\Omega )$}.
	\end{proposition}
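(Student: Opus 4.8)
The plan is to work from the residue-calculus representation of the vertical component of the Helmholtz projection and to exploit a cancellation that removes the apparent singularity of the prefactor. By density it suffices to prove (\ref{eq:C1535}) for $f \in C_0^\infty(\Omega)$ with $\tilde f = 0$, so that formula (\ref{eq:C0717}) is available; since $\tilde f = 0$, only frequencies $\Prime{n} \neq 0$ occur and $\frac{1+\epsilon\abs{\Prime{n}}}{\epsilon\abs{\Prime{n}}}$ is finite. Multiplying (\ref{eq:C0717}) by this prefactor, the factors $\epsilon\Prime{n}$ and $\epsilon\abs{\Prime{n}}$ that enter the two integrals there — originating from the divergence structure of $\mathbb{P}_\epsilon^{\Torus^2\times\Real}E_0 f$ in its third component — cancel the denominator $\epsilon\abs{\Prime{n}}$, and one is reduced to bounding, uniformly in $\epsilon$, a finite sum of operators
\begin{align*}
 f \longmapsto \FourierInverse_{d,\Prime{n}}\TimeInt{-1}{1}{\LongBracket{1+\epsilon\abs{\Prime{n}}}\,\Napier^{-\abs{x_3-\zeta}\epsilon\abs{\Prime{n}}}\,a(\Prime{n})\,\Fourier_{d,\Prime{x}}h(\Prime{n},\zeta)}{\zeta},
\end{align*}
where $h\in\{f_1,f_2,f_3\}$ and $a\in\{1,\,n_1/\abs{\Prime{n}},\,n_2/\abs{\Prime{n}}\}$ is a Riesz-type symbol with finite, $\epsilon$-independent Mikhlin constant. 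I would then split $(1+\epsilon\abs{\Prime{n}})\Napier^{-t\epsilon\abs{\Prime{n}}} = \Napier^{-t\epsilon\abs{\Prime{n}}} + \epsilon\abs{\Prime{n}}\,\Napier^{-t\epsilon\abs{\Prime{n}}}$ and treat the two resulting operators $T_1$ and $T_2$ by different arguments.

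For $T_1$, the point is that $\Napier^{-t\abs{\Prime{\xi}}}$ has Mikhlin constant bounded uniformly in $t>0$ (an elementary computation, in the spirit of (\ref{eq:C0501}) with $\lambda=0$, $s_0=\abs{\Prime{\xi}}$), hence by Proposition \ref{prop_scaled_multiplier} so does $\Napier^{-t\epsilon\abs{\Prime{\xi}}}$, and therefore $\Napier^{-t\epsilon\abs{\Prime{\xi}}}a(\Prime{\xi})$, uniformly in $t>0$ and $\epsilon\in(0,1]$; this is in sharp contrast to $\epsilon\abs{\Prime{\xi}}\Napier^{-t\epsilon\abs{\Prime{\xi}}}$, whose Mikhlin constant is only $O(1/t)$. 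Writing $T_1 h(\cdot,x_3)=\int_{-1}^1 S_{\abs{x_3-\zeta}}h(\cdot,\zeta)\,d\zeta$ where, by Proposition \ref{prop_Fourier_Multiplier_Theorem_Discrete}, $\{S_t\}_{t>0}$ is a family of operators on $\Lebesgue{q}(\Torus^2)$ bounded uniformly in $t$ and $\epsilon$ (the frequency $\Prime{n}=0$ is irrelevant since $\tilde f=0$), Minkowski's integral inequality followed by Hölder's inequality over the bounded interval $(-1,1)$ gives $\LongNorm{T_1 h}{\Lebesgue{q}(\Omega)}\le C\LongNorm{h}{\Lebesgue{q}(\Omega)}\le C\LongNorm{f}{\Lebesgue{q}(\Omega)}$; no singular-integral theory is needed because the $\zeta$-integration runs over a set of finite measure.

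For $T_2$ the one-dimensional estimate no longer closes: the kernel $\epsilon\abs{\Prime{n}}\Napier^{-\epsilon\abs{\Prime{n}}\abs{x_3-\zeta}}$ behaves like $\abs{x_3-\zeta}^{-1}$ (only of weak type $(1,1)$), and Young's inequality in $x_3$ produces a factor $1/(\epsilon\abs{\Prime{n}})$ that blows up as $\epsilon\to0$. Instead I would recognize $T_2$ as a genuine Fourier multiplier: since the Fourier transform of $\Napier^{-\mu|s|}$ in $s$ is $2\mu/(\mu^2+\xi_3^2)$, one has, for every $\Prime{n}$,
\begin{align*}
 \TimeInt{-1}{1}{\epsilon\abs{\Prime{n}}\,\Napier^{-\epsilon\abs{\Prime{n}}\abs{x_3-\zeta}}\phi(\zeta)}{\zeta}=R_0\,\FourierInverse_{\xi_3}\sLongBracket{\frac{2\epsilon^2\abs{\Prime{n}}^2}{\epsilon^2\abs{\Prime{n}}^2+\xi_3^2}\,\Fourier_{x_3}E_0\phi},
\end{align*}
so that $T_2 h = R_0\,\mathcal{M}_\epsilon\,E_0 h$ with $\mathcal{M}_\epsilon$ the Fourier multiplier on $\Torus^2\times\Real$ of symbol $\frac{2\epsilon^2\abs{\Prime{n}}^2}{\epsilon^2\abs{\Prime{n}}^2+\xi_3^2}a(\Prime{n})$. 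Here $\frac{2\epsilon^2\abs{\Prime{\xi}}^2}{\epsilon^2\abs{\Prime{\xi}}^2+\xi_3^2}$ is obtained from the bounded symbol $\frac{2\abs{\Prime{\xi}}^2}{\abs{\Prime{\xi}}^2+\xi_3^2}$ on $\Real^3$ by rescaling $\xi_1,\xi_2$ by $\epsilon$, so by applying the identity (\ref{eq:0530}) in those two variables (exactly as in Proposition \ref{prop_scaled_multiplier}) its Mikhlin constant, and that of its product with $a$, is bounded uniformly in $\epsilon$. By the Fourier multiplier theorem on $\Torus^2\times\Real$ (the mixed analogue of Proposition \ref{prop_Fourier_Multiplier_Theorem_Discrete} already used for $\mathbb{P}_\epsilon^{\Torus^2\times\Real}$), $\mathcal{M}_\epsilon$ is bounded on $\Lebesgue{q}(\Torus^2\times\Real)$ uniformly in $\epsilon$, whence $\LongNorm{T_2 h}{\Lebesgue{q}(\Omega)}\le\LongNorm{\mathcal{M}_\epsilon E_0 h}{\Lebesgue{q}(\Torus^2\times\Real)}\le C\LongNorm{h}{\Lebesgue{q}(\Omega)}\le C\LongNorm{f}{\Lebesgue{q}(\Omega)}$. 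Summing the $T_1$- and $T_2$-contributions of the finitely many terms yields (\ref{eq:C1535}).

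The main obstacle is precisely the degeneracy of the prefactor $\frac{1+\epsilon\abs{\Prime{n}}}{\epsilon\abs{\Prime{n}}}$, which is large both when $\abs{\Prime{n}}$ is small and nonzero and when $\epsilon\to0$: a crude pointwise kernel bound gives only the insufficient weak-type estimate, so the argument must first use the cancellation against the $\epsilon\abs{\Prime{n}}$ in the vertical component of $\mathbb{P}_\epsilon^{\Torus^2\times\Real}E_0 f$, and then carefully separate the piece that is a trivially interval-integrable family of uniformly bounded horizontal multipliers ($T_1$) from the piece that is a bona fide bounded mixed multiplier ($T_2$). Replacing the multiplier argument in $T_2$ by Young's inequality, or treating $T_2$ as a singular kernel, both reintroduce an $\epsilon$-dependent blow-up. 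The only mildly technical points, both routine given Propositions \ref{prop:C05} and \ref{prop_scaled_multiplier}, are the uniform-in-$(t,\epsilon)$ Mikhlin bound for $\Napier^{-t\epsilon\abs{\Prime{\xi}}}$ and the uniform-in-$\epsilon$ Mikhlin bound for the partially rescaled symbol appearing in $T_2$.
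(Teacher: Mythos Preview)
Your argument is correct and follows essentially the same route as the paper: both use the representation (\ref{eq:C0717}) and exploit the cancellation of $\epsilon\abs{\Prime{n}}$ against the prefactor, and both split $\frac{1+\epsilon\abs{\Prime{n}}}{\epsilon\abs{\Prime{n}}}=\frac{1}{\epsilon\abs{\Prime{n}}}+1$ (your $T_1$ and $T_2$, respectively). The only difference is in the treatment of $T_2$: the paper observes that this piece is exactly $e_3\cdot\ep{\mathbb{P}}^{\Torus^2\times\Real}E_0 f$ itself, whose uniform $\Lebesgue{q}$-boundedness has already been established, so no further work is needed; you instead recompute its symbol $\frac{2\epsilon^2\abs{\Prime{n}}^2}{\epsilon^2\abs{\Prime{n}}^2+\xi_3^2}a(\Prime{n})$ and re-prove boundedness via Mikhlin on $\Torus^2\times\Real$, which is a correct but redundant derivation of the same fact.
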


%%%%%%%%%%%%%%%%%%%%%%%%%%%%%%%%%%%%%%%%%%%%%%%%%%%%%%%%%%%%%%%%%%%---------------------END OF PROPOSITION C1510-------------------%
%%%%%%%%%%%%%%%%%%%%%%%%%%%%%%%%%%%%%%%%%%%%%%%%%%%%%%%%%%%%%%%%%%

%%%%%%%%%%%%%%%%%%%%%%%%%%%%%%%%%%%%%%%%%%%%%%%%%%%%%%%%%%%%%%%%%%%--------------------PROOF OF PROPOSITION C1510------------------%
%%%%%%%%%%%%%%%%%%%%%%%%%%%%%%%%%%%%%%%%%%%%%%%%%%%%%%%%%%%%%%%%%%
	\begin{proof}
%		The way of proof of this proposition is essentially same as Lemma 6.3 in \cite{Abels2002}. 
		Since the symbol have poles at $\xi_3 = \pm i \epsilon \abs{\Prime{n}}$, we obtain its partial Fourier transform with  respect to $\xi_3$ by the residue theorem.
		Thus, we have
			\begin{align*}
				& \FourierInverse_{d, \Prime{n}}		
					\frac{ 1 }{ \epsilon \abs{\Prime{n}} } 
					\LongBracket{ e_3 \cdot 
				\Fourier_{\FurukawaCommentFifth{d,\Prime{x}}} 
				\LongBracket{ \ep{\mathbb{P}}^{\Torus^2 \times \Real}  E_0 f }} \notag \\
				& = \frac{1}{2} \FourierInverse_{d, \Prime{n}}
				\int_{-1}^{1}
					\frac{ 1 }{ \epsilon \abs{\Prime{n}} }
					\left[
						\Napier^{- \abs{x_3 - \zeta} \epsilon \abs{\Prime{n}} } 
						i \epsilon \Prime{n} \cdot 
						\Fourier_{d, \Prime{x}} \Prime{f} \bracket{\Prime{n}, \zeta}
					\right. \\
					& \quad \quad \quad \quad \quad \quad \left.
					+ \Napier^{- \abs{x_3 - \zeta} \epsilon \abs{\Prime{n}} } 
						\epsilon \abs{\Prime{n}}
						\Fourier_{d, \Prime{x}} f_3 \bracket{\Prime{n}, \zeta}
					\right]
				d \zeta.
			\end{align*}
		\FurukawaCommentSeventh{This formula and Proposition \ref{prop_Fourier_Multiplier_Theorem_Discrete} imply}
			\begin{align*}
				\LongNorm{
					\FourierInverse_{d, \Prime{n}}		
					\frac{ 1 }{ \epsilon \abs{\Prime{n}} } 
					\LongBracket{ e_3 \cdot 
				\Fourier_{d, \Prime{x}} 
				\LongBracket{ \ep{\mathbb{P}}^{\Torus^2 \times \Real} E_0 f }}
				}{
					L^q (\Torus^2)
				}
				\leq C \LongNorm{
					f
				}{
					L^q (\Real^2)
				},
			\end{align*}
		where $C$ is independent of $\epsilon$.
		\FurukawaCommentFifth{
		Combining \FurukawaCommentSixth{this estimate with} the boundedness of $\mathbb{P}_\epsilon^{\Torus^2 \times \Real}$, we obtain (\ref{eq:C1535}).
		}
	\end{proof}

%%%%%%%%%%%%%%%%%%%%%%%%%%%%%%%%%%%%%%%%%%%%%%%%%%%%%%%%%%%%%%%%%%%---------------END OF PROOF OF PROPOSITION C1510----------------%
%%%%%%%%%%%%%%%%%%%%%%%%%%%%%%%%%%%%%%%%%%%%%%%%%%%%%%%%%%%%%%%%%%

%\begin{corollary} \label{cor:C1510}
%		Let $1 < q < \infty$ and $0 < \epsilon \leq 1$. 
%		Then there exists a constant $C > 0$, which is independent of $\epsilon$, such that
%			\begin{align} \label{eq:C1535_2}
%				\LongNorm{	
%					\FourierInverse_{d, \Prime{n}}		
%						\frac{ 1 + \epsilon \abs{\Prime{n}} }{ \epsilon \abs{\Prime{n}} } 
%						\LongBracket{ e_3 \cdot 
%					\Fourier_{d, \Prime{x}} 
%					\LongBracket{ \ep{\mathbb{P}}^{\Torus^2 \times \Real} E_0 f }}
%				}{\Lebesgue{q} \bracket{\Omega}}
%				\leq C 
%				\LongNorm{f}{\Lebesgue{q} \bracket{\Omega}},
%			\end{align}
%		for all $f \in \Lebesgue{q} \bracket{\Omega}$.
%	\end{corollary}
%	\begin{proof}
%		This is a direct consequence of Proposition \ref{prop:C0510} and Proposition \ref{prop_Fourier_Multiplier_Theorem_Discrete}.
%	\end{proof}
	
	Let us show BIP for the solution operator for \FurukawaComment{the equation (II)}.
	
%%%%%%%%%%%%%%%%%%%%%%%%%%%%%%%%%%%%%%%%%%%%%%%%%%%%%%%%%%%%%%%%%%
%-----------------------PRPOSITION C1515-------------------------%
%%%%%%%%%%%%%%%%%%%%%%%%%%%%%%%%%%%%%%%%%%%%%%%%%%%%%%%%%%%%%%%%%%

		\begin{proposition} \label{prop:C1515}
			Let \FurukawaCommentFifth{$1 < q < \infty$, $0 < \epsilon \leq 1$, $0 < \theta < \pi /2$,} $\lambda \in \Sigma_\theta$,  $0 < a < 1/2$, \FurukawaCommentFifth{ and $z$ satisfying $ - a < \RealPart z <0 $.}
			Then there exists a constant $C = C \bracket{ q, a, \theta }$, it holds that
				\begin{align} \label{eq:C1550}
					\LongNorm{ \frac{
						1
					}{ 
						2 \pi i 
					} \TimeInt{
							\Gamma_\theta
						}{}{ 
							\bracket{ 
								- \lambda
							}^z V_{\lambda, \epsilon} \sLongBracket{ 
								\gamma v_1 - \bracket{ \gamma v_1 \cdot \nu } \nu 
							} 
						}{\lambda} 
					}{\Lebesgue{q}(\Omega)} 
					\leq C \Napier^{ \abs{ \ImaginaryPart z } \theta } \LongNorm{
						f
					}{
						\Lebesgue{q} (\Omega)
					}
				\end{align}
			\FurukawaCommentFourth{for all $f \in \Lebesgue{q} (\Omega)$, where $ v_1 = K_{\lambda, \epsilon} f$.}
		\end{proposition}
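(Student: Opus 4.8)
The plan is to exploit the factorization of $V_{\lambda,\epsilon}$ from \eqref{eq:C1509}, \eqref{eq:C1510} and \eqref{eq:C1533}, namely $V_{\lambda,\epsilon}=W_{\lambda,\epsilon}S_{\lambda,\epsilon}^{-1}=-P_{N,\epsilon}L_{\lambda,\epsilon}(I+R_{\lambda,\epsilon})$. By Remark~\ref{rem_average_free} we may assume $\tilde f=0$, so that $v_{1}=K_{\lambda,\epsilon}E_{0}f$ and its tangential trace $\gamma v_{1}-(\gamma v_{1}\cdot\nu)\nu$ are horizontally average-free. Since $P_{N,\epsilon}$ is bounded on $L^{q}_{\mathrm{af}}(\Omega)$ uniformly in $\epsilon$ by Corollary~\ref{cor_bound_P_N_epsilon} and is independent of $\lambda$, it commutes with the Cauchy integral, and it suffices to bound in $L^{q}(\Omega)$ the operator
\[
f\longmapsto \frac{1}{2\pi i}\int_{\Gamma_{\theta}}(-\lambda)^{z}\,L_{\lambda,\epsilon}(I+R_{\lambda,\epsilon})\bigl[\gamma K_{\lambda,\epsilon}E_{0}f-(\gamma K_{\lambda,\epsilon}E_{0}f\cdot\nu)\nu\bigr]\,d\lambda .
\]

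First I would write out this composite operator on the Fourier side in $n^{\prime}$, using the kernel formula \eqref{eq:C0555} for $K_{\lambda,\epsilon}$, the representation \eqref{eq:C1506}--\eqref{eq:C1507} of $L_{\lambda,\epsilon}$ together with $e^{\prime}_{\lambda}$ in \eqref{eq_definition_of_e_prime_lamda} and $y_{\lambda,\epsilon}$ in \eqref{eq:y_lambda_epsilon}, and the structure of $R_{\lambda,\epsilon}$ from Proposition~\ref{prop:C1500}. Carrying out the manipulations of $\eta^{\prime}_{\lambda,\epsilon}$, $\partial_{3}\eta^{\prime}_{\lambda,\epsilon}$, the $\cosh/\sinh$ factors and the $(1+\epsilon|n^{\prime}|)/(\epsilon|n^{\prime}|)$ factors exactly as in the proof of Proposition~\ref{prop:C1500} (and extracting $\epsilon$ by rescaling via Propositions~\ref{prop_scaled_multiplier} and \ref{prop:C1510}), this operator becomes a finite sum of operators
\[
f\longmapsto \mathcal{F}^{-1}_{d,n^{\prime}}\int_{-1}^{1} B(\epsilon n^{\prime})\,\chi_{\lambda}(n^{\prime},x_{3},\zeta)\,\mathcal{F}_{d,x^{\prime}}f(n^{\prime},\zeta)\,d\zeta ,
\]
with $B$ carrying an $\epsilon$-independent Mikhlin constant (by Proposition~\ref{prop:C05}) and $\chi_{\lambda}$ an elementary profile: either $s_{\lambda}^{-1}\Napier^{-(|x_{3}-a|+|\zeta-b|)s_{\lambda}}$, or a product of $\Napier^{-|x_{3}-a|s_{\lambda}}$ (or $\Napier^{-\epsilon|x_{3}-a||n^{\prime}|}$) with $\Napier^{-|\zeta-b|s_{\lambda}}$ (or $\Napier^{-\epsilon|\zeta-b||n^{\prime}|}$), weighted by at most one of $s_{\lambda}^{-1}$ or $\bigl(\lambda+(1-\epsilon^{2})|n^{\prime}|^{2}\bigr)^{-1}$, with $a,b\in\{\pm1\}$.

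The core step is to bound $\frac{1}{2\pi i}\int_{\Gamma_{\theta}}(-\lambda)^{z}\chi_{\lambda}\,d\lambda$ as a Fourier multiplier in $n^{\prime}$, uniformly in $\epsilon$ and in $x_{3},\zeta\in(-1,1)$. For the leading profile, with $t=|x_{3}-a|+|\zeta-b|$, a scaling argument and \eqref{eq:C0501} give $[\,\Napier^{-ts_{\lambda}}/s_{\lambda}\,]_{\mathcal{M}^{\prime}}\le C|\lambda|^{-1/2}\Napier^{-ct|\lambda|^{1/2}}$ uniformly in $t>0$; hence, using $|(-\lambda)^{z}|\le|\lambda|^{\RealPart z}\Napier^{\theta|\ImaginaryPart z|}$ on $\Gamma_{\theta}$ and Proposition~\ref{prop_Fourier_Multiplier_Theorem_Discrete},
\[
\LongNorm{\frac{1}{2\pi i}\int_{\Gamma_{\theta}}(-\lambda)^{z}\frac{\Napier^{-ts_{\lambda}}}{s_{\lambda}}\,d\lambda}{L^{q}(\Torus^{2})\to L^{q}(\Torus^{2})}\le C\,\Napier^{\theta|\ImaginaryPart z|}\,g(t),\qquad g(t):=\int_{\Gamma_{\theta}}|\lambda|^{\RealPart z-\frac{1}{2}}\Napier^{-ct|\lambda|^{1/2}}\,|d\lambda| .
\]
Since $-a<\RealPart z<0$ with $a<\frac{1}{2}$, the substitution $u=|\lambda|^{1/2}$ yields $g(t)=C\,t^{-(2\RealPart z+1)}$ with $0<2\RealPart z+1<1$ and $C=C(q,a,\theta)$. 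The profiles involving $\Napier^{-\epsilon\tau|n^{\prime}|}$ ($\tau=|x_{3}-a|$ or $|\zeta-b|$) are absorbed through Propositions~\ref{prop:C03} and \ref{prop:C1510} after noting $\epsilon|n^{\prime}|\Napier^{-\epsilon\tau|n^{\prime}|}\le C\tau^{-1}$, and the weight $\bigl(\lambda+(1-\epsilon^{2})|n^{\prime}|^{2}\bigr)^{-1}$ is controlled by the dichotomy between $\epsilon$ near $0$ (using its $|\lambda|^{-1}$-decay) and $\epsilon$ bounded away from $0$ (using the decay of $\Napier^{-2\epsilon|n^{\prime}|}$), as for the term $II_{3}$ in the proof of Proposition~\ref{prop:C1500}.

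Finally, summing the pieces, applying Minkowski's inequality in $\zeta$ and then taking the $L^{q}$-norm in $x_{3}$, one gets
\[
\LongNorm{\frac{1}{2\pi i}\int_{\Gamma_{\theta}}(-\lambda)^{z}V_{\lambda,\epsilon}\bigl[\gamma v_{1}-(\gamma v_{1}\cdot\nu)\nu\bigr]\,d\lambda}{L^{q}(\Omega)}\le C\,\Napier^{\theta|\ImaginaryPart z|}\LongNorm{\int_{-1}^{1}\bigl(|x_{3}-a|+|\zeta-b|\bigr)^{-(2\RealPart z+1)}\,\Vert f(\cdot,\zeta)\Vert_{L^{q}(\Torus^{2})}\,d\zeta}{L^{q}_{x_{3}}(-1,1)} ,
\]
and the integral operator on $L^{q}(-1,1)$ with kernel $(|x_{3}-a|+|\zeta-b|)^{-\beta}$, $0<\beta<1$, is bounded by Schur's test (its kernel being dominated on $(-1,1)^{2}$ by a constant multiple of that of the operator $M$ of Proposition~\ref{prop:C03}); this gives \eqref{eq:C1550}. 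The step I expect to be the main obstacle is the first one: unwinding $L_{\lambda,\epsilon}\mathbb{P}_{\epsilon}^{\Torus^{2}\times\Real}$ composed with the tangential trace of $K_{\lambda,\epsilon}E_{0}$ (and its $R_{\lambda,\epsilon}$-correction), and checking that after all cancellations and rescalings every residual $n^{\prime}$-multiplier carries an $\epsilon$-independent Mikhlin constant --- in particular the same delicate competition between the decay of $\Napier^{-2\epsilon|n^{\prime}|}$ and of $\bigl(\lambda+(1-\epsilon^{2})|n^{\prime}|^{2}\bigr)^{-1}$ that already appeared for the terms $II_{1}$--$II_{3}$ there, together with the identity $s_{\lambda}-\epsilon|n^{\prime}|=\bigl(\lambda+(1-\epsilon^{2})|n^{\prime}|^{2}\bigr)/(s_{\lambda}+\epsilon|n^{\prime}|)$ needed to see that the $\eta^{\prime}_{\lambda,\epsilon}$-type terms stay bounded as $\epsilon\to1$.
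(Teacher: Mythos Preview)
Your overall route --- factor $V_{\lambda,\epsilon}=-P_{N,\epsilon}L_{\lambda,\epsilon}(I+R_{\lambda,\epsilon})$, pull $P_{N,\epsilon}$ through the Cauchy integral, and reduce the $I$-part to explicit kernels of the type $s_{\lambda}^{-1}e^{-(|x_{3}-a|+|\zeta-b|)s_{\lambda}}$ whose $\lambda$-integral produces a Hilbert-type kernel handled by Proposition~\ref{prop:C03} --- is exactly the paper's strategy for the two leading pieces (the paper's $I_{1}$ and $I_{2}$ in \eqref{eq_formula_V_lambda_epsilon_tangential_term}). Your computation $g(t)=Ct^{-(2\RealPart z+1)}$ is correct and, since $t\le 4$ on $(-1,1)^{2}$, is dominated by the kernel of $M$.

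There are, however, two genuine gaps. First, your claim that the whole composite ``becomes a finite sum of operators'' with elementary profiles $\chi_{\lambda}$ does not survive the $R_{\lambda,\epsilon}$-correction: by Proposition~\ref{prop:C1500}, $R_{\lambda,\epsilon}$ is obtained from a Neumann series, so it is \emph{not} a finite sum of the $II_{1}$--$II_{3}$-type multipliers you can read off from the proof there. The paper does not try to unpack $R_{\lambda,\epsilon}$ on the symbol level; instead it treats the piece $I_{3}=W_{\lambda,\epsilon}R_{\lambda,\epsilon}\bigl[\gamma K_{\lambda,\epsilon}E_{0}f-\gamma\Pi_{\epsilon}K_{\lambda,\epsilon}E_{0}f\bigr]$ abstractly, combining the operator bound $\|R_{\lambda,\epsilon}\|\le C|\lambda|^{-1/2}$ from Proposition~\ref{prop:C1500} with the trace theorem, Lemma~\ref{lem:C0710}, and the resolvent estimate for $L_{\lambda,\epsilon}$ (Lemma~5.3 in \cite{Abels2002}) to get $\|I_{3}\|_{L^{q}(\Omega)}\le C|\lambda|^{-3/2+\delta}\|f\|_{L^{q}(\Omega)}$, which is directly $\lambda$-integrable. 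Second, the representation $V_{\lambda,\epsilon}=W_{\lambda,\epsilon}S_{\lambda,\epsilon}^{-1}$ is only available for $|\lambda|>R$ (Proposition~\ref{prop:C1500} again), so you must deform $\Gamma_{\theta}$ around the origin to a circle of radius $R$; the arc contributes a harmless $Ce^{\theta|\ImaginaryPart z|}\|f\|_{L^{q}}$ term, and only then does your integral $g(t)$ (now over $|\lambda|\ge R$) apply. Both points are easy to fix along the paper's lines, but as written your reduction to a finite list of elementary $\chi_{\lambda}$ does not cover the $R_{\lambda,\epsilon}$-piece.
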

		
%%%%%%%%%%%%%%%%%%%%%%%%%%%%%%%%%%%%%%%%%%%%%%%%%%%%%%%%%%%%%%%%%%
%-----------------PROOF OF PRPOSITION C1515----------------------%
%%%%%%%%%%%%%%%%%%%%%%%%%%%%%%%%%%%%%%%%%%%%%%%%%%%%%%%%%%%%%%%%%%
		
		\begin{proof}
%			$ \gamma v_1 - \bracket{ \gamma v_1 \cdot \nu } \nu $ can be written as
			\FurukawaCommentFifth{In view of Remark \ref{rem_average_free}, we \FurukawaCommentSixth{may} assume $\tilde{f} = 0$ without loss of generality.}
			It holds \FurukawaComment{by (\ref{eq:C0550_discrete}) that}
				\begin{align*}
					\gamma v_1 - \bracket{ \gamma v_1 \cdot \nu } \nu 
%					& = \TimeInt{-1}{1}{ 
%								\FourierInverse_{\Prime{\xi}} 
%									\frac{ \Napier^{ - s_{\lambda} } }{ 2 s_{\lambda} } \Fourier_{\Prime{x}} \Prime{f} 
%									- \Function{ \Prime{\eta}_{\lambda, \epsilon} }{ \Prime{\xi}, \pm 1 - \zeta} \Prime{\xi} \otimes \Prime{\xi} \Fourier_{\Prime{x}} \Prime{f} 
%									- i \Prime{\xi} \bracket{ \partial_3 \Prime{\eta}_{\lambda, \epsilon} } \bracket{ \Prime{\xi}, \pm 1 - \zeta } 
%								\Fourier_{\Prime{x}} f_3 
%							}{\zeta} \\
					= \gamma K_{\lambda, \epsilon} E_0 f 
					- \gamma \Pi_{\epsilon} K_{\lambda, \epsilon} E_0 f.
				\end{align*}
			We find from this formula, \FurukawaCommentSeventh{(\ref{eq:C0712}), (\ref{eq:C1506}), (\ref{eq:C1509}), (\ref{eq:C1510}) and (\ref{eq:C1533})} that the integrand of the left hand side of (\ref{eq:C1550}) can be essentially written as
				\begin{align}
					& P_{N, \epsilon} \ep{\mathbb{P}}^{\Torus^2 \times \Real}
					\FourierInverse_{d, \Prime{n}}
						\int_{-1}^{1}
							\Prime{e}_{\lambda} \bracket{\Prime{\xi}, \pm 1 - x_3}
							y_{\lambda, \epsilon} \bracket{\Prime{n}}
							\Prime{e}_{\lambda} \bracket{\Prime{\xi}, \pm 1 - \zeta} \notag \\
				& \quad \quad \quad \quad 	\quad \quad \quad \quad 
					\times \Fourier_{d, \Prime{x}} \LongBracket{ \ep{\mathbb{P}}^{\Torus^2 \times \Real} E_0 f } \bracket{\Prime{n}, \zeta}
						d \zeta \notag \\
				+ & P_{N, \epsilon} \ep{\mathbb{P}}^{\Torus^2 \times \Real}
					\FourierInverse_{d, \Prime{n}}
						\int_{-1}^{1}
							\Prime{e}_{\lambda} \bracket{\Prime{n}, \pm 1 - x_3}
							y_{\lambda, \epsilon} \bracket{\Prime{n}}
							\alpha_{\epsilon, \pm} \bracket{\Prime{n}, \pm 1} 
							\Prime{e}_{\lambda} \bracket{\Prime{n}, \pm 1 -\zeta} \notag \\
				&	 \quad\quad\quad\quad\quad\quad\quad\quad	
							\times \frac{\epsilon \abs{\Prime{n}}}{ 1 + \epsilon \abs{\Prime{n}} }
							\frac{1 + \epsilon \abs{\Prime{n}} }{ \epsilon \abs{\Prime{n}}}
					\Fourier_{d, \Prime{x}} \LongBracket{  \ep{\mathbb{P}}^{\Torus^2 \times \Real} E_0 f } \bracket{\Prime{n}, \zeta}
						d \zeta \notag \\
				+ & W_{\lambda, \epsilon} R_{\lambda, \epsilon} 
					\sLongBracket{
							\gamma K_{\lambda, \epsilon} E_0 f 
							- \gamma \Pi_{\epsilon} K_{\lambda, \epsilon} E_0 f
					}
					 \notag \\
				=: & I_1 + I_2 + I_3, \label{eq_formula_V_lambda_epsilon_tangential_term}
				\end{align}
			where $\pm$ should be take properly. 
			%We omitted details of the above for simplicity.
			It follows from \FurukawaCommentSeventh{(\ref{eq:C0501}) and (\ref{eq_C1530_multiplier})} that
				\begin{align} \label{eq:C1560}
					& \sLongBracket{
						\Prime{e}_{\lambda} \bracket{\FurukawaCommentFifth{\FurukawaCommentSixth{\xi}^\prime}, \pm 1 - x_3 }
						y_{\lambda, \epsilon} \bracket{\FurukawaCommentFifth{\FurukawaCommentSixth{\xi}^\prime}}
						\Prime{e}_{\lambda} \bracket{\FurukawaCommentFifth{\FurukawaCommentSixth{\xi}^\prime}, \pm 1 - \zeta }
					}_{\Prime{\mathcal{M}}} \notag \\
					& = 2 \sLongBracket{
%						\frac{
							e^{ \FurukawaCommentFifth{-} \abs{\pm 1 - x_3} s_\lambda} 
%						}{
%							s_\lambda
%						}
%						s_{\lambda} 
						\LongBracket{
							I_2 
							+ \frac{ 
								\epsilon \abs{\FurukawaCommentFifth{\FurukawaCommentSixth{\xi}^\prime}} 
							}{
								s_{\lambda}
							} \frac{ 
								\FurukawaCommentFifth{\FurukawaCommentSixth{\xi}^\prime} \otimes \FurukawaCommentFifth{\FurukawaCommentSixth{\xi}^\prime} 
							}{
								\abs{\FurukawaCommentFifth{\FurukawaCommentSixth{\xi}^\prime}}^2
							} 
						}
						\frac{
							e^{ \FurukawaCommentFifth{-} \abs{\pm 1 - \zeta} s_\lambda}
						}{
							s_\lambda
						}
					}_{\mathcal{M}^\prime} \notag \\
					& \leq C 
					\frac{
						\Napier^{- c \abs{\lambda}^{1/2} \bracket{ \abs{ \pm 1 - x_3 } + \abs{ \pm 1 - \zeta } }}
					}{\abs{\lambda}^{1/2}}.
				\end{align}
			\FurukawaCommentFifth{Let $R>0$ be large enough so that $S_{\lambda, \epsilon}^{-1}$ in Proposition \ref{prop:C1500} exists.
			Then we find from the change of integral curve  around the origin to ensure $\abs{\lambda} > R$ and Proposition \ref{prop_Fourier_Multiplier_Theorem_Discrete} that}
				\begin{align*}
					& \LongNorm{ \frac{1}{ 2 \pi i}
						 \TimeInt{\Gamma}{}{ 
							 \bracket{- \lambda}^z I_1
							 }{\lambda}
					 }{\Lebesgue{q} (\FurukawaCommentFifth{\Omega)}} \notag \\
					& \leq C \LongNorm{ 
							\TimeInt{-1}{1}{
								\TimeInt{\Gamma}{}{	\abs{ \lambda^z}
										 \frac{ \Napier^{ - c \abs{\lambda}^{1/2} \bracket{ \abs{ x_3 - a}  + \abs{ \zeta - b } } } }{ \abs{\lambda}^{1/2} } \LongNorm{
										\mathbb{P}_\epsilon^{\FurukawaCommentFifth{\Torus^2 \times \Real}} E_0 f \bracket{\cdot, \zeta} }{ \Lebesgue{q} \bracket{\Torus^2} }
								}{\lambda}
							}{\zeta}
							}{\Lebesgue{q} \bracket{-1, 1} } \notag \\
					& \leq C \FurukawaComment{e^{\theta \abs{\ImaginaryPart z}}}  \LongNorm{f}{\Lebesgue{q} \bracket{\Omega}} \notag \\
					& + C \left \Vert
						\int_{-1}^{1}
								\int_{R}^{\infty}
									\Napier^{ \theta \abs{ \ImaginaryPart z } }
									r^{ \RealPart z - 1/2 } \Napier^{ - c r^{1/2} \bracket{ \abs{ x_3 - a}  + \abs{ \zeta - b } } } \LongNorm{f \bracket{\cdot, \zeta}}{\Lebesgue{q} \bracket{\Torus^2}}
							dr
						d \zeta
					\right \Vert_{\Lebesgue{q} \bracket{-1, 1} } \\
					& \leq C \FurukawaComment{e^{\theta \abs{\ImaginaryPart z}}} \LongNorm{
						f
					}{L^q (\Omega)} 
					+ C_R e^{\theta \abs{\ImaginaryPart z}} \LongNorm{
						\int_{-1}^{1}
							\frac{\LongNorm{f (\cdot , \zeta)}{L^q (\Torus^2)}}{\abs{ x_3 - a}  + \abs{ \zeta - b }}
						d r
					}{L^q (\Omega)}
				\end{align*}
			\FurukawaComment{for some $a , b \in \{ -1 , 1 \}$,} where \FurukawaCommentSeventh{$C$ and $C_R$ are} independent of $\epsilon$. 
			Applying \FurukawaCommentSeventh{Proposition} \ref{prop:C03}, we obtain
				\begin{align} \label{eq:C1570}
					\LongNorm{ 
						 \TimeInt{\Gamma}{}{ 
							 \bracket{- \lambda}^z I_1
							 }{\lambda}
					 }{\Lebesgue{q} \bracket{\Omega} }
					\leq C \Napier^{ \theta \abs{\ImaginaryPart z} } \LongNorm{f}{\Lebesgue{q} \bracket{\Omega}} \FurukawaCommentFourth{.}
				\end{align}
			\FurukawaCommentSeventh{It follows from (\ref{eq_definition_of_e_prime_lamda}), (\ref{eq:C0712}), (\ref{eq:y_lambda_epsilon}) and Proposition \ref{prop:C05} that}
				\begin{align}
					&\sLongBracket{
						\Prime{e}_{\lambda} \bracket{\FurukawaCommentFifth{\Prime{\FurukawaCommentSixth{\xi}}}, \pm 1 - x_3}
							y_{\lambda, \epsilon} \bracket{\FurukawaCommentFifth{\Prime{\FurukawaCommentSixth{\xi}}}}
							\alpha_{\epsilon, \pm} \bracket{\FurukawaCommentFifth{\Prime{\FurukawaCommentSixth{\xi}}}, \pm 1} 
							\Prime{e}_{\lambda} \bracket{\FurukawaCommentFifth{\Prime{\FurukawaCommentSixth{\xi}}}, \pm 1 -\zeta}
							\frac{\epsilon \abs{\FurukawaCommentFifth{\Prime{\FurukawaCommentSixth{\xi}}}}}{ 1 + \epsilon \abs{\FurukawaCommentFifth{\Prime{\FurukawaCommentSixth{\xi}}}} }
					}_{\Prime{\mathcal{M}}} \notag \\  \label{eq_Resolvent_estimate_2}
					& \leq C 
					\frac{
						\Napier^{- c \abs{\lambda}^{1/2} \bracket{ \abs{ \pm 1 - x_3 } + \abs{ \pm 1 - \zeta } }}
					}{\abs{\lambda}^{1/2}}. 
				\end{align}							
			Thus we find from Proposition \ref{prop:C1510} \FurukawaCommentFifth{that}
				\begin{align*}
					& \LongNorm{ \frac{1}{2 \pi i }
						\TimeInt{\Gamma}{}{
							 \bracket{ - \lambda }^z I_2
						}{\lambda}
					}{
						\Lebesgue{q} \bracket{\Omega}
					} \\
					& \leq C \Napier^{ \theta \abs{\ImaginaryPart z} } 
					\LongNorm{
						\FourierInverse_{\FurukawaCommentFifth{d,} \Prime{n}}
						\frac{
							1 + \epsilon \abs{\FurukawaComment{\Prime{n}}} 
						}{ 
							\epsilon \abs{\Prime{n}}
						}
						\Fourier_{\FurukawaCommentFifth{d,} \Prime{x}} \LongBracket{
							 \mathbb{P}_\epsilon^{\FurukawaCommentFifth{\Torus^2 \times \Real}} E_0 f 
						} \bracket{\Prime{n}, \zeta}
					}{
						\Lebesgue{q}\bracket{\Omega}
					} \notag \\
					& \leq C \Napier^{ \theta \abs{\ImaginaryPart z} } 
					\LongNorm{
						f
					}{\Lebesgue{q} \bracket{\Omega} }.
				\end{align*}
			\FurukawaCommentSeventh{By Proposition \ref{prop:C1500}, the trace theorem, Lemma \ref{lem:C0710} and the resolvent estimate for the Laplace operator on $\Torus^2 \times \Real$, we have
			\begin{align*}
					& \LongNorm{ R_{\lambda ,\epsilon}
					 \gamma	\sLongBracket{
							K_{\lambda, \epsilon} E_0 f 
							- \gamma \Pi_{\epsilon} K_{\lambda, \epsilon} E_0 f
					}
					}{\Lebesgue{q} \bracket{ \partial \Omega}}  \\
					& \leq C \abs{\lambda}^{ -3/2 + 1/ 2 q + \delta} 
					\LongNorm{
						f
					}{\Lebesgue{q} \bracket{ \Omega}} 
				\end{align*}
			for some small $\delta>0$.
			 The resolvent estimate for the Dirichlet Laplacian on $\Omega$, see Lemma 5.3 in \cite{Abels2002}, \FurukawaCommentSeventh{and Lemma \ref{lem:C0710}} imply
			 \begin{align*}
					\LongNorm{
						P_{N, \epsilon} L_{\lambda, \epsilon} 
					}{\Lebesgue{q} \bracket{\partial  \Omega} \rightarrow \Lebesgue{q} \bracket{\Omega}}
					 \leq C \abs{\lambda}^{- 1 /2q } 
				\end{align*}
			for some small $\delta>0$. 
			We find from the above \FurukawaCommentSeventh{two inequalities}
			\begin{align} \label{eq:C1575}
			\LongNorm{I_3}{\Lebesgue{q} \bracket{\Omega}}
					& \leq C \abs{\lambda}^{- 3/2 + \delta} 
					\LongNorm{
						f
					}{\Lebesgue{q} \bracket{ \Omega}}.
				\end{align}
			}
%			Applying $L^q$-boundedness of $P_{N, \epsilon}$, the resolvent estimate for the Dirichlet Laplacian on $\Omega$, Lemma 5.3 in \cite{Abels2002}, Proposition \ref{prop:C1500}, the resolvent estimate for the Dirichlet Laplacian on $\Torus^2 \times \Real$, $L^q$-boundedness of $\Pi_{\epsilon}$, the trace theorem and the interpolation inequality, we have
%				\begin{align}
%					\LongNorm{I_3}{\Lebesgue{q} \bracket{\Omega}}
%					& = \LongNorm{
%						P_{N, \epsilon} L_{\lambda, \epsilon} R_{\lambda ,\epsilon}
%					 \gamma	\sLongBracket{
%							K_{\lambda, \epsilon} E_0 f 
%							- \gamma \Pi_{\epsilon} K_{\lambda, \epsilon} E_0 f
%					}
%					}{\Lebesgue{q} \bracket{\Omega}} \notag \\
%					& \leq C \abs{\lambda}^{- 1 /2q}  \LongNorm{
%						R_{\lambda ,\epsilon}
%					 \gamma	\sLongBracket{
%							K_{\lambda, \epsilon} E_0 f 
%%							- \gamma \Pi_{\epsilon} K_{\lambda, \epsilon} E_0 f
%					}
%					}{\Lebesgue{q} \bracket{ \Omega}} \notag \\
%					& \leq C \abs{\lambda}^{- 1 /2q - 1/2} 
%					\LongNorm{
%					 \gamma	\sLongBracket{
%							K_{\lambda, \epsilon} E_0 f 
%							- \gamma \Pi_{\epsilon} K_{\lambda, \epsilon} E_0 f
%					}
%					}{\Lebesgue{q} \bracket{ \Omega}} \notag \\
%					& \leq C \abs{\lambda}^{- 1 /2q - 1/2 - 1/\Prime{q} - 1/ 2 + \delta} 
%					\LongNorm{
%						f
%					}{\Lebesgue{q} \bracket{ \Omega}} \notag \\ \label{eq:C1575}
%					& = C \abs{\lambda}^{- 3/2 + \delta} 
%					\LongNorm{
%						f
%					}{\Lebesgue{q} \bracket{ \Omega}}		
%				\end{align}
%			for small $\delta>0$. 
			Thus \FurukawaCommentThird{we find from the change of integral line around the origin that}
				\begin{align*}
					\LongNorm{
						\frac{1}{2 \pi i} 
						\TimeInt{\Gamma}{}{
							\bracket{ - \lambda}^z I_3
						}{\lambda}
					}{\Lebesgue{q} \bracket{\Omega}} 
					\leq C \LongNorm{f}{\Lebesgue{q} \bracket{\Omega}},
				\end{align*}
		\end{proof}
		\FurukawaCommentSeventh{where $C>0$ is independent of $\epsilon$.}
		\begin{proof}[Proof of Lemma \ref{lem:C02}]
			Lemma \ref{lem:C02} is a direct consequence of \AmruComment{Propositions \ref{prop:C0510}, \ref{prop:C0715} and \ref{prop:C1515}}.
		\end{proof}
			\FurukawaCommentSixth{We next prove Lemma \ref{lem:A1001} from Lemma \ref{lem:C02}.
		For this purpose we need further uniform estimate for the resolvent to compare $\Vert \nabla^2 u \Vert_{L^q (\Omega)}$ and $\Vert A_\epsilon  u \Vert_{L^q (\Omega)}$.
		For resolvent estimates we begin with
		}
		\FurukawaCommentFifth{
		\begin{proposition} \label{prop_nabla_2_V_lambda_epsilon}
			Let $1 < q < \infty$, $0 < \epsilon \leq 1$, $0 < \theta < \pi /2$.
			Let $\lambda \in \Sigma_\theta$ be sufficiently large so that $S_{\lambda, \epsilon}^{-1}$ exists in Proposition \ref{prop:C1500}.
			Then there exists a constant $C = C \bracket{ q, \theta }$, it holds that
				\begin{align*}
					\LongNorm{ 
						 \nabla^2 V_{\lambda, \epsilon} \sLongBracket{ 
								\gamma v_1 - \bracket{ \gamma v_1 \cdot \nu } \nu 
						} 
					}{\Lebesgue{q}(\Omega)} 
					\leq C \LongNorm{
						f
					}{
						\Lebesgue{q} (\Omega)
					}
				\end{align*}
			for all $f \in \Lebesgue{q} (\Omega)$, where $ v_1 = K_{\lambda, \epsilon} f$.
		\end{proposition}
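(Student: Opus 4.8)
The plan is to specialise, at the fixed large $\lambda$ and with $z=0$, the decomposition $V_{\lambda,\epsilon}[\gamma v_1-(\gamma v_1\cdot\nu)\nu]=I_1+I_2+I_3$ obtained in (\ref{eq_formula_V_lambda_epsilon_tangential_term}) inside the proof of Proposition \ref{prop:C1515} (so that $v_1=K_{\lambda,\epsilon}E_0 f$ and, by Remark \ref{rem_average_free}, we may assume $\tilde f=0$), and to estimate $\nabla^2$ of each of $I_1,I_2,I_3$ in $L^q(\Omega)$ by $C\|f\|_{L^q(\Omega)}$ with $C$ independent of both $\epsilon$ and $\lambda$. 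Throughout I would use that $\partial_1,\partial_2$ commute with the horizontal Fourier (inverse) transforms, with $\mathbb{P}_\epsilon^{\Torus^2\times\Real}$, and with $P_{N,\epsilon}$ and $\Pi_\epsilon$ (the latter by the proof of Lemma \ref{lem:C0710}), while $\partial_3$ acts only on the factor $\Prime e_\lambda(\Prime n,\pm1-x_3)$; since $\pm1-x_3$ has a fixed sign for $x_3\in(-1,1)$, on $\Omega$ one has $\partial_3^{\,j}\Prime e_\lambda(\Prime n,\pm1-x_3)=(\pm1)^j s_\lambda^{\,j-1}e^{-|\pm1-x_3|s_\lambda}$ for $j\ge1$ and $\Prime e_\lambda$ itself for $j=0$.

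For $\nabla^2 I_1$ and $\nabla^2 I_2$ I would argue as follows. After applying $\nabla^2$, the $\Prime n$-symbol multiplying $\Fourier_{d,\Prime x}(\mathbb{P}_\epsilon^{\Torus^2\times\Real}E_0 f)(\Prime n,\zeta)$ under the $\zeta$-integral is, up to the $\epsilon$-uniformly Mikhlin-bounded factor $\tfrac{\epsilon|\Prime n|}{1+\epsilon|\Prime n|}\alpha_{\epsilon,\pm}(\Prime n,\pm1)$ occurring in $I_2$ (its companion factor $\tfrac{1+\epsilon|\Prime n|}{\epsilon|\Prime n|}$ being peeled off beforehand by Proposition \ref{prop:C1510}), a sum of terms $(\Prime n)^{\,\gamma}\,\partial_3^{\,j}\Prime e_\lambda(\Prime n,\pm1-x_3)\,y_{\lambda,\epsilon}(\Prime n)\,\Prime e_\lambda(\Prime n,\pm1-\zeta)$ with $|\gamma|+j=2$. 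Since $\|y_{\lambda,\epsilon}(\Prime n)\|$ is of size $s_\lambda$, the two factors $\Prime e_\lambda$ carry the weight $s_\lambda^{-2}e^{-(|\pm1-x_3|+|\pm1-\zeta|)s_\lambda}$, $|(\Prime n)^{\,\gamma}|\le s_\lambda^{|\gamma|}$, and the combined power of $s_\lambda$ is $|\gamma|+j-1=1$, each such term is bounded in modulus by $C\,s_\lambda\,e^{-(|\pm1-x_3|+|\pm1-\zeta|)s_\lambda}$; keeping the $\Prime n$-derivatives and invoking (\ref{eq:C0501}), Propositions \ref{prop:C05} and \ref{prop_scaled_multiplier}, together with the elementary bound $s\,e^{-cds}+d s^2 e^{-cds}\le C/d$, its Mikhlin constant is $\le C/(|x_3-a|+|\zeta-b|)$ for suitable $a,b\in\{-1,1\}$, with $C$ independent of $\epsilon$ and $\lambda$. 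Then Proposition \ref{prop_Fourier_Multiplier_Theorem_Discrete}, the integral-operator bound of Proposition \ref{prop:C03}, the $L^q$-boundedness of $\mathbb{P}_\epsilon^{\Torus^2\times\Real}$, and the $W^{2,q}$-boundedness of $\Pi_\epsilon$ and $P_{N,\epsilon}$ (Lemma \ref{lem:C0710}, Corollary \ref{cor_bound_P_N_epsilon}) would give $\|\nabla^2 I_1\|_{L^q(\Omega)}+\|\nabla^2 I_2\|_{L^q(\Omega)}\le C\|f\|_{L^q(\Omega)}$.

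For $\nabla^2 I_3$ I would instead exploit a gain of regularity rather than raw symbol bounds. By Proposition \ref{prop_nabla2_resolvent_estimate} and the boundedness of $\mathbb{P}_\epsilon^{\Torus^2\times\Real}$, $K_{\lambda,\epsilon}E_0 f=(\lambda-\Delta_{\Torus^2\times\Real})^{-1}\mathbb{P}_\epsilon^{\Torus^2\times\Real}E_0 f$ lies in $W^{2,q}(\Omega)$ with $\epsilon$- and $\lambda$-uniform bound $C\|f\|_{L^q(\Omega)}$, so by Lemma \ref{lem:C0710} and the trace theorem $\|\gamma[K_{\lambda,\epsilon}E_0 f-\Pi_\epsilon K_{\lambda,\epsilon}E_0 f]\|_{W^{2-1/q,q}(\Torus^2)}\le C\|f\|_{L^q(\Omega)}$. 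Since $R_{\lambda,\epsilon}$ is bounded on $W^{s,q}_{\mathrm{af}}(\Torus^2)$ for all $s\ge0$ (Proposition \ref{prop:C1500}), since $L_{\lambda,\epsilon}$ equals (twice) the $(\lambda-\Delta_{\Torus^2\times\Real})$-Poisson extension composed with the $\epsilon$- and $\lambda$-uniformly bounded horizontal multiplier $I_2+\tfrac{\epsilon|\Prime n|}{s_\lambda}\tfrac{\Prime n\otimes\Prime n}{|\Prime n|^2}$ and hence maps $W^{2-1/q,q}(\partial\Omega)$ boundedly into $W^{2,q}(\Omega)$, and since $P_{N,\epsilon}$ is bounded on $W^{2,q}_{\mathrm{af}}(\Omega)$ (Corollary \ref{cor_bound_P_N_epsilon}), I get $\|\nabla^2 I_3\|_{L^q(\Omega)}\le\|I_3\|_{W^{2,q}(\Omega)}=\|P_{N,\epsilon}L_{\lambda,\epsilon}R_{\lambda,\epsilon}\gamma[K_{\lambda,\epsilon}E_0 f-\Pi_\epsilon K_{\lambda,\epsilon}E_0 f]\|_{W^{2,q}(\Omega)}\le C\|f\|_{L^q(\Omega)}$. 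Summing the three estimates proves the proposition.

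The hard part will be the simultaneous uniformity in $\epsilon$ and in $\lambda$: one has to verify that in $I_1,I_2$ every second derivative costs at most one extra power of $s_\lambda$ — which works only because each of those terms carries two $\Prime e_\lambda$-factors against a single $y_{\lambda,\epsilon}$-factor — so that the $\zeta$-integral remains of the form treated by Proposition \ref{prop:C03}; and one must recognise that for $I_3$ a direct symbol estimate on $L_{\lambda,\epsilon}$ alone fails (it is not bounded from $L^q(\partial\Omega)$ into $W^{2,q}(\Omega)$), so the argument must be routed through the genuine $W^{2,q}$-regularity of the boundary data supplied by Proposition \ref{prop_nabla2_resolvent_estimate}.
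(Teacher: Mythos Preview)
Your proposal is correct and follows essentially the same route as the paper: the same decomposition $I_1+I_2+I_3$ from (\ref{eq_formula_V_lambda_epsilon_tangential_term}), the same Mikhlin-plus-Proposition~\ref{prop:C03} argument for $\nabla^2 I_1$ and $\nabla^2 I_2$ (together with Corollary~\ref{cor_bound_P_N_epsilon} to pass $\nabla^2$ through $P_{N,\epsilon}$), and the same regularity route through Proposition~\ref{prop_nabla2_resolvent_estimate}, Lemma~\ref{lem:C0710}, and the trace theorem for $I_3$.

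The one place you differ slightly is in the bookkeeping for $I_3$: the paper invokes the derivative-gaining estimate (\ref{eq_existence_of_inverse_2}) for $R_{\lambda,\epsilon}$ to land the boundary data in $W^{3-1/q,q}(\Torus^2)$ before applying the elliptic solution operator, whereas you use only the plain boundedness (\ref{eq_existence_of_inverse}) and rely on the sharp Poisson mapping $W^{2-1/q,q}(\partial\Omega)\to W^{2,q}(\Omega)$ for $L_{\lambda,\epsilon}$. Both are valid; the paper's extra derivative simply gives some slack in the Poisson step, while your version is a bit more economical. Either way the $\epsilon$-independence (which is what the downstream application in Proposition~\ref{prop_anisotropic_Stokes_lambda_equal_0} actually needs, at a fixed large $\lambda_0$) goes through.
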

		}\FurukawaCommentFifth{
		\begin{proof}
			\FurukawaCommentFifth{In view of Remark \ref{rem_average_free}, we \FurukawaCommentSixth{may} assume $\tilde{f} = 0$ without loss of generality.}
			It is enough to estimate the second derivative of the left-hand side of (\ref{eq_formula_V_lambda_epsilon_tangential_term}) in $L^q (\Omega)$.
			We find from (\ref{eq:C0501}) and (\ref{eq:C1560}) that
			\begin{align*}
					& \sLongBracket{
						\abs{n^\prime}^2 \, \Prime{e}_{\lambda} \bracket{\FurukawaCommentFifth{\FurukawaCommentSixth{\xi}^\prime}, \pm 1 - x_3 }
						y_{\lambda, \epsilon} \bracket{\FurukawaCommentFifth{\FurukawaCommentSixth{\xi}^\prime}}
						\Prime{e}_{\lambda} \bracket{\FurukawaCommentFifth{\FurukawaCommentSixth{\xi}^\prime}, \pm 1 - \zeta }
					}_{\Prime{\mathcal{M}}} \notag \\
					& = 2 \sLongBracket{ 
							\abs{\FurukawaCommentSixth{\xi}^\prime}^2 \, e^{ \FurukawaCommentFifth{-} \abs{\pm 1 - x_3} s_\lambda} 
						\LongBracket{
							I_2 
							+ \frac{ 
								\epsilon \abs{\FurukawaCommentFifth{\FurukawaCommentSixth{\xi}^\prime}} 
							}{
								s_{\lambda}
							} \frac{ 
								\FurukawaCommentFifth{\FurukawaCommentSixth{\xi}^\prime} \otimes \FurukawaCommentFifth{\FurukawaCommentSixth{\xi}^\prime} 
							}{
								\abs{\FurukawaCommentFifth{\FurukawaCommentSixth{\xi}^\prime}}^2
							} 
						}
						\frac{
							e^{ \FurukawaCommentFifth{-} \abs{\pm 1 - \zeta} s_\lambda}
						}{
							s_\lambda
						}
					}_{\mathcal{M}^\prime} \notag \\
					& \leq \frac{C}{
						 	\abs{ \pm 1 - x_3 } 
						 	+ \abs{ \pm 1 - \zeta }
					},
				\end{align*}
			where $C> 0$ is independent of $\epsilon$.
			 Similarly, it follows from  (\ref{eq:y_lambda_epsilon}), (\ref{eq:C0712}) and Proposition \ref{prop:C05} that
				\begin{align*}
					&\sLongBracket{
						\abs{\FurukawaCommentSixth{\xi}^\prime}^2 \, \Prime{e}_{\lambda} \bracket{\FurukawaCommentFifth{\Prime{\FurukawaCommentSixth{\xi}}}, \pm 1 - x_3}
							y_{\lambda, \epsilon} \bracket{\FurukawaCommentFifth{\Prime{\FurukawaCommentSixth{\xi}}}}
							\alpha_{\epsilon, \pm} \bracket{\FurukawaCommentFifth{\Prime{\FurukawaCommentSixth{\xi}}}, \pm 1} 
							\Prime{e}_{\lambda} \bracket{\FurukawaCommentFifth{\Prime{\FurukawaCommentSixth{\xi}}}, \pm 1 -\zeta}
							\frac{\epsilon \abs{\FurukawaCommentFifth{\Prime{\FurukawaCommentSixth{\xi}}}}}{ 1 + \epsilon \abs{\FurukawaCommentFifth{\Prime{\FurukawaCommentSixth{\xi}}}} }
					}_{\Prime{\mathcal{M}}} \notag \\  
					&  \leq \frac{C}{
						 	\abs{ \pm 1 - x_3 } 
						 	+ \abs{ \pm 1 - \zeta }
					}.
				\end{align*}
				Thus we find from Corollary \ref{cor_bound_P_N_epsilon} and Proposition \ref{prop:C03} that
				\begin{align*}
					\LongNorm{
						\nabla_H \otimes \nabla_H I_j
					}{L^q(\Omega)}
					\leq C \Vert
						f
					\Vert_{L^q (\Omega)}, \quad j = 1,2,
				\end{align*}
				where $\nabla_H = (\partial_1, \partial_2)^T$, $I_j$ is defined in (\ref{eq_formula_V_lambda_epsilon_tangential_term}) and $C>0$ is independent of $\epsilon$.
				Since 
				\begin{align*}
					\partial_3 e_{\lambda}^\prime (n^\prime, \pm  1- x_3)
					& = \frac{\pm e^{- (1 \mp x_3)s_\lambda}}{2}, \\
					\partial_3^2 e_{\lambda}^\prime (n^\prime, \pm 1- x_3)
					& = \frac{ s_\lambda e^{- (1 \mp x_3)s_\lambda}}{2},
				\end{align*}
				\FurukawaCommentSeventh{we} can use the same way as above to get
				\begin{align*}
					\LongNorm{
						\nabla_H \partial_3 I_j
					}{L^q(\Omega)}
					+ \LongNorm{
						\partial_3^2 I_j
					}{L^q(\Omega)}
					\leq C \Vert
						f
					\Vert_{L^q (\Omega)}, \quad j = 1,2,
				\end{align*}
				where $I_j$ is defined in (\ref{eq_formula_V_lambda_epsilon_tangential_term}) and $C>0$ is independent of $\epsilon$.
				Propositions \ref{prop_nabla2_resolvent_estimate} \FurukawaCommentSeventh{and} \ref{prop:C1500}, Lemma \ref{lem:C0710} and the trace theorem imply 
				\begin{align*}
					R_{\lambda, \epsilon} 
					\sLongBracket{
							\gamma K_{\lambda, \epsilon} E_0 f 
							- \gamma \Pi_{\epsilon} K_{\lambda, \epsilon} E_0 f
					} \in W^{3 - 1 / q, q} (\Torus^2)
				\end{align*}
				and its norm is bounded uniformly on $\epsilon$.
				\FurukawaCommentSeventh{By} the definition of the operator $L_{\lambda, \epsilon}$, see (\ref{eq:C1506}), \FurukawaCommentSeventh{we have}
				\begin{align*}
					L_{\lambda, \epsilon} R_{\lambda, \epsilon} 
					\sLongBracket{
							\gamma K_{\lambda, \epsilon} E_0 f 
							- \gamma \Pi_{\epsilon} K_{\lambda, \epsilon} E_0 f
					}
				\end{align*}
				solves the elliptic equations $\lambda u - \Delta u =0$.
				Moreover, the boundary data belongs to $W^{3 - 1 / q, q} (\Torus^2)$ by (\ref{eq_definition_of_e_prime_lamda}), (\ref{eq:y_lambda_epsilon}) and Proposition \ref{prop_Fourier_Multiplier_Theorem_Discrete}.
				Thus we find from (\ref{eq:C1510}), Corollary \ref{cor_bound_P_N_epsilon} and smoothing effect of the solution operator to the elliptic equation that
				\begin{align*}
					& \LongNorm{W_{\lambda, \epsilon} R_{\lambda, \epsilon} 
					\sLongBracket{
							\gamma K_{\lambda, \epsilon} E_0 f 
							- \gamma \Pi_{\epsilon} K_{\lambda, \epsilon} E_0 f
					}}{W^{2, q} (\Omega)} \\
					& \leq C \LongNorm{L_{\lambda, \epsilon} R_{\lambda, \epsilon} 
					\sLongBracket{
							\gamma K_{\lambda, \epsilon} E_0 f 
							- \gamma \Pi_{\epsilon} K_{\lambda, \epsilon} E_0 f
					}}{W^{2, q} (\Omega)} \\
					& \leq C \LongNorm{R_{\lambda, \epsilon} 
					\sLongBracket{
							\gamma K_{\lambda, \epsilon} E_0 f 
							- \gamma \Pi_{\epsilon} K_{\lambda, \epsilon} E_0 f
					}}{W^{2 - 1/ q + \delta, q} (\Torus^2)} \\
					& \leq C \LongNorm{
						f
					}{L^q (\Omega)},
				\end{align*}
			where $\delta>0$ is small and $C$ is independent of $\epsilon$.
		\end{proof}}
		\FurukawaCommentFifth{
		\begin{lemma} \label{lem_nabla2_resolvent_f}
			Let $1 < q < \infty$, $0 < \epsilon \leq 1$, \FurukawaCommentSeventh{$0< \theta < \pi / 2$} and $\lambda \in \Sigma_\theta$ satisfying $\abs{\lambda} > R$ for sufficiently large $R>0$.
			Then there exists a constant $C = C \bracket{ q, \theta }$ \FurukawaCommentSeventh{such that}
				\begin{align*}
					\LongNorm{ 
						\nabla^2 \left(
							\lambda + A_\epsilon
						\right)^{-1} f
					}{\Lebesgue{q}(\Omega)} 
					\leq C \LongNorm{
						f
					}{
						\Lebesgue{q} (\Omega)
					}
				\end{align*}
			for all $f \in \Lebesgue{q} (\Omega)$.
		\end{lemma}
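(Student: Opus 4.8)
The plan is to read off $\nabla^2(\lambda + A_\epsilon)^{-1}f$ from the three-term decomposition \eqref{def_v_j_pi_j} of the resolvent and to estimate each term separately using the $\epsilon$-uniform resolvent bounds already established. Fix $\lambda \in \Sigma_\theta$ with $\abs{\lambda}$ larger than the threshold $R$ of Proposition \ref{prop:C1500}, so that $S_{\lambda,\epsilon}^{-1}$, hence $V_{\lambda,\epsilon}$, is well defined. Then the solution $u$ of the resolvent problem \eqref{eq_RLE} with right-hand side $f$ is exactly $(\lambda + A_\epsilon)^{-1}f$, and by the construction used for Lemma \ref{lem:C02},
$$
u = R_0 v_1 - v_2 + \nabla_\epsilon \pi_3,
\qquad
v_1 = K_{\lambda,\epsilon}E_0 f,\quad v_2 = V_{\lambda,\epsilon}[\gamma v_1 - (\gamma v_1\cdot\nu)\nu],
$$
with $\pi_3$ the solution of (III) with boundary data $(\gamma v_1\cdot\nu)\nu$. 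Differentiating this identity in $\Omega$ gives $\nabla^2 u = R_0\nabla^2 v_1 - \nabla^2 v_2 + \nabla^2\nabla_\epsilon\pi_3$ in $\Lebesgue{q}(\Omega)$, and it suffices to bound each of the three pieces by $C\LongNorm{f}{\Lebesgue{q}(\Omega)}$ with $C$ independent of $\epsilon$ and of $\lambda$ (for $\abs{\lambda}>R$).

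First, since $R_0$ is the vertical restriction and commutes with differentiation, $\LongNorm{R_0\nabla^2 v_1}{\Lebesgue{q}(\Omega)} \le \LongNorm{\nabla^2 v_1}{\Lebesgue{q}(\Torus^2\times\Real)}$, and Proposition \ref{prop_nabla2_resolvent_estimate} bounds the right-hand side by $C\LongNorm{f}{\Lebesgue{q}(\Omega)}$ uniformly in $\epsilon$. Second, $\LongNorm{\nabla^2 v_2}{\Lebesgue{q}(\Omega)} \le C\LongNorm{f}{\Lebesgue{q}(\Omega)}$ is exactly Proposition \ref{prop_nabla_2_V_lambda_epsilon}, whose hypothesis ($\lambda$ large enough for $S_{\lambda,\epsilon}^{-1}$ to exist) we have arranged. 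Third, $\LongNorm{\nabla^2\nabla_\epsilon\pi_3}{\Lebesgue{q}(\Omega)} \le C\LongNorm{f}{\Lebesgue{q}(\Omega)}$ is the content of Proposition \ref{prop_nabla2_resolvent_estimate_2}. Summing the three bounds yields the claim.

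Before invoking these propositions one makes the standard reduction from Remark \ref{rem_average_free}, splitting off the horizontal average of $f$: for the average part $\tilde f$ the resolvent is the explicit pair $u' = ((\lambda-\partial_3^2)^{-1}\tilde f_H,\, 0)^T$, whose Hessian reduces to $\partial_3^2(\lambda-\partial_3^2)^{-1}$ acting componentwise, and since the one-dimensional Dirichlet Laplacian $-\partial_3^2$ on $(-1,1)$ is sectorial with $\partial_3^2(\lambda-\partial_3^2)^{-1}$ uniformly bounded on $\Lebesgue{q}(-1,1)$ for $\lambda\in\Sigma_\theta$ — an operator that does not involve $\epsilon$ — this contribution is also controlled by $\LongNorm{f}{\Lebesgue{q}(\Omega)}$ with an $\epsilon$-independent constant. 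After this reduction the horizontally average-free remainder is handled by the decomposition above. Since every analytic ingredient is already in place, the proof is essentially an assembly; the only point needing a word of care is that $(\lambda + A_\epsilon)^{-1}f$ indeed coincides with the $u$ furnished by \eqref{def_v_j_pi_j} and that $R_0$ loses no regularity. I do not expect a genuine obstacle here: the $\epsilon$-uniformity — the delicate part — has already been absorbed into Propositions \ref{prop_nabla2_resolvent_estimate}, \ref{prop_nabla2_resolvent_estimate_2} and \ref{prop_nabla_2_V_lambda_epsilon}, and the vertical average is dispatched by an elementary one-dimensional resolvent estimate.
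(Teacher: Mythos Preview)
Your proposal is correct and follows exactly the paper's approach: the paper's proof is the single sentence ``This is a direct consequence of Propositions \ref{prop_nabla2_resolvent_estimate}, \ref{prop_nabla2_resolvent_estimate_2} and \ref{prop_nabla_2_V_lambda_epsilon},'' and you have simply spelled out the decomposition $u = R_0 v_1 - v_2 + \nabla_\epsilon\pi_3$ behind it and invoked those same three propositions for the three pieces, together with the horizontal-average reduction of Remark \ref{rem_average_free}.
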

		\begin{proof}
			This is a direct consequence of Propositions \ref{prop_nabla2_resolvent_estimate}, \ref{prop_nabla2_resolvent_estimate_2} and \ref{prop_nabla_2_V_lambda_epsilon}.
		\end{proof}
		}
		
	\FurukawaCommentFifth{
	\begin{lemma} \label{lem_apriori_estimate_nabla2}
		Let $1 < q < \infty$, $0 < \epsilon \leq 1$.
		Then there exists a constant \FurukawaCommentSixth{$C = C \bracket{ q}$} \FurukawaCommentSeventh{such that}
		\begin{align*}
			\Vert
				\nabla^2 u
			\Vert_{\FurukawaCommentSixth{L^q (\Omega)}}
			\leq C \Vert
				A_\epsilon u
			\Vert_{L^q (\Omega)}
		\end{align*}
		for all $u \in D(A_\epsilon)$.
	\end{lemma}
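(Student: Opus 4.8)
The plan is to deduce the a~priori estimate from the $\epsilon$-uniform resolvent bound of Lemma~\ref{lem_nabla2_resolvent_f}. Fix $0<\theta<\pi/2$ and a point $\lambda_0\in\Sigma_\theta$ with $|\lambda_0|>R$, where $R$ is the constant appearing in Lemma~\ref{lem_nabla2_resolvent_f}; note that $\lambda_0$ can be chosen independently of $\epsilon$. Given $u\in D(A_\epsilon)$, I put $f:=(\lambda_0+A_\epsilon)u\in L^q(\Omega)$, so that $u=(\lambda_0+A_\epsilon)^{-1}f$, and Lemma~\ref{lem_nabla2_resolvent_f} yields
\[
\Vert\nabla^2 u\Vert_{L^q(\Omega)}=\Vert\nabla^2(\lambda_0+A_\epsilon)^{-1}f\Vert_{L^q(\Omega)}\le C\Vert f\Vert_{L^q(\Omega)}\le C\Vert A_\epsilon u\Vert_{L^q(\Omega)}+C|\lambda_0|\Vert u\Vert_{L^q(\Omega)}
\]
with $C=C(q)$ independent of $\epsilon$, so that the whole $\epsilon$-uniformity developed in Section~\ref{section_BIP} enters only through Lemma~\ref{lem_nabla2_resolvent_f}. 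It remains to absorb the zeroth-order term, i.e.\ to prove an $\epsilon$-uniform bound $\Vert u\Vert_{L^q(\Omega)}\le C\Vert A_\epsilon u\Vert_{L^q(\Omega)}$ for $u\in D(A_\epsilon)$.

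To establish this I would rerun the resolvent construction of Section~\ref{section_BIP} with $\lambda=0$. By Remark~\ref{rem_average_free} it suffices to treat horizontally average-free data, for which only the frequencies $n'\in\Integer^2\setminus\{0\}$ occur, so $s_0=|n'|\ge1$ and no singularity at the origin is created; inserting a smooth cutoff vanishing near $\xi'=0$ (harmless on average-free functions) makes the continuous Mikhlin constants in Proposition~\ref{prop:C05} and in Propositions~\ref{prop:C0510}, \ref{prop:C0715} and \ref{prop:C1515} finite, and they are bounded by a constant depending only on $q$. The horizontal-average part $\tilde u$ solves an equation involving only the one-dimensional Dirichlet Laplacian $-\partial_3^2$ on $(-1,1)$, which is boundedly invertible with a bound depending only on $q$. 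Altogether $0\in\rho(A_\epsilon)$ and $\Vert A_\epsilon^{-1}\Vert_{L^q(\Omega)\to L^q(\Omega)}\le C(q)$ uniformly in $\epsilon$, hence $\Vert u\Vert_{L^q(\Omega)}\le C\Vert A_\epsilon u\Vert_{L^q(\Omega)}$ for all $u\in D(A_\epsilon)$. (That $\rho(A_\epsilon)$ contains a fixed neighbourhood of $0$ is also immediate from the $L^2$-coercivity $(A_\epsilon u,u)_{L^2(\Omega)}=(-\Delta u,u)_{L^2(\Omega)}=\Vert\nabla u\Vert_{L^2(\Omega)}^2\ge c_\Omega\Vert u\Vert_{L^2(\Omega)}^2$ valid for $u\in D(A_\epsilon)$, where $c_\Omega$ is the Poincaré constant of $\Omega$ and the divergence and boundary terms vanish because $\mathrm{div}_\epsilon u=0$ and $u|_{x_3=\pm1}=0$; this $c_\Omega$ does not depend on $\epsilon$.)

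Combining the two estimates gives $\Vert\nabla^2 u\Vert_{L^q(\Omega)}\le C(1+|\lambda_0|)\Vert A_\epsilon u\Vert_{L^q(\Omega)}$ with $C$ independent of $\epsilon$, which is the assertion. The structural half of the argument is immediate once Lemma~\ref{lem_nabla2_resolvent_f} is available; I expect the only genuinely delicate point to be the $\epsilon$-uniformity of the zeroth-order bound $\Vert u\Vert_{L^q(\Omega)}\le C\Vert A_\epsilon u\Vert_{L^q(\Omega)}$, and the cleanest way to secure it is the $\lambda=0$ version of the Section~\ref{section_BIP} construction, which reuses verbatim the multiplier bounds proved there and therefore requires no estimate beyond those already in hand.
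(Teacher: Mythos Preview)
Your first step---deriving $\Vert\nabla^2 u\Vert_{L^q}\le C\Vert A_\epsilon u\Vert_{L^q}+C|\lambda_0|\Vert u\Vert_{L^q}$ from Lemma~\ref{lem_nabla2_resolvent_f}---is correct and is exactly the content of the paper's Proposition~\ref{prop_anisotropic_Stokes_lambda_equal_0}. The difficulty, as you recognise, lies entirely in removing the lower-order term $\Vert u\Vert_{L^q}$ with an $\epsilon$-independent constant.

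Here your argument has a gap. You propose to rerun the Section~\ref{section_BIP} construction at $\lambda=0$, and you note that $s_0=|n'|\ge1$ avoids singularities at the origin. But the obstruction is not at $n'=0$; it is in the construction of the boundary corrector $v_2$. The solution operator $V_{\lambda,\epsilon}=W_{\lambda,\epsilon}S_{\lambda,\epsilon}^{-1}$ relies on Proposition~\ref{prop:C1500}, where $S_{\lambda,\epsilon}^{-1}$ is obtained by a Neumann series from the bound $\Vert R_{\lambda,\epsilon}\Vert\le C|\lambda|^{-1/2}$, and this bound \emph{requires} $|\lambda|$ large. The individual multiplier estimates (\ref{eq:boundedness_Mikhlin_constant_A_1})--(\ref{eq:boundedness_Mikhlin_constant_A_3}) and (\ref{eq:C1532}) are stated in terms of $e^{-c|\lambda|^{1/2}}$ and $|\lambda|^{-1/2}$; at $\lambda=0$ they yield nothing, and you would have to produce a separate argument that $S_{0,\epsilon}$ is invertible on $L^q(\partial\Omega)$ with inverse bounded uniformly in $\epsilon\in(0,1]$. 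This is not ``verbatim'' reuse of the estimates in hand. Your $L^2$-coercivity remark gives the uniform bound only for $q=2$; it does not transfer to general $q$ without additional work.

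The paper takes a different route to absorb $\Vert u\Vert_{L^q}$: a contradiction-plus-compactness argument. If the estimate failed one would find sequences $\epsilon_k\in(0,1]$ and $u_k\in D(A_{\epsilon_k})$ with $\Vert\nabla^2 u_k\Vert_{L^q}=1$ and $\Vert A_{\epsilon_k}u_k\Vert_{L^q}\to0$; by Proposition~\ref{prop_anisotropic_Stokes_lambda_equal_0} and Poincar\'e the $u_k$ are bounded in $W^{2,q}$, so Rellich yields a strong $L^q$-limit $u$ with $\Vert u\Vert_{L^q}\ge1/\alpha>0$. Passing to a subsequence $\epsilon_k\to\epsilon_\ast$, one identifies the limit equation: for $\epsilon_\ast>0$ it is the homogeneous anisotropic Stokes system, for $\epsilon_\ast=0$ it is the homogeneous hydrostatic Stokes system (the vertical component being forced to zero by the divergence constraint and the boundary condition). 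In both cases uniqueness forces $u=0$, contradicting $\Vert u\Vert_{L^q}>0$. This argument sidesteps any explicit uniform invertibility of $S_{0,\epsilon}$ and instead exploits the well-posedness of the \emph{limit} hydrostatic problem; that is the missing idea in your proposal.
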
}
	\FurukawaCommentFifth{
	\begin{proof}[Proof of Lemma \ref{lem:A1001}]
		Let $u$ be a solution of (\ref{eq_LE}).
		Our uniform BIP yields
		\begin{align*}
			\Vert
				\partial_t u
			\Vert_{\mathbb{E}_0 (T)}
			+ \Vert
				A_\epsilon u
			\Vert_{\mathbb{E}_0 (T)}
			\leq C \left(
				\Vert
					f
				\Vert_{\mathbb{E}_0 (T)}
				+ \Vert
					u_0
				\Vert_{X_\gamma}
			\right)
		\end{align*}
		by the Dore-Venni theory, where $C>0$ is independent of $\epsilon$ and $T$.
		Applying an a priori estimate Lemma \ref{lem_apriori_estimate_nabla2}, we can replace $\Vert A_\epsilon u \Vert_{\mathbb{E}_0 (T)}$ by $\Vert \nabla^2 u \Vert_{\mathbb{E}_0 (T)}$.
		Since $(u, \pi)$ solves (\ref{eq_LE}) and $\partial_t u$ and $\nabla^2 u$ are controlled, we are able to estimate $\Vert \nabla_\epsilon \pi \Vert_{\mathbb{E}_0 (T)}$.
		This completes the proof of Lemma \ref{lem:A1001}.
	\end{proof}
	}
	\FurukawaCommentFifth{
	It remains to prove Lemma \ref{lem_apriori_estimate_nabla2}.
	We first observe an a priori estimate slightly weaker than Lemma \ref{lem_apriori_estimate_nabla2}, which can be proved by using the resolvent estimate Lemma \ref{lem_nabla2_resolvent_f}.
	}
	\FurukawaCommentFifth{
	\begin{proposition} \label{prop_anisotropic_Stokes_lambda_equal_0}
		Let $1 < q < \infty$ and $0 < \epsilon \leq 1$.
		There exists a unique solution $(u, \pi) \in D (A_\epsilon) \times L^q (\Omega) / \Real$ to
		\begin{align} \label{eq_anisotropic_Stokes_lambda_equal_0}
			\begin{array}{rcll}
				 - \Delta u + \nabla_\epsilon \pi 
				&= 
				& f
				&  \quad \mathrm{in} \quad \Omega , \\
				\mathrm{div}_\epsilon \, u
				&= 
				& 0
				& \quad \mathrm{in} \quad \Omega, \\
				u
				&= 
				& 0
				& \quad  \mathrm{on} \quad \partial \Omega, 
			\end{array}
		\end{align}
		\FurukawaCommentSixth{for $f \in L^q (\Omega)$,} such that
		\begin{align*}
			\Vert
				\nabla^2 u
			\Vert_{L^q (\Omega)}
			+ \Vert
				\nabla_\epsilon \pi
			\Vert_{L^q (\Omega)}
			\leq C \Vert
				f
			\Vert_{L^q (\Omega)}
			+ C \Vert
				u
			\Vert_{L^q (\Omega)},
		\end{align*}
		where $C > 0$ is independent of $\epsilon$ \FurukawaCommentSixth{and $f$}.
	\end{proposition}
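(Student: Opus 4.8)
The plan is to reduce (\ref{eq_anisotropic_Stokes_lambda_equal_0}) to the anisotropic Stokes operator $A_\epsilon$ and then invoke the uniform resolvent bound of Lemma \ref{lem_nabla2_resolvent_f}. Since $H_\epsilon$ annihilates anisotropic gradients (by its very definition via the Neumann problem (\ref{eq:C0725})) and $A_\epsilon = H_\epsilon(-\Delta)$ on $D(A_\epsilon)$, applying $H_\epsilon$ to the first line of (\ref{eq_anisotropic_Stokes_lambda_equal_0}) shows that (\ref{eq_anisotropic_Stokes_lambda_equal_0}) is equivalent to
\begin{align*}
	A_\epsilon u = H_\epsilon f, \qquad \nabla_\epsilon \pi = \Delta u + f ;
\end{align*}
the second identity genuinely defines a pressure because $H_\epsilon(\Delta u + f) = -A_\epsilon u + H_\epsilon f = 0$ forces $\Delta u + f$ to lie in the range of $\nabla_\epsilon$, so it equals $\nabla_\epsilon \pi$ for some $\pi \in W^{1,q}(\Omega)$, unique up to an additive constant.

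For existence and uniqueness I would show that $A_\epsilon$ is boundedly invertible on $L^q_{\sigma,\epsilon}(\Omega)$. By Lemma \ref{lem_nabla2_resolvent_f} (equivalently, the BIP of Lemma \ref{lem:C02}) the operator $\lambda_0 + A_\epsilon$ is invertible for some fixed large $\lambda_0 \in \Sigma_\theta$, so $(\lambda_0 + A_\epsilon)^{-1}$ maps $L^q_{\sigma,\epsilon}(\Omega)$ into $D(A_\epsilon) \subset W^{2,q}(\Omega)$, which embeds compactly into $L^q(\Omega)$ since $\Omega$ is bounded; hence $A_\epsilon$ has compact resolvent and discrete spectrum. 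A $W^{2,q}$-solution of $A_\epsilon u = 0$ is smooth by elliptic regularity and vanishes by the $L^2$ energy identity together with $u|_{\partial\Omega}=0$ (and the Poincar\'e inequality), so $0$ is not an eigenvalue and therefore $0 \in \rho(A_\epsilon)$. Thus $u := A_\epsilon^{-1} H_\epsilon f$ together with the pressure determined above gives a solution, which is unique: $u$ by injectivity of $A_\epsilon$, and $\pi$ because two pressures differ by a $\nabla_\epsilon$-closed function, i.e.\ a constant, hence agree in $L^q(\Omega)/\Real$.

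To obtain the $\epsilon$-uniform a priori bound, fix $\lambda_0 \in \Sigma_\theta$ so large that Lemma \ref{lem_nabla2_resolvent_f} applies; crucially $\lambda_0$ may be chosen independently of $\epsilon$, since the threshold $R$ there is $\epsilon$-independent. Because $u$ also solves $\lambda_0 u - \Delta u + \nabla_\epsilon \pi = f + \lambda_0 u$ and $H_\epsilon u = u$, we have
\begin{align*}
	u = (\lambda_0 + A_\epsilon)^{-1} H_\epsilon f + \lambda_0 (\lambda_0 + A_\epsilon)^{-1} u.
\end{align*}
Applying $\nabla^2$ and using $\Vert \nabla^2 (\lambda_0 + A_\epsilon)^{-1} g \Vert_{L^q(\Omega)} \le C \Vert g \Vert_{L^q(\Omega)}$ from Lemma \ref{lem_nabla2_resolvent_f} together with the $\epsilon$-uniform $L^q$-boundedness of $H_\epsilon$ (Lemma \ref{lem_bound_H_epsilon}) gives $\Vert \nabla^2 u \Vert_{L^q(\Omega)} \le C \Vert f \Vert_{L^q(\Omega)} + C \Vert u \Vert_{L^q(\Omega)}$ with $C$ independent of $\epsilon$. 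Finally, reading the pressure off the equation, $\nabla_\epsilon \pi = \Delta u + f$, and using the trivial bound $\Vert \Delta u \Vert_{L^q(\Omega)} \le C \Vert \nabla^2 u \Vert_{L^q(\Omega)}$ yields the same estimate for $\Vert \nabla_\epsilon \pi \Vert_{L^q(\Omega)}$.

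There is no deep obstacle here — the statement is essentially an assembly of the preceding lemmas — but one must be careful that the shift $\lambda_0$, the constant in Lemma \ref{lem_nabla2_resolvent_f}, and the bound for $H_\epsilon$ are all independent of $\epsilon$; this is precisely why the pressure is recovered directly from the differential equation rather than through the $\epsilon$-dependent solution operators built in Section \ref{section_BIP}.
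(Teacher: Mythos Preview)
Your proposal is correct and follows essentially the same route as the paper: shift the equation by a large $\lambda_0$ so that Lemma \ref{lem_nabla2_resolvent_f} applies, deduce the $\epsilon$-uniform bound on $\nabla^2 u$, and read off the pressure directly from the equation; existence and uniqueness come from compactness of the resolvent together with triviality of the kernel. The only cosmetic difference is that for the kernel you invoke elliptic regularity to bootstrap a $W^{2,q}$-solution to smoothness before using the $L^2$ energy identity, whereas the paper treats $q\ge 2$ directly (where $L^q\hookrightarrow L^2$ on the bounded domain) and then handles $1<q<2$ by duality; both routes are standard.
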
} 
	\FurukawaCommentFifth{
	\begin{proof}
		The equations are equivalent to
		\begin{align*}
			\begin{array}{rcll}
				\lambda_0 u - \Delta u + \nabla_\epsilon \pi 
				&= 
				& f + \lambda_0 u
				&\quad \mathrm{in} \quad \Omega,  \\
				\mathrm{div}_\epsilon \, u
				&= 
				& 0
				&\quad \mathrm{in} \quad \Omega, \\
				u
				&= 
				& 0
				& \quad \mathrm{on} \quad \partial \Omega,
			\end{array}
		\end{align*}
		\FurukawaCommentSixth{for sufficiently large $\lambda_0 > 0$.}
		We find from Lemma \ref{lem_nabla2_resolvent_f} that
		\begin{align*}
			\Vert
				\nabla^2 u
			\Vert_{L^q (\Omega)}
			& \leq C \Vert
				\nabla^2 (\lambda_0 + A_\epsilon)^{-1}
			\Vert_{L^q (\Omega) \rightarrow L^q (\Omega)}
			\Vert
				f + \lambda_0 u
			\Vert_{L^q (\Omega)} \\
			& \leq C \left(
			 \Vert
					f
				\Vert_{L^q (\Omega)}
				+ \lambda_0 \Vert
					u
				\Vert_{L^q (\Omega)}
			\right)
		\end{align*}
		for some constant $C> 0$, which is independent of $\epsilon$.
		The first equation in (\ref{eq_anisotropic_Stokes_lambda_equal_0}) implies
		\begin{align*}
			\Vert
				\nabla_\epsilon \pi
			\Vert_{L^q (\Omega)}
			& \leq\Vert
				\nabla^2 u
			\Vert_{L^q (\Omega)}
			+ \Vert
				f
			\Vert_{L^q (\Omega)} \\
			& \leq C \left(
			 \Vert
					f
				\Vert_{L^q (\Omega)}
				+ \lambda_0 \Vert
					u
				\Vert_{L^q (\Omega)}
			\right).
		\end{align*}
		\FurukawaCommentSixth{
	For uniqueness we multiply $u$ with the first equation and integrating by parts yields $\nabla u = 0$.
	By the Poincar\'{e} inequality it implies $u = 0$.
	This argument works for $q \geq 2$ since $\Omega$ is bounded.
	Since $(\lambda_0 + A_\epsilon)^{-1}$ is compact in $L^q (\Omega)$, the Riesz-Schauder theorem implies that $0$ is in resolvent since $\mathrm{ker} A_\epsilon = \{ 0 \}$.
	In particular, (\ref{eq_anisotropic_Stokes_lambda_equal_0}) is uniquely solvable for any $f \in L^q (\Omega)$ for $q \geq 2$.
	By duality argument the solvability of $q \geq 2$ implies the uniqueness of (\ref{eq_anisotropic_Stokes_lambda_equal_0}) for $1 < q < 2$.
	Again by compactness of $(\lambda_0 + A_\epsilon)^{-1}$ the solvability for (\ref{eq_anisotropic_Stokes_lambda_equal_0}) follows.
	}
	\end{proof}	}
	\FurukawaCommentFifth{
	\begin{proof}[Proof of Lemma \ref{lem_apriori_estimate_nabla2}]
		Assume that the statement were false then there would exist a sequence $\{ \epsilon_k \}_{k\in \Integer_{\geq 1}}$, ($0 < \epsilon_k \leq 1$) and $u_k \in D(A_\epsilon)$ such that
		\begin{align*}
			\Vert
				\nabla^2 u_k
			\Vert_{L^q (\Omega)}
			> k \Vert
				f_k
			\Vert_{L^q (\Omega)}, \quad
			f_k 
			= A_{\epsilon_k} u_k.
		\end{align*}
		Since the problem is linear we may assume that
		\begin{align*}
			\Vert
				\nabla^2 u_k
			\Vert_{L^q (\Omega)}
			\equiv 1, \quad
			\Vert
				f_k
			\Vert_{L^q (\Omega)}
			\leq \frac{1}{k} 
			\rightarrow 0,  \,
			(k \rightarrow 0).
		\end{align*}
		By $A_{\epsilon_k} u_k = f_k$ and Proposition \ref{prop_anisotropic_Stokes_lambda_equal_0}, we have
		\begin{align*} %\label{eq_lower_bound_1_fk_u_k}
			1
			\leq \alpha \left(
				\Vert
					f_k
				\Vert_{L^q (\Omega)}	
				+ \Vert
					u_k
				\Vert_{L^q (\Omega)}	
			\right)
		\end{align*}
		for some constant $\alpha>0$, which is independent of $\epsilon_k$.
		Letting $k \rightarrow \infty$ implies
		\begin{align} \label{eq_lower_bound_1_u_k}
			\frac{1}{\alpha}
			\leq \liminf_{k \rightarrow \infty} \Vert
				u_k
			\Vert_{L^q (\Omega)}.
		\end{align}
		By the Poincar\'{e} inequality for $u_k$ our bound $\Vert \nabla u_k \Vert_{L^q (\Omega)}$ implies that $u_k$ and $\nabla u_k$ are bounded in $L^q (\Omega)$.
		By Rellich's compactness theorem, we observe that $u_k \rightarrow u$ for some $u \in L^q (\Omega)$ strongly in $L^q (\Omega)$ by taking a subsequence.
		The estimate (\ref{eq_lower_bound_1_u_k}) implies that
		\begin{align*}
			\Vert
				u
			\Vert_{L^q (\Omega)}
			\geq \frac{1}{\alpha}.
		\end{align*}
		We may assume $\epsilon_k \rightarrow \epsilon_\ast \in [0, 1]$ and $u_k \rightarrow u$ as $k \rightarrow \infty$ by taking a subsequence.
		The situation is divided into two cases, i.e. $\epsilon_\ast = 0$ or $\epsilon_\ast > 0$.
		By definition, 
		\begin{align*}
			\begin{array}{rcll}
				 - \Delta u_k + \nabla_{\epsilon_k} \pi_k 
				&= 
				& f_k
				& \quad  \mathrm{in} \quad \Omega , \\
				\mathrm{div}_\epsilon \, u_k
				&= 
				& 0
				& \quad  \mathrm{in} \quad \Omega, \\
				u_k
				&= 
				& 0
				&  \quad \mathrm{on} \quad \partial \Omega, 
			\end{array}
		\end{align*}
		with some function $\pi_k$ satisfying $\int_{\Omega} \pi_k dx = 0$.
		Since $\Vert \nabla^2 u_k \Vert_{L^q (\Omega)} \leq 1$, we see that
		\begin{align*}
			\Vert
				\nabla_{\epsilon_k} \pi_k
			\Vert_{L^q (\Omega)}
			\leq \Vert
				f
			\Vert_{L^q (\Omega)}
			+ 1.
		\end{align*}
		By the Poincar\'{e} inequality $\{ \pi_k \}$ is bounded in $L^q (\Omega)$.
		By Rellich's compactness theorem we may assume $\pi_k \rightarrow \pi$ in $L^q (\Omega)$ for some \FurukawaCommentSixth{$\pi \in L^q (\Omega)$} strongly by taking a subsequence. 
		If $\epsilon_\ast = 0$, this implies $\pi$ is independent of \FurukawaCommentSixth{$z$}.
		Since $\mathrm{div}_{\epsilon_k} \, u_k = 0$ and the vertical component $w_k = 0$ on $x_3 = \pm 1$, integration vertically on $(-1 , 1)$ yields that the horizontal limit $v$ satisfies
		\begin{align*}
			\mathrm{div}_H \overline{v} = 0,
		\end{align*}
		where $\mathrm{div}_H = \nabla_H \cdot$.
		Thus the horizontal component $v$ satisfies the hydrostatic Stokes equations
		\begin{align*}
			\begin{array}{rcll}
				 - \Delta u + \nabla_H \pi 
				&= 
				& 0
				& \quad  \mathrm{in} \quad \Omega , \\
				\mathrm{div}_H \, \overline{v}
				&= 
				& 0
				&  \quad \mathrm{in} \quad \Omega, \\
				v
				&= 
				& 0
				&  \quad \mathrm{on} \quad \partial \Omega.
			\end{array}
		\end{align*}
		Since we know the only possible $W^{2 , q}$-solution is zero, so we conclude that $v = 0$.
		Since $\Vert \nabla v_k \Vert_{L^q (\Omega)}$ is bounded, $\mathrm{div}_{\epsilon_k} $-free condition implies that the horizontal limit $w$ is independent of the vertical variable.
		By the boundary condition $w = 0$ at $x_3 = \pm 1$, this implies $w$ must be zero.
		We thus observe that $u_k \rightarrow 0$ strongly in $L^q (\Omega)$, this contradicts $\Vert u \Vert_{L^q (\Omega)} \geq 1 / \alpha > 0$.
		The case $\epsilon_\ast$ is easier since the limit satisfies the anisotropic Stokes equations
		\begin{align*}
			\begin{array}{rcll}
				 - \Delta u + \nabla_{\epsilon_\ast} \pi 
				&= 
				& 0
				&  \quad \mathrm{in} \quad \Omega , \\
				\mathrm{div}_{\epsilon_\ast} \, u
				&= 
				& 0
				&  \quad \mathrm{in} \quad \Omega, \\
				u
				&= 
				& 0
				& \quad  \mathrm{on} \quad \partial \Omega.
			\end{array}
		\end{align*}
		By the uniqueness $u \equiv 0$ in $\Omega$.
		This again contradicts $\Vert u \Vert_{L^q (\Omega)} \geq 1 / \alpha > 0$.
		The proof of Lemma \ref{lem_apriori_estimate_nabla2} is now complete.
	\end{proof}	}
	\FurukawaCommentSixth{As a direct application of Lemma \ref{lem:A1001} we obtain}
		\begin{corollary} \label{cor_mr_estimate}
		Let $p,q\in (1,\infty )$, $T>0$, $F = (f_H, f_z) \in \mathbb{E} _0(T)$, $U_0\in X_\gamma $ and \FurukawaCommentFourth{$0<\epsilon \leq 1$}. 
		Then there is a unique solution $( U_\epsilon,
		P_\epsilon) \in \mathbb{E}_1 (T) \times \mathbb{E}_0(T)$ to the \FurukawaCommentFourth{equations}
			\begin{equation} \label{eq_diff-eq_abstract_2}
				\left \{
					\begin{array}{rclll}
						\partial _t V-\Delta V + \nabla_{H} P
						& =
						& f_H 
						&\mathrm{ in }
						& \Omega \times  (0,T) ,\\
						\partial _t (\epsilon W) -\Delta (\epsilon W) + \frac{\partial_3 P}{\epsilon}
						& =
						& f_z 
						&\mathrm{in}  
						& \Omega \times (0, T) ,\\
						\mathrm{div}_H \, V + \frac{\partial_3}{\epsilon
						} ({\FurukawaCommentFourth{\epsilon}} W)
						&=
						& 0
						&\mathrm{ in }
						& \Omega \times (0, T),\\
						U 
						& =
						& 0
						&  \mathrm{on} 
						& \partial \Omega \times (0, T) \FurukawaCommentFourth{,} \\
						U (0)
						& =
						& U_0
						&\mathrm{ in }
						& \Omega ,
					\end{array}\right .
			\end{equation}
		where $P$ is unique up to a constant.
		Moreover, there exist constants $C>0$ and $C_T>0$, which is independent of $\epsilon$,  such that
		\begin{equation} \label{maximal_regularity_of_anisotropic_Stokes_2}
			\Vert \left(
				V, {\FurukawaCommentFourth{\epsilon}} W
			\right)	
			\Vert _{\mathbb{E}_1(T)}
			+ \Vert 
				\nabla_{{\FurukawaCommentFourth{\epsilon}}} P \Vert_{\mathbb{E}_0(T)}
			\le  C\Vert F \Vert _{\mathbb{E}_0(T)}
			+ C_T \Vert 
			\left(
				V_0, \epsilon W_0
			\right)	
			\Vert _{X_\gamma }.
		\end{equation}
	\end{corollary}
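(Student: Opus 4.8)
The system (\ref{eq_diff-eq_abstract_2}) is precisely the linear problem (\ref{eq_LE}) rewritten for the unknown $(V,\epsilon W)$: collecting the three scalar equations and setting $\nabla_\epsilon P = (\partial_1 P,\partial_2 P,\partial_3 P/\epsilon)^T$ and $\mathrm{div}_\epsilon(V,\epsilon W) = \mathrm{div}_H V + \partial_3(\epsilon W)/\epsilon$, one obtains $\partial_t(V,\epsilon W) - \Delta(V,\epsilon W) + \nabla_\epsilon P = F$ and $\mathrm{div}_\epsilon(V,\epsilon W) = 0$ in $\Omega\times(0,T)$, with $(V,\epsilon W)=0$ on $\partial\Omega\times(0,T)$, initial datum $(V_0,\epsilon W_0)\in X_{\epsilon,\gamma}$ and force $F=(f_H,f_z)\in\mathbb{E}_0(T)$. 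So the plan is to apply Lemma \ref{lem:A1001} essentially verbatim and then upgrade its conclusion to the full $\mathbb{E}_1(T)$-norm, while keeping track of the dependence of the constants on $\epsilon$ and on $T$.

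First I would invoke Lemma \ref{lem:A1001} with $f=F$ and $u_0=(V_0,\epsilon W_0)$. This yields a solution $((V,\epsilon W),P)$ of (\ref{eq_diff-eq_abstract_2}) — existence being built into that lemma, which itself rests on the $\epsilon$-uniform $L^q$-boundedness of the anisotropic Helmholtz projection (Lemma \ref{lem_bound_H_epsilon}), the $\epsilon$-uniform bound on the imaginary powers $A_\epsilon^{is}$ (Lemma \ref{lem:C02}) inserted into the Dore--Venni theorem, the a priori estimate $\|\nabla^2 u\|_{L^q}\leq C\|A_\epsilon u\|_{L^q}$ with $\epsilon$-uniform $C$ (Lemma \ref{lem_apriori_estimate_nabla2}), and recovery of $\nabla_\epsilon P$ from the equation — together with the bound
\begin{align*}
	\|\partial_t(V,\epsilon W)\|_{\mathbb{E}_0(T)}
	+ \|\nabla^2(V,\epsilon W)\|_{\mathbb{E}_0(T)}
	+ \|\nabla_\epsilon P\|_{\mathbb{E}_0(T)}
	\leq C\|F\|_{\mathbb{E}_0(T)} + C'\|(V_0,\epsilon W_0)\|_{X_\gamma},
\end{align*}
where $C,C'$ depend only on $p,q$ and, crucially, on neither $\epsilon$ nor $T$.

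It then remains to control the lower-order parts of the $\mathbb{E}_1(T)$-norm. Since $\Omega$ is bounded and $(V,\epsilon W)$ vanishes on $\partial\Omega$, the Poincar\'e inequality gives $\|(V,\epsilon W)\|_{W^{2,q}(\Omega)}\leq C(\|(V,\epsilon W)\|_{L^q(\Omega)}+\|\nabla^2(V,\epsilon W)\|_{L^q(\Omega)})$, so only $\|(V,\epsilon W)\|_{L^p(0,T;L^q(\Omega))}$ is left. Writing $(V,\epsilon W)(t)=(V_0,\epsilon W_0)+\int_0^t\partial_t(V,\epsilon W)\,ds$, applying H\"older in time and using the embedding $X_\gamma\hookrightarrow L^q(\Omega)$ (recall $X_\gamma$ carries the $B^{2-2/p}_{q,p}(\Omega)$-norm), one gets $\|(V,\epsilon W)\|_{C([0,T];L^q)}\leq C\|(V_0,\epsilon W_0)\|_{X_\gamma}+T^{1-1/p}\|\partial_t(V,\epsilon W)\|_{\mathbb{E}_0(T)}$, hence $\|(V,\epsilon W)\|_{L^p(0,T;L^q)}\leq CT^{1/p}\|(V_0,\epsilon W_0)\|_{X_\gamma}+CT\|\partial_t(V,\epsilon W)\|_{\mathbb{E}_0(T)}$. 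Combining with the previous display and absorbing the powers of $T$ into a constant $C_T$ yields (\ref{maximal_regularity_of_anisotropic_Stokes_2}), with $C$ independent of both $\epsilon$ and $T$ and $C_T$ independent of $\epsilon$. Uniqueness is standard: the difference of two solutions satisfies (\ref{eq_diff-eq_abstract_2}) with $F=0$ and zero initial datum, so the estimate just derived forces $(V,\epsilon W)=0$, and then the first two equations give $\nabla_\epsilon P=0$, i.e.\ $P$ is constant.

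The only genuinely nontrivial ingredient is the $\epsilon$-uniformity of the maximal-regularity constant, and that is entirely inherited from Lemmas \ref{lem:C02} and \ref{lem_apriori_estimate_nabla2}; everything else is bookkeeping. The one point requiring care is matching the scaled unknown $(V,\epsilon W)$ and the scaled operators $\nabla_\epsilon,\mathrm{div}_\epsilon$ with the abstract formulation (\ref{eq_LE}), and checking that the lower-order term grows at most polynomially in $T$ so that it may legitimately be hidden in $C_T$ rather than in $C$. I do not expect any substantive obstacle beyond this.
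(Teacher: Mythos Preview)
Your argument is correct and matches the paper's approach: identify \eqref{eq_diff-eq_abstract_2} with \eqref{eq_LE} for the unknown $(V,\epsilon W)$ and read off the estimate from Lemma~\ref{lem:A1001}, then pass to the full $\mathbb{E}_1(T)$-norm. The paper's proof is in fact terser than yours --- it simply invokes Lemma~\ref{lem:A1001}, relabels the variables, and asserts the conclusion without discussing the lower-order pieces.

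One small slip: your time-integration bound
\[
\|(V,\epsilon W)\|_{L^p(0,T;L^q)} \le CT^{1/p}\|(V_0,\epsilon W_0)\|_{X_\gamma} + CT\,\|\partial_t(V,\epsilon W)\|_{\mathbb{E}_0(T)}
\]
feeds a factor of $T$ back into the $\|F\|_{\mathbb{E}_0(T)}$-term once you insert $\|\partial_t(V,\epsilon W)\|_{\mathbb{E}_0(T)}\le C\|F\|+C'\|u_0\|$, so the resulting coefficient in front of $\|F\|$ is \emph{not} independent of $T$ as you then claim. To get a genuinely $T$-free constant in front of $\|F\|$ (relevant for $T=\infty$, which the paper allows), bound the zero-order term instead by the pointwise-in-$t$ estimate $\|u(t)\|_{L^q(\Omega)}\le C\|\nabla^2 u(t)\|_{L^q(\Omega)}$, valid for Dirichlet data on the bounded domain $\Omega$ with an $\epsilon$-independent constant; this avoids any growth in $T$. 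The paper does not spell this out either, but its remark that $\lim_{T\to\infty}C_T<\infty$ indicates that this is the intended route.
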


	\begin{proof}
		Lemma \ref{lem:A1001} implies there exists a solution $(\tilde{U}, \tilde{P})$ to (\ref{eq_LE}) with initial data $U_0$ such that
		\begin{equation*} 
			\Vert %\left(
				\widetilde{U}
%			\right)	
			\Vert _{\mathbb{E}_1(T)}
			+ \Vert 
				\nabla_{{\FurukawaCommentFourth{\epsilon}}} \widetilde{P} \Vert_{\mathbb{E}_0(T)}
			\le  C\Vert F \Vert _{\mathbb{E}_0(T)}
			+ C_T \Vert 
%			\left(
				U_0
%			\right)	
			\Vert _{X_\gamma }.
		\end{equation*}
		Set
		\begin{align*}
			V = \tilde{V}, \quad
			W = \epsilon \tilde{W}, \quad
			P = \tilde{P}.
		\end{align*}
		Then $(U, P)$ is the desired solution satisfying (\ref{maximal_regularity_of_anisotropic_Stokes_2}).
		\FurukawaComment{Note that $\lim_{T \rightarrow \infty} C_T < \infty$.}
	\end{proof}
		
%%%%%%%%%%%%%%%%%%%%%%%%%%%%%%%%%%%%%%%%%%%%%%%%%%%%%%%%%%%%%%%%%%
%%%%%%%%%%%%%%%%%%%%%%%%%%%%%%%%%%%%%%%%%%%%%%%%%%%%%%%%%%%%%%%%%%
%%%%%%%%%%%%%%%%%%%%%%%----Section 2---%%%%%%%%%%%%%%%%%%%%%%%%%%%
%%%%%%%%%%%%%%%%%%%%%%%%%%%%%%%%%%%%%%%%%%%%%%%%%%%%%%%%%%%%%%%%%%
%%%%%%%%%%%%%%%%%%%%%%%%%%%%%%%%%%%%%%%%%%%%%%%%%%%%%%%%%%%%%%%%%%
	
	\section{Non-linear Estimates and Regularity of w} \label{section_improved_regularity_w}
	
	In this section\FurukawaCommentFourth{,} we introduce some Propositions on non-linear estimates to estimate $F_H$, $F_z$ and $F$ and on the regularity of $w$, which is \FurukawaCommentFourth{the} vertical component of the solution to the primitive equations.
	Although the following Propositions have already proved in \cite{FurukawaGigaHieberHusseinKashiwabaraWrona2018}, we introduce them to explain our restriction for $p$ and $q$ and for \FurukawaCommentFourth{the reader's convenience}.
	
%%%%%%%%%%%%%%%%%%%%%%%%%%%%%%%%%%%%%%%%%%%%%%%%%%%%%%%%%%%%%%%%%%
%--------------------------LEMMA 0201----------------------------%
%%%%%%%%%%%%%%%%%%%%%%%%%%%%%%%%%%%%%%%%%%%%%%%%%%%%%%%%%%%%%%%%%%
	
	\begin{proposition}[\cite{FurukawaGigaHieberHusseinKashiwabaraWrona2018}] \label{prop:0201}
		Let $T > 0$, $p, q \in \bracket{ 1, \infty } $ such that $2 / 3p + 1 / q \leq 1$. 
		Then there exist a constant $C = C (p, q) > 0$ such that
			\begin{align*}
				\LongNorm{ v_1 \partial_x v_2}{\mathbb{E}_0(T)} \leq C \LongNorm{v_1}{\mathbb{E}_1 (T)} \LongNorm{v_2}{\mathbb{E}_1 (T)} 
			\end{align*}
		\FurukawaCommentFourth{for all $v_1, v_2 \in \mathbb{E}_1(T)$.}
	\end{proposition}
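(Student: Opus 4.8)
The plan is to reduce the bilinear estimate to a Hölder inequality in mixed space--time Lebesgue norms, after upgrading the regularity of each factor through the standard embeddings of the maximal regularity space. To obtain a constant independent of $T$ (recall $T=\infty$ is allowed), I would first prove the inequality on a time interval of unit length with a constant depending only on $p$ and $q$, and then recover arbitrary $T\in(0,\infty]$ by covering $(0,T)$ with the integer translates of $(0,1)$, estimating on each piece, and summing in $\ell^p$: since $\Vert u\Vert_{W^{1,p}(0,T;L^q(\Omega))}^p$ and $\Vert u\Vert_{L^p(0,T;W^{2,q}(\Omega))}^p$ are additive over such a decomposition while $v_2$ may be bounded by its norm on all of $(0,T)$, this yields $\Vert v_1\partial_x v_2\Vert_{\mathbb{E}_0(T)}\le C(p,q)\Vert v_1\Vert_{\mathbb{E}_1(T)}\Vert v_2\Vert_{\mathbb{E}_1(T)}$.

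On a fixed interval $I=(a,a+1)$ I would use two $a$-independent embeddings. For the first factor, the trace theorem for the maximal regularity space gives $v_1\in C(\bar I;B^{2-2/p}_{q,p}(\Omega))$, so by Sobolev embedding in $\Omega\subset\mathbb{R}^3$,
\begin{align*}
  v_1\in L^\infty\big(I;L^{\rho}(\Omega)\big),\qquad \frac{1}{\rho}=\frac{1}{q}-\frac{2-2/p}{3},
\end{align*}
whenever $2-2/p<3/q$, and into $L^\infty(\Omega)$ (resp. every $L^{\rho}(\Omega)$ with $\rho<\infty$) in the super- (resp. borderline) case. For the second factor, $v_2\in L^p(I;W^{2,q}(\Omega))$ gives $\partial_x v_2\in L^p(I;W^{1,q}(\Omega))$, hence
\begin{align*}
  \partial_x v_2\in L^p\big(I;L^{\rho'}(\Omega)\big),\qquad \frac{1}{\rho'}=\frac{1}{q}-\frac{1}{3},
\end{align*}
whenever $q<3$, and into $L^\infty(\Omega)$ (resp. every $L^{\rho'}(\Omega)$ with $\rho'<\infty$) otherwise. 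Because $\Omega$ is bounded, $L^{\rho}(\Omega)\hookrightarrow L^{\tilde\rho}(\Omega)$ for $\tilde\rho\le\rho$, so it suffices to be able to choose exponents $\tilde\rho\le\rho$, $\tilde\rho'\le\rho'$ with $1/\tilde\rho+1/\tilde\rho'=1/q$; this is possible exactly when $1/\rho+1/\rho'\le1/q$, and
\begin{align*}
  \frac{1}{\rho}+\frac{1}{\rho'}=\frac{2}{q}-\frac{2-2/p}{3}-\frac{1}{3}=\frac{2}{q}-1+\frac{2}{3p},
\end{align*}
so that $1/\rho+1/\rho'\le1/q$ is precisely the hypothesis $\frac{2}{3p}+\frac{1}{q}\le1$. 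Hölder's inequality in the spatial variable (exponents summing to $1/q$) followed by Hölder's inequality in time (exponents $\infty$ and $p$) then gives $\Vert v_1\partial_x v_2\Vert_{L^p(I;L^q(\Omega))}\le C(p,q)\Vert v_1\Vert_{\mathbb{E}_1(I)}\Vert v_2\Vert_{\mathbb{E}_1(I)}$; the $L^\infty$ endpoint cases are only easier, since one factor is then bounded on $\Omega$ and the other already belongs to $L^q(\Omega)$.

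The routine exponent bookkeeping above is not the delicate point; the delicate point is the $T$-uniformity of the constant, and the reason for localizing to unit intervals is exactly to avoid passing through the trace space $B^{2-2/p}_{q,p}(\Omega)$ on all of $(0,T)$, whose embedding constant into $C([0,T];\cdot)$ degenerates as $T\to0$ (and is not scale invariant as $T\to\infty$). A secondary, purely technical point is that the Sobolev indices $2-2/p$ and $1$ may land on or above the critical level $3/q$, which forces the case distinctions indicated above; these only simplify the argument. This is in essence the proof given in \cite{FurukawaGigaHieberHusseinKashiwabaraWrona2018}, reproduced here for the reader's convenience.
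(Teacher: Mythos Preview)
The paper does not supply its own proof of this proposition; it is quoted verbatim from \cite{FurukawaGigaHieberHusseinKashiwabaraWrona2018} and used as a black box. Your reconstruction via the trace embedding $\mathbb{E}_1(I)\hookrightarrow C(\bar I;B^{2-2/p}_{q,p})$, Sobolev embedding of both factors, and H\"older in space and time is the standard argument and your exponent arithmetic is correct: the condition $1/\rho+1/\rho'\le 1/q$ is exactly $2/(3p)+1/q\le 1$.

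One point deserves comment. Your unit-interval decomposition correctly handles $T\ge 1$ and in particular $T=\infty$, which is the regime the paper cares about. But the sentence ``recover arbitrary $T\in(0,\infty]$'' overreaches: for $T<1$ there is no covering by unit intervals, and in fact no $T$-independent constant can exist as $T\to 0$ (take $v_1=v_2$ constant in time; the left side scales like $T^{1/p}$ while the right side scales like $T^{2/p}$). So the constant is uniform for $T\ge 1$ and locally bounded in $T$ on $(0,1)$, which is all that the applications in Sections~3 and~4 need; the paper's phrasing ``independent of $T$'' should be read in that sense.
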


%%%%%%%%%%%%%%%%%%%%%%%%%%%%%%%%%%%%%%%%%%%%%%%%%%%%%%%%%%%%%%%%%%
%--------------------------LEMMA 0202----------------------------%
%%%%%%%%%%%%%%%%%%%%%%%%%%%%%%%%%%%%%%%%%%%%%%%%%%%%%%%%%%%%%%%%%%

	\begin{proposition}[\cite{FurukawaGigaHieberHusseinKashiwabaraWrona2018}] \label{prop:0202}
		Let $T > 0$ and $z \in \bracket{ - 1, 1}$. Let $p, q \in \bracket{ 1, \infty }$ such that $ 1 / p + 1 / q \leq 1 $. Then there exist a constant $C = C (p, q) > 0$ such that
			\begin{align*}
				\LongNorm{ w_1 \partial_3 v_2}{\mathbb{E}_0 (T)} \leq C \LongNorm{ v_1 }{\mathbb{E}_1 (T)} \LongNorm{ v_2 }{\mathbb{E}_1 (T)}
			\end{align*}
		\FurukawaCommentFourth{for all $v_1, v_2 \in \mathbb{E}_2 (T)$ and $ w_1 : = - \TimeInt{z}{-1}{ \mathrm{div}_H v_1 }{\zeta} $.}
 	\end{proposition}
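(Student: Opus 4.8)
The plan is to exploit the fact that $w_1$ is a vertical antiderivative of $\mathrm{div}_H v_1$, so that controlling $w_1$ in $L^\infty$ in the vertical variable costs only \emph{one} horizontal derivative of $v_1$, and then to distribute the gained regularity between the time and space variables by means of parabolic embeddings for the maximal regularity space $\mathbb{E}_1(T) = W^{1,p}(0,T;L^q(\Omega)) \cap L^p(0,T;W^{2,q}(\Omega))$; as will be seen, the restriction $1/p + 1/q \le 1$ is precisely the threshold at which the horizontal Sobolev step is still admissible. This is the line of argument in \cite{FurukawaGigaHieberHusseinKashiwabaraWrona2018}. Throughout write $\Omega = \Torus^2 \times (-1,1)$ and $L^{a}_{x'}L^{b}_z$ for the corresponding anisotropic Lebesgue space over $\Torus^2 \times (-1,1)$, and take $v_1,v_2 \in \mathbb{E}_1(T)$.

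The first step is a pointwise-in-time estimate. Since $|w_1(t,x',z)| \le \int_{-1}^{1} |\mathrm{div}_H v_1(t,x',\zeta)|\,d\zeta$ for every $z$, Hölder's inequality on the bounded interval gives $\| w_1(t,x',\cdot) \|_{L^\infty_z} \le C \| \nabla_H v_1(t,x',\cdot) \|_{L^q_z}$. Applying Hölder in $z$ (with exponents $\infty$ and $q$) and then in $x'$ (with exponents $2q$ and $2q$) yields
\[
  \| w_1(t)\, \partial_3 v_2(t) \|_{L^q(\Omega)}
  \le C\, \| \nabla_H v_1(t) \|_{L^{2q}_{x'}L^q_z}\, \| \partial_3 v_2(t) \|_{L^{2q}_{x'}L^q_z}.
\]
Next I would invoke the anisotropic Sobolev embedding $W^{1-1/p,q}(\Omega) \hookrightarrow L^{2q}_{x'}(L^q_z)$, obtained by combining Minkowski's integral inequality (legitimate since $2q \ge q$), the inclusion $W^{s,q}(\Omega) \hookrightarrow L^q\bigl((-1,1);W^{s,q}(\Torus^2)\bigr)$, and the two-dimensional embedding $W^{s,q}(\Torus^2) \hookrightarrow L^{2q}(\Torus^2)$, which holds exactly when $s \ge 1/q$. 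Taking $s = 1 - 1/p$ this requires $1 - 1/p \ge 1/q$, i.e. $1/p + 1/q \le 1$, which is the hypothesis. Hence, pointwise in $t$,
\[
  \| w_1(t)\, \partial_3 v_2(t) \|_{L^q(\Omega)}
  \le C\, \| v_1(t) \|_{W^{2-1/p,q}(\Omega)}\, \| v_2(t) \|_{W^{2-1/p,q}(\Omega)}.
\]

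The second step is the time integration. Taking the $L^p(0,T)$ norm and using Hölder in $t$ with exponents $2p$ and $2p$ reduces the claim to the parabolic embedding $\mathbb{E}_1(T) \hookrightarrow L^{2p}\bigl(0,T;W^{2-1/p,q}(\Omega)\bigr)$. This follows from the mixed-derivative theorem $W^{1,p}(0,T;L^q) \cap L^p(0,T;W^{2,q}) \hookrightarrow H^{\theta,p}\bigl(0,T;H^{2(1-\theta),q}\bigr)$ with $\theta = 1/(2p)$, followed by the one-dimensional Sobolev embedding $H^{1/(2p),p}(0,T) \hookrightarrow L^{2p}(0,T)$. Combining the last two displays with these embeddings proves Proposition~\ref{prop:0202}; for general $v_1,v_2$ one passes to the limit from smooth functions, which are dense in $\mathbb{E}_1(T)$.

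The main delicate point is the exponent bookkeeping: because $1/p+1/q\le 1$ is exactly the threshold for the horizontal Sobolev step (and $\theta = 1/(2p)$ is the corresponding endpoint in time), the estimate is sharp, and one must check that the endpoint embeddings genuinely hold in the borderline case $1/p+1/q=1$. They do, since there the source spaces are of Besov type $B^{\cdot}_{r,r}$ with third index $r$ not exceeding the target Lebesgue exponent ($q \le 2q$ in the horizontal direction, $p\le 2p$ in time). A related subtlety, needed to obtain a constant $C$ independent of $T$ as asserted, is that the parabolic and Sobolev embeddings used above are scaling-invariant precisely because the exponents $2p$, $2q$, $2-1/p$ are the ones dictated by the parabolic scaling $(t,x)\mapsto(\lambda^2 t,\lambda x)$; this is what permits the $T$-uniform constant and is the point requiring the most care.
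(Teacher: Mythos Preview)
The paper does not supply its own proof of this proposition: it merely records the statement and cites \cite{FurukawaGigaHieberHusseinKashiwabaraWrona2018}, where the result was established. Your argument is a faithful reconstruction of that reference's approach---vertical antiderivative bound for $w_1$, anisotropic H\"older splitting $L^{2q}_{x'}L^q_z$, the horizontal Sobolev step $W^{1-1/p,q}\hookrightarrow L^{2q}_{x'}L^q_z$ (which is exactly where $1/p+1/q\le 1$ enters), and then the mixed--derivative embedding $\mathbb{E}_1(T)\hookrightarrow L^{2p}(0,T;W^{2-1/p,q})$---so there is nothing to contrast.
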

%%%%%%%%%%%%%%%%%%%%%%%%%%%%%%%%%%%%%%%%%%%%%%%%%%%%%%%%%%%%%%%%%%
%----------------------END OF LEMMA 0202-------------------------%
%%%%%%%%%%%%%%%%%%%%%%%%%%%%%%%%%%%%%%%%%%%%%%%%%%%%%%%%%%%%%%%%%%
		The \FurukawaCommentFourth{restriction} for $p$ and $q$ in our theorem is due to \FurukawaCommentFifth{Propositions \ref{prop:0201} and \ref{prop:0202}}. 
	Let us show $w \in \mathbb{E}_1 \bracket{T}$.
	In our previous paper \cite{FurukawaGigaHieberHusseinKashiwabaraWrona2018}, we first derive the equation which $w$ satisfies by applying $\TimeInt{-1}{x_3}{\mathrm{div}_H \, \cdot}{\zeta}$ to the equations $v$ satisfies.
	Then, estimating the corresponding non-linear terms and applying the maximal regularity principle, we obtain $w \in \mathbb{E}_1 \bracket{T}$. 
	\FurukawaCommentThird{
		Note that, in the present paper, we invoke additional regularity for $v$ to deal with the trace of the second derivative.
	}
	\FurukawaCommentSecond{
		Although, in \cite{GigaGriesHieberHusseinKashiwabara2017_analiticity}, the authors treat higher order regularity of the solution to the primitive equations, \FurukawaCommentThird{they do not explicitly write the maximal regularity in fractional Sobolev spaces.}
		\FurukawaCommentThird{However, it is easy to modify their proof to get the maximal regularity in the fractional Sobolev spaces.}
		In \cite{GigaGriesHieberHusseinKashiwabara2017_H_infty}, the argument to get $H^\infty$-calculus of hydrostatic Stokes operator is based on $H^\infty$-calculus for the Laplace operator and perturbations arguments.
		Since the Laplace operator admits $H^\infty$-calculus in fractional Sobolev spaces, \FurukawaCommentThird{it is not difficult to establish $H^{\infty}$-calculus of the hydrostatic Stokes operator in fractional Sobolev spaces.}
		We also find local well-posedness of the primitive equations in \FurukawaCommentThird{fractional maximal regularity space $W^{1, p} (0, T ;W^{s \FurukawaCommentFifth{, q}} (\Omega) ) \cap L^{p} (0, T ;W^{2 + s, q} (\Omega) )$ for $s > 1 / q$ \FurukawaCommentThird{in the same way} \cite{GigaGriesHieberHusseinKashiwabara2017_analiticity} to get local well-posedness, namely, use Lemma 6.1, Corollary 6.2 and Theorem 5.1 in \cite{GigaGriesHieberHusseinKashiwabara2017_analiticity}.}}

%%%%%%%%%%%%%%%%%%%%%%%%%%%%%%%%%%%%%%%%%%%%%%%%%%%%%%%%%%%%%%%%%%%%%--------------------------PEOPOSITION 0205------------------------%
%%%%%%%%%%%%%%%%%%%%%%%%%%%%%%%%%%%%%%%%%%%%%%%%%%%%%%%%%%%%%%%%%%%%
% 	
% 	\begin{lemma} \label{prop:0205}
%		Let $T > 0$, $ 1 < p, q < \infty$ and $\bracket{ v \bracket{\Prime{x}, x_3}, w\bracket{\Prime{x}, x_3}} = \bracket{v\bracket{\Prime{x}, x_3} , - \TimeInt{-1}{x_3}{ \mathrm{div}_H v \bracket{\Prime{x}, \zeta} }{\zeta}} \in \mathbb{E}_1 \bracket{T}$ be a solution to (PE) with $\LongNorm{v}{\mathbb{E}_1} \leq C_0$ for some $C_0 = C_0 \bracket{v_0, p, q, T}$. Then there exist a constant $C = C \bracket{ v, p, q, T }$ such that
%			\begin{align} \label{eq:B0800}
%				\LongNorm{ w }{ \mathbb{E}_1 \bracket{T} } \leq C.
%			\end{align}
% 	\end{lemma}
 	
%%%%%%%%%%%%%%%%%%%%%%%%%%%%%%%%%%%%%%%%%%%%%%%%%%%%%%%%%%%%%%%%%%%%%----------------------END OF PEOPOSITION 0205---------------------%
%%%%%%%%%%%%%%%%%%%%%%%%%%%%%%%%%%%%%%%%%%%%%%%%%%%%%%%%%%%%%%%%%%%%
 	
%%%%%%%%%%%%%%%%%%%%%%%%%%%%%%%%%%%%%%%%%%%%%%%%%%%%%%%%%%%%%%%%%%
%------------------PROOF OF PROPOSITION 0205---------------------%
%%%%%%%%%%%%%%%%%%%%%%%%%%%%%%%%%%%%%%%%%%%%%%%%%%%%%%%%%%%%%%%%%%
	\begin{remark}
		It \FurukawaCommentFifth{is} already known that $v \in \mathbb{E}_1 \FurukawaCommentFifth{(T)}$ with initial data $v_0 \in X_{\gamma}$ by Giga, et al. \cite{GigaGriesHieberHusseinKashiwabara2017_H_infty}.
	\end{remark}
 	
	\begin{proof}[Proof of Lemma \ref{lem:A1002}]
 		%Set $\overline{v} = \TimeInt{-1}{1}{ v \bracket{\cdot, \zeta } }{\zeta}$ and $\overline{\pi} = \TimeInt{-1}{1}{ \pi \bracket{\cdot, \zeta } }{\zeta}$. 
 		Integrating (PE) both sides over $\bracket{-1, 1}$, we find $\bracket{\overline{v}, \overline{\pi}}$ \FurukawaCommentFifth{satisfy}
			\begin{align} \label{eq:B0801}
				\begin{array}{rcll}
					\dt \overline{v} - \Delta \overline{v} + \nabla_H \overline{\pi}
					& = 
					& - \TimeInt{-1}{1}{ v \cdot \nabla_H v + w \partial_3 v
						}{\zeta}
					&  \\ 
					\,
					& \,
					&  + \bracket{\partial_3 v} |_{x_3 = -1}^{x_3 = 1} 
					& \In{\Omega \times \bracket{0, T}}\FurukawaCommentFourth{,} \\ 
					\mathrm{div}_H \overline{v} 
					& = 
					& 0 
					&\In{\Omega \times \bracket{0, T}}\FurukawaCommentFourth{,} \\
					\overline{v}
					& = 
					& 0 
					&\On{ \partial \Omega \times \bracket{0, T}}\FurukawaCommentFourth{,} \\
					\overline{v} (0)
					& = 
					& \overline{v_0}
					& \quad \mathrm{in} \quad \Omega.
				\end{array}				
			\end{align}			 		
 		 Put $\tilde{v} =  v - \overline{v}$.
 		 Then, $\tilde{u} = (\tilde{v}, w)$ satisfies
 		\begin{align} \label{eq;B1001}
 			\begin{array}{rcll}
	 			\dt \tilde{v} - \Delta \tilde{v} 
 				& = 
 				& - \tilde{v} \cdot \nabla_H\tilde{v} 
 				- w \partial_z \tilde{v} 
 				- \overline{v} \cdot \nabla_H \tilde{v} 
 				- \tilde{v} \cdot \nabla_H \overline{v}
 				& \, \\
 				\,
 				& \, 
 				& - \frac{1}{2} 
 				\TimeInt{-1}{1}{
 					\tilde{v} \cdot \nabla_H \tilde{v} 
 					- \bracket{ \mathrm{div}_H \, \tilde{v} } 
 				}{\zeta}
 				& \, \\
 				\,
 				& \, 
 				& +  \frac{1}{2} \bracket{\partial_3 v} |_{x_3 = -1}^{x_3 = 1} 
 				& \In{\Omega \times \bracket{0, T}}, \\
 				\mathrm{div}_H \, \tilde{v} + \partial_z w 
 				& = 
 				& 0 
 				& \In{\Omega \times \bracket{0, T}} \FurukawaCommentFourth{,} \\
 				\tilde{v}
				& = 
				& 0 
				&\On{ \partial \Omega \times \bracket{0, T}} \FurukawaCommentFourth{,} \\
 				\tilde{v} (0)
 				& = 
 				& v (0) - \overline{v_0}
 				& \In{\Omega} \FurukawaCommentFourth{.}
 			\end{array}
 		\end{align}
 		Note that the pressure term no longer appears in the above equations and
 		\begin{align*}
 			\mathrm{div}_H \, \tilde{v} + \partial_3 w 
 			= 0.	
		\end{align*} 			
 		Applying $ - \mathrm{div}_H $ to (\ref{eq;B1001}) and integrating over $(-1, x_3)$ with respect to vertical variable, we find
 		\begin{align*}
 			& \dt w - \Delta w \notag \\
 			& = \FurukawaComment{
 				\partial_z \mathrm{div}_H \tilde{v} |_{x_3 = - 1} 
 				- \int_{-1}^{x_3}
 					\frac{1}{2} \mathrm{div}_H  \left[ 
 						\bracket{\partial_3 v} |_{x_3 = -1}^{x_3 = 1} 
 					\right]
 					d \zeta \notag
 				}\\
 			& + \int_{-1}^{x_3}
 				\FurukawaCommentFourth{\mathrm{div}_H} 
 					\bracket{ 
 						- \tilde{v} \cdot \nabla_H\tilde{v} 
 						- w \partial_{\zeta} \tilde{v} 
 						- \overline{v} \cdot \nabla_H \tilde{v} 
 						- \tilde{v} \cdot \nabla_H \overline{v} }
 			d \zeta \notag \\
 			& - \frac{1}{2} 
			\TimeInt{-1}{x_3}{	
	 			\mathrm{div}_H
 				\TimeInt{-1}{1}{
 					\tilde{v} \cdot \nabla_H \tilde{v} - \bracket{ \mathrm{div}_H \, \tilde{v} } \tilde{v} 
 				}{\zeta}
 			}{\eta}  \notag \\
 			& = : I_1 + I_2 + I_3,
		\end{align*}
		with initial data $w_0$. 
		\FurukawaCommentSecond{
			Since $v_0 \in X_\gamma \cap B^{s}_{q,p}(\Omega)$ for $s > 2 - 2/ p + 1/q$, we have $v \in \mathbb{E}_1 (T) \cap L^p (0, T ; W^{\FurukawaCommentFourth{2 + 1/q +\delta}, q}(\Omega))$ \FurukawaCommentFourth{for some $\delta>0$ by \cite{GigaGriesHieberHusseinKashiwabara2017_analiticity} and \cite{GigaGriesHieberHusseinKashiwabara2017_H_infty} }, and thus $\nrm{I_1}{\mathbb{E}_0 \bracket{T}} \leq C$ for some $C>0$.}
		We use integration by parts to get
		\begin{align*}
			I_2 
			& = \tilde{v} \cdot \nabla_H w 
			- w \mathrm{div}_H v
			+ \overline{v} \cdot \nabla_H w \\
			& + \int_{-1}^{x_3}
				%\bracket{ 
					\partial_j \tilde{v} \cdot \nabla_H \tilde{v}_{\FurukawaCommentFourth{j}}
					-
					\bracket{ 
						\partial_{\zeta} \tilde{v}
				%	} 
					\cdot \nabla_H w
				} 
				+ \nabla_H w \cdot \partial_{\zeta} \tilde{v} 
				- 
			%	\bracket{ 
						\partial_{\zeta} w \,
						\mathrm{div}_H \tilde{v} 
			%		} 
			\, d \zeta \\
			& + \TimeInt{-1}{x_3}{ 
				\partial_j \overline{v} \cdot \nabla_H \tilde{v} _{\FurukawaCommentFourth{j}}
				+ \partial_j \tilde{v} \cdot \nabla_H \overline{v}_j 
			}{\zeta}.
		\end{align*}
	\FurukawaCommentFourth{for $j = 1, 2$.}
	We can apply \FurukawaCommentFourth{Propositions} \ref{prop:0201} and \ref{prop:0202} to $I_2$ to get
		\begin{align} \label{eq_estimate_I_2_improved_reg}
			\LongNorm{
				I_2
			}{\mathbb{E}_0 \bracket{T}}
			\leq C
			\LongBracket{
				\nrm{w}{\mathbb{E}_1 \bracket{T}} \LongNorm{\tilde{v}}{\mathbb{E}_1 \bracket{T}}
				+ \LongNorm{\tilde{v}}{\mathbb{E}_1 \bracket{T}}^2
			}.
		\end{align}
	Similarly, we have
		\begin{align} \label{eq_estimate_I_3_improved_reg}
			\LongNorm{
				I_3
			}{\mathbb{E}_0 \bracket{T}}
			\leq C
			\LongBracket{
				\nrm{w}{\mathbb{E}_1 \bracket{T}} \LongNorm{\tilde{v}}{\mathbb{E}_1 \bracket{T}}
				+ \LongNorm{\tilde{v}}{\mathbb{E}_1 \bracket{T}}^2
			}.
		\end{align} 
	Note that constants in (\ref{eq_estimate_I_2_improved_reg}) and (\ref{eq_estimate_I_3_improved_reg}) are independent of $T$ since constants in \FurukawaCommentFourth{Propositions} \ref{prop:0201} and \ref{prop:0202} are independent of \FurukawaComment{$T$}.
	Thus we find from the maximal regularity of the heat equation, implicit function theorem and Neumann series argument, which is the same way as in Proposition 4.8 in \cite{FurukawaGigaHieberHusseinKashiwabaraWrona2018}, that
		\begin{align*}
			\Vert
				w
			\Vert_{\mathbb{E}_1 (T)}
			\leq C
		\end{align*}
	for some \FurukawaComment{$C > 0$}.
 	\end{proof}
 	
 	\section{Justification of the Hydrostatic approximation and Global-well-posedness of the anisotropic Navier-Stokes Equations} \label{section_proof_of_main_thoerem}

%%%%%%%%%%%%%%%%%%%%%%%%%%%%%%%%%%%%%%%%%%%%%%%%%%%%%%%%%%%%%%%%%%
%----------------------PROOF OF MAIN THEOREM-------------------- %
%%%%%%%%%%%%%%%%%%%%%%%%%%%%%%%%%%%%%%%%%%%%%%%%%%%%%%%%%%%%%%%%%%
		
	Let us prove our main theorem.
	
	\begin{proof}[Proof of Theorem \ref{thm:A0105}]
		Let $\mathcal{T} > 0$.
		Let $C_1$ be the maximum of constants $C$ in \FurukawaCommentFourth{Propositions} \ref{prop:0201} and \ref{prop:0202},  (\ref{maximal_regularity_of_anisotropic_Stokes_2}) and the constant in the trace theorem.
		Let us construct a solution $(V_{\FurukawaCommentFourth{\epsilon}}, {\FurukawaCommentFourth{\epsilon}} W_{\FurukawaCommentFourth{\epsilon}})$ to (\ref{eq:A0502})
		with zero initial data on $[0, \mathcal{T}]$.
		Set $(u_{\FurukawaCommentFourth{\epsilon}}, p_{\FurukawaCommentFourth{\epsilon}}) := (v + V_{\FurukawaCommentFourth{\epsilon}}, w + W_{\FurukawaCommentFourth{\epsilon}}, p + P_{\FurukawaCommentFourth{\epsilon}})$, then this is the desired solution to (SNS).
		We denote by $\Vert \cdot \Vert_{\mathbb{E}_1 (mT, (m + 1)T)}$ and $\Vert \cdot \Vert_{\mathbb{E}_0 (mT, (m + 1)T)}$ the $\mathbb{E}_1$-norm and $\mathbb{E}_0$-norm on the time interval $[m T, (m + 1)T]$, respectively.
		Let us take $0 < T \leq 1$ \FurukawaCommentFifth{satisfying $\mathcal{T} = N T$ for sufficiently large integer $N$ and}
 		\begin{align} \label{Choice_of_T}
 			\Vert
 				u
 			\Vert_{\mathbb{E}_1 (m T, (m + 1) T)}
 			\leq \frac{1}{\FurukawaCommentFifth{10 C_1}},
 		\end{align}
 		for all integer \FurukawaCommentFifth{$m \in (1, N)$}.
 		This choice of $T$ is clearly independent of ${\FurukawaCommentFourth{\epsilon}}$.
 		We divide the time interval $[0, \mathcal{T}]$ into $\cup_{m = 0}^N [m T, (m + 1) T ]$.
		Put $F = F ( V_{\FurukawaCommentFourth{\epsilon}}, W_{\FurukawaCommentFourth{\epsilon}}, u) := \left( F_H ( V_{\FurukawaCommentFourth{\epsilon}}, W_{\FurukawaCommentFourth{\epsilon}}, u)), F_z ( V_{\FurukawaCommentFourth{\epsilon}}, W_{\FurukawaCommentFourth{\epsilon}}, u) \right)$ be the left hand side of (\ref{eq:A0502}).
		We denote by $\mathcal{R} (F, U_0) = (\mathcal{R}^u (F, U_0), \mathcal{R}^p (F, U_0 ) \FurukawaComment{)}= ( U, P)$ the solution to (\ref{eq_diff-eq_abstract_2}) with initial data $U_0$.
		Set inductively
		\begin{align*}
			& U_{{\FurukawaCommentFourth{\epsilon}}, 1}
			= \mathcal{R}^u (F (0, u), 0) , \quad
			P_{{\FurukawaCommentFourth{\epsilon}}, 1} 
			=  \mathcal{R}^p (F (0, u), 0),\\
			& U_{{\FurukawaCommentFourth{\epsilon}}, j + 1}
			= \mathcal{R}^u (F ( U_j, u), 0), \quad
			P_{{\FurukawaCommentFourth{\epsilon}}, j + 1}
			= \mathcal{R}^p (F (U_j, u), 0) \FurukawaCommentFourth{.}
		\end{align*} 
		\FurukawaCommentFourth{Propositions} \ref{prop:0201}, \ref{prop:0202} and \FurukawaCommentSeventh{Corollary} \ref{cor_mr_estimate} lead \FurukawaCommentFifth{to}
		\begin{align} 
			&\Vert
				\left(
				V_{{\FurukawaCommentFourth{\epsilon}}, j + 1}
				, {\FurukawaCommentFourth{\epsilon}} W_{ {\FurukawaCommentFourth{\epsilon}}, j + 1}
				\right)
			\Vert_{\mathbb{E}_1 (T)}
			+ \Vert
				\nabla_{\FurukawaCommentFourth{\epsilon}} P_{{\FurukawaCommentFourth{\epsilon}}, j + 1}
			\Vert_{\mathbb{E}_0 (T)} \notag \\ 
			& \leq C_1 T^\eta \left( 
				\Vert
					u
				\Vert_{\mathbb{E}_1 (T)}
				\Vert
					\left(
						V_{{\FurukawaCommentFourth{\epsilon}}, j}
						, {\FurukawaCommentFourth{\epsilon}} W_{ {\FurukawaCommentFourth{\epsilon}}, j }
					\right)
				\Vert_{\mathbb{E}_1 (T)}
				+ \Vert
					\left(
						V_{{\FurukawaCommentFourth{\epsilon}}, j}
						, {\FurukawaCommentFourth{\epsilon}} W_{ {\FurukawaCommentFourth{\epsilon}}, j }
					\right)
				\Vert_{\mathbb{E}_1 (T)}^2
			\right) \notag \\ \label{ineq_quadratic_estimate}
 			& + {\FurukawaCommentFourth{\epsilon}} C_1 T^\eta \left(
				\Vert
					u
				\Vert_{\mathbb{E}_1 (T)}
				+ \Vert
					u
				\Vert_{\mathbb{E}_1 (T)}^2 			
 			\right) \FurukawaCommentFourth{.}
 		\end{align}
 		This quadratic inequality and (\ref{Choice_of_T}) \FurukawaCommentFifth{imply}
 		\begin{align} \label{estimate_max_norm_on_0_T}
			\Vert
				\left(
						V_{{\FurukawaCommentFourth{\epsilon}}, j}
						, {\FurukawaCommentFourth{\epsilon}} W_{ {\FurukawaCommentFourth{\epsilon}}, j }
					\right)
			\Vert_{\mathbb{E}_1 (T)}
			+ \Vert
				\nabla_{\FurukawaCommentFourth{\epsilon}} P_{{\FurukawaCommentFourth{\epsilon}}, j}
			\Vert_{\mathbb{E}_0 (T)}
			\leq 2 {\FurukawaCommentFourth{\epsilon}} C^\ast
 		\end{align}
 		for $C^\ast = \left( 1/4 C_1 + 1/ 16 C_1^2 \right)$ and small ${\FurukawaCommentFourth{\epsilon}} > 0$.
 		Put 
 		\begin{align*}
 			& \widetilde{U}_{{\FurukawaCommentFourth{\epsilon}}, j} 
 			= U_{{\FurukawaCommentFourth{\epsilon}}, j+1} - U_{{\FurukawaCommentFourth{\epsilon}}, j} \, ( j \geq 1), \quad 
 			\widetilde{U}_{{\FurukawaCommentFourth{\epsilon}}, 0} 
 			= U_{{\FurukawaCommentFourth{\epsilon}}, 1}\FurukawaCommentFourth{,} \\
 			& \widetilde{P}_{{\FurukawaCommentFourth{\epsilon}}, j} 
 			= P_{{\FurukawaCommentFourth{\epsilon}}, j + 1} - P_{{\FurukawaCommentFourth{\epsilon}}, j } \, (j \geq 1 ), \quad 
 			\widetilde{P}_{{\FurukawaCommentFourth{\epsilon}}, 0} 
 			= P_{{\FurukawaCommentFourth{\epsilon}}, 1}.
 		\end{align*}
 		Then seeking the equation which $(\widetilde{U}_{{\FurukawaCommentFourth{\epsilon}}, j}, \widetilde{P}_{{\FurukawaCommentFourth{\epsilon}}, j})$ satisfies and applying Propositions \ref{prop:0201}, \ref{prop:0202} and \FurukawaCommentFifth{Corollary} \ref{cor_mr_estimate}, we have
 		\begin{align*} \label{ineq_quadratic_estimate_for_diff_eq}
 			& \Vert
 				(
						\widetilde{V}_{{\FurukawaCommentFourth{\epsilon}}, j + 1}
						, {\FurukawaCommentFourth{\epsilon}} \widetilde{W}_{ {\FurukawaCommentFourth{\epsilon}}, j + 1 }
				)
 			\Vert_{\mathbb{E}_1 (T)}
 			+ \Vert
 				\nabla_{\FurukawaCommentFourth{\epsilon}} \tilde{P}_{j + 1}
 			\Vert_{\mathbb{E}_0 (T)} \\
 			& \leq C_1 T^\eta \left(
 				\Vert
 					\left(
						V_{{\FurukawaCommentFourth{\epsilon}}, j}
						, {\FurukawaCommentFourth{\epsilon}} W_{ {\FurukawaCommentFourth{\epsilon}}, j }
					\right)
 				\Vert_{\mathbb{E}_1 (T)}
 				+ \Vert
 					\left(
						V_{{\FurukawaCommentFourth{\epsilon}}, j+ 1}
						, {\FurukawaCommentFourth{\epsilon}} W_{ {\FurukawaCommentFourth{\epsilon}}, j + 1 }
					\right)
 				\Vert_{\mathbb{E}_1 (T)}
 			\right. \\
 			& \left.
 				\quad \quad \quad \quad
 				+ 2 \Vert
 					u
 				\Vert_{\mathbb{E}_1 (T)}
 			\right) \Vert
 				(
						\widetilde{V}_{{\FurukawaCommentFourth{\epsilon}}, j}
						, {\FurukawaCommentFourth{\epsilon}} \widetilde{W}_{ {\FurukawaCommentFourth{\epsilon}}, j }
				)
 			\Vert_{\mathbb{E}_1 (T)} \\
 			& \leq \frac{3}{4}
 			\left(
 				\Vert
 				(
						\widetilde{V}_{{\FurukawaCommentFourth{\epsilon}}, j}
						, {\FurukawaCommentFourth{\epsilon}} \widetilde{W}_{ {\FurukawaCommentFourth{\epsilon}}, j }
				)
 			\Vert_{\mathbb{E}_1 (T)}
 			+ \Vert
 				\nabla_{\FurukawaCommentFourth{\epsilon}} \tilde{P}_{j}
 			\Vert_{\mathbb{E}_0 (T)}
 			\right).
 		\end{align*}
 		Thus $(U_{\FurukawaCommentFourth{\epsilon}}, P_{\FurukawaCommentFourth{\epsilon}}) : = ( \lim_{j \rightarrow \infty} U_j, \lim_{j \rightarrow \infty} P_j ) = (\sum_{j = 0} \tilde{U}_{{\FurukawaCommentFourth{\epsilon}}, j}, \sum_{j = 0} \tilde{P}_{{\FurukawaCommentFourth{\epsilon}}, j})$ exists on $[0, T]$ and satisfies
 		\begin{align}
 			\Vert
 				\left(
						V_{{\FurukawaCommentFourth{\epsilon}}}
						, {\FurukawaCommentFourth{\epsilon}} W_{ {\FurukawaCommentFourth{\epsilon}} }
					\right)
 			\Vert_{\mathbb{E}_1 (T)}
 			+ \Vert
 				\nabla_{\FurukawaCommentFourth{\epsilon}} P_{\FurukawaCommentFourth{\epsilon}}
 			\Vert_{\mathbb{E}_0 (T)}
 			\leq 2 {\FurukawaCommentFourth{\epsilon}} C^\ast.
 		\end{align}
 		By construction $(U_{\FurukawaCommentFourth{\epsilon}}, P_{\FurukawaCommentFourth{\epsilon}})$ satisfies (\ref{eq:A0502}) on $[0, T]$.
 		Moreover, by trace theorem there exists a constant $C_{tr} > 0$ such that
 		\begin{align} \label{ineq_trace_at_T}
 			\Vert
 				\left(
						V_{{\FurukawaCommentFourth{\epsilon}}} (T)
						, {\FurukawaCommentFourth{\epsilon}} W_{ {\FurukawaCommentFourth{\epsilon}} } (T)
					\right)
 			\Vert_{X_\gamma}
 			\leq C_{tr} \Vert
 				\left(
						V_{{\FurukawaCommentFourth{\epsilon}}}
						, {\FurukawaCommentFourth{\epsilon}} W_{ {\FurukawaCommentFourth{\epsilon}} }
					\right)
 			\Vert_{E_1 (0, T)}
 			\leq 2 {\FurukawaCommentFourth{\epsilon}} C^\ast C_{tr}.
 		\end{align}
 		Next let us construct the solution to (\ref{eq:A0502}) on $[ T, 2T]$ with initial data $U_{\FurukawaCommentFourth{\epsilon}} (T)$.
 		By \FurukawaCommentFourth{(\ref{ineq_trace_at_T}), we have $\Vert U_{\FurukawaCommentFourth{\epsilon}} (T) \Vert_{X_\gamma} \leq 2 {\FurukawaCommentFourth{\epsilon}}  C^\ast C_{tr} $}.
		Put $a_{{\FurukawaCommentFourth{\epsilon}}, 1} = (b_{{\FurukawaCommentFourth{\epsilon}}, 1}, c_{{\FurukawaCommentFourth{\epsilon}}, 1}) = \mathcal{R}^u (0, U_{\FurukawaCommentFourth{\epsilon}} (T))$ and $\pi_{{\FurukawaCommentFourth{\epsilon}}, 1} = \mathcal{R}^p (0, U_{{\FurukawaCommentFourth{\epsilon}}} (T))$.
		Corollary \ref{cor_mr_estimate} implies 
		\begin{align} \label{ineq_a_1_pi_1_MR}
			\Vert
				\left(
					b_{{\FurukawaCommentFourth{\epsilon}}, 1},
					{\FurukawaCommentFourth{\epsilon}} c_{{\FurukawaCommentFourth{\epsilon}}, 1}
				\right)
			\Vert_{\mathbb{E}_1 (T, 2 T)}	
			+ \Vert
				\nabla_{\FurukawaCommentFourth{\epsilon}} \pi_{{\FurukawaCommentFourth{\epsilon}}, 1}
			\Vert_{\mathbb{E}_0 (T, 2 T)}
			\leq 2 {\FurukawaCommentFourth{\epsilon}} C^\ast C_{tr} C_T. 
		\end{align}
 		Let the vector field $a_{\FurukawaCommentFourth{\epsilon}} = (b_{\FurukawaCommentFourth{\epsilon}}, c_{\FurukawaCommentFourth{\epsilon}})$ be the solution to
		\begin{equation} \label{eq_DE_modified_2}
			\left \{
			\begin{array}{rl}
				\partial_t b_{\FurukawaCommentFourth{\epsilon}} - \Delta b_{\FurukawaCommentFourth{\epsilon}} + \nabla_H \pi_{\FurukawaCommentFourth{\epsilon}}	
				& = F_H ( b_{1, {\FurukawaCommentFourth{\epsilon}}} + b_{\FurukawaCommentFourth{\epsilon}}, c_{{\FurukawaCommentFourth{\epsilon}}, 1} +c_{\FurukawaCommentFourth{\epsilon}}, u) \FurukawaCommentFourth{,} \\
				\partial_t ( {\FurukawaCommentFourth{\epsilon}} c_{\FurukawaCommentFourth{\epsilon}} ) - \Delta ( {\FurukawaCommentFourth{\epsilon}} c_{\FurukawaCommentFourth{\epsilon}} ) + \frac{\partial_3}{{\FurukawaCommentFourth{\epsilon}}} \pi_{\FurukawaCommentFourth{\epsilon}}	
				& = {\FurukawaCommentFourth{\epsilon}} F_z ( b_{1, {\FurukawaCommentFourth{\epsilon}}} + b_{\FurukawaCommentFourth{\epsilon}}, c_{{\FurukawaCommentFourth{\epsilon}}, 1} +c_{\FurukawaCommentFourth{\epsilon}}, u) \FurukawaCommentFourth{,} \\
				\mathrm{div} \, a_{\FurukawaCommentFourth{\epsilon}}
				& = 0 \FurukawaCommentFourth{,} \\
				a_{\FurukawaCommentFourth{\epsilon}} (T)
				& =  0 \FurukawaCommentFourth{.}
 			\end{array} 
			\right.
		\end{equation}
		Then $U_{{\FurukawaCommentFourth{\epsilon}}} = a_{{\FurukawaCommentFourth{\epsilon}}, 1}  + a_{{\FurukawaCommentFourth{\epsilon}}}$ and $P_{\FurukawaCommentFourth{\epsilon}} = \pi_{{\FurukawaCommentFourth{\epsilon}}, 1} + \pi_{{\FurukawaCommentFourth{\epsilon}}}$ is a solution to ((\ref{eq:A0502}) with initial data $U_{\FurukawaCommentFourth{\epsilon}} (T)$.
		Let us construct the solution to (\ref{eq_diff-eq_abstract_2}). 
%		Let $G (a_1 + \tilde{a}, u)$ be the non-linear external force of (\ref{eq_DE_modified_2}).
		Let $F ( b_{1, {\FurukawaCommentFourth{\epsilon}}} + b_{\FurukawaCommentFourth{\epsilon}}, c_{{\FurukawaCommentFourth{\epsilon}}, 1} +c_{\FurukawaCommentFourth{\epsilon}}, u) = \left( F_H ( b_{1, {\FurukawaCommentFourth{\epsilon}}} + b_{\FurukawaCommentFourth{\epsilon}}, c_{{\FurukawaCommentFourth{\epsilon}}, 1} +c_{\FurukawaCommentFourth{\epsilon}}, u), {\FurukawaCommentFourth{\epsilon}} F_z ( b_{1, {\FurukawaCommentFourth{\epsilon}}} + b_{\FurukawaCommentFourth{\epsilon}}, c_{{\FurukawaCommentFourth{\epsilon}}, 1} +c_{\FurukawaCommentFourth{\epsilon}}, u) \right)$
		Set inductively
 		\begin{align*}
 			& a_{{\FurukawaCommentFourth{\epsilon}}, j + 1}
 			= a_{{\FurukawaCommentFourth{\epsilon}}, 1}
 			+ \mathcal{R}^u (F ( b_{1, {\FurukawaCommentFourth{\epsilon}}} + b_{{\FurukawaCommentFourth{\epsilon}} + j}, c_{{\FurukawaCommentFourth{\epsilon}}, 1} +c_{{\FurukawaCommentFourth{\epsilon}} + j}, u), 0), \\
 			& \pi_{{\FurukawaCommentFourth{\epsilon}}, j + 1}
 			= \mathcal{R}^p (F ( b_{1, {\FurukawaCommentFourth{\epsilon}}} + b_{{\FurukawaCommentFourth{\epsilon}} + j}, c_{{\FurukawaCommentFourth{\epsilon}}, 1} +c_{{\FurukawaCommentFourth{\epsilon}} + j}, u), 0) \FurukawaCommentFourth{,}
		\end{align*}
		for $j \geq 1$.
		Applying \FurukawaCommentFourth{Propositions} \ref{prop:0201}, \ref{prop:0202} and \FurukawaCommentSeventh{Corollary} \ref{cor_mr_estimate} to (\ref{eq_DE_modified_2}), we find
		\begin{align*}
			& \Vert
				\left(
					b_{{\FurukawaCommentFourth{\epsilon}} ,j + 1}, 
					{\FurukawaCommentFourth{\epsilon}} c_{{\FurukawaCommentFourth{\epsilon}} ,j + 1}
				\right)
			\Vert_{\mathbb{E}_1 (T, 2T)}
			+ \Vert
				\nabla_{\FurukawaCommentFourth{\epsilon}} \pi_{{\FurukawaCommentFourth{\epsilon}}, j + 1}
			\Vert_{\mathbb{E}_0 (T, 2T)} \notag \\
			& \leq C_1 T^\eta
				\Vert
					u
				\Vert_{\mathbb{E}_1 (T, 2T)}
				\Vert
					\left(
						b_{{\FurukawaCommentFourth{\epsilon}}, 1} +  b_{{\FurukawaCommentFourth{\epsilon}} ,j}, 
						{\FurukawaCommentFourth{\epsilon}} ( c_{{\FurukawaCommentFourth{\epsilon}}, 1} + c_{{\FurukawaCommentFourth{\epsilon}} ,j} )
					\right)
				\Vert_{\mathbb{E}_1 (T, 2T)} \\
			&	+ C_1 T^\eta \Vert
					\left(
						b_{{\FurukawaCommentFourth{\epsilon}}, 1} +  b_{{\FurukawaCommentFourth{\epsilon}} ,j}, 
						{\FurukawaCommentFourth{\epsilon}} ( c_{{\FurukawaCommentFourth{\epsilon}}, 1} + c_{{\FurukawaCommentFourth{\epsilon}} ,j} )
					\right)
				\Vert_{\mathbb{E}_1 (T, 2T)}^2 \\
			& + {\FurukawaCommentFourth{\epsilon}} C_1 T^\eta \left[
				\Vert
					u
				\Vert_{\mathbb{E}_1 (T, 2T)}
				+ \Vert
					u
				\Vert_{\mathbb{E}_1 (T, 2T)}^2
			\right] \notag \\
			& \leq C_1 T^\eta 
				\Vert
					\left(
						b_{{\FurukawaCommentFourth{\epsilon}} ,j}, 
						{\FurukawaCommentFourth{\epsilon}} c_{{\FurukawaCommentFourth{\epsilon}} ,j}
					\right)
				\Vert_{\mathbb{E}_1 (T, 2T)}^2 \\
				& + C_1 T^\eta \left( 
					\Vert
						u
					\Vert_{\mathbb{E}_1 (T, 2T)}
					+ 2	\Vert
						\left(
							b_{{\FurukawaCommentFourth{\epsilon}} ,1}, 
							{\FurukawaCommentFourth{\epsilon}} c_{{\FurukawaCommentFourth{\epsilon}} ,1}
						\right)
					\Vert_{\mathbb{E}_1 (T, 2T)}
				\right) \Vert
					\left(
						b_{{\FurukawaCommentFourth{\epsilon}} ,j}, 
						{\FurukawaCommentFourth{\epsilon}} c_{ {\FurukawaCommentFourth{\epsilon}} ,j}
					\right)
				\Vert_{\mathbb{E}_1 (T, 2T)} \\
				& + {\FurukawaCommentFourth{\epsilon}} C_1 T^\eta \left[ \Vert
					u
				\Vert_{\mathbb{E}_1 (T, 2T)} 
				\Vert
					\left(
						b_{{\FurukawaCommentFourth{\epsilon}} ,1}, 
						{\FurukawaCommentFourth{\epsilon}} c_{{\FurukawaCommentFourth{\epsilon}} ,1}
					\right)
				\Vert_{\mathbb{E}_1 (T, 2T)}
				+ \Vert
					\left(
						b_{{\FurukawaCommentFourth{\epsilon}} ,1}, 
						{\FurukawaCommentFourth{\epsilon}} c_{{\FurukawaCommentFourth{\epsilon}} ,1}
					\right)
				\Vert_{\mathbb{E}_1 (T, 2T)}^2
			\right] \\
			& + {\FurukawaCommentFourth{\epsilon}} C_1 T^\eta \left[
				\Vert
					u
				\Vert_{\mathbb{E}_1 (T, 2T)}
				+ \Vert
					u
				\Vert_{\mathbb{E}_1 (T, 2T)}^2
			\right] \notag \FurukawaCommentFourth{.}
		\end{align*}
		If we take ${\FurukawaCommentFourth{\epsilon}}$ so small that
		\begin{align*}
			\Vert
				a_{{\FurukawaCommentFourth{\epsilon}}, 1} 
			\Vert_{\mathbb{E}_1 (T, 2T)}
			\leq2 {\FurukawaCommentFourth{\epsilon}} C^\ast C_{tr} C_T
			\leq \frac{1}{8 C_1} \FurukawaCommentFourth{,}
		\end{align*}
		we have
		\begin{align*}
			& \Vert
				\left(
					b_{{\FurukawaCommentFourth{\epsilon}} ,j + 1}, 
					{\FurukawaCommentFourth{\epsilon}} c_{{\FurukawaCommentFourth{\epsilon}} ,j + 1}
				\right)
			\Vert_{\mathbb{E}_1 (T, 2T)}
			+ \Vert
				\nabla_{\FurukawaCommentFourth{\epsilon}} \pi_{{\FurukawaCommentFourth{\epsilon}}, j + 1}
			\Vert_{\mathbb{E}_0 (T, 2T)}  \\ 
			& \leq C_1 \Vert
				\left(
						b_{{\FurukawaCommentFourth{\epsilon}} ,j}, 
						{\FurukawaCommentFourth{\epsilon}} c_{{\FurukawaCommentFourth{\epsilon}} ,j}
					\right)
			\Vert_{\mathbb{E}_1 (T, 2T)}^2
			+ \frac{1}{2} \Vert
				\left(
						b_{{\FurukawaCommentFourth{\epsilon}} ,j}, 
						{\FurukawaCommentFourth{\epsilon}} c_{{\FurukawaCommentFourth{\epsilon}} ,j}
					\right)
			\Vert_{\mathbb{E}_1 (T, 2T)}
			+ {\FurukawaCommentFourth{\epsilon}} C^\ast C_T C_{tr}
			+ {\FurukawaCommentFourth{\epsilon}} C^\ast \notag \\
			& \leq C_1 \Vert
				\left(
						b_{{\FurukawaCommentFourth{\epsilon}} ,j}, 
						{\FurukawaCommentFourth{\epsilon}} c_{{\FurukawaCommentFourth{\epsilon}} ,j}
					\right)
			\Vert_{\mathbb{E}_1 (T, 2T)}^2
			+ \frac{1}{2} \Vert
				\left(
						b_{{\FurukawaCommentFourth{\epsilon}} ,j}, 
						{\FurukawaCommentFourth{\epsilon}} c_{{\FurukawaCommentFourth{\epsilon}} ,j}
					\right)
			\Vert_{\mathbb{E}_1 (T, 2T)}
			+ {\FurukawaCommentFourth{\epsilon}} C^\ast (1 + C_T C_{tr}). \notag
		\end{align*}
		Thus, we have by induction
		\begin{align*}
			\Vert
				\left(
						b_{{\FurukawaCommentFourth{\epsilon}} ,j}, 
						{\FurukawaCommentFourth{\epsilon}} c_{{\FurukawaCommentFourth{\epsilon}} ,j}
					\right)
			\Vert_{\mathbb{E}_1 (T, 2T)}
			+ \Vert
				\nabla_{\FurukawaCommentFourth{\epsilon}} \pi_{{\FurukawaCommentFourth{\epsilon}}, j}
			\Vert_{\mathbb{E}_0 (T, 2T)}
			\leq 2 {\FurukawaCommentFourth{\epsilon}} C^\ast (1 + C_T C_{tr})
		\end{align*}
		for all $j \geq 1$.
		Set
		\begin{align*}
			& \tilde{a}_{{\FurukawaCommentFourth{\epsilon}}, j} 
			= a_{{\FurukawaCommentFourth{\epsilon}}, j + 1} 
			- a_{{\FurukawaCommentFourth{\epsilon}}, j} \quad (j \geq 1), \quad 
			\tilde{a}_{{\FurukawaCommentFourth{\epsilon}}, 0} 
			= a_{{\FurukawaCommentFourth{\epsilon}}, 0}  \FurukawaCommentFourth{,} \\
			& \tilde{\pi}_{{\FurukawaCommentFourth{\epsilon}}, j} 
			= \pi_{{\FurukawaCommentFourth{\epsilon}}, j + 1} 
			- \pi_{{\FurukawaCommentFourth{\epsilon}}, j} \quad (j \geq 1), \quad
			\tilde{\pi}_{{\FurukawaCommentFourth{\epsilon}}, 0} 
			= \pi_{{\FurukawaCommentFourth{\epsilon}}, 0} \FurukawaCommentFourth{.}
		\end{align*}
		Applying \FurukawaCommentFourth{Propositions} \ref{prop:0201}, \ref{prop:0202} and \FurukawaCommentFifth{Corollary} \ref{cor_mr_estimate} to the equations that 
		\begin{align*}
			(\tilde{a}_{{\FurukawaCommentFourth{\epsilon}}, j + 1}, \tilde{\pi}_{{\FurukawaCommentFourth{\epsilon}}, j + 1})
		\end{align*} 
		satisfies, we find
		\begin{align*}
			&\Vert
				(
					\tilde{b}_{{\FurukawaCommentFourth{\epsilon}}, j + 1}, 
					{\FurukawaCommentFourth{\epsilon}} \tilde{c}_{{\FurukawaCommentFourth{\epsilon}}, j + 1}
				)
			\Vert_{\mathbb{E}_1 (T, 2 T)}
			+ \Vert
 				\nabla_{\FurukawaCommentFourth{\epsilon}} \tilde{\pi}_{{\FurukawaCommentFourth{\epsilon}}, j + 1}
			\Vert_{\mathbb{E}_0 (T, 2 T)} \\
			& \leq C_1 T^\eta \left(
				\Vert
					\left(
						b_{{\FurukawaCommentFourth{\epsilon}} ,j}, 
						{\FurukawaCommentFourth{\epsilon}} c_{{\FurukawaCommentFourth{\epsilon}} ,j}
					\right)
				\Vert_{\mathbb{E}_1 (T, 2 T)}
				+ \Vert
					\left(
						b_{{\FurukawaCommentFourth{\epsilon}} ,j + 1}, 
						{\FurukawaCommentFourth{\epsilon}} c_{{\FurukawaCommentFourth{\epsilon}} ,j + 1}
					\right)
				\Vert_{\mathbb{E}_1 (T, 2 T)}
				+ 2 \Vert
					u
				\Vert_{\mathbb{E}_1 (T, 2 T)}
			\right) \\
			& \quad \quad \times \Vert
				(
					\tilde{b}_{{\FurukawaCommentFourth{\epsilon}}, j }, 
					{\FurukawaCommentFourth{\epsilon}} \tilde{c}_{{\FurukawaCommentFourth{\epsilon}}, j },
				)
			\Vert_{\mathbb{E}_1 (T, 2 T)} \\
			& \leq \left[
				C_1 \left(
					\Vert
						\left(
						b_{{\FurukawaCommentFourth{\epsilon}} ,j}, 
						{\FurukawaCommentFourth{\epsilon}} c_{{\FurukawaCommentFourth{\epsilon}} ,j}
					\right)
					\Vert_{\mathbb{E}_1 (T, 2 T)}
					+ \Vert
						\left(
						b_{{\FurukawaCommentFourth{\epsilon}} ,j + 1}, 
						{\FurukawaCommentFourth{\epsilon}} c_{{\FurukawaCommentFourth{\epsilon}} ,j + 1}
					\right)
					\Vert_{\mathbb{E}_1 (T, 2 T)}
				\right)
				+ \frac{1}{2}
			\right] \\
			& \quad \quad  \times \Vert
				(
					\tilde{b}_{{\FurukawaCommentFourth{\epsilon}}, j}, 
					{\FurukawaCommentFourth{\epsilon}} \tilde{c}_{{\FurukawaCommentFourth{\epsilon}}, j},
				)
			\Vert_{\mathbb{E}_1 (T, 2 T)} \\
			& \leq \frac{3}{4} \Vert
				(
					\tilde{b}_{{\FurukawaCommentFourth{\epsilon}}, j}, 
					{\FurukawaCommentFourth{\epsilon}} \tilde{c}_{{\FurukawaCommentFourth{\epsilon}}, j },
				)
			\Vert_{\mathbb{E}_1 (T, 2 T)}.
		\end{align*}
		The last inequality holds if ${\FurukawaCommentFourth{\epsilon}}$ is sufficiently small.
		Thus, 
		\begin{align*}
			( a_{{\FurukawaCommentFourth{\epsilon}}}, \pi_{\FurukawaCommentFourth{\epsilon}}) 
			: = (
				\lim_{ j \rightarrow \infty} a_{{\FurukawaCommentFourth{\epsilon}}, j} , 
				\lim_{ j \rightarrow \infty} \pi_{{\FurukawaCommentFourth{\epsilon}}, j}
			) 
			= (
				\sum_{j = 0}\tilde{a}_{{\FurukawaCommentFourth{\epsilon}}, j}, 
				\sum_{j = 0} \tilde{\pi}_{{\FurukawaCommentFourth{\epsilon}}, j}
			)
		\end{align*} 
		exists and satisfies (\ref{eq_DE_modified_2}) such that
		\begin{align*}
			\Vert
				\left(
						b_{{\FurukawaCommentFourth{\epsilon}}}, 
						{\FurukawaCommentFourth{\epsilon}} c_{{\FurukawaCommentFourth{\epsilon}}}
					\right)
			\Vert_{\mathbb{E}_1 (T, 2T)}
			+ \Vert
				\nabla_{\FurukawaCommentFourth{\epsilon}} \pi_{{\FurukawaCommentFourth{\epsilon}}}
			\Vert_{\mathbb{E}_0 (T, 2T)}
			\leq 2 {\FurukawaCommentFourth{\epsilon}} C^\ast (1 + C_T C_{tr}).
		\end{align*}
		\FurukawaCommentFourth{The functions} $(U_{\FurukawaCommentFourth{\epsilon}}, P_{\FurukawaCommentFourth{\epsilon}})$ solves (\ref{eq:A0502}) on the time interval $[T, 2 T]$ with initial data $U_{\FurukawaCommentFourth{\epsilon}} (T)$ such that
		\begin{align*}
			& \Vert
 				\left(
						V_{{\FurukawaCommentFourth{\epsilon}}}
						, {\FurukawaCommentFourth{\epsilon}} W_{ {\FurukawaCommentFourth{\epsilon}} }
					\right)
 			\Vert_{\mathbb{E}_1 (T)}
 			+ \Vert
 				\nabla_{\FurukawaCommentFourth{\epsilon}} P_{\FurukawaCommentFourth{\epsilon}}
 			\Vert_{\mathbb{E}_0 (T)} \\
			& \leq \Vert
				\left(
					b_{{\FurukawaCommentFourth{\epsilon}}, 1}
					, {\FurukawaCommentFourth{\epsilon}} c_{{\FurukawaCommentFourth{\epsilon}}, 1}
				\right)
			\Vert_{\mathbb{E}_1 (T, 2 T)}
			+ \Vert
				\left(
					b_{{\FurukawaCommentFourth{\epsilon}}}
					, {\FurukawaCommentFourth{\epsilon}} c_{{\FurukawaCommentFourth{\epsilon}}}
				\right)
			\Vert_{\mathbb{E}_1 (T, 2 T)} \\
			& + \Vert
				\nabla_{\FurukawaCommentFourth{\epsilon}} \pi_{{\FurukawaCommentFourth{\epsilon}}, 1}
			\Vert_{\mathbb{E}_0 (T, 2 T)} 
			+ \Vert
				\nabla_{\FurukawaCommentFourth{\epsilon}} \pi_{\FurukawaCommentFourth{\epsilon}}
			\Vert_{\mathbb{E}_0 (T, 2 T)} \\
			& \leq C_T \Vert
				U_{\FurukawaCommentFourth{\epsilon}} (T)
			\Vert_{X_\gamma}
			+ 2 {\FurukawaCommentFourth{\epsilon}} C^\ast (1 +  C_{tr} C_T)
			\leq 2 {\FurukawaCommentFourth{\epsilon}} C^\ast ( 1 + 2 C_{tr} CT)  \FurukawaCommentFourth{.}
		\end{align*}
		By induction, the solution $(U_{\FurukawaCommentFourth{\epsilon}}, P_{\FurukawaCommentFourth{\epsilon}})$ constructed by the same way on the time interval $[m T , (m + 1) T]$ satisfies
		\begin{align*}
			& \Vert
 				\left(
						V_{{\FurukawaCommentFourth{\epsilon}}}
						, {\FurukawaCommentFourth{\epsilon}} W_{ {\FurukawaCommentFourth{\epsilon}} }
					\right)
 			\Vert_{\mathbb{E}_1 (m T, (m + 1) T)}
 			+ \Vert
 				\nabla_{\FurukawaCommentFourth{\epsilon}} P_{\FurukawaCommentFourth{\epsilon}}
 			\Vert_{\mathbb{E}_0 (m T, (m + 1) T)} \\
			& \leq 2 {\FurukawaCommentFourth{\epsilon}} C^\ast \left[
				1 + 3 C_T C_{tr} \left(
					1 + 3 C_T C_{tr} ( \cdots )
					%\right) 
				\right)
			\right]
			=: 2 {\FurukawaCommentFourth{\epsilon}} \alpha_j \FurukawaCommentFourth{.}
		\end{align*}
		Since $\mathcal{T}$ is finite, this induction ends in finite steps.
		Thus we conclude
		\begin{align*}
		 \Vert 
		 	\left(
						V_{{\FurukawaCommentFourth{\epsilon}}}
						, {\FurukawaCommentFourth{\epsilon}} W_{ {\FurukawaCommentFourth{\epsilon}} }
					\right)
		 \Vert_{\mathbb{E}_1 (\mathcal{T})} 
		 + \Vert 
		 	\nabla_{\FurukawaCommentFourth{\epsilon}} P_{\FurukawaCommentFourth{\epsilon}}
		 \Vert_{\mathbb{E}_1  (\mathcal{T})}
		 \leq 2 {\FurukawaCommentFourth{\epsilon}} \sum_{1 \leq j \leq \FurukawaCommentFifth{N}} \alpha_j.
		\end{align*}
	\end{proof}
\FurukawaCommentThird{
	\section*{Acknowledgements}
%If you'd like to thank anyone, place your comments here
%and remove the percent signs.
	The authors are grateful to Professor Matthias Hieber and Professor Amru Hussein for helpful discussions and comments.
}

% BibTeX users please use one of
%\bibliographystyle{spbasic}      % basic style, author-year citations
%\bibliographystyle{spmpsci}      % mathematics and physical sciences
%\bibliographystyle{spphys}       % APS-like style for physics
%\bibliography{}   % name your BibTeX data base

% Non-BibTeX users please use

%\end{document}
% end of file template.tex

\end{document}